\title{Growing conditioned BGW trees with\\ log-concave offspring distributions}
\author{
		William FLEURAT%
		\thanks{Université Paris-Saclay, France \hfill  \texttt{\href{mailto:william.fleurat@universite-paris-saclay.fr}{william.fleurat@universite-paris-saclay.fr}}}
	}
	\definecolor{MyRed}{HTML}{BC272D}
	\definecolor{MyBlue}{HTML}{0000FF}
	\numberwithin{equation}{section}
\newcommand{\R}{{\mathbb R}}
\newcommand{\Z}{{\mathbb Z}}
\newcommand{\N}{{\mathbb N}}
\newcommand{\indic}[1]{{\mathbbm 1}_{#1}}
\newcommand{\supp}{\mathrm{supp}}
\newcommand\diff{\mathop{}\!\mathrm{d}}
\let\P\relax\DeclareMathOperator{\P}{\mathbf{P}}
\DeclareMathOperator{\E}{\mathrm{E}}
\newcommand{\Expect}[2][]{%
	\E_{#1}\mathopen{}\mathclose\bgroup\left[\,#2\,\aftergroup\egroup\right]}
\newcommand{\condExpect}[3][]{%
	\E_{#1}\mathopen{}\mathclose\bgroup\left[\,#2\;\middle\vert\;#3\,\aftergroup\egroup\right]}
\newcommand{\Prob}[2][]{\P_{#1}\mathopen{}\mathclose\bgroup\left(\,#2\,\aftergroup\egroup\right)}
\newcommand{\condProb}[3][]{%
	\P_{#1}\mathopen{}\mathclose\bgroup\left(\,#2\;\middle\vert\;#3\,\aftergroup\egroup\right)}
\newcommand{\anyProb}[2]{#1\mathopen{}\mathclose\bgroup\left(\,#2\,\aftergroup\egroup\right)}
\newcommand\cdf{{F}}
\newcommand{\Ecal}{\mathcal{E}}
\newcommand{\Fcal}{\mathcal{F}}
\newcommand\Xcal{\mathcal{X}}
\newcommand\mfrak{{\mathfrak{m}}}
\newcommand{\probStyle}[1]{\mathbf{#1}}
\newcommand{\Unif}{\probStyle{Unif}}
\newcommand{\Binom}{\probStyle{Binom}}
\newcommand{\Poisson}{\probStyle{Poisson}}
\newcommand{\Geom}{\probStyle{Geom}}
\newcommand{\PComp}[2]{\probStyle{P}_{#2}^{\,{#1}}}
\newcommand{\ProbComp}[3]{%
	\PComp{#1}{#2}\mathopen{}\mathclose\bgroup	\left(\,#3\,\aftergroup\egroup\right)}
\newcommand{\BGW}[1]{\probStyle{BGW}^{#1}}
\newcommand{\BGWcond}[2]{\BGW{#1}_{#2}}
\newcommand{\SimpGen}[2]{\probStyle{SG}^{#1}_{#2}}
\newcommand{\Sub}[2]{\probStyle{ST}^{#1}_{#2}}
\newcommand{\randSubset}[2]{\probStyle{B}^{#1}_{#2}}
\newcommand{\weight}[1]{\mathbf{#1}}
\newcommand{\wa}{{\weight{a}}}
\newcommand{\wb}{{\weight{b}}}
\newcommand{\w}{{\weight{w}}}
\newcommand{\W}{{\weight{W}}}
\newcommand\wtheta{{\boldsymbol{\uptheta}}}
\newcommand\wthetai{{\wtheta^{(i)}}}
\newcommand{\e}{{\weight{e}}}
\newcommand{\shiftNotation}[2]{{#2}^{+#1}}\long\def\gobbleone#1{}
\newcommand{\shift}[2]{\if\relax\detokenize\expandafter{\gobbleone#1}\relax\shiftNotation{#1}{#2}\else\shiftNotation{(#1)}{#2}\fi}
\newcommand{\PFComp}[2]{{Z}^{#1}_{#2}}
\newcommand{\PFtrees}[2]{b^{#1}_{#2}}
\newcommand{\PFforests}[2]{f^{#1}_{#2}}
\newcommand{\PFsubtrees}[2]{ST^{#1}_{#2}}
\newcommand{\U}{\mathbb{U}}
\newcommand{\Ud}[1]{\U^{(#1)}}
\renewcommand{\u}{\mathtt{u}}
\renewcommand{\v}{\mathtt{v}}
\newcommand{\trees}[1][]{\ifthenelse{\isempty{#1}}{\mathbb{T}}{\mathbb{T}^{[#1]}}}
\newcommand{\forests}[1][]{\ifthenelse{\isempty{#1}}{\mathbb{F}}{\mathbb{F}^{[#1]}}}
\newcommand{\subtrees}{\mathbbm{t}}
\newcommand{\dsubtrees}{\subtrees^{(d)}}
\newcommand{\dcomp}{\smash{\widehat{\mathbbm{T}}}^{(d)}}
\newcommand{\pt}{T}
\newcommand{\T}{\mathrm{T}}
\renewcommand{\t}{\tau}
\newcommand{\subT}{\mathcal{T}}
\newcommand{\dsubT}{\subT}
\newcommand{\pos}{{{C}}}
\newcommand\Pos{{\boldsymbol{C}}}
\newcommand\G{\mathbb{G}}
\newcommand\bbXseq{\bbX_{\rm seq}}
\newcommand\g{g}
\newcommand\boldg{\boldsymbol{g}}
\newcommand\boldh{\boldsymbol{h}}
\newcommand\boldp{\boldsymbol{p}}
\newcommand{\x}{{\boldsymbol{x}}}
\newcommand{\treesgr}{\trees^{\rm gr}}
\newcommand{\perm}{\mathfrak{S}}
\newcommand{\bbX}{\mathbb{X}}
\newcommand{\bbY}{\mathbb{Y}}
\newcommand{\boldsigma}{{\boldsymbol{\sigma}}}
\newcommand{\boldpi}{{\boldsymbol{\uppi}}}
\newcommand{\boldS}{{\boldsymbol{S}}}
\newcommand{\Parts}{\mathbb{S}}
\newcommand{\Compos}[2][]{\mathrm{Comp}_{#1}(#2)}
\newcommand{\letter}[1]{\underline{#1}}
\newcommand{\composes}[1][]{\:\models_{#1}\:}
\newcommand{\projParts}{\mathrm{proj}}
\newcommand{\precdot}{\prec\mathrel{\mkern-5mu}\mathrel{\cdot}}
\newcommand{\covered}[1][]{\precdot^{#1}}
\newcommand\pf{_*\,}
\newcommand{\treeOperations}[1]{\mathsf{#1}}
\newcommand{\push}{\treeOperations{push}}
\newcommand\epush{\mathfrak{P}}
\newcommand{\embed}{{\epush^{-1}}}
\newcommand{\comp}{\treeOperations{comp}^{(d)}}
\renewcommand{\emptyset}{\varnothing}
\renewcommand{\phi}{\varphi}
\renewcommand{\epsilon}{\varepsilon}
\newcommand{\where}{\qquad\text{where}\qquad}
	\theoremstyle{plain}
\newtheorem{thm}{Theorem}
\newtheorem*{thm*}{Theorem}
\newtheorem{prop}{Proposition}[section]
\newtheorem{lemma}[prop]{Lemma}
\newtheorem{pb}{Problem}
\newtheorem{cor}[prop]{Corollary}
	\theoremstyle{definition}
\newtheorem{defin}[prop]{Definition}
	\theoremstyle{remark}
\newtheorem{rem}[prop]{Remark}
\newcommand{\mypar}[1]{\paragraph{{{#1}}}}
\begin{document}

\maketitle

%-------------------------------------------------------------

\begin{abstract}
We show that given a log-concave offspring distribution, the corresponding sequence of Bienaymé--Galton--Watson trees conditioned to have $n\geq 1$ vertices admits a realization as a Markov process $(\T_n)_{n\geq1}$ which adds a new ``right-leaning'' leaf at each step.
This applies for instance to offspring distributions which are Poisson, binomial, geometric, or any convolution of those.
By a negative result of Janson, the log-concavity condition is optimal in the restricted case of offspring distributions supported in $\{0,1,2\}$.
We then prove a generalization to the case of an offspring distribution supported on an arithmetic progression, if we assume log-concavity along that progression.

As an application, we deduce the existence of increasing couplings in an inhomogeneous model of random subtrees of the Ulam--Harris tree. This is equivalent to the statement that, in a corresponding inhomogeneous Bernouilli percolation model on a regular tree, the root cluster is stochastically increasing in its size.

These results generalize a construction of Luczak and Winkler which applies to uniformly sampled subtrees with $n$ vertices of the infinite complete $d$-ary trees.
Our proofs are elementary and we tried to make them as self-contained as possible.
\end{abstract}

\vspace{2.5em}
\begin{figure}[h!]
	\centering
	\includegraphics[scale=.75, page=4]{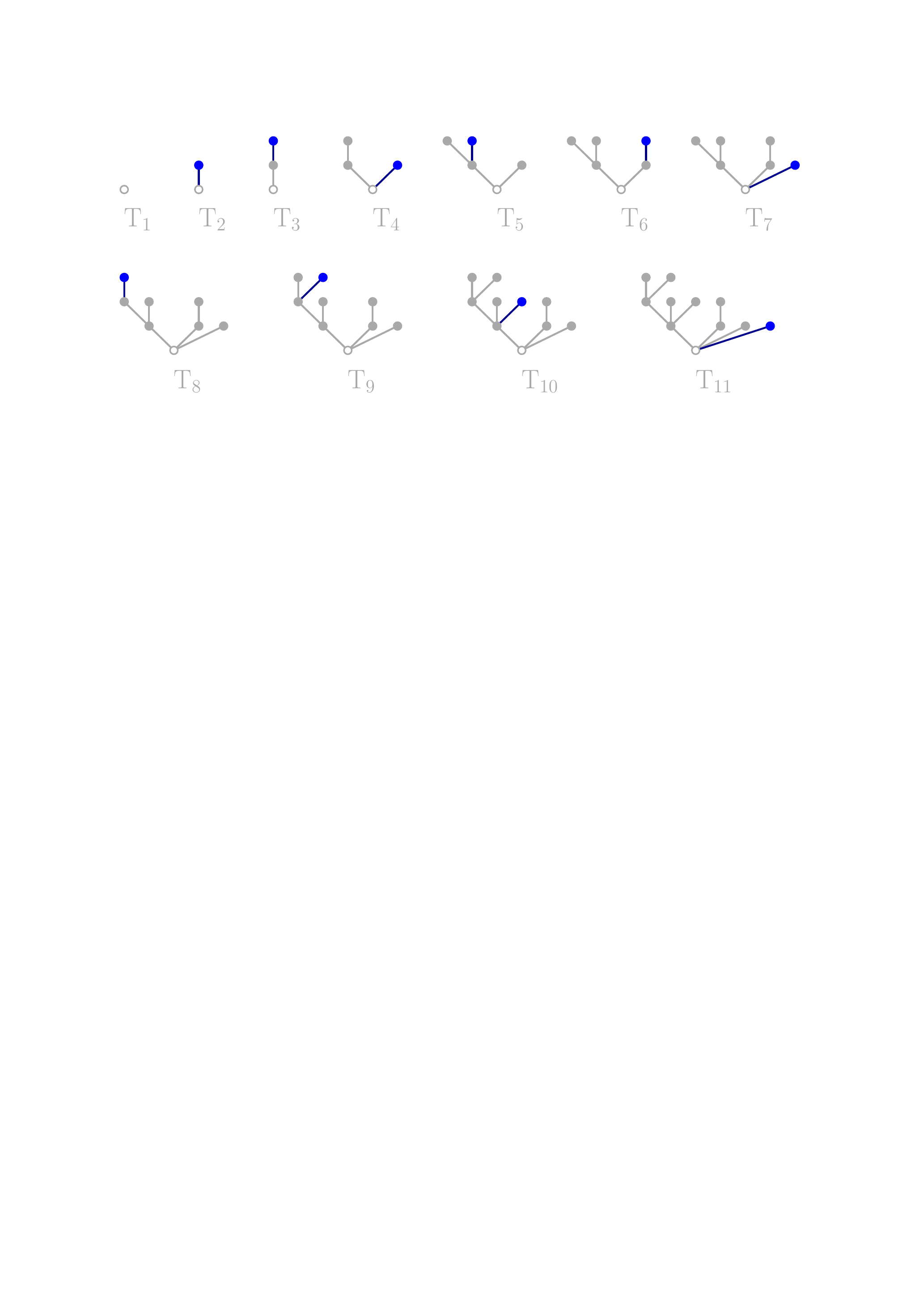}
	\caption{An increasing sequence of rooted plane trees as obtained in Theorem~\ref{thm:main-thm-BGW}.
	The root is at the bottom and the children of a vertex are ordered left-to-right.
At each step, a new \textit{right-leaning} leaf is added.}
	\label{fig:cover-1}
\end{figure}

\vfill\pagebreak
{% 
	\hypersetup{linktocpage}
	\tableofcontents
}
\vfill\pagebreak

\section{Introduction}
\label{sec:intro}
\begin{figure}
	\centering
	\includegraphics[scale=.75, page=7]{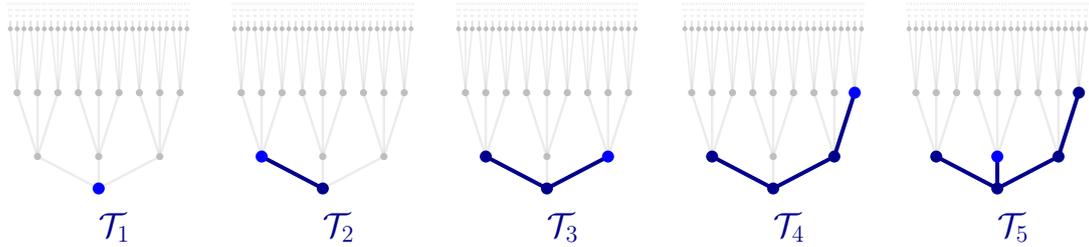}
	\caption{An increasing sequence of rooted subtrees of the infinite complete $d$-ary tree, as in Luczak and Winkler's coupling, here with $d=3$.}
	\label{fig:Luczak-Winkler}
\end{figure}%
Twenty years ago, Luczak and Winkler \cite{LuczakWinkler04}  showed that for every $d\geq 2$, the uniform distributions on rooted subtrees with $n$ vertices, $n\geq1$, of the infinite complete $d$-ary tree $\Ud d$ admit a coupling%
	\footnote{We recall that a \textit{coupling} of a collection of probability distributions $(\mathcal L_i)_i$ is a collection of random variables $(X_i)_i$ on a probability space $(\Omega,\Fcal,\P)$ such that for every $i$, the random variable $X_i$ has law $\mathcal L_i$.}
as an increasing process $(\dsubT_n)_{n\geq 1}$, in the sense that for this process we have the inclusions 
\begin{align*}
\dsubT_1\subset\dsubT_2\subset\dsubT_3\subset\dots\subset\Ud d.
\end{align*}

This says in a precise sense that these distributions of random trees form a sequence which is stochastically increasing.
This result has been applied to various problems relating to the number of spanning trees in the Erdös--Rényi random graphs \cite{LyonsPeledSchramm08}, stochastic ordering for infinite Bienaymé--Galton--Watson trees \cite{LyonsPeledSchramm08,Broman14,Broman16}, the parking problem on random trees \cite{GoldschmidtPrzykucki19}, growth procedures for random planar maps \cite{Addario-Berry14,CaraceniStauffer23}, or the edge flip chain on quadrangulations \cite{CaraceniStauffer20}.
In the case $d=2$, the construction has been made more explicit in \cite{CaraceniStauffer20}, and  a detailed analysis of its asymptotic behaviour has been performed in \cite{CaraceniCurienStephenson24}, uncovering multifractal properties of the random measure which describes where the tree $\subT_n$ grows new vertices for every $n$.

Among the most natural and well-studied models of random trees reside \textit{Bienaymé--Galton--Watson trees} conditioned on having $n$ vertices, $n\geq1$, with some fixed offspring distribution $\mu$.
It would be desirable that such sequences of random trees be stochastically increasing, too.
Unfortunately that is not the case in general, as was shown by Janson \cite{Janson06}. 
Still, some partial results in this direction can be deduced from Luczak and Winkler's couplings, as we shall recall.

In this paper we give a handy sufficient condition relying on the notion of \textit{log-concavity}, for the existence of increasing couplings for conditioned Bienaymé--Galton--Watson trees.
We then give an application to increasing couplings in a natural inhomogeneous model of random subtrees.

\subsection{Context}
\label{sec:results-LW}

All our trees will be represented as subsets of the Ulam--Harris tree $\U$,
\begin{align*}
\U=\bigcup_{h\geq0}\{1,2,\dots\}^h.
\end{align*}
The empty word is denoted by $\emptyset$, which is the \textit{root vertex} of a natural \textit{rooted tree} structure underlying the set $\U$: we declare that a \textit{vertex} $\u\in\U$ is an ancestor of $\v\in\U$ if $\u$ is a prefix of $\v$.
Related genealogical notions such as \textit{parents}, \textit{siblings}, \textit{children} follow.
For $\u=(u_1,\dots,u_h)\in \U$, the integer $h$ is called the \textit{height} of $\u$, and its \textit{ancestral line} is the sequence of vertices $(\u_0,\u_1,\dots,\u_h)$, where $\u_0=\emptyset$ and $\u_\ell=(u_1,\dots,u_\ell)$, $1\leq \ell\leq h$.
For $\u,\v\in\U$, we denote by $\u\v$ the \textit{concatenation} of the words $\u$ and $\v$,
and we write $\u i$ instead of $\u(i)$ when $\v$ is the singleton $(i)$.
In particular, a \textit{child} of $\u\in\U$ is a vertex $\u i$ for $i\in\{1,2,\dots\}$.

\mypar{Plane trees and BGW measures}
In Neveu's formalism \cite{Neveu86}, a \textit{plane tree} is a non-empty finite subset $\pt\subset\U$ which is closed under ``taking parents'' and ``taking siblings to the left'', or in other words which is such that:
\begin{align*}
\forall\u\in\U,\,\forall i\in\{1,2,\dots\},	\qquad
\u i\in\pt
\quad\implies\quad
\begin{cases}
\u\in\pt,\\
\u j\in \pt	& 1\leq j\leq i.
\end{cases}
\end{align*}
The set of plane trees is denoted by $\trees$ and we let $\trees_n$ be the subset of plane trees with $n$ vertices, $n\geq1$.
For $\pt\in\trees$ and $\u\in \pt$, we let $k_\u(\pt)$ be the number of children of $\u$ which are in $\pt$.
A \textit{leaf} of $\pt$ is a vertex with no children in $\pt$.

Given some probability distribution  $\mu$ on%
	\footnote{
		In this work we let $\Z_+=\{0,1,2,\dots\}$ and $\N=\{1,2,3,\dots\}$.
	}
$\Z_+=\{0,1,2,\dots\}$, with $0<\mu(0)<1$, the corresponding Bienaymé--Galton--Watson measure, or the $\mu$-BGW measure for short, is defined by:
\begin{align}\label{eq:def-BGW}
\forall \pt\in \trees,\quad
\BGW{\mu}(\pt)=\prod_{\u\in \pt}\mu\bigl(k_\u(\pt)\bigr).
\end{align}
This is a probability measure on $\trees$ when $\mu$ has mean at most one, and a sub-probability measure otherwise.
For $n\geq1$, as long as $\trees_n$ gets non-zero weight under the $\mu$-BGW measure, we can condition on the event $\{\#T=n\}$ to define the \textit{probability} distribution $\BGWcond{\mu}{n}$ on $\trees_n$.

\mypar{Rooted subtrees}
More generally, a \textit{rooted subtree} of $\U$ is a non-empty finite subset $\t$ of $\U$ which is closed under ``taking parents'' only, that is:
\begin{align*}
\forall\u\in\U,\,\forall i\in\{1,2,\dots\},	\qquad
\u i\in\t
\qquad\implies\qquad
\u\in\t.
\end{align*}
The set of rooted subtrees of $\U$ is denoted by $\subtrees$ and we let $\subtrees_n$ be the subset of rooted subtrees with $n$ vertices for $n\geq1$.

\mypar{Luczak and Winkler's couplings}
For every $d\geq2$, the infinite complete $d$-ary tree $\Ud d$ and its rooted subtrees can be viewed as subsets of $\U$:
\begin{align*}
\Ud d=\bigcup_{h\geq0}\{1,2,\dots,d\}^h,
\qquad\text{and}\qquad
\dsubtrees_n=\{\t\in\subtrees_n\colon \t\subset \Ud d\},\quad n\geq1.
\end{align*}
The main result in \cite{LuczakWinkler04}, their Theorem~4.1, states that for $d\geq 2$, the distributions%
	\footnote{
		Given a finite set $S$, we denote by $\Unif(S)$ the uniform probability distribution on it.
	}
$\Unif(\dsubtrees_n)$, $n\geq1$, admit a coupling $(\dsubT_n)_{n\geq1}$ which is an increasing process with respect to the inclusion order $\subseteq$.
This also gives the existence of increasing couplings for some related models which we shall now review.
A summary is presented in Table~\ref{table:Luczak--Winkler}.

\begin{figure}
	\centering
	\includegraphics[scale=.75, page=2]{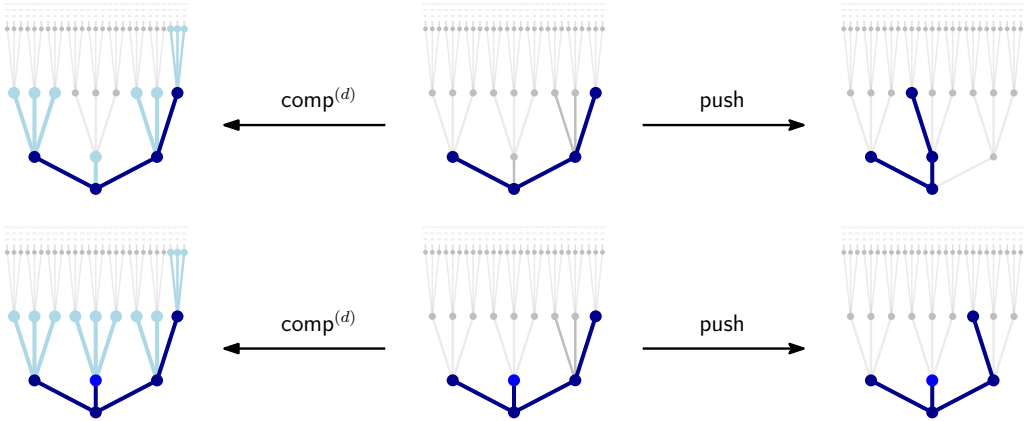}
	\caption{An illustration of the action of the mappings $\push$ and $\comp$, here with $d=3$.
	Notice that $\comp$ preserves the order $\subseteq$ while $\push$ does not.}
	\label{fig:Luczak-Winkler-2}
\end{figure}

A plane tree $\pt$ is said to be a \textit{complete $d$-ary tree} if every $\u\in\pt$ has either $d$ or $0$ children in $\pt$.
The set of complete $d$-ary trees with $nd+1$ vertices, $n\geq0$, is denoted by $\dcomp_{nd+1}$.
The Luczak--Winkler couplings imply that the distributions $\smash{\Unif(\dcomp_{nd+1})}$, $n\geq0$, also admit a coupling as an increasing process, with respect to the inclusion order $\subseteq$.
Indeed, given $\t$ a rooted subtree with $n$ vertices, $n\geq1$, of $\Ud d$, $d\geq2$, we may define its \textit{completion} $\pt=\comp(\t)$ by adding all the vertices $\u i$ for $\u\in\t$ and $i\in\{1,\dots,d\}$.
This yields a complete $d$-ary tree with $nd+1$ vertices, and $\t$ is recovered by removing all the leaves of $\pt$.
It is easily seen that for $\mu=(1-\frac1d)\boldsymbol{\delta}_0+\frac1d\boldsymbol{\delta}_d$ we have $\BGWcond{\mu}{nd+1}=\smash{\Unif(\dcomp_{nd+1})}$ for all $n\geq0$.

Rooted subtrees of $\U$ naturally come with an associated plane tree.%
	\footnote{
		Rooted plane trees are often defined as unlabeled vertex-rooted trees, in the sense of graph theory, with an ordering of the children of each vertex.
		The plane tree associated to an arbitrary subtree of $\U$ is then the one where we ``forget'' the names of the vertices, only remembering the relative ordering of the children of each vertex.
	}
In Neveu's formalism, this plane tree can be obtained by ``pushing to the left'' as much as possible the rooted subtree of $\U$.
This induces a mapping $\push\colon\subtrees\rightarrow\trees$, which we describe in detail in Section~\ref{sec:application-random-subtrees}.
Let $d\geq2$.
It is easily seen that the image of $\Unif(\dsubtrees_n)$, $n\geq1$, under this mapping is $\BGWcond{\mu}{n}$ with $\mu=\Binom(d,1/d)$, see for instance Proposition~\ref{prop:comparison-SG-and-ST}.
Hence the Luczak--Winkler coupling is mapped to a coupling of these distributions.
However, the image coupling is no longer increasing with respect to $\subseteq$, but only with respect to a weaker order $\preceq$ defined by $\pt\preceq \pt'\iff\exists \t\subset\t',\pt=\push(\t),\pt'=\push(\t')$.

It was observed by Lyons, Peled and Schramm in \cite{LyonsPeledSchramm08} that by taking $d\rightarrow\infty$ in the preceding coupling of the distributions $(\BGWcond{\mu}{n})_{n\geq1}$ with $\mu=\Binom(d,1/d)$, we get an $\preceq$-increasing%
	\footnote{
		The authors do not mention the order $\preceq$ or the operation $\push$, simply because they do not use Neveu's formalism.
	}
coupling in the $\mu=\Poisson(1)$ case.

In the case $d=2$, the above coupling for the distributions $(\Unif(\dcomp_{nd+1}))_{n\geq0}$ is an $\subseteq$-increasing coupling for uniformly sampled complete binary trees with $n$ interior vertices respectively.
By applying the so-called \textit{rotation correspondence} between complete binary trees and plane trees, we obtain a coupling for uniformly random plane trees with $n$ vertices, $n\geq1$, see also \cite[p.~427]{LuczakWinkler04}.
This coupling is easily seen to be increasing with respect to $\subseteq$.
Uniformly random plane trees with $n$ vertices are well-known to be distributed as $\BGWcond{\mu}{n}$ with $\mu=\Geom(1/2)$.

We recall that for every offspring distribution $\mu$, the laws $\BGWcond{\mu}{n}$, $n\geq1$, are unchanged by exponential tilting%
	\footnote{
		We recall that an \textit{exponential tilt} of a probability distribution $\nu$ on $\Z_+$ is a probability distribution $\widetilde\nu$ on $\Z_+$ of the form $\widetilde\nu(k)={x^k\nu(k)}/({\sum_j x^j\nu(j)})$, $k\geq0$ for some $x>0$ such that $\sum_j x^j\,\nu(j)<\infty$.
	}
of $\mu$. 
Hence the above couplings cover the cases $\Binom(d,p),\Poisson(\lambda),\Geom(p)$ for any $d\geq2$, $\lambda>0$, $p\in(0,1)$ instead of the specific values we gave.

\begin{table}
	\centering
	\begin{tabular}{l l l c }
		\hline
		\textsc{Parameters} & \textsc{Model}& \textsc{Offspring distr.}	& \textsc{Order rel.} \\
		\hline
		$d\geq2$ &  $(\Unif(\dsubtrees_n))_{n\geq1}$ &  N/A & $\subseteq$ \\ 
		$d\geq2$ &  $(\Unif(\dcomp_{nd+1}))_{n\geq0}$ &  N/A &	$\subseteq$\\
		$d\geq2$ & 	$(\BGWcond{\mu}{nd+1})_{n\geq0}$ &  $\mu=(1-\frac1d)\boldsymbol{\delta}_0+\frac1d\boldsymbol{\delta}_d$	& $\subseteq$\\
		$d\geq2$, $p\in(0,1)$ &	$(\BGWcond{\mu}{n})_{n\geq1}$ & $\mu=\Binom(d,p)$ & $\preceq$ \\
		$\lambda>0$ & $(\BGWcond{\mu}{n})_{n\geq1}$ & $\mu=\Poisson(\lambda)$ & $\preceq$ \\
		$p\in(0,1)$	& $(\BGWcond{\mu}{n})_{n\geq1}$ & $\mu=\Geom(p)$		& $\subseteq$\\
		\hline
	\end{tabular}
	\caption{Summary of the models of random trees for which the results of \cite{LuczakWinkler04} allow to build increasing couplings.
		The right-hand column specifies with respect to which order relation the resulting couplings are increasing.
		The second and third line describe the same model.}
	\label{table:Luczak--Winkler}
\end{table}

\mypar{Janson's negative result}
In \cite{Janson06}, Janson investigated several conditions related to the existence of increasing couplings for conditioned BGW trees.
In particular, it is justified in Section~3 therein that if for $\epsilon\in (0,1)$ one considers the offspring distribution $\mu$ supported on $\{0,1,2\}$ such that
\begin{align}\label{eq:Janson-counter-ex}
\mu(0)=\frac{1-\epsilon}{2},\qquad \mu(1)=\epsilon,\qquad \mu(2)=\frac{1-\epsilon}{2},
\end{align}
then when $\epsilon<1/3$, the expected number of children of the root vertex is strictly larger under $\BGWcond{\mu}{3}$ than under $\BGWcond{\mu}{4}$.
Hence, the distributions $(\BGWcond{\mu}{n})_{n\geq1}$ cannot be coupled increasingly with respect to $\subseteq$ nor with respect to $\preceq$.

\subsection{Main results}

Central to our results will be the notion of log-concavity, which
has a rich history in combinatorics, algebra and geometry, see the classical surveys \cite{Stanley89,Brenti89,Brenti94}, as well as the more recent ones \cite{Branden15,Huh18}.
A non-negative sequence $(x_i)_{i\geq0}$ is said to be \textit{log-concave} if the following conditions are satisfied.
	\begin{enumerate}
		\item The inequality $x_i^2\geq x_{i-1}x_{i+1}$ holds for all $i\geq1$.
		\item It has \textit{no internal zeros}.
		That is if $x_ix_j\neq0$, $i<j$, then $x_ix_{i+1}\cdots x_j\neq0$.
	\end{enumerate}
Some authors do not enforce the second condition when defining log-concavity, but we insist that it will be needed in our case.
Convolutions of log-concave sequences are log-concave, as well as pointwise limits of log-concave sequences.
Basic examples comprise binomial coefficients $(\binom d k)_{k\geq 0}$ with $d\geq2$, geometric sequences $(q^k)_{k\geq0}$ with $q>0$, or the inverse factorials $(1/k!)_{k\geq 0}$.
As another important example, if a polynomial with non-negative coefficients has only real roots then said coefficients are log-concave.
This can be derived using the preservation under convolution, or using Newton's inequalities, which are recalled in Lemma~\ref{lem:Newton-inequalities}.

\mypar{Main result on conditioned BGW trees}
Note that if an offspring distribution $\mu$ has no internal zeros, and $0<\mu(0)<1$, then $\mu(1)>0$.
Hence for $n\geq1$, the tree with $n$ vertices arranged in a line gets non-zero $\mu$-BGW weight and so does $\trees_n$.
The distributions $(\BGWcond{\mu}{n})_{n\geq 1}$ are therefore well-defined.
Our first main result goes as follows.

\begin{figure}
	\centering
	\includegraphics[scale=.75, page=8]{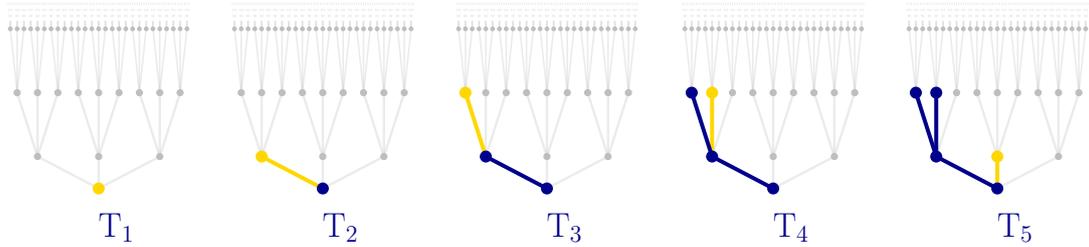}
	\caption{An example of an increasing sequence of rooted plane trees, as in Theorem~\ref{thm:main-thm-BGW}.
		Observe that the new leaves at each step (in yellow) must be \textit{right-leaning}.
	For convenience, we only represented a portion of the Ulam--Harris tree, namely the bottom part of its subset $\Ud 3$.}
	\label{fig:growth}
\end{figure}

\begin{thm}\label{thm:main-thm-BGW}
	Let $\mu$ be a probability measure on $\Z_+$ with $0<\mu(0)<1$.
	If the sequence $(\mu(k))_{k\geq 0}$ is log-concave, then the distributions $(\BGWcond{\mu}{n})_{n\geq 1}$ can be coupled as a Markov process $(\T_n)_{n\geq 1}$ such that $\T_1\subset\T_{2}\subset\T_{3}\subset\dots$.
\end{thm}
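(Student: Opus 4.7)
The plan is to produce, for each $n\geq 1$, a Markov kernel $K_n\colon\trees_n\to\trees_{n+1}$ satisfying $K_n(\tau,\tau')>0\Rightarrow\tau\subset\tau'$ and $\pi_nK_n=\pi_{n+1}$, where $\pi_n:=\BGWcond{\mu}{n}$. Given these one-step kernels, the Markov process $(\T_n)_{n\geq1}$ is obtained by composition starting from $\T_1=\{\emptyset\}$ (the unique element of $\trees_1$). A preliminary observation is that the inclusion $\tau\subset\tau'$ of plane trees with $|\tau'|=|\tau|+1$ automatically forces the added vertex $\u:=\tau'\setminus\tau$ to be a right-leaning leaf of $\tau'$: writing $\u=\v\cdot k$, the vertex $\u$ has no children in $\tau'$ (else its children would also lie in $\tau'\setminus\tau$), and if some $\v\cdot(k+1)$ belonged to $\tau'$ then it would also belong to $\tau$, forcing $\u=\v\cdot k$ into $\tau$ by closure under ``siblings to the left'', a contradiction. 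So the right-leaning property in the statement is built in.

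By Strassen's theorem, the existence of each $K_n$ is equivalent to the stochastic domination $\pi_n\leq_{\mathrm{st}}\pi_{n+1}$ for the partial order $\subset$ on $\trees$: for every $A\subseteq\trees_n$, writing $A^{\uparrow}:=\{\tau'\in\trees_{n+1}:\exists\,\tau\in A,\ \tau\subset\tau'\}$, we need the Hall-type inequality $\pi_n(A)\leq\pi_{n+1}(A^{\uparrow})$. Rather than verifying this abstractly, a more informative route is to build $K_n$ explicitly. The transition from $\tau\in\trees_n$ should select a vertex $\v\in\tau$ with some probability $q_\tau(\v)$ and adjoin the new leaf $\v\cdot(k_\v(\tau)+1)$. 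Expanding $\pi_nK_n=\pi_{n+1}$ using the BGW product formula, together with the partition function $Z_n:=\BGW{\mu}(\trees_n)$ that satisfies $Z_n=\sum_{k\geq0}\mu(k)\sum_{n_1+\dots+n_k=n-1}Z_{n_1}\cdots Z_{n_k}$, reduces the problem to an algebraic identity that the $q_\tau(\v)$ must satisfy. A natural ansatz is to let $q_\tau(\v)$ be proportional to the ratio $\mu(k_\v(\tau)+1)/\mu(k_\v(\tau))$ times a combinatorial correction built recursively from partition functions of subtrees attached along the ancestral line of $\v$; the log-concavity inequalities $\mu(k-1)\mu(k+1)\leq\mu(k)^2$ should then ensure non-negativity of the correction.

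The main obstacle, I anticipate, is pinning down the correct recursive formula for the combinatorial correction so that both the marginal identity and positivity hold simultaneously and uniformly over all $\tau$. A useful auxiliary tool should be a companion log-concavity property for the sequence $(Z_n)_{n\geq1}$, inherited from that of $\mu$ via closure of log-concavity under convolution and pointwise limits. This log-concavity of $Z_n$ would then feed into an induction on the tree structure---for instance by conditioning on the subtrees hanging from the root of $\tau$ and comparing the resulting size-$n$ and size-$(n+1)$ generating functions---to verify $\pi_nK_n=\pi_{n+1}$ vertex by vertex along the spine down which the new leaf is added.
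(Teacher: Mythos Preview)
Your sketch identifies the right recursive structure (condition on the subtrees at the root) and the right-leaning observation is correct, but it contains a sign error and leaves the core step as an admitted obstacle rather than a proof.

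The sign error: the partition functions $Z_n=\BGW{\mu}(\trees_n)$ are log-\emph{convex}, not log-concave. Your proposed justification (``closure of log-concavity under convolution and pointwise limits'') does not apply, since $(Z_n)_{n\geq1}$ arises from a nonlinear fixed-point recursion, not from convolving $\mu$ with itself. Log-convexity of $(Z_n)$---that is, $Z_{n+1}/Z_n\leq Z_{n+2}/Z_{n+1}$---is in fact one of the inequalities the paper needs and proves; it is the outermost consequence of Proposition~\ref{prop:checking-the-inequalities}. Had $(Z_n)$ been log-concave, the relevant inequalities would point the wrong way.

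The unresolved core: you concede that ``pinning down the correct recursive formula for the combinatorial correction'' in your ansatz for $q_\tau(\v)$ is the main obstacle, and your proposal supplies no mechanism for resolving it. The paper sidesteps any explicit formula for $q_\tau(\v)$. Instead it associates to $\T_n$ the composition $c(\T_n)=\letter{n_1}\cdots\letter{n_i}$ of subtree sizes at the root, and reduces the tree coupling to coupling $c(\T_n)$ with $c(\T_{n+1})$ so that either one part is incremented by $1$ or a new part $\letter1$ is appended on the right (Proposition~\ref{prop:growing-comp-is-sufficient}). Via Proposition~\ref{prop:role-of-shifted-a} this further reduces, inductively over the shifted weights $\shift{\ell}\w$, to coupling two integer-valued variables $X_n,X_{n+1}$ with $X_{n+1}\in\{X_n,X_n+1\}$, for which a clean CDF criterion applies (Lemma~\ref{lem:key-lemma}). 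The inequalities feeding that criterion are not merely log-convexity of $(Z_n)$ but a full chain involving all the shifts $\shift{\ell}\w$ (Proposition~\ref{prop:checking-the-inequalities}), established via a TP2 property of the forest partition functions $(\PFforests{\w}{n,k})_{n,k}$ (Corollary~\ref{cor:TP2-forests-partition-function}), itself inherited from the Toeplitz-TP2 property of the log-concave sequence $\w$. Once the compositions are coupled, independent copies of the previously constructed smaller couplings fill in the subtrees, and the kernel $K_n$ is read off from the resulting joint law.
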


Let us make a few comments.
First, notice that in the sequence $(\T_n)_{n\geq 1}$, the new vertex $\u\in\T_{n+1} \setminus\T_n$ which is added at each step must be a \textit{leaf}, which must be \textit{right-leaning}, that is of the form $\u=\v i$ with $i=k_\v(\T_{n})+1$, for some $\v\in\T_{n}$.
This is illustrated in Figure~\ref{fig:growth}.
Since a plane tree is completely determined by its genealogy together with the relative order of the children of each vertex, it is sufficient on a drawing to represent this data, without representing the ``ambient'' Ulam--Harris tree $\U$.
This is what we did to illustrate Theorem~\ref{thm:main-thm-BGW} in Figure~\ref{fig:cover-1}.

The fact that in our coupling the new leaves are \textit{right-leaning} allows a lot of flexibility to make new couplings out of it.
For instance, using the symmetries of the BGW distributions, we can perform random independent permutations of the subtrees of descendants above each node, so as to obtain another coupling of the same conditioned BGW distributions, in which the new leaves are in a controlled position.
This strategy underpins the proof of Theorem~\ref{thm:growing-subtrees} below.

Theorem~\ref{thm:main-thm-BGW} covers notably the cases of \textit{binomial}, \textit{Poisson}, and \textit{geometric} offspring distributions listed in Table~\ref{table:Luczak--Winkler}.
Indeed the binomial coefficients, the inverse factorials, and the geometric sequences all form log-concave sequences.
But since log-concavity is preserved under convolution, Theorem~\ref{thm:main-thm-BGW} also applies to arbitrary convolutions of these distributions, with different parameters.
Note that Theorem~\ref{thm:main-thm-BGW} is already a substantial improvement in the binomial and Poisson cases, since the inclusion order $\subseteq$ is stronger than the order $\preceq$.

Let us mention another example, which is not\footnote{This is easily seen at the level of generating functions, say by looking at their complex roots.} a convolution of binomial, Poisson, or geometric distributions.
The measure $\mu=\Unif(\{0,1,2,\dots,d\})$, $d\geq2$, satisfies the log-concavity assumption, so that Theorem~\ref{thm:main-thm-BGW} applies to the corresponding sequence of conditioned BGW trees, which are uniformly distributed in the set of plane trees with $n$ vertices having at most $d$ children per node.

Lastly, let us insist on the fact that in our definition of log-concavity the condition of having \textit{no internal zeros} is essential.
In fact, the conclusion of Theorem~\ref{thm:main-thm-BGW} never holds when the sequence $(\mu(k))_{k\geq 0}$ possesses an internal zero.%	
	\footnote{
		Indeed, let $(\T_n)_{n\geq1}$ be as in the conclusion of Theorem~\ref{thm:main-thm-BGW} and let $k\in\supp(\mu)$.
		Then $\T_{k+1}$ is with non-zero probability the ``$k$-star'' $\{\emptyset\}\cup\{(1),(2),\dots,(k)\}$.
		This forces $\T_{j+1}$ to be a $j$-star for $0\leq j\leq k$, so that $k_\emptyset(\T_{j+1})=j$ with non-zero probability and $j\in\supp(\mu)$.
	}

In the case where the support of $\mu$ is included in $\{0,d,2d,\dots\}$ for some $d\geq2$, it is trivial that the conclusions of Theorem~\ref{thm:main-thm-BGW} do not hold since the conditioned $\mu$-BGW measures are not well-defined when $n$ has residue different from $1$ modulo $d$, see below.
Still, we can adapt our approach and the statement of Theorem~\ref{thm:main-thm-BGW} in this setting, which leads us to our next result.

\mypar{Extension to arithmetic offspring distributions}
Let $\mu$ be a probability distribution on $\Z_+$ whose support is included in the arithmetic progression $d\Z_+=\{0,d,2d,\dots\}$ for some $d\geq2$, and with $0<\mu(0)<1$.
Then the $\mu$-BGW measure is still well-defined, but the conditional distribution $\BGWcond{\mu}{n}$ may only be possibly defined when $n$ has residue $1$ modulo $d$.
Indeed if for $\T\in\trees_n$, $n\geq1$, all vertices in $\T$ have a number of children which is a multiple of $d$, then the $n-1$ non-root vertices can be split in groups of $d$ vertices.

If the sequence $(\mu(kd))_{k\geq 0}$ has no internal zeros, then $\trees_{nd+1}$ does indeed get non-zero weight for all $n\geq 0$, so that the conditional distribution $(\BGWcond{\mu}{nd+1})_{n\geq0}$ are well-defined.
Our generalization of Theorem~\ref{thm:main-thm-BGW} in the arithmetic case goes as follows.

\begin{thm}\label{thm:main-thm-BGW-arithmetic}
	Let $\mu$ be a probability measure on $\Z_+$ with $0<\mu(0)<1$ and whose support is included in $d\Z_+$ for some $d\geq1$.
	If the sequence $(\mu(kd))_{k\geq 0}$ is log-concave, then the distributions $(\BGWcond{\mu}{nd+1})_{n\geq 1}$ can be coupled as a Markov process $(\T_{nd+1})_{n\geq 0}$ such that $\T_1\subset\T_{d+1}\subset\T_{2d+1}\subset\dots$.
\end{thm}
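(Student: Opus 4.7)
The strategy is to reduce Theorem~\ref{thm:main-thm-BGW-arithmetic} to Theorem~\ref{thm:main-thm-BGW} by introducing the \emph{contracted} offspring distribution $\nu$ on $\Z_+$ defined by $\nu(k):=\mu(kd)$ for all $k\geq 0$. Since $\mu$ is supported on $d\Z_+$, this is a genuine probability distribution with $\nu(0)=\mu(0)\in(0,1)$, and the hypothesis that $(\mu(kd))_{k\geq 0}$ is log-concave is precisely log-concavity of $\nu$. Applying Theorem~\ref{thm:main-thm-BGW} to $\nu$ yields a Markov coupling $(\tau_m)_{m\geq 1}$ of $(\BGWcond{\nu}{m})_{m\geq 1}$ as an increasing process in which the vertex added at each step is a right-leaning leaf.

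Next I would transform $\tau_{n+1}$ into $\T_{nd+1}\in\trees^{(d\Z_+)}_{nd+1}$ via a ``blow-up'' map: each vertex $\u\in\tau_{n+1}$ with $k_\u(\tau_{n+1})=k$ children becomes a vertex of $\T_{nd+1}$ with $kd$ children, of which $k$ are ``real'' (inheriting the subtrees of $\tau_{n+1}$) and $k(d-1)$ are freshly introduced ``phantom'' leaves. A short computation shows that $\BGW{\mu}(\text{blow-up of }\tau)=\mu(0)^{(m-1)(d-1)}\BGW{\nu}(\tau)$ for $\tau\in\trees_m$, so blow-ups transport BGW weights correctly up to a global factor depending only on $m$. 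Moreover, since the new vertex added by the coupling $(\tau_m)$ is a right-leaning leaf in $\tau_{m+1}$, its blow-up in $\T$ contributes a ``$d$-bunch'' of new right-leaning leaves at a single vertex, which respects the $d\Z_+$-degree constraint.

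The main obstacle is the choice of positions of the real children among the $kd$ child slots at each internal vertex, which must achieve three goals simultaneously: (i) surjection onto all of $\trees^{(d\Z_+)}_{nd+1}$, (ii) the correct marginal $\BGWcond{\mu}{nd+1}$, and (iii) order-preservation $\subseteq$ as $n$ increases. A deterministic placement yields an order-preserving injection but misses most trees; uniformly random placements are surjective but yield a marginal distorted by the combinatorial factor $\prod_{\u\text{ internal in }\pt}\binom{k_\u-i_\u}{k_\u/d-i_\u}/\binom{k_\u}{k_\u/d}$, where $i_\u$ denotes the number of internal children of $\u$ in the blown-up tree $\pt$. A carefully weighted randomization is therefore required to cancel this factor, and the underlying randomness must be shared across the different $n$'s---for instance via random variables attached to each vertex of $\U$---to preserve the order $\subseteq$. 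A plausible alternative route is to adapt the proof of Theorem~\ref{thm:main-thm-BGW} directly, constructing a Markov chain that appends a $d$-bunch of right-leaning leaves at a single vertex at each step, with transition probabilities driven by partition-function ratios involving the factors $\nu(k+1)/\nu(k)$; the log-concavity of $\nu$ then plays the same role in guaranteeing non-negativity of these transition probabilities as the log-concavity of $\mu$ did in the original proof.
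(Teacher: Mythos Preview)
Your blow-up approach has a fundamental gap that no amount of randomized placement can repair: the map is not surjective onto $\trees^{[d]}_{nd+1}$, so goal~(i) in your list is unattainable within this framework. Concretely, take $d=2$ and the tree $\pt\in\trees^{[2]}_{7}$ whose root has two children, each of which in turn has two leaf children. In any blow-up of a tree $\tau\in\trees_4$, the root of $\tau$ would have $k$ children with $2k=2$, hence $k=1$; one of the two root-children in $\pt$ would then have to be a phantom leaf. But both root-children of $\pt$ are internal, so $\pt$ is not a blow-up of anything. The obstruction is structural: your phantom vertices are always leaves, so at any vertex of $\pt$ with $kd$ children you can accommodate at most $k$ internal children; but trees in $\trees^{[d]}$ face no such constraint. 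Your discussion of the ``combinatorial factor'' $\prod_\u\binom{k_\u-i_\u}{k_\u/d-i_\u}/\binom{k_\u}{k_\u/d}$ implicitly assumes $i_\u\leq k_\u/d$, which need not hold.

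Your ``plausible alternative route''---adapting the proof of Theorem~\ref{thm:main-thm-BGW} directly so that a right-leaning bouquet of $d$ leaves is added at each step---is exactly what the paper does, but the execution is not a routine modification. The paper reworks the entire random-composition machinery with a $(d,s)$-arithmetic condition on compositions (parts $\equiv 1\bmod d$, number of parts $\equiv s\bmod d$), tracks how the residue class $s$ changes under $\projParts_{>1}$, and proves a considerably more delicate chain of partition-function inequalities (Proposition~4.14) indexed by \emph{two} parameters $(q,s)$ rather than one. In particular, the recursion for the forest partition functions $F^s_{n,k}$ (Corollary~4.12) acquires a modified form on the boundary column $k=0$, and verifying the TP2 property there requires an extra log-concavity argument beyond the Toeplitz case. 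None of this is visible from your sketch; the sentence ``the log-concavity of $\nu$ then plays the same role'' glosses over precisely the part of the argument that is new.
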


\begin{figure}
	\centering
	\includegraphics[scale=.75, page=9]{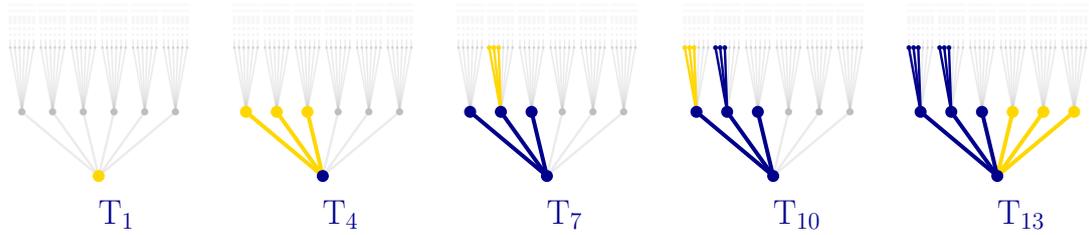}
	\caption{An example of an increasing sequence of rooted plane trees as obtained in Theorem~\ref{thm:main-thm-BGW-arithmetic} with $d=3$.
		Observe that at each step, a new \textit{right-leaning bouquet of $d$ leaves} is added (in yellow).
		For convenience, we only represented a portion of the Ulam--Harris tree, namely the bottom part of its subset $\Ud 6$.}
	\label{fig:growth-arithmetic}
\end{figure}

For a rooted plane tree $\pt$ and an integer $d\geq1$, we call \textit{right-leaning bouquet of $d$ leaves} in $\pt$ a set of $d$ leaves which are siblings and which additionally are the $d$ rightmost vertices among their siblings.
More formally, a right-leaning bouquet of $d$ leaves in $\pt$ is a family of leaves of $T$ which consists in the vertices $\u({k_\u(T)-(d-1)})$ and $\u({k_\u(T)-(d-2)})$ up to $\u({k_\u(T)})$, for some $\u\in T$.
Then, as for Theorem~\ref{thm:main-thm-BGW}, a sequence of plane trees which is increasing with respect to $\subseteq$ must grow ``on the right'', so that in the sequence $(\T_{nd+1})_{n\geq 1}$ of Theorem~\ref{thm:main-thm-BGW-arithmetic}, a new right-leaning bouquet of $d$ leaves is added at each step.
This is illustrated in Figure~\ref{fig:growth-arithmetic}.

Notice that the $d=1$ case of Theorem~\ref{thm:main-thm-BGW-arithmetic} corresponds precisely to Theorem~\ref{thm:main-thm-BGW}.
Theorem~\ref{thm:main-thm-BGW-arithmetic} applies notably when $\mu=\frac{d-1}{d}\delta_0+\frac{1}{d}\delta_d$ for some $d\geq 2$, in which case for all $n\geq 0$ the tree $\T_{nd+1}$ has the distribution of a uniformly sampled complete $d$-ary tree with $n$ interior vertices.
This recovers Luczak and Winkler's main result\footnote{We recall that their statement, which is about subtrees of $\Ud d$, is equivalent to a statement for complete $d$-ary trees using the bijection $\comp$ described above.} from \cite{LuczakWinkler04}, namely their Theorem 4.1.

Theorem~\ref{thm:main-thm-BGW-arithmetic} also applies for instance when $\mu=\mathrm{Unif}(\{0,d,2d,\dots,rd\})$ for some $r\geq1$.
In this case, for every $n\geq0$, the distribution $\BGWcond{\mu}{nd+1}$ is uniform in the set of plane trees with $nd+1$ vertices satisfying that the number of children of each node belongs to the arithmetic progression $\{0,d,2d,\dots,rd\}$.

\mypar{Application to a model of random subtrees}

As a testament to the flexibility we have to generate new couplings from those of Theorems~\ref{thm:main-thm-BGW} and~\ref{thm:main-thm-BGW-arithmetic}, we shall construct increasing couplings for a natural \textit{inhomogeneous} model of random subtrees of the Ulam--Harris tree $\U$.
For background on models of random subtrees of a graph, we refer the reader to the recent survey of Fredes and Marckert \cite{FredesMarckert23} and references therein.

Let $\wtheta=(\theta_1,\theta_2,\dots)$ be a non-negative sequence such that $0<\sum_i \theta_i<\infty$.
For a non-root%
	\footnote{We do not assign a type to the root vertex $\emptyset$.}
vertex $\u\in\U$, we say that it is a vertex of type $i\in\{1,2,\dots\}$ if it is the $i$-th child of its parent, that is $\u=\v i$ for some $\v\in\U$.
For $i\in\{1,2,\dots\}$ and $\t\in\subtrees$, we let $V_i(\t)$ denote the set of $\u\in\t$ of type $i$, and we write $N_i(\t)=\#\, V_i(\t)$.
For every $n\geq1$, we define a measure $\Sub{\wtheta}{n}$ on $\subtrees_n$ as follows:
\begin{align}\label{eq:def-subtree-model}
\forall \t\in\subtrees_n,\quad
\Sub{\wtheta}{n}(\t)=\frac{\prod_{i\geq 1}\theta_i^{N_i(\t)}}{\PFsubtrees{\wtheta}{n}},
\qquad \PFsubtrees{\wtheta}{n}=\sum_{\t\in\subtrees_n}\prod_{i\geq 1}\theta_i^{N_i(\t)}.
\end{align}
Notice that for $d\geq2$, if $\wtheta$ satisfies $\theta_i=0$ for all $i>d$, then the supports of the distributions $\Sub\wtheta n$, $n\geq1$, are included in $\dsubtrees_n$ respectively, so that this yields a model of random subtrees of the complete $d$-ary tree $\Ud d$.
We will deduce from Theorem~\ref{thm:main-thm-BGW} the following statement.

\begin{thm}\label{thm:growing-subtrees}
	Let $\wtheta=(\theta_1,\theta_2,\dots)$ be a non-negative sequence such that $0<\sum_i \theta_i<\infty$.
	The distributions $\Sub{\wtheta}n$, $n\geq 1$, can be coupled as a Markov process $(\subT_n)_{n\geq 1}$ such that $\subT_1\subset\subT_2\subset\subT_3\subset\dots\subset\U$.
\end{thm}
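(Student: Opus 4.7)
The plan is to reduce Theorem~\ref{thm:growing-subtrees} to Theorem~\ref{thm:main-thm-BGW} via the push map. Set $w_k:=e_k(\wtheta)=\sum_{i_1<\cdots<i_k}\theta_{i_1}\cdots\theta_{i_k}$, the $k$-th elementary symmetric function (with $e_0:=1$). Since $\sum_i\theta_i<\infty$, the normalizing constant $Z:=\sum_{k\ge 0}w_k=\prod_i(1+\theta_i)$ is finite, so $\mu(k):=w_k/Z$ defines a probability distribution on $\Z_+$ with $0<\mu(0)=1/Z<1$. Newton's inequalities applied to the real-rooted polynomial $\prod_{i=1}^{N}(1+x\theta_i)$ (Lemma~\ref{lem:Newton-inequalities}) yield log-concavity of the coefficients $(e_k(\theta_1,\ldots,\theta_N))_{k}$, and this property passes to the pointwise limit $N\to\infty$; so $\mu$ is log-concave with no internal zeros. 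Summing the weight $\prod_i\theta_i^{N_i(\t)}$ over the $\push$-preimages in $\subtrees_n$ of a fixed $\pt\in\trees_n$---parameterized at each $\u\in\pt$ by the choice of a $k_\u(\pt)$-subset of $\N$ providing the types of the children---yields $\prod_{\u\in\pt}e_{k_\u(\pt)}(\wtheta)=Z^n\,\BGW{\mu}(\pt)$, whence $\push\pf\Sub{\wtheta}{n}=\BGWcond{\mu}{n}$. Theorem~\ref{thm:main-thm-BGW} then provides a Markov $\subseteq$-increasing coupling $(\T_n)_{n\ge1}$ of $(\BGWcond{\mu}{n})_{n\ge 1}$.

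To lift $(\T_n)$ to $(\subT_n)$, the key ingredient is a monotone Markov coupling of the single-vertex type-distributions $\tilde\pi_k(S):=\prod_{i\in S}\theta_i/e_k(\wtheta)$, $|S|=k$: namely a Markov chain $(S_k)_{k\ge0}$ on finite subsets of $\N$ with $|S_k|=k$, $S_k\subset S_{k+1}$, and $S_k\sim\tilde\pi_k$. Granting this, I sample independently at each $\u\in\U$ a copy $(S^\u_k)_{k\ge0}$ of this Markov chain and set $I^\u_\ell:=S^\u_\ell\setminus S^\u_{\ell-1}$. I then define a bijection $\phi_n\colon\T_n\to\subT_n\subset\U$ recursively by $\phi_n(\emptyset)=\emptyset$ and $\phi_n(\v\ell)=\phi_n(\v)\,I^{\phi_n(\v)}_\ell$, and I set $\subT_n:=\phi_n(\T_n)$. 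The recursions defining $\phi_n$ and $\phi_{n+1}$ coincide on $\T_n$, so $\subT_n\subset\subT_{n+1}$ automatically. Checking that $\push(\subT_n)=\T_n$ and that, conditionally on $\T_n$, the type-sets at distinct vertices are independent and distributed as the corresponding $\tilde\pi_{k_\u(\T_n)}$ gives $\subT_n\sim\Sub{\wtheta}{n}$. The Markov property of $(\subT_n)$ follows because $\T_n$ can be recovered from $\subT_n$ as $\push(\subT_n)$, and because the conditional distribution of the past orderings $(I^\u_1,\ldots,I^\u_{k})$ given the current type-set $\{I^\u_1,\ldots,I^\u_{k}\}$ depends only on that set, by Markovianity of each $(S^\u_k)_k$.

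The main obstacle is the existence of the monotone Markov coupling of $(\tilde\pi_k)_{k\ge0}$. At each $k$ this amounts to a linear-feasibility problem for the transition kernel from $\tilde\pi_k$ to $\tilde\pi_{k+1}$ supported on pairs $(S,S\cup\{j\})$, and I would expect its solvability to follow from the log-concavity of $\mu=(e_k(\wtheta)/Z)_{k}$, either by a direct LP/marriage-type argument or, in the spirit of the remark following Theorem~\ref{thm:main-thm-BGW}, by applying that theorem to an auxiliary ``star''-shaped BGW model and then exploiting the invariance of BGW under random permutations of children to extract the desired monotone family of type orderings.
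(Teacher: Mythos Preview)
Your overall strategy matches the paper's: push to plane trees, observe that the resulting weight sequence $e_k(\wtheta)$ is log-concave by Newton's inequalities, apply Theorem~\ref{thm:main-thm-BGW} to get an increasing $(\T_n)_{n\ge1}$, couple the type-set distributions $(\tilde\pi_k)_k$ monotonically, and lift back. The monotone subset coupling you flag as the main obstacle is indeed the key missing ingredient; the paper supplies it as Proposition~\ref{prop:growing-subsets}, proved by induction on $|\supp(\wtheta)|$ via the one-site inequality $\randSubset{\wtheta}{k}(\{S:i\in S\})\le\randSubset{\wtheta}{k+1}(\{S:i\in S\})$, which in turn follows from log-concavity of $\e$ evaluated at $\wtheta$ with its $i$-th entry zeroed out.

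There is, however, a genuine gap in your lifting argument: the claim $\push(\subT_n)=\T_n$ is false. The $\ell$-th child of $\v\in\T_n$ lands at position $I^{\phi_n(\v)}_\ell$, but nothing forces $I^{\phi_n(\v)}_1<I^{\phi_n(\v)}_2<\cdots$; after pushing, the children get reordered by the ranks of the $I_\ell$'s, so $\push(\subT_n)$ is $\T_n$ shuffled by a random, decoration-dependent permutation at each vertex. This is exactly the difficulty the paper's shuffling machinery (Section~\ref{subsec:coupling-using-shuffling} and Appendix~\ref{app:appendix-shuffling}) is built to resolve, and your Markov argument breaks for the same reason. The repair, with your indexing of the chains by the \emph{image} vertex, is nonetheless short: for fixed $\t\in\subtrees_n$, the event $\{\subT_n=\t\}$ is the conjunction of $\{S^\u_{|\pos_\u(\t)|}=\pos_\u(\t)\text{ for all }\u\in\t\}$ and $\{\T_n=\pt\}$, where $\pt$ is the random plane tree determined by the orderings $(I^\u_\ell)_{\u\in\t}$. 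Since $\T_n$ is independent of the $(S^\u)_\u$ and $\SimpGen{\e(\wtheta)}{n}$ is invariant under child-permutations, the conditional probability of the second event is constantly $\SimpGen{\e(\wtheta)}{n}(\push(\t))$; multiplying by $\prod_{\u\in\t}\tilde\pi_{|\pos_\u(\t)|}(\pos_\u(\t))$ recovers $\Sub{\wtheta}{n}(\t)$. For the Markov property, simply resample $\subT_{n+1}$ conditionally on $\subT_n$, as the paper does.
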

Note that in Theorem~\ref{thm:growing-subtrees}, the new vertices at each step must be leaves, but which are not necessarily right-leaning.
However, our construction will rely crucially on the fact that the couplings in Theorem~\ref{thm:main-thm-BGW} add leaves which are right-leaning.

In the case of the sequence $\wtheta=(1,1,\dots,1,0,0,\dots)$, where the first $d$ terms equal $1$ and the others are zero, notice that for every $n\geq1$, the distribution $\Sub\wtheta n$ is uniform on the set $\dsubtrees_n$ of random subtrees with $n$ vertices of the complete $d$-ary tree $\Ud d$.
In particular, in the case of sequences of the form $\wtheta=(1,1,\dots,1,0,0,\dots)$, we recover \cite[Theorem 4.1]{LuczakWinkler04}, that is the statement that uniformly random subtrees of $\Ud{d}$ can be coupled increasingly.

Roughly, we will observe that the application $\push$ maps the distributions $\Sub\wtheta n$, $n\geq1$, onto some conditioned BGW distributions satisfying the log-concavity assumption in Theorem~\ref{thm:main-thm-BGW}, which yields some coupling $(\T_n)_{n\geq1}$.
The core of the proof of Theorem~\ref{thm:growing-subtrees} then consists in constructing an embedding of the plane trees $(\T_n)_{n\geq1}$ into $\U$ such that their images form an increasing coupling of the distributions $\Sub\wtheta n$, $n\geq1$.

\begin{rem}[Percolation interpretation]
	We let the reader check that given $\wtheta=(\theta_1,\theta_2,\dots)$ a non-negative sequence such that $0<\sum_i \theta_i<\infty$, and $n\geq1$, the probability distribution $\Sub{\wtheta}n$ can also be described as the distribution of the tree forming the root cluster, conditioned on it having $n$ vertices, in the following site percolation model: each non-root vertex $\u\in\U$ of type $i$ is present in the percolation configuration with probability $p_i=\theta_i/(1+\theta_i)$, independently of other vertices.
	In particular, Theorem~\ref{thm:growing-subtrees} states that in such an \textit{inhomogeneous} Bernoulli percolation model with type-dependent probabilities, the root cluster conditioned on having $n$ vertices can be realized as an increasing process as $n$ varies.
\end{rem}

\subsection{Perspectives}

In light of our results, it is natural to ask whether a converse statement to our main theorem holds.
Let $\mu$ be a probability distibution on $\Z_+$ with $0<\mu(0)<1$.

\begin{pb}\label{pb:converse-main-thm}
	Suppose that there exists a coupling of the distributions $\BGWcond{\mu}{n}$, $n\geq1$, which is increasing with respect to $\subseteq$.
	Does this imply that the sequence $(\mu(k))_{k\geq0}$ is log-concave?
\end{pb}

Note that the answer to Problem~\ref{pb:converse-main-thm} is positive when $\mu$ has support included in $\{0,1,2\}$.
Indeed, by possibly performing an exponential tilt,%
	\footnote{
		This leaves unchanged the corresponding conditioned BGW measures, and log-concavity of the sequence $(\mu(k))_{k\geq0}$ is equivalent to log-concavity of its tilted version.
	}
we may assume that $\mu$ takes the form \eqref{eq:Janson-counter-ex} for some $\epsilon\in(0,1)$. Then the $\epsilon<1/3$ regime, where no increasing couplings can exist by \cite{Janson06}, is exactly the regime where the sequence $(\mu(k))_{k\geq0}$ fails to be log-concave.
Problem~\ref{pb:converse-main-thm} remains open otherwise.

In a different direction, it would be interesting to see if the statements of stochastic increase for infinite BGW trees in \cite{LyonsPeledSchramm08,Broman14,Broman16}, which were proved using the results of \cite{LuczakWinkler04}, hold beyond the geometric, binomial, and Poisson cases.

\subsection{Organization of the paper}

We consider in Section~\ref{sec:coupling-random-compositions} a general model of random compositions of integers and the problem of coupling them in a suitable increasing way.
Ultimately, we extract a set of inequalities whose satisfaction is sufficient to ensure that there exist such increasing couplings, see Corollary~\ref{cor:sufficient-inequalities}.

Sections~\ref{sec:application-to-trees} and~\ref{sec:arithmetic-case} are then dedicated to the proof of Theorems~\ref{thm:main-thm-BGW} and~\ref{thm:main-thm-BGW-arithmetic}, which are conveniently re-formulated in terms of \textit{simply generated trees}, see Theorems~\ref{thm:main-thm-SG} and~\ref{thm:main-thm-SG-arithmetic}.

In Section~\ref{sec:application-to-trees}, we relate the model of random compositions studied in Section~\ref{sec:coupling-random-compositions} to simply generated trees, and explain how to construct increasing couplings for them using the increasing couplings for random compositions.
To complete the proof of Theorem~\ref{thm:main-thm-BGW}, or equivalently Theorem~\ref{thm:main-thm-SG}, we show that the sufficient set of inequalities we alluded to above is satisfied in this setting, by proving an even stronger set of inequalities, see Proposition~\ref{prop:checking-the-inequalities}, which vastly generalizes Corollary 5.4 in \cite{LuczakWinkler04}.

Then, Section~\ref{sec:arithmetic-case} generalizes the results of Sections~\ref{sec:coupling-random-compositions} and~\ref{sec:application-to-trees} to the arithmetic case, in order to prove Theorem~\ref{thm:main-thm-BGW-arithmetic}.
The global approach is similar but the proof is more technical, and the set of inequalities we obtain in this setting has a more subtle structure, see Proposition~\ref{prop:checking-the-inequalities-new}.

Lastly, we apply our results by proving Theorem~\ref{thm:growing-subtrees} in Section~\ref{sec:application-random-subtrees}.
Our approach is to relate bijectively rooted subtrees of $\U$ to some decorated plane trees.
We can then ``grow'' the plane trees and their decorations separately.
The last step consists in making the two ``growths'' match.
This involves performing some decoration-dependent ``shuffling operations''.

We collect in Appendix~\ref{app:appendix-shuffling} some technical lemmas about these decoration-dependent ``shuffling operations'', which may be of independent interest.

\section*{Acknowledgments}
%\addcontentsline{toc}{section}{Acknowledgments}
The author would like to express his deep gratitude to Grégory Miermont for many discussions and helpful advice, and for his careful reading of several versions of this article. The author is also thankful to Svante Janson for pointing out some inconsistencies in Section 4.5 of the first version of this article.

This work has been carried for the most part at the ENS de Lyon (UMPA), and it has been finished at the Université Paris-Saclay (LMO) with support from SuPerGRandMa (ERC Consolidator Grant no 101087572).

% \pagebreak

%-------------------------------------------------------------

\section{Growing random compositions}
\label{sec:coupling-random-compositions}
We start by introducing a model of random compositions which is closely related to BGW trees, and we study a notion of increasing couplings for this model, which will be instrumental to our constructions of increasing couplings for BGW trees.

As is usual in probability theory, by standard extension theorems we may assume that we have at our disposal a sufficiently big probability space $(\Omega,\mathcal F,\P)$, together with realizations on it of all the random variables we may need.
Unless we expressly mention the probability distribution we are working with, our random variables will be defined on $(\Omega,\mathcal F,\P)$.

\subsection{The model}

\mypar{Compositions of integers}
A composition of an integer $n\geq 0$ is a word $c=\letter{n_1} \cdots   \letter{n_r}$ with $r\geq0$ letters on the alphabet $\{1,2,\dots\}$ such that ${n}_1+\dots+{n}_r=n$.
The integers $({n}_i)$ are called the parts of $c$, and we underline them when writing a composition in order to avoid possible confusions.
We write $c\composes n$ to mean that $c$ is a composition of $n$.
By convention, the empty word $\emptyset$ is the only composition of the integer $0$.
The set of compositions of an integer $n\geq 0$ will be denoted by $\Compos{n}$.

\mypar{Covering and order relations}
We say that a composition $c$ is covered by a composition $c'$ and write $c\covered c'$ if we can obtain $c'$ from $c$ by either adding $1$ to one of its parts or by adding a new part with value $1$ on its right.
More precisley, if $c={\letter{n_1} \cdots   \letter{n_r}}$ then $c'$ covers $c$ if it is either one of the following compositions
\begin{align*}
\bigl(\letter{n_1}  \cdots  \letter{n_{i-1}}  \:\letter{\,n_i+1\,}\:  \letter{n_{i+1}} \cdots    \letter{n_r}\bigr),
\end{align*}
for some $i\in\{1,\dots,r\}$, or if it is the following composition
\begin{align*}
\letter{n_1}  \cdots   \letter{n_r}  \:\letter {\,1\,}.
\end{align*}
Notice that if $c$ is a composition of $n\geq 0$, then a composition $c'$ which covers $c$ is a composition of $n+1$.
The relation $\covered$ is extended by transitivity into a partial order $\preceq$ on the set of all compositions of integers.

{\begin{figure}[p]
	\begin{center}
		\includegraphics[page=1]{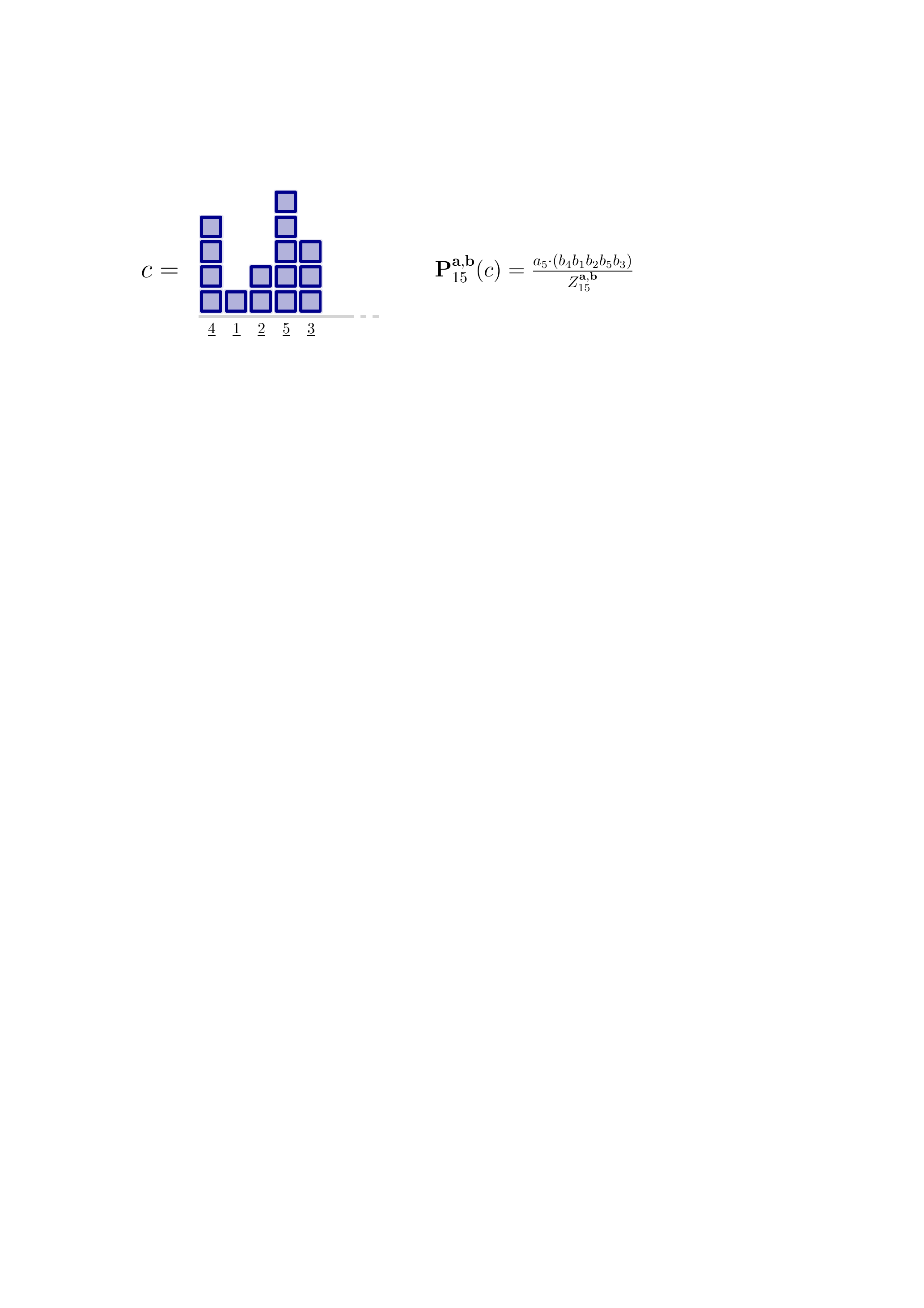}
	\end{center}
	\caption{A graphical representation of the composition $c=\letter 4 \,\letter 1\, \letter 2 \, \letter 5\, \letter 3$ as piles of square bricks.
	The parts of the composition are the numbers of bricks in each column.}
\end{figure}

\begin{figure}[p]
	\begin{center}
		\includegraphics[page=2,scale=.7]{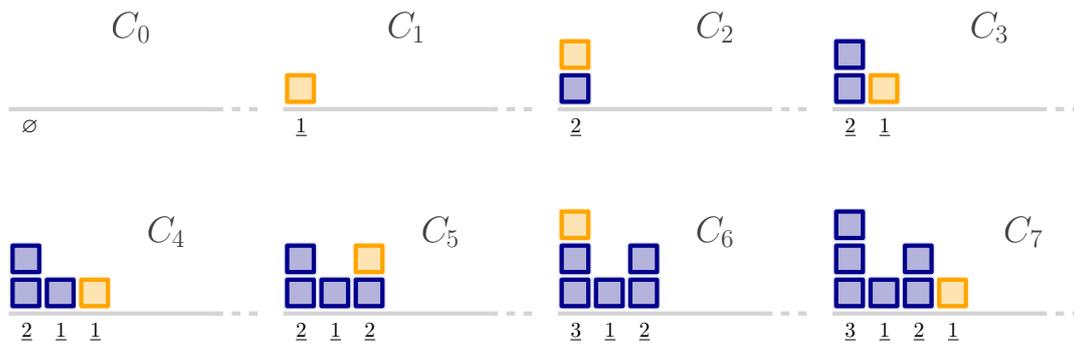}
	\end{center}
	\caption{An example of how of a sequence $C_0\covered C_1\covered C_2\covered\dots$ may begin.
		At each step, the newly added brick is colored in yellow.}
\end{figure}

\begin{figure}
	\begin{center}
		\includegraphics[page=3,scale=.8]{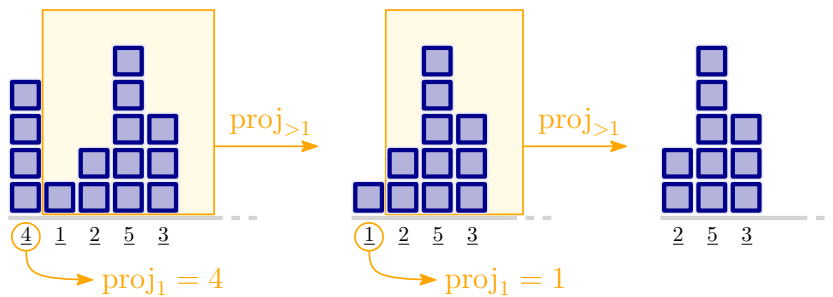}
	\end{center}
	\caption{An illustration of the action of ${\mathrm{proj}}_1$ and ${\mathrm{proj}}_{>1}$ on compositions.}
	\label{fig:proj-compositions}
\end{figure}}

\mypar{Weight pairs}
A pair $(\wa,\wb)$ where $\wa=(a_0,a_1,\dots)$ and $\wb=(b_1,b_2,\dots)$ are non-negative sequences will be called a \textit{weight pair}.
It is \textit{non-degenerate} if
\begin{enumerate}
	\item we have $a_0a_1>0$ and the sequence $\wa$ has no \textit{internal zeros}, or equivalently its support is $\{\ell\colon0\leq\ell\leq r\}$ for some $r=r(\wa)\in\{1,2,\dots\}\cup\{\infty\}$; and,
	\item the sequence $\wb$ is positive, or equivalently its support is $\{1,2,\dots\}$.
\end{enumerate}

\mypar{Random compositions}
Given a non-degenerate weight pair $(\wa,\wb)$ and for every $n\geq 0$, we define a probability distribution on $\Compos{n}$ by letting, for $\letter{n_1}  \dots   \letter{n_r}\in\Compos{n}$,
\begin{align}\label{eq:def-proba-compositions}
\ProbComp{\wa,\wb} n {\letter{n_1}  \cdots   \letter{n_r}}&=
\frac{a_r\cdot (b_{{n}_1}\cdots b_{{n}_r})}{\PFComp{\wa,\wb} n},
&&
\PFComp{\wa,\wb}n=
	\sum_{\substack{i\geq 0\\\letter{n_1}  \cdots   \letter{n_i}\composes n}}a_i\cdot(b_{{n}_1}\cdots b_{{n}_i}).
\intertext{
	Consistently with the usual convention that an empty product equals $1$, the case $n=0$ reads}
\label{eq:def-proba-compositions-2}
\ProbComp{\wa,\wb} 0 {\emptyset}&=
\frac{a_0}{\PFComp{\wa,\wb} 0}=1,
\qquad&&\text{where}\qquad
\PFComp{\wa,\wb}0=
a_0.
\end{align}
Obviously $a_0$ plays no role in the definition of $\PComp{\wa,\wb}{0}$, or of the $(\PComp{\wa,\wb}{n})_{n\geq1}$ for that matter, but having a consistently defined $n=0$ case will allow for cleaner statements later on when some compositions we deal with may be empty, such as in Proposition \ref{prop:role-of-shifted-a}.
The assumption that $(\wa,\wb)$ is non-degenerate ensures that $\PFComp{\wa,\wb}n$ is non-zero for every $n\geq 0$ so that the above probability distributions are well-defined.

\begin{rem}
	The model of random compositions defined by \eqref{eq:def-proba-compositions} can be seen as a variant on a model of random allocation called the \textit{balls-in-boxes} or \textit{balls-in-bins} model, but where the number of boxes is also allowed to be random.
	We refer the reader to Janson's survey~\cite[Sect.~11]{Janson12} for background on the model with a fixed number of boxes.
\end{rem}

\begin{rem}
	Another related model is Gnedin and Pitman's model of \textit{exchangeable Gibbs partitions} \cite{GnedinPitman05}.
	The authors associate to these random partitions some random compositions, whose distributions have a similar structure as the ones we defined in \eqref{eq:def-proba-compositions}, see \cite[Def.~1]{GnedinPitman05}.
\end{rem}

Our aim is to find \textit{admissible} weight pairs $(\wa,\wb)$, in the following sense.

\begin{defin}[Admissibility]
	A non-degenerate weight pair $(\wa,\wb)$ will be called \emph{admissible} if there exists a coupling of the random compositions $(C_n)_{n\geq 0}$  with distributions $(\PComp{\wa,\wb}{n})_{n\geq 1}$ respectively, in such a way that 
	\begin{align*}
	C_0\covered C_1\covered C_2\covered \dots\covered C_n\covered C_{n+1}\covered\dots.
	\end{align*}
\end{defin}

\begin{lemma}\label{lem:closure-admissibility}
	The set of admissible weight pairs is closed in the set of non-degenerate weight pairs equipped with the topology of pointwise convergence.
\end{lemma}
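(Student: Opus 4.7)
The plan is to prove this by a weak-compactness / subsequential-limit argument on the space of monotone sequences of compositions. Suppose $(\wa^{(k)},\wb^{(k)})_{k\geq 1}$ is a sequence of admissible non-degenerate weight pairs which converges pointwise to some non-degenerate weight pair $(\wa,\wb)$. The key preliminary observation is that for each $n\geq 0$, the set $\Compos{n}$ is finite (of size $2^{\max(n-1,0)}$), so $\PFComp{\wa,\wb}{n}$ is a \emph{finite} sum of products of finitely many coordinates of $\wa$ and $\wb$. Using that non-degeneracy of $(\wa,\wb)$ forces $\PFComp{\wa,\wb}{n}>0$, pointwise convergence of the weights therefore implies convergence of partition functions, and hence pointwise convergence
\begin{align*}
\ProbComp{\wa^{(k)},\wb^{(k)}}{n}{c}\xrightarrow[k\to\infty]{}\ProbComp{\wa,\wb}{n}{c},\qquad c\in\Compos{n}.
\end{align*}

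Next, admissibility of $(\wa^{(k)},\wb^{(k)})$ provides a probability law $Q^{(k)}$ on the product space $\Xi=\prod_{n\geq 0}\Compos{n}$, equipped with the product topology, representing an increasing coupling. Each factor $\Compos{n}$ being finite and discrete, the space $\Xi$ is a compact metrizable space by Tychonoff, hence the space of Borel probability measures on $\Xi$ is weakly compact. So along a subsequence, $Q^{(k_j)}$ converges weakly to some probability measure $Q$ on $\Xi$, which I would then realize on $(\Omega,\mathcal F,\P)$ to obtain a candidate coupling $(C_n)_{n\geq 0}$ for $(\wa,\wb)$.

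It remains to verify the two defining properties of admissibility for $Q$. For the marginals: each projection $\Xi\to\Compos{n}$ is continuous, so weak convergence yields that the $n$-th marginal of $Q$ is the limit of the $n$-th marginals of $Q^{(k_j)}$, and by the first paragraph this limit is $\PComp{\wa,\wb}{n}$. For the monotonicity: the event
\begin{align*}
E=\bigl\{(c_n)_{n\geq 0}\in\Xi:\,c_n\covered c_{n+1}\text{ for every }n\geq 0\bigr\}
\end{align*}
is an intersection of cylinder events depending each on two consecutive coordinates, hence is closed in $\Xi$. Since $Q^{(k_j)}(E)=1$ for every $j$ by admissibility, the Portmanteau theorem gives $Q(E)\geq \limsup_j Q^{(k_j)}(E)=1$, so $Q$ is supported on the monotone sequences and $(\wa,\wb)$ is admissible. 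The argument is essentially routine, and the only thing that really needed checking was finiteness of the partition functions under pointwise convergence, which is the one place where the non-degeneracy hypothesis enters.
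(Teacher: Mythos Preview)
Your proof is correct and follows essentially the same compactness-plus-continuity strategy as the paper: both rely on the finiteness of each $\Compos{n}$, the continuity of $(\wa,\wb)\mapsto\PComp{\wa,\wb}{n}$ at non-degenerate weight pairs, and passing to a subsequential limit of couplings. The only cosmetic difference is that you work globally on the compact product space $\prod_{n\geq0}\Compos{n}$ via Tychonoff and Portmanteau, whereas the paper extracts for each fixed $n$ a limiting two-step coupling on the finite set $\Compos{n}\times\Compos{n+1}$ (and then implicitly chains these into a full process as in the proof of Corollary~\ref{cor:cor-key-lemma}); either packaging works equally well.
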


\begin{proof}
	Let $n\geq0$ and let $A_n$ be the subset of $\Compos{n}\times\Compos{n+1}$ consisting of those pairs $(C,C')$ for which $C\covered C'$.
	The mapping which maps a probability measure $\mfrak$ on $\Compos{n}\times\Compos{n+1}$ to the value $\mfrak(A_n)$ is clearly continuous with respect to convergence in distribution since we work with finite spaces.
	Hence if we take $(\wa_i,\wb_i)\rightarrow_{i\rightarrow\infty}(\wa,\wb)$ in a sequence $(C_n(\wa_i,\wb_i), C_{n+1}(\wa_i,\wb_i))_{i\geq1}$ of $A_n$-supported couplings of the distributions $\PComp{\wa_i,\wb_i}{n}$ and $\PComp{\wa_i,\wb_i}{n+1}$, then any subsequential limit we may obtain is also supported in $A_n$.
	Such a subsequential limit exists by compactness, and forms a coupling of $\PComp{\wa,\wb}{n}$ and $\PComp{\wa,\wb}{n+1}$ since the mappings $(\wa,\wb)\mapsto \PComp{\wa,\wb}{n}(c)$ and $(\wa,\wb)\mapsto\PComp{\wa,\wb}{n+1}(c)$ are continuous for every composition $c$ (as rational functions in the coordinates of $\wa$ and $\wb$, with non-vanishing%
	\footnote{
		To be precise, the denominators are non-vanishing on the set of non-degenerate weight pairs.
	}
	denominators).
	This holds for all $n\geq0$, so that taking a limit $(\wa_i,\wb_i)\rightarrow_{i\rightarrow\infty}(\wa,\wb)$ of admissible weight pairs inside the set of non-degenerate ones yields a weight pair $(\wa,\wb)$ which is admissible.
\end{proof}

\subsection{Key lemma}

It is well-known that two \textit{real} variables $X$ and $Y$ can be coupled in such a way that $X\leq Y$ if and only if their cumulative distribution functions $\cdf_X\colon t\mapsto \Prob{X\leq t}$ and $\cdf_Y\colon t\mapsto \Prob{Y\leq t}$ satisfy $\cdf_X\geq \cdf_Y$ on $\R$.
The latter is clearly necessary.
Conversely, denote by $F^{-1}$ the generalized inverse of a function $F\colon \R\rightarrow[0,1]$, that is
\begin{align*}
F^{-1}\colon u\in[0,1]\mapsto F^{-1}(u)=\inf\{t\in\R\colon F(t)\geq u\}.
\end{align*}
Then assuming that $\cdf_X\geq \cdf_Y$ on $\R$, the coupling $(X',Y')=(\cdf_X^{-1}(U),\cdf_Y^{-1}(U))$ satisfies $X'\leq Y'$, where $U$ is uniformly random in $[0,1]$.
If now $\cdf_X\geq \cdf_Y\geq \cdf_{X+1}$, then for $u\in[0,1]$,
\begin{align*}
\{t\in\R\colon \cdf_{X+1}(t)\geq u\}
\subseteq\{t\in\R\colon \cdf_Y(t)\geq u\}
\subseteq\{t\in\R\colon \cdf_X(t)\geq u\},
\end{align*}
and therefore by taking infima and observing that $\cdf_{X+1}$ is  $ t \mapsto \cdf_X(t-1)$, we get
\begin{align*}
X'\leq Y'\leq X'+1.
\end{align*}
The following lemma is equivalent to Lemma 3.1 in {\cite{LuczakWinkler04}}.

\begin{lemma}[Key lemma]\label{lem:key-lemma}
	Let $n\geq 1$ and let $\mu_n$ and $\mu_{n+1}$ be two probability measures on $\{1,2,\dots,n\}$ and $\{1,2,\dots,n+1\}$ respectively.
	Assume that for $m=1,\dots, n$ we have
	\begin{align}\label{eq:key-lemma-inequality}
	\mu_{n+1}(m)\leq \mu_n(m)\geq \mu_{n+1}(m+1).
	\end{align}
	Then, there exists a coupling $(X_n,X_{n+1})$ of the probability measures $\mu_n$ and $\mu_{n+1}$ such that $X_{n+1}\in\{X_n,X_n+1\}$ almost surely.
\end{lemma}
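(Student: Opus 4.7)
The plan is to reuse the inverse-CDF construction recalled in the paragraph preceding the statement. Let $\cdf_n$ and $\cdf_{n+1}$ denote the cumulative distribution functions of $\mu_n$ and $\mu_{n+1}$, extended as step functions on $\R$, and set $X_n = \cdf_n^{-1}(U)$ and $X_{n+1} = \cdf_{n+1}^{-1}(U)$ for a single uniform variable $U$ on $[0,1]$. Then $X_n \sim \mu_n$ and $X_{n+1}\sim\mu_{n+1}$ by construction, and by the discussion above the lemma it suffices to establish the two CDF comparisons
\[
\cdf_n(t-1)\;\leq\;\cdf_{n+1}(t)\;\leq\;\cdf_n(t)\qquad\text{for all }t\in\R,
\]
to guarantee $X_n\leq X_{n+1}\leq X_n+1$ almost surely; since both random variables are integer-valued, this forces $X_{n+1}\in\{X_n,X_n+1\}$ as required.

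Both CDFs are piecewise constant with jumps at positive integers, so it is enough to verify the two inequalities at integer points $t=m$. The upper inequality $\cdf_{n+1}(m)\leq \cdf_n(m)$ is immediate: for $1\leq m\leq n$ it follows by summing the left-hand half of \eqref{eq:key-lemma-inequality}, i.e. $\mu_{n+1}(k)\leq\mu_n(k)$, over $k=1,\dots,m$, while for $m\geq n+1$ both sides equal $1$. For the lower inequality $\cdf_n(m-1)\leq \cdf_{n+1}(m)$, passing to upper tails (which is legitimate since both measures have total mass $1$) reduces it to
\[
\sum_{k=m}^{n}\mu_n(k)\;\geq\;\sum_{k=m+1}^{n+1}\mu_{n+1}(k)\;=\;\sum_{k=m}^{n}\mu_{n+1}(k+1),
\]
and this holds term-by-term by the right-hand half $\mu_n(m)\geq\mu_{n+1}(m+1)$ of the hypothesis \eqref{eq:key-lemma-inequality}.

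I do not anticipate any real obstacle: the lemma is essentially a repackaging of the elementary inverse-CDF coupling recalled just before its statement, and the two conditions appearing in \eqref{eq:key-lemma-inequality} are tailored precisely so that the two required CDF comparisons hold after a single summation. The only mild care needed is the standard handling of the measure-zero event $\{U=0\}$ in the definition of the generalized inverse, which can be absorbed into a null set.
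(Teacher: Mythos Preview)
Your proposal is correct and follows essentially the same route as the paper: both arguments sum the two halves of \eqref{eq:key-lemma-inequality} to obtain the CDF comparison $\cdf_{n+1}(k)\leq \cdf_n(k)\leq \cdf_{n+1}(k+1)$, then invoke the inverse-CDF coupling recalled before the lemma to conclude $X_n\leq X_{n+1}\leq X_n+1$. The only cosmetic difference is that the paper phrases the tail inequality directly as $1-\cdf_{X_n}(k)\geq 1-\cdf_{X_{n+1}}(k+1)$ rather than reindexing the sum.
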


\begin{proof}
	Let $X_n$, resp~$X_{n+1}$, have distribution $\mu_n$, resp.~$\mu_{n+1}$.
	By summing the first inequality in \eqref{eq:key-lemma-inequality} for $m\leq k$ and the second one for $m>k$ we get for $1\leq k\leq n$,
	\begin{align*}
	\cdf_{X_{n+1}}(k)\leq \cdf_{X_n}(k)
	\qquad\text{and}\qquad
	1-\cdf_{X_n}(k)\geq 1-\cdf_{X_{n+1}}({k+1}),
	\end{align*}
	that is $\cdf_{X_{n+1}}(k)\leq \cdf_{X_n}(k)\leq \cdf_{X_{n+1}}({k+1})$.
	Hence by the discussion above, the variables $X_n$ and $X_{n+1}$ can be coupled in such a way that $X_n\leq X_{n+1}\leq X_n+1$, or equivalently $X_{n+1}\in\{X_n,X_n+1\}$ since $X_n$ and $X_{n+1}$ are integer-valued.
\end{proof}

\begin{cor}\label{cor:cor-key-lemma}
	Let $(\mu_n)_{n\geq 1}$ be a sequence of probability measures with support included in $\{1,\dots,n\}$ respectively.
	If the inequalities \eqref{eq:key-lemma-inequality} of Lemma \ref{lem:key-lemma} are satisfied for every $1\leq m\leq n$, then there exists a coupling $(X_n)_{n\geq 1}$ of the measures $(\mu_n)_{n\geq 1}$ such that $X_{n+1}\in\{X_n,X_n+1\}$ for every $n$ almost surely.
\end{cor}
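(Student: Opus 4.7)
The plan is to iterate Lemma~\ref{lem:key-lemma} and build $(X_n)_{n\geq 1}$ as a time-inhomogeneous Markov chain. First, for every $n\geq 1$ the hypothesis grants us, via Lemma~\ref{lem:key-lemma}, a pairwise coupling $(Y_n, Y_{n+1})$ of $(\mu_n,\mu_{n+1})$ with $Y_{n+1}\in\{Y_n, Y_n+1\}$ almost surely. I would convert this coupling into a Markov transition kernel $K_n$ from $\{1,\dots,n\}$ to $\{1,\dots,n+1\}$ by setting
\[
K_n(x,y) = \Prob{Y_{n+1}=y\mid Y_n=x}\quad\text{whenever }\mu_n(x)>0,
\]
and by choosing $K_n(x,\cdot)$ arbitrarily on $\{x,x+1\}$ when $\mu_n(x)=0$ (say, a Dirac at $x$). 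By construction $K_n(x,\cdot)$ is supported in $\{x,x+1\}$, and the marginal identity $\sum_x \mu_n(x) K_n(x,y) = \mu_{n+1}(y)$ follows from the fact that $(Y_n,Y_{n+1})$ is a coupling of $(\mu_n,\mu_{n+1})$.

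Next I would define the process on the common probability space $(\Omega,\Fcal,\P)$ as follows. Since $\mu_1$ is a probability measure supported in $\{1\}$, it must be the Dirac mass at $1$; set $X_1 = 1$. Then, recursively, given $X_1,\dots,X_n$, sample $X_{n+1}$ conditionally on $X_n$ according to $K_n(X_n,\cdot)$, independently of the past. This is a standard discrete Markov-chain construction on a sufficiently rich probability space, as granted in the preamble of Section~\ref{sec:coupling-random-compositions}.

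Finally, an immediate induction shows that $X_n$ has distribution $\mu_n$ for every $n\geq 1$: the base case holds, and if $X_n\sim\mu_n$ then $X_{n+1}$ has distribution $\sum_x \mu_n(x)K_n(x,\cdot) = \mu_{n+1}$ by the marginal identity above. By the support property of $K_n$, we have $X_{n+1}\in\{X_n, X_n+1\}$ almost surely, which concludes. There is no real obstacle here since everything is discrete and finite at each step; the only very mild subtlety is the arbitrary definition of $K_n(x,\cdot)$ on atoms $x$ with $\mu_n(x)=0$, which is harmless because such states are never visited by the chain.
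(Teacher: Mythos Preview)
Your proof is correct and follows essentially the same approach as the paper: extract Markov transition kernels from the pairwise couplings of Lemma~\ref{lem:key-lemma}, start the chain at $X_1=1$, and propagate the marginals by induction. You are in fact slightly more careful than the paper in handling the null atoms of $\mu_n$ and in spelling out why $\mu_1=\delta_1$.
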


\begin{proof}
	For every $n\geq1$, let $(X^{(n)}_n,X^{(n)}_{n+1})$  be a coupling of the distributions $\mu_n$ and $\mu_{n+1}$ given by Lemma \ref{lem:key-lemma}, hence satisfying $X^{(n)}_{n+1}\in\{X^{(n)}_n,X^{(n)}_n+1\}$ almost surely.
	Let us denote by $p_n(x_n,x_{n+1})$ the probability that $X^{(n)}_{n+1}=x_{n+1}$ given $X^{(n)}_n=x_n$.
	This suffices to define a time-inhomogeneous Markov chain $(X_n)_{n\geq 1}$ started at $X_1=1$ and with transition probabilities 
	\begin{align*}
	\condProb{X_{n+1}=x_{n+1}}{X_n=x_n}=p_n(x_n,x_{n+1}).
	\end{align*}
	Then $(X_n)_{n\geq 1}$ is a coupling of the measures $(\mu_n)_{n\geq 1}$ such that $X_{n+1}\in\{X_n,X_n+1\}$ for every $n$ almost surely.
\end{proof}

\subsection{Admissibility and shifted \texorpdfstring{$\wa$}{a}-weights}
\label{subsec:admissiblity-and-shift}

For a sequence $\wa=(a_0,a_1,\dots)$ of non-negative numbers, we denote by $\wa^+$ the same sequence shifted by one unit to the left, that is $(a^+_0,a^+_1,a^+_2,\dots)=(a_1,a_2,a_3,\dots)$.
For a composition $c=\letter{n_1}\cdots \letter{n_r}$ of $n\geq 1$,we let 
\begin{align*}
\projParts_1(c)={n}_1\in\{1,2,\dots\}
\qquad\text{and}\qquad
\projParts_{>1}(c)=\letter{n_2}\cdots \letter{n_r}\in\Compos{n-{n}_1}.
\end{align*}

\begin{prop}\label{prop:role-of-shifted-a}
	Let $(\wa,\wb)$ be a non-degenerate weight pair, with $r(\wa)\geq2$ so that $(\wa^+,\wb)$ is also non-degenerate.
	Let $C_n$ be a $\PComp{\wa,\wb}n$-distributed random composition  for some fixed $n\geq 1$.
	If we let $X_n=\projParts_1(C_n)$ and $C'_n=\projParts_{>1}(C_n)$, then $X_n$ has distribution given by
	\begin{align}\label{eq:law-X_n}
	\Prob{X_n=m}=\frac{b_m\cdot\PFComp{\wa^+,\wb}{n-m}}{\PFComp{\wa,\wb}{n}},
	\qquad
	m=1,\dots,n,
	\end{align}
	and conditionally on $X_n$, the composition $C'_n$ has distribution $\PComp{\wa^+,\wb}{n-X_n}$.
\end{prop}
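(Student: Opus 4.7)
The proof will be a direct computation from the definition \eqref{eq:def-proba-compositions} of $\PComp{\wa,\wb}n$. The plan is to first write down the joint law of the pair $(X_n,C'_n)$, observe that it factorizes cleanly as a function of $m = X_n$ alone times an expression that can be recognized as $\PComp{\wa^+,\wb}{n-m}(c')$, then marginalize in $c'$ to get \eqref{eq:law-X_n} and divide to get the claimed conditional distribution.

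Concretely, I will fix $m \in \{1,\dots,n\}$ and a composition $c' = \letter{n_2}\cdots\letter{n_r} \composes n-m$ with $s := r-1 \geq 0$ parts (allowing $r = 1$, in which case $c' = \emptyset$ and $n-m = 0$). Since $\projParts_1^{-1}(m) \cap \projParts_{>1}^{-1}(c') = \{\letter{m}\,c'\}$, the joint law reads
\begin{align*}
\Prob{X_n = m,\,C'_n = c'} = \ProbComp{\wa,\wb}{n}{\letter{m}\,c'} = \frac{a_{s+1}\, b_m\, b_{n_2}\cdots b_{n_r}}{\PFComp{\wa,\wb}n}.
\end{align*}
Using $a_{s+1} = a^+_s$, isolating the factor $b_m/\PFComp{\wa,\wb}n$, and multiplying and dividing by $\PFComp{\wa^+,\wb}{n-m}$ I will then rewrite this as
\begin{align*}
\Prob{X_n = m,\,C'_n = c'} = \frac{b_m \cdot \PFComp{\wa^+,\wb}{n-m}}{\PFComp{\wa,\wb}n}\cdot\frac{a^+_s\, b_{n_2}\cdots b_{n_r}}{\PFComp{\wa^+,\wb}{n-m}},
\end{align*}
where the rightmost fraction is precisely $\PComp{\wa^+,\wb}{n-m}(c')$ by definition. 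Summing over $c' \composes n-m$ and using the fact that $\PComp{\wa^+,\wb}{n-m}$ is a probability distribution will then yield \eqref{eq:law-X_n}, after which dividing the joint formula by the marginal gives the stated conditional distribution of $C'_n$ given $X_n$.

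I do not anticipate a serious obstacle here; the entire statement is essentially an unpacking of the definition. The only point requiring a bit of care is the bookkeeping around the edge cases $r=1$ (so that $c' = \emptyset$) and in particular $m = n$ (so $n-m = 0$): these are handled precisely by the convention \eqref{eq:def-proba-compositions-2}, which ensures $\PFComp{\wa^+,\wb}{0} = a^+_0 = a_1$ and $\PComp{\wa^+,\wb}{0}(\emptyset) = 1$, matching the $s = 0$ contribution in the computation above. With these conventions in place, the factorization is valid on the entire range $m \in \{1,\dots,n\}$ without a separate case analysis.
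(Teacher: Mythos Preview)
Your proposal is correct and follows essentially the same route as the paper's own proof: compute the joint law of $(X_n,C'_n)$ directly from the definition, recognize the factorization into the claimed marginal times $\PComp{\wa^+,\wb}{n-m}(c')$, sum over $c'$ to obtain \eqref{eq:law-X_n}, and divide for the conditional law. The only cosmetic difference is that the paper sums first and then divides, whereas you factorize first and then sum; your explicit handling of the $m=n$ edge case via \eqref{eq:def-proba-compositions-2} matches the paper's closing remark.
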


\begin{proof}
	For $m=1,\dots,n$ and a composition $c=\letter{n_1}\cdots \letter{n_i}$ in $\Compos{n-m}$, we have
	\begin{align}\label{eq:law-X_n-and-C_n}
	\Prob{X_n=m,C'_n=c}
	=\Prob{C_n=\letter mc}
	=\frac{a_{i+1}\cdot(b_mb_{{n}_1}\cdots b_{{n}_i})}{\PFComp{\wa,\wb}{n}}.
	\end{align}
	In particular, by summing over $c\composes n-m$ we get
	\begin{align*}
	\Prob{X_n=m}
		=\sum_{\substack{i\geq 0\\\letter{n_1}\cdots \letter{n_i}\composes n-m}}\frac{a_{i+1}\cdot(b_mb_{{n}_1}\cdots b_{{n}_i})}{\PFComp{\wa,\wb}{n}}
		=\frac{b_m\cdot\PFComp{\wa^+,\wb}{n-m}}{\PFComp{\wa,\wb}{n}},
	\end{align*}
	which is \eqref{eq:law-X_n}.
	Dividing \eqref{eq:law-X_n-and-C_n} by the latter expression we get, for $m=1,\dots,n$ and $c=\letter{n_1}\cdots \letter{n_i}$ in $\Compos{n-m}$,
	\begin{align*}
	\condProb{C'_n=c}{X_n=m}
	=\frac{a_{i+1}\cdot(b_{{n}_1}\cdots b_{{n}_i})}{\PFComp{\wa^+,\wb}{n-m}}
	=\ProbComp{\wa^+,\wb}{n-m}{c},
	\end{align*}
	as claimed.
	Observe that the above expressions still hold when $n=m$, with the convention that an empty product evaluates to $1$.
\end{proof}

\begin{prop}\label{prop:admissibility-vs-shift}
	Let $(\wa,\wb)$ be a non-degenerate weight pair.
	\begin{enumerate}
		\item If $\wa=(a_0,a_1,0,0\dots)$ with $a_0,a_1>0$ then $(\wa,\wb)$ is admissible.
		\item If $(\wa^+,\wb)$ is admissible and if the inequalities
		\begin{align}\label{eq:ineq-shift}
		\frac{\PFComp{\wa^+,\wb}{n+1-m}}{\PFComp{\wa^+,\wb}{n-m}}
		\leq
		\frac{\PFComp{\wa,\wb}{n+1}}{\PFComp{\wa,\wb}{n}}
		\geq
		\frac{b_{m+1}}{b_m},
		\end{align}
		are satisfied for $m=1,\dots,n$, then $(\wa,\wb)$ is admissible.
	\end{enumerate}
\end{prop}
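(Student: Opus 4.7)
The plan splits along the two items of the proposition.

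Item (1) is essentially trivial: when $\wa=(a_0,a_1,0,0,\dots)$, formula \eqref{eq:def-proba-compositions} shows that $\PComp{\wa,\wb}{n}$ is supported on compositions with at most one part, so it is concentrated on $\emptyset$ for $n=0$ and on $\letter{n}$ for $n\geq1$. The deterministic sequence $C_0=\emptyset$, $C_n=\letter{n}$ for $n\geq1$ then yields an admissible coupling, since each step either introduces the part $\letter{1}$ on the right ($n=0\to1$) or increments the unique part by $1$.

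For item (2), the idea is to splice together two independent couplings by means of Proposition~\ref{prop:role-of-shifted-a}: a coupling of the first-part marginals $X_n=\projParts_1(C_n)$, and the admissible coupling for $(\wa^+,\wb)$ controlling the tails. The first step is to produce the first-part coupling via Corollary~\ref{cor:cor-key-lemma}. By \eqref{eq:law-X_n}, the distribution $\mu_n$ of $X_n$ on $\{1,\dots,n\}$ satisfies
\[
\mu_{n+1}(m)\leq \mu_n(m)
\iff
\frac{\PFComp{\wa^+,\wb}{n+1-m}}{\PFComp{\wa^+,\wb}{n-m}}\leq \frac{\PFComp{\wa,\wb}{n+1}}{\PFComp{\wa,\wb}{n}},
\]
and a similar direct calculation gives $\mu_n(m)\geq\mu_{n+1}(m+1)\iff\PFComp{\wa,\wb}{n+1}/\PFComp{\wa,\wb}{n}\geq b_{m+1}/b_m$. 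Hence \eqref{eq:ineq-shift} is exactly the key-lemma hypothesis \eqref{eq:key-lemma-inequality} for the sequence $(\mu_n)_{n\geq 1}$, and Corollary~\ref{cor:cor-key-lemma} yields a Markov coupling $(X_n)_{n\geq1}$ with $X_{n+1}\in\{X_n,X_n+1\}$ almost surely.

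To conclude, take the admissible coupling $(C'_k)_{k\geq 0}$ of $(\wa^+,\wb)$ independently of $(X_n)_{n\geq1}$, and define $C_0=\emptyset$ and $C_n=\letter{X_n}\, C'_{n-X_n}$ for $n\geq 1$. By independence, conditionally on $X_n=m$ the tail $C'_{n-m}$ has distribution $\PComp{\wa^+,\wb}{n-m}$, so the converse direction of Proposition~\ref{prop:role-of-shifted-a} gives $C_n\sim\PComp{\wa,\wb}{n}$. The covering $C_n\covered C_{n+1}$ then follows by case analysis: if $X_{n+1}=X_n$, the two compositions share their first letter and the covering reduces to the tail covering $C'_{n-X_n}\covered C'_{n+1-X_n}$ supplied by admissibility of $(\wa^+,\wb)$; if $X_{n+1}=X_n+1$, then $C_{n+1}$ differs from $C_n$ by a $+1$ increment of the first letter; and the edge case $n=0\to1$ reduces to $\emptyset\covered\letter{1}$. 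The main technical content is the algebraic reduction in the middle paragraph; everything else is an orchestration exercise.
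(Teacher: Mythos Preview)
Your proof is correct and follows essentially the same approach as the paper's. The only cosmetic difference is in the final step: the paper verifies $C_n\covered C_{n+1}$ by first establishing $C_n\preceq C_{n+1}$ via the equivalence $c\preceq c'\iff\projParts_1(c)\leq\projParts_1(c')$ and $\projParts_{>1}(c)\preceq\projParts_{>1}(c')$, and then remarking that $\preceq$ between compositions of consecutive integers collapses to $\covered$; you instead do the two-case analysis on $X_{n+1}\in\{X_n,X_n+1\}$ directly, which amounts to the same thing.
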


\begin{proof}
In the first case, the fact that $a_i=0$ for $i\geq 2$ imposes that a sample of $\PComp{\wa,\wb}{n}$ is a composition of $n$ into one part, for every $n\geq 1$.
There is only one such composition, namely the trivial composition $\letter n$.
In particular $(\wa,\wb)$ is trivially admissible in that case.

Let us now focus on the second case and assume that $(\wa^+,\wb)$ is admissible and that the inequalities \eqref{eq:ineq-shift} are satisfied.
We will leverage the basic observation that for two compositions $c$ and $c'$ we have the equivalence
\begin{align}\label{eq:equivalence-covering-relation}
c\preceq c' \qquad\iff\qquad\projParts_1(c)\leq \projParts_1(c')\quad\text{and}\quad \projParts_{>1}(c)\preceq\projParts_{>1}(c').
\end{align}
Since $(\wa^+,\wb)$ is admissible, there exists a coupling  $(C^+_n)_{n\geq0}$ of the probability distributions $(\PComp{\wa^+,\wb}{n})_{n\geq0}$ such that $C^+_0\covered C^+_1\covered\dots$.
For $n\geq1$, define the image measure $\mu_n=(\projParts_1)_*\bigl(\PComp{\wa,\wb}{n}\bigr)$.
Then by \eqref{eq:law-X_n} in Proposition \ref{prop:admissibility-vs-shift}, we have for $m=1,\dots,n$,
\begin{align*}
\mu_n(k)=\frac{b_m\cdot\PFComp{\wa^+,\wb}{n-m}}{\PFComp{\wa,\wb}{n}}.
\end{align*}
The inequalities \eqref{eq:ineq-shift}, which are assumed to hold, precisely tell that for all $1\leq m\leq n$,
\begin{align*}
\mu_{n+1}(m)\leq \mu_n(m)\geq \mu_{n+1}(m+1).
\end{align*}
Hence by Corollary \ref{cor:cor-key-lemma}, there exists a coupling $(X_n)_{n\geq 1}$ of the distributions $(\mu_n)_{n\geq 1}$ such that $X_{n+1}\in\{X_n,X_n+1\}$ for every $n$ almost surely, which may and will take independent from the sequence $(C^+_n)_{n\geq0}$.
The fact that $X_{n+1}\in\{X_n,X_n+1\}$ for every $n$ yields that $(n-X_n)_{n\geq 1}$ is non-decreasing so that by construction of the coupling $(C^+_n)_{n\geq0}$ we have $C^+_{n-X_{n}}\preceq C^+_{n+1-X_{n+1}}$ for every $n\geq 1$.
For $n\geq 1$ we define the composition $C_n=\letter{X_n}\,C^+_{n-X_n}$.
Then by \eqref{eq:equivalence-covering-relation} we have
\begin{align*}
C_1\preceq C_2\preceq \dots.
\end{align*}
By Proposition \ref{prop:role-of-shifted-a}, the sequence $(C_n)_{n\geq 1}$ is indeed a coupling of the distributions $(\PComp{\wa,\wb}{n})_{n\geq 1}$.
If we finally add the deterministic composition $C_0=\emptyset$, which trivially satisfies $C_0\preceq C_1$, we obtain a coupling $(C_n)_{n\geq 0}$ of $(\PComp{\wa,\wb}{n})_{n\geq 0}$ such that $C_0\preceq C_1\preceq C_2\preceq \dots$.
Now notice that for every $n\geq0$, if $c\in\Compos{n}$ and $c'\in\Compos{n+1}$ are such that $c\preceq c'$, then $c$ is actually \textit{covered} by $c'$.
Hence we actually have $C_0\covered C_1\covered C_2\covered\dots$, and therefore $(\wa,\wb)$ is admissible.
\end{proof}

For $\ell$ in $\{0,\dots,r-1\}$, we denote by $\shift \ell \wa$ the sequence obtained by shifting $\ell$ times to the left the sequence $\wa$, that is
$(\shift\ell a_0,\shift\ell a_1,\dots)=(a_\ell,a_{\ell+1},\dots)$.

\begin{cor}\label{cor:sufficient-inequalities}
	Let $(\wa,\wb)$ be a non-degenerate weight pair and assume that $r=r(\wa)$ is finite.
	Assume that the following inequalities hold for all $n\geq 0$ and all $\ell\in\{0,\dots,r-1\}$,
	\begin{equation}\label{eq:main-inequalities}
	\frac{b_{n+1}}{b_{n}}
	\leq
	\frac{
		\PFComp{\shift{\ell}\wa,\wb}{n+1}
	}{
		\PFComp{\shift{\ell}\wa,\wb}{n}}
	\leq
	\frac{b_{n+2}}{b_{n+1}},
	\end{equation}
	where we omit the ill-defined inequality on the left when $n=0$.
	Then $(\wa,\wb)$ is admissible, as well as the pairs $(\shift\ell\wa,\wb)$ for $\ell=0,\dots,r-1$.
\end{cor}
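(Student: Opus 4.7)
The plan is to perform a downward induction on $\ell$, going from $\ell=r-1$ down to $\ell=0$, so as to apply Proposition \ref{prop:admissibility-vs-shift} in an iterated way. Recall that the shifted sequence $\shift{\ell}\wa = (a_\ell, a_{\ell+1}, \ldots)$ has support $\{0,1,\ldots,r-\ell\}$, and in particular $\shift{r-1}\wa = (a_{r-1}, a_r, 0, 0, \ldots)$ has exactly two positive entries. Hence the base case $\ell = r-1$ follows immediately from part~(1) of Proposition \ref{prop:admissibility-vs-shift}.

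For the inductive step, suppose $(\shift{\ell+1}\wa, \wb)$ is admissible for some $\ell \in \{0, \ldots, r-2\}$, and note that $(\shift\ell\wa)^+ = \shift{\ell+1}\wa$. By part~(2) of Proposition \ref{prop:admissibility-vs-shift}, it suffices to verify for every $n\geq 0$ and every $m\in\{1,\ldots,n\}$ the two inequalities
\begin{align*}
\frac{\PFComp{\shift{\ell+1}\wa,\wb}{n+1-m}}{\PFComp{\shift{\ell+1}\wa,\wb}{n-m}}
\;\leq\;
\frac{\PFComp{\shift{\ell}\wa,\wb}{n+1}}{\PFComp{\shift{\ell}\wa,\wb}{n}}
\;\geq\;
\frac{b_{m+1}}{b_m}.
\end{align*}
The key preliminary observation is that the assumed inequalities \eqref{eq:main-inequalities} at $\ell=0$ (or at any fixed $\ell$) sandwich $\PFComp{\shift\ell\wa,\wb}{n+1}/\PFComp{\shift\ell\wa,\wb}{n}$ between $b_{n+1}/b_n$ and $b_{n+2}/b_{n+1}$ for $n\geq 1$, which forces the ratio $b_{k+1}/b_k$ to be non-decreasing in $k\geq 1$. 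The right-hand inequality above is then obtained as $b_{m+1}/b_m \leq b_{n+1}/b_n \leq \PFComp{\shift{\ell}\wa,\wb}{n+1}/\PFComp{\shift{\ell}\wa,\wb}{n}$, using monotonicity for $m\leq n$ and \eqref{eq:main-inequalities} applied at $(\ell, n)$. For the left-hand inequality, applying the upper bound in \eqref{eq:main-inequalities} at $(\ell+1, n-m)$ yields $\PFComp{\shift{\ell+1}\wa,\wb}{n+1-m}/\PFComp{\shift{\ell+1}\wa,\wb}{n-m} \leq b_{n-m+2}/b_{n-m+1}$, and since $n-m+1 \leq n$ we get $b_{n-m+2}/b_{n-m+1} \leq b_{n+1}/b_n \leq \PFComp{\shift{\ell}\wa,\wb}{n+1}/\PFComp{\shift{\ell}\wa,\wb}{n}$ by the same monotonicity and lower bound in \eqref{eq:main-inequalities}.

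The main point to keep in mind is simply this monotonicity of $(b_{k+1}/b_k)_{k\geq 1}$, which is not postulated but is a free consequence of the sandwiching in \eqref{eq:main-inequalities}; once it is noted, the verification of \eqref{eq:ineq-shift} reduces to a chain of three elementary comparisons. The boundary case $n=m$ (where $n-m=0$ and only the upper half of \eqref{eq:main-inequalities} is available at index $0$) still goes through, since only the upper bound on $\PFComp{\shift{\ell+1}\wa,\wb}{n-m+1}/\PFComp{\shift{\ell+1}\wa,\wb}{n-m}$ is needed. The induction then propagates down from $\ell=r-1$ to $\ell=0$, yielding admissibility of $(\shift\ell\wa,\wb)$ for every $\ell\in\{0,\ldots,r-1\}$, and in particular of $(\wa,\wb)$ itself.
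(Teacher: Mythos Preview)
Your proof is correct and follows essentially the same approach as the paper: a downward induction on $\ell$ starting from the base case $\ell=r-1$ (handled by part~(1) of Proposition~\ref{prop:admissibility-vs-shift}), with the inductive step verifying the inequalities~\eqref{eq:ineq-shift} by chaining~\eqref{eq:main-inequalities} across consecutive values of $n$. Your explicit remark that the sandwiching forces $(b_{k+1}/b_k)_{k\geq1}$ to be non-decreasing is exactly what the paper means by ``the inequalities can be chained as $n$ increases,'' and your boundary check at $n=m$ is a nice touch.
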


\begin{proof}
Since  $\shift{r-1}\wa=(a_{r-1},a_r,0,0,\dots)$ with $a_{r-1}a_r>0$, the first case of Proposition \ref{prop:admissibility-vs-shift} gives that $(\shift{r-1}\wa,\wb)$ is admissible.
If $r=1$ we are done, so let us consider the case $r\geq2$.
Notice that for $n\geq 1$, the inequalities \eqref{eq:main-inequalities} start with $\frac{b_{n+1}}{b_n}$ and end with $\frac{b_{n+2}}{b_{n+1}}$, so that they can be chained as $n$ increases to relate the inequalities corresponding to different values of $n$ together.
We read in the inequalities we obtain in this way that for $1\leq m\leq n$ and $0\leq\ell\leq r-2$,
\begin{align*}
\frac{\PFComp{\shift{\ell+1}\wa,\wb}{n+1-m}}{\PFComp{\shift{\ell+1}\wa,\wb}{n-m}}
\leq
\frac{b_{n+2-m}}{b_{n+1-m}}
\leq
\frac{b_{n+1}}{b_{n}}
\leq
\frac{\PFComp{\shift\ell\wa,\wb}{n+1}}{\PFComp{\shift\ell\wa,\wb}{n}}
\qquad\text{and}\qquad
\frac{\PFComp{\shift\ell\wa,\wb}{n+1}}{\PFComp{\shift\ell\wa,\wb}{n}}
\geq
\frac{b_{n+1}}{b_{n}}
\geq
\frac{b_{m+1}}{b_m}.
\end{align*}
Since $(\shift{r-1}\wa,\wb)$ is admissible, the last displayed inequalities imply through repeated applications of Proposition \ref{prop:admissibility-vs-shift} that $(\shift{r-2}\wa,\wb),\dots,(\shift0\wa,\wb)$ are also admissible, as claimed.
\end{proof}

%-------------------------------------------------------------

\section{Growing conditioned BGW trees I: the non-arithmetic case}
\label{sec:application-to-trees}
\subsection{Simply generated trees}\label{subsec:defs-trees}

For convenience we will use the framework of \textit{simply generated trees} in the sense of Meir and Moon \cite{MeirMoon78}, see also Janson's survey \cite{Janson12}.
For $\w=(w_0,w_1,\dots)$ a non-negative sequence, we denote by $\omega$ the following function on trees, which we interpret as a $\sigma$-finite measure,
\begin{align}\label{eq:def-omega}
\forall \pt\in \trees,\quad
\omega(\pt)=\prod_{\u\in\pt}w_{k_\u(\pt)},
\end{align}
where we recall that $k_\u(\pt)$ denotes the number of children of a vertex $\u$ in a plane tree $\pt$.
The dependence in $\w$ is implicit in the notation $\omega$ but will always be clear from context.
The distribution of the simply generated tree with $n$ vertices associated to the weight sequence $\w$ is defined for $n\geq1$ by
\begin{align}\label{eq:def-SG}
\forall \pt\in \trees,\quad
\SimpGen{\w}n(\pt)=\frac{\omega(\pt)}{\PFtrees{\w}{n}},
\where
\quad\text{and}\quad
\PFtrees{\w}{n}=\sum_{\pt\in\trees_n}\omega(\pt).
\end{align}
This definition only makes sense when $\PFtrees{\w}{n}\neq 0$, which is the case for every $n\geq1$ under the assumption that $w_0w_1>0$, since in this case the tree $\pt\in\trees_n$ consisting of a single ancestral line with $n$ vertices has $\omega(\pt)>0$.
When $\mu$ is a probability distribution, the distribution $\SimpGen{\w}n$ corresponding to $\w=(\mu(i))_{i\geq0}$ is precisely the conditioned Bienaymé--Galton--Watson distribution $\BGWcond{\mu}{n}$, for all $n\geq 1$ such that $\PFtrees{\w}{n}\neq 0$.
Conversely, given a non-negative sequence $\w$ such that the power series $\sum_i x^iw_i$ has non-zero radius of convergence, there exist $a,b>0$ such that the definition $\mu(i)=ab^iw_i$ for all $i\geq0$ yields a probability distribution $\mu$ which satisfies $\BGWcond{\mu}{n}=\SimpGen{\w}{n}$ for all $n\geq 1$ such that $\PFtrees{\w}{n}\neq 0$.
See \cite{Janson12} for details.

In this context, Theorem \ref{thm:main-thm-BGW} can be reformulated in the following way.

\begin{thm}\label{thm:main-thm-SG}
	Let $\w=(w_0,w_1,\dots)$ be a non-negative sequence with $w_0w_1>0$.
	If $\w$ is log-concave, then the random trees with respective distributions $(\SimpGen{\w}{n})_{n\geq 1}$ can be realized as a Markov process $(\T_n)_{n\geq 1}$ in which at each step a right-leaning leaf is added.
\end{thm}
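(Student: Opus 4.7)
The plan is to reduce the construction to the admissibility machinery of Section~\ref{sec:coupling-random-compositions}. A plane tree $\pt\in\trees_n$ decomposes at its root into the composition of subtree sizes of the root's children: if the root has $r$ children with subtrees of sizes $n_1,\ldots,n_r$, we record $\letter{n_1}\cdots\letter{n_r}\composes n-1$, and the branching property gives
\[
\PFtrees{\w}{n}=\sum_{r\geq 0}w_r\sum_{\substack{n_1+\cdots+n_r=n-1\\n_i\geq1}}\prod_{i=1}^r\PFtrees{\w}{n_i}.
\]
In the notation of Section~\ref{sec:coupling-random-compositions} this is precisely $\PFComp{\wa,\wb}{n-1}$ for the weight pair $\wa=\w$ and $b_m=\PFtrees{\w}{m}$ for $m\geq 1$, and under $\SimpGen{\w}{n}$ the composition of subtree sizes at the root is distributed as $\PComp{\wa,\wb}{n-1}$, with the $r$ subtrees conditionally independent with respective $\SimpGen{\w}{n_i}$ distributions.

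The coupling is then built recursively. Given $\T_n$, I would read the composition $c_n$ at its root, sample $c_{n+1}$ covering $c_n$ from the admissible Markov chain on compositions, and either (a) append a new rightmost leaf at the root if $c_{n+1}$ adds a new part equal to $1$, or (b) apply the same construction (at size $n_j\to n_j+1$) to the $j$-th subtree of $\T_n$ if $c_{n+1}$ increments the $j$-th part. The recursion is well-defined by induction on $n$ since the subtree processed at lower levels is strictly smaller. Combined with the conditional independence of subtrees under $\SimpGen{\w}{n}$ and the fact that the composition-level kernel has the correct marginals, this yields an increasing Markov coupling of $(\SimpGen{\w}{n})_{n\geq 1}$ that adds exactly one right-leaning leaf per step.

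The remaining task, and the main obstacle, is to verify that the weight pair $(\wa,\wb)=(\w,(\PFtrees{\w}{m})_{m\geq 1})$ is admissible. By Lemma~\ref{lem:closure-admissibility}, and since truncating a log-concave sequence preserves log-concavity, we may assume $r(\w)<\infty$. Corollary~\ref{cor:sufficient-inequalities} then reduces the problem to the inequalities
\[
\frac{\PFtrees{\w}{n+1}}{\PFtrees{\w}{n}}\leq\frac{\PFComp{\shift{\ell}\w,\wb}{n+1}}{\PFComp{\shift{\ell}\w,\wb}{n}}\leq\frac{\PFtrees{\w}{n+2}}{\PFtrees{\w}{n+1}},\qquad n\geq 0,\ 0\leq\ell\leq r(\w)-1,
\]
where the middle ratio is read as a tree partition function in which the weight at the root has been shifted by $\ell$. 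This is a quantitative log-concavity-type statement for $(\PFtrees{\w}{n})_n$ and its root-shifted variants. I expect to prove it by induction on $n$, using a combinatorial argument that pairs up trees contributing to each side, with the excess weight absorbed via $w_k^2\geq w_{k-1}w_{k+1}$; this is the content of the announced Proposition~\ref{prop:checking-the-inequalities} and is the technical heart of the proof.
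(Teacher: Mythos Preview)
Your overall architecture matches the paper exactly: the link between $\SimpGen{\w}{n}$ and the random compositions $\PComp{\w,\wb}{n-1}$ with $\wb=(\PFtrees{\w}{m})_{m\geq1}$ is Proposition~\ref{prop:link-with-compositions}, the recursive construction of the tree coupling from the composition coupling is Proposition~\ref{prop:growing-comp-is-sufficient}, the truncation to finite $r(\w)$ via Lemma~\ref{lem:closure-admissibility} is precisely how the paper concludes, and the reduction to the displayed chain of inequalities is Corollary~\ref{cor:sufficient-inequalities}.

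The one point where your expectation diverges from the paper is the proof of the key inequalities (Proposition~\ref{prop:checking-the-inequalities}). You anticipate a direct combinatorial pairing of trees; the paper instead passes through the \emph{forest} partition functions $\PFforests{\w}{n,k}=\sum_{\letter{n_1}\cdots\letter{n_k}\composes n}\PFtrees{\w}{n_1}\cdots\PFtrees{\w}{n_k}$ and proves that the array $(\PFforests{\w}{n,k})_{n,k\geq1}$ is TP2 (Corollary~\ref{cor:TP2-forests-partition-function}). This follows from a production-matrix argument: the recursion $\PFforests{\w}{n,k}=\sum_{i\geq0}w_i\,\PFforests{\w}{n-1,k+i-1}$ (Lemma~\ref{lem:recursion-fnk}) propagates TP2 from the Toeplitz matrix $(w_{i-j})$ of $\w$ to the output array, by an induction on $n$ whose step is a Cauchy--Binet-type sign argument (Proposition~\ref{prop:TP2-fnk}). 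The desired inequalities then drop out by writing $\PFComp{\shift{\ell}\w,\wb}{n}=\sum_{i\geq0}w_{i+\ell}\,\PFforests{\w}{n,i}$ and applying the same sign trick once more. So the induction you guessed is there, but it lives at the level of TP2 for forests rather than a bijective pairing of individual trees; a direct pairing argument would likely be harder to organize.
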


This version may look stronger than Theorem \ref{thm:main-thm-BGW} since simply generated trees generalize Biemaymé--Galton--Watson trees, but both versions are actually strictly equivalent.
Indeed if $\w$ is log-concave, then $w_{n+1}\leq w_n w_1/w_0$ for all $n\geq 0$ so that the power series $\sum_i x^iw_i$ has non-zero radius of convergence.
As mentioned above, this entails that there exist $a,b>0$ such that setting $\mu(i)=ab^iw_i$ for all $i\geq0$ defines a probability distribution $\mu$ which satisfies $\BGWcond{\mu}{n}=\SimpGen{\w}{n}$.
But $\mu$ thus defined is log-concave and Theorem \ref{thm:main-thm-BGW} applies.
The remainder of Section \ref{sec:application-to-trees} is devoted to proving Theorem \ref{thm:main-thm-SG} and thus equivalently Theorem \ref{thm:main-thm-BGW}.

\subsection{Relationship with random compositions}\label{subsec:link-with-compositions}

For $n\geq 1$ and a tree $\pt\in\trees_n$ whose root has $i=k_\emptyset(\pt)\geq 0$ children, we write $\phi(\pt)=(\pt^{[1]},\dots,\pt^{[i]})$  for the possibly empty collection where, for $1\leq j\leq i$, we denote by $\pt^{[j]}$ the subtree of descendants of the $j$-th child of the root from left to right, that is, in Neveu's formalism:
\begin{align*}
\phi(\pt)=(\pt^{[1]},\dots,\pt^{[i]}),\quad\pt^{[j]}=\Bigl\{\u\in\U\colon j\u\in\pt \Bigr\},\quad  1\leq j\leq i,\quad i=k_\emptyset(\pt).
\end{align*}
Note that the mapping $\phi$ is a bijection whose inverse mapping associates to any finite sequence of plane trees $(T^1,\dots,T^i)$, $i\geq0$, the plane tree
\begin{align*}
\phi^{-1}(T^1,\dots,T^i)=\{\emptyset\}\cup\bigcup_{1\leq j\leq i}jT^j,
\end{align*}
where $j V=\{j\u\colon\u\in V\}$ for every $j\geq1$ and every $V\subset\U$.
The preceding allows to associate to any plane tree $\pt$ a composition 
\begin{align*}
c(\pt)=\letter{n}_1 \cdots  \letter{n}_i
\where
n_j=\#\,\pt^{[j]},\quad 1\leq j\leq i.
\end{align*}
Notice that $c(\pt)$ is a composition of ${n-1}$ since the sets $j\pt^{[j]}$, $1\leq j\leq i$, partition the non-root vertices of $\pt$.
Consistently, if $\pt$ is the tree with one vertex then $c(\pt)=\emptyset$ since in this case the collection $(\pt^{[1]},\dots,\pt^{[i]})$ is empty.

\begin{prop}\label{prop:link-with-compositions}
	Let $\w=(w_0,w_1,\dots)$ be a non-negative sequence which has no internal zeros and such that $w_0w_1>0$.
	Set $\wb=(\PFtrees{\w}{n})_{n\geq 1}$.
	Then $(\w,\wb)$ is a non-degenerate weight pair.
	For $n\geq 1$ we have $\PFtrees\w n=\PFComp{\w,\wb}{n-1}$, and for $\pt\in\trees_n$ if we write $c(\pt)=\letter{n}_1 \cdots  \letter{n}_i$ and $\phi(\pt)=(\pt^{[1]},\dots,\pt^{[i]})$, then
	\begin{align}\label{eq:description-laws-trees-compositions}
	\SimpGen{\w}{n}(\pt)=\PComp{\w,\wb}{n-1}\bigl(c(\pt)\bigr)\cdot\left(\SimpGen{\w}{n_1}\bigl(\pt^{[1]}\bigr)\cdots \SimpGen{\w}{n_i}\bigl(\pt^{[i]}\bigr)\right).
	\end{align}
%	That is, if $\T_n$ is distributed according to $\SimpGen\w n$, $n\geq1$, then the random composition $c(\T_n)$ is distributed according to $\PComp{\w,\wb}{n-1}$ and conditionally on $c(\T_n)=\letter n_1\cdots\letter n_i$, the trees $\phi(\T_n)=(\pt^{[1]},\dots,\pt^{[i]})$ are independent with distributions $\SimpGen{\w}{n_1},\dots,\SimpGen{\w}{n_i}$ respectively.
\end{prop}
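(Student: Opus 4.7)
The plan is to exploit the recursive/multiplicative structure of the weight $\omega$ under the root decomposition $\phi$, matching it term by term with the structure of $\PComp{\w,\wb}{n-1}$ once we set $b_n = \PFtrees{\w}{n}$.

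First I would dispose of the non-degeneracy claim. The sequence $\w$ has $w_0w_1>0$ and no internal zeros by hypothesis, so it satisfies the first condition for non-degenerate weight pairs. For the second condition, I need to check that $\PFtrees{\w}{n}>0$ for every $n\geq1$. This is witnessed by the ``line tree'' $L_n=\{\emptyset,1,11,\dots,1^{n-1}\}\in\trees_n$, which contributes $\omega(L_n)=w_1^{n-1}w_0>0$ to $\PFtrees{\w}{n}$.

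Next I would prove the partition-function identity $\PFtrees{\w}{n}=\PFComp{\w,\wb}{n-1}$. The key observation is that $\phi$ is a bijection between $\trees_n$ (restricted to trees with $k_\emptyset(\pt)=i$) and the set of sequences $(\pt^{[1]},\dots,\pt^{[i]})$ of plane trees whose sizes form a composition of $n-1$ into $i$ positive parts; moreover, the definition \eqref{eq:def-omega} gives the multiplicative factorization
\begin{align*}
\omega(\pt)=w_i\cdot\omega(\pt^{[1]})\cdots\omega(\pt^{[i]}).
\end{align*}
Summing over $\pt\in\trees_n$ by first splitting on $i=k_\emptyset(\pt)$ and then on the sizes $n_1,\dots,n_i$ of the subtrees, and summing internally over each $\pt^{[j]}\in\trees_{n_j}$, yields
\begin{align*}
\PFtrees{\w}{n}=\sum_{i\geq0}\,\sum_{\letter{n_1}\cdots\letter{n_i}\composes n-1}w_i\cdot\PFtrees{\w}{n_1}\cdots\PFtrees{\w}{n_i},
\end{align*}
which is exactly $\PFComp{\w,\wb}{n-1}$ by \eqref{eq:def-proba-compositions} with $a_i=w_i$ and $b_m=\PFtrees{\w}{m}$. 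The $n=1$ case matches the convention \eqref{eq:def-proba-compositions-2} via $\PFtrees{\w}{1}=w_0=a_0=\PFComp{\w,\wb}{0}$.

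Finally, the distributional factorization \eqref{eq:description-laws-trees-compositions} is a direct consequence: dividing both sides of the multiplicative identity for $\omega(\pt)$ by $\PFtrees{\w}{n}=\PFComp{\w,\wb}{n-1}$ and inserting the factors $\PFtrees{\w}{n_j}/\PFtrees{\w}{n_j}$, we obtain
\begin{align*}
\SimpGen{\w}{n}(\pt)
=\frac{w_i\,\PFtrees{\w}{n_1}\cdots\PFtrees{\w}{n_i}}{\PFComp{\w,\wb}{n-1}}\,\prod_{j=1}^{i}\frac{\omega(\pt^{[j]})}{\PFtrees{\w}{n_j}}
=\PComp{\w,\wb}{n-1}\bigl(c(\pt)\bigr)\,\prod_{j=1}^{i}\SimpGen{\w}{n_j}(\pt^{[j]}),
\end{align*}
which is the claimed identity. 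I do not anticipate any real obstacle here: the whole content of the proposition is that the definition of $\PComp{\w,\wb}{n-1}$ is tailored precisely to encode the ``law of the composition of subtree sizes at the root'' under $\SimpGen{\w}{n}$, together with the conditional independence of the subtrees; the only care needed is handling the degenerate $n=1$ (empty composition) case cleanly via the empty-product convention.
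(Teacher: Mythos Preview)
Your proof is correct and follows essentially the same approach as the paper: both use the multiplicative factorization $\omega(\pt)=w_i\,\omega(\pt^{[1]})\cdots\omega(\pt^{[i]})$ under the root decomposition $\phi$, together with the observation that $\PFtrees{\w}{n}>0$ via the line tree. The only cosmetic difference is ordering: you first sum the factorization over $\trees_n$ to obtain $\PFtrees{\w}{n}=\PFComp{\w,\wb}{n-1}$ and then divide to get \eqref{eq:description-laws-trees-compositions}, whereas the paper first writes the distributional identity up to the constant $\PFComp{\w,\wb}{n-1}/\PFtrees{\w}{n}$ and then sums to see that this constant equals~$1$.
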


\begin{proof}	
	Since $w_0w_1>0$, the sequence $\wb$ is positive as mentioned after \eqref{eq:def-SG}.
	Since additionally $\w$ has no internal zeros, the weight pair $(\w,\wb)$ is indeed non-degenerate.
	Let $n\geq 1$ and let $\pt\in\trees_n$ and $\phi(\pt)=(\pt^{[1]},\dots,\pt^{[i]})$.
	The vertices of $\pt$ may be partitioned into the root vertex (which has $i$ children) on the one hand, and on the other hand the elements of the sets $j\pt^{[j]}$, $1\leq j\leq i$.
	Hence, by the product form \eqref{eq:def-omega} for $\omega(\pt)$, and the easy verification that for $1\leq j\leq i$ we have $k_{j\u}(\pt)=k_\u(\pt^{[j]})$, we get:
	\begin{align}\label{eq:proof-link-compos:product-form}
	\omega(\pt)=w_i\cdot \omega(\pt^{[1]})\cdots\omega(\pt^{[i]}).
	\end{align}
	In particular if $c(\pt)=\letter{n}_1 \cdots  \letter{n}_i$ then by the definition \eqref{eq:def-SG}, the latter rewrites as follows:
	\begin{align*}
	\SimpGen{\w}{n}(\pt)
		&= \frac{w_i\cdot \omega(\pt^{[1]})\cdots\omega(\pt^{[i]})}{\PFtrees{\w}{n}}\\
		&= \frac{w_i\cdot(\PFtrees{\w}{n_1}\cdots\PFtrees\w{n_i})}{\PFtrees{\w}{n}}\cdot
		\SimpGen{\w}{n_1}\bigl(\pt^{[1]}\bigr)\cdots \SimpGen{\w}{n_i}\bigl(\pt^{[i]}\bigr),
	\end{align*}
	and thus by using the definition of $\PComp{\w,\wb}{n-1}$ in \eqref{eq:def-proba-compositions},
	\begin{align}\label{eq:proof-link-compos:product-form-rewritten}
	\SimpGen{\w}{n}(\pt)
		= \frac{\PFComp{\w,\wb}{n-1}}{\PFtrees{\w}{n}}\cdot
		\ProbComp{\w,\wb}{n-1}{\letter{n}_1\cdots \letter{n}_i}\cdot\SimpGen{\w}{n_1}\bigl(\pt^{[1]}\bigr)\cdots \SimpGen{\w}{n_i}\bigl(\pt^{[i]}\bigr).
	\end{align}
	Since the measures $(\SimpGen{\w}{k})_{k\geq1}$ are probability measures, we get for all $c\composes n-1$, by summing over all $\pt\in\trees_n$ such that $c(\pt)=c$,
	\begin{align*}
		\SimpGen{\w}{n}(\{\pt\colon c(\pt)=c\})
	= \frac{\PFComp{\w,\wb}{n-1}}{\PFtrees{\w}{n}}\cdot
	\ProbComp{\w,\wb}{n-1}{c},
	\end{align*}
	and by summing again on all $c\composes n-1$ and using that $\SimpGen{\w}{n}$ and $\PComp{\w,\wb}{n-1}$ are also probability measures we find $\PFtrees\w n=\PFComp{\w,\wb}{n-1}$, and \eqref{eq:proof-link-compos:product-form-rewritten} becomes \eqref{eq:description-laws-trees-compositions}.
\end{proof}

The following proposition reduces the proof of Theorem \ref{thm:main-thm-SG} to verifying that $(\w,\wb)$ is an admissible weight pair.
This is the counterpart in our setting to \cite[Proposition 2.1]{LuczakWinkler04}.

\begin{prop}\label{prop:growing-comp-is-sufficient}
	Let $\w=(w_0,w_1,\dots)$ be a non-negative sequence such that $w_0w_1>0$ and let $\wb=(\PFtrees{\w}{n})_{n\geq 1}$.
	If $(\w,\wb)$ is an admissible weight pair then the random trees with respective distributions $(\SimpGen{\w}{n})_{n\geq 1}$ can be realized as a Markov process $(\T_n)_{n\geq 1}$ such that $\T_1\subset\T_2\subset\T_3\subset\dots$.
\end{prop}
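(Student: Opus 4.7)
The plan is to construct the Markov chain $(\T_n)_{n\geq 1}$ by recursion on the number of vertices, using the admissible coupling of compositions to handle growth at the root level and recursing into the root-subtrees to handle interior growth. The factorization~\eqref{eq:description-laws-trees-compositions} from Proposition~\ref{prop:link-with-compositions} is what couples these two levels cleanly.

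Concretely, let $(C_k)_{k\geq 0}$ be an admissible coupling of the distributions $(\PComp{\w,\wb}{k})_{k\geq 0}$, and write $q_k(c,c')$ for its conditional transition probabilities from $c\in\Compos{k}$ to $c'\in\Compos{k+1}$. I will define a transition kernel $K$ on $\trees$ by induction on $n=|\pt|$. Given $\pt\in\trees_n$ with $c(\pt)=\letter{n_1}\cdots\letter{n_i}$ and $\phi(\pt)=(\pt^{[1]},\dots,\pt^{[i]})$, sampling from $K(\pt,\cdot)$ proceeds as follows: first sample a composition $C'$ covering $c(\pt)$ with distribution $q_{n-1}(c(\pt),\cdot)$; if $C'=c(\pt)\,\letter{1}$, return $\pt\cup\{(i+1)\}$; if instead $C'$ increments the $j$-th part of $c(\pt)$, invoke the already-defined kernel on trees of size $n_j$ to sample $\widetilde\pt^{[j]}\sim K(\pt^{[j]},\cdot)$ and return $\phi^{-1}(\pt^{[1]},\dots,\widetilde\pt^{[j]},\dots,\pt^{[i]})$. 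By construction the output contains $\pt$ with exactly one new vertex, and this new vertex is a right-leaning leaf: directly in the first case, and in the second because a right-leaning leaf of $\pt^{[j]}$ translates to a right-leaning leaf of $\pt$ via the identity $k_{j\u}(\pt)=k_\u(\pt^{[j]})$.

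It then remains to verify, again by induction on $n$, that $K$ preserves the family $(\SimpGen{\w}{n})_{n\geq 1}$. If $\pt\sim\SimpGen{\w}{n}$, then~\eqref{eq:description-laws-trees-compositions} states that $c(\pt)\sim\PComp{\w,\wb}{n-1}$ and, conditionally on $c(\pt)=\letter{n_1}\cdots\letter{n_i}$, the subtrees $\pt^{[j]}$ are independent with laws $\SimpGen{\w}{n_j}$. The composition step of $K$ then makes the pair $(c(\pt),c(\widetilde\pt))$ distributed like $(C_{n-1},C_n)$, so that $c(\widetilde\pt)\sim\PComp{\w,\wb}{n}$. Conditionally on this updated composition, the unchanged subtrees retain their (correct) distributions, a freshly added rightmost singleton has law $\SimpGen{\w}{1}$ trivially, and a grown subtree has law $\SimpGen{\w}{n_j+1}$ by the inductive hypothesis applied to $K$. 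Reapplying~\eqref{eq:description-laws-trees-compositions} then yields $\widetilde\pt\sim\SimpGen{\w}{n+1}$. Starting from $\T_1=\{\emptyset\}$ and iterating $K$ therefore produces the required Markov process.

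The construction is essentially bookkeeping driven by the two decompositions already in place (the composition at the root versus the recursive root-decomposition of a plane tree), and no genuinely hard step is involved. The only points that require care are matching the covering relation $\covered$ on compositions with the addition of a single right-leaning leaf, and organizing the recursion so that the inductive hypothesis on subtrees applies uniformly — both of which are guaranteed by the design of the model in Section~\ref{sec:coupling-random-compositions} and by Proposition~\ref{prop:link-with-compositions}.
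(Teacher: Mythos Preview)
Your proof is correct and follows essentially the same approach as the paper's: both build the transition kernel inductively by letting the root composition evolve according to the admissible coupling and then updating the affected subtree via the already-constructed kernel, with Proposition~\ref{prop:link-with-compositions} supplying the needed factorization. The only cosmetic difference is that the paper first builds the two-step coupling $(\T,\T')$ from independent copies of the chain on subtrees and then extracts the conditional kernel, whereas you define the kernel recursively and verify it pushes $\SimpGen{\w}{n}$ to $\SimpGen{\w}{n+1}$; these are equivalent formulations of the same construction.
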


\begin{proof}
Our Markov process is initialized at $\T_1$ the deterministic  tree with one vertex.
We may now construct recursively the transition probabilities $p_n(\pt,\pt')$ for $n\geq1$ and $(\pt,\pt')\in\trees_n\times\trees_{n+1}$.
It is actually sufficient to construct $p_n(\pt,\pt')$ when $\pt$ and $\pt'$ are respectively in the supports of $\SimpGen{\w}{n}$ and $\SimpGen{\w}{n+1}$.

Let $n\geq 1$ and assume that $p_1,\dots,p_{n-1}$ have been constructed, so that we obtain a Markov process $(\T_1,\dots,\T_n)$ which is a coupling of $(\SimpGen{\w}{m})_{1\leq m\leq n}$ such that $\T_1\subset\T_2\subset\dots\subset\T_n$.
We take independent copies $(\T_{m}^{(j)})_{1\leq m\leq n}$ indexed by $j\geq1$ of this coupling.
By the assumption that $(\w,\wb)$ is an admissible weight pair, there exists a coupling $(C_n, C_{n+1})$ of the distributions $\PComp{\w,\wb}{n}$ and $\PComp{\w,\wb}{n+1}$ such that $C_n\covered C_{n+1}$.
We choose such a coupling, independently of previously constructed random variables.
We write $C_n=\letter n_1\cdots \letter n_i$ and $C_{n+1}=\letter n'_1\cdots \letter n'_{i'}$ and we set $\T=\phi^{-1}(\T_{n_1}^{(1)},\dots,\T_{n_i}^{(i)})$ and $\T'=\phi^{-1}(\T_{n'_1}^{(1)},\dots,\T_{n'_{i'}}^{(i')})$, that is more explicitly:
\begin{align}\label{eq:recursive-construction-coupling}
	\T=\{\emptyset\}\cup \bigcup_{1\leq j\leq i}j\T_{n_j}^{(j)},
	&&
	\T'=\{\emptyset\}\cup \bigcup_{1\leq j\leq i'}j\T_{n'_j}^{(j)}.
\end{align}
Since $C_n\covered C_{n+1}$, we have $i\leq i'$, as well as $n_j\leq n'_j$ for every $1\leq j\leq i$.
This observation, combined with the fact that $(\T_{m}^{(j)})_{1\leq m\leq n}$ is increasing with respect to inclusion for every $j\geq1$, gives that $\T\subseteq\T'$ using the expressions~\eqref{eq:recursive-construction-coupling}.

Hence, we constructed two random plane trees $\T$ and $\T'$ satifying $\T\subset\T'$, and such that $\T$ and $\T'$ have respective distributions $\SimpGen{\w}{n}$ and $\SimpGen{\w}{n+1}$ by Proposition~\ref{prop:link-with-compositions}.
To complete the inductive construction of the transition probabilities, it suffices to set $p_n(\pt,\pt')=\condProb{\T'=\pt'}{\T=\pt}$ for all plane trees $\pt$ and $\pt'$ in the support of $\SimpGen{\w}{n}$ and $\SimpGen{\w}{n+1}$ respectively.
\end{proof}

\subsection{Log-concavity and the Toeplitz TP2 property}

We will make use of the notion of \textit{total positivity of order 2}.

\begin{defin}
	Let $A=(A_{i,j})_{i,j\geq0}$ be an array of nonnegative numbers. It is said to be \textit{totally positive of order 2} (TP2), if all its $2\times 2$ minors are non-negative, that is for all $i\leq i'$ and all $j\leq j'$, we have $A_{i,j}A_{i',j'}\geq A_{i,j'}A_{i',j}$.
\end{defin}

Proposition \ref{prop:Toeplitz-TP2} below states the well-known fact that the Toeplitz matrix of a log-concave sequence is TP2.

\begin{prop}\label{prop:Toeplitz-TP2}
	Let $\x=(x_i)_{i\geq0}$ be a non-negative sequence.
	If it is log-concave then, then its Toeplitz matrix $(x_{i-j})_{i,j\geq 0}$  is TP2, that is we have:
	\begin{align*}
	\forall {i}\leq{i'},\quad \forall {j}\leq{j'},\qquad
	x_{{i}-{j}} x_{{i'}-{j'}}\geq x_{{i}-{j'}} x_{{i'}-{j}},
	\end{align*}
	with the convention that $x_{-1}=x_{-2}=\dots=0$.
\end{prop}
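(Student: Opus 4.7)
The plan is to reduce the four-term inequality to a single-variable monotonicity statement about the ratios $x_{k+1}/x_k$. Setting $p=i-j'$, $q=i-j$, $r=i'-j'$, $s=i'-j$, the hypotheses $i\leq i'$ and $j\leq j'$ translate into $p\leq q\leq s$ and $p\leq r\leq s$, together with the balance relation $q+r=p+s$. The goal becomes the symmetric inequality $x_qx_r\geq x_px_s$, and by possibly swapping $q$ and $r$ I may assume $q\leq r$, so I can write $q=p+\alpha$, $r=p+\beta$, $s=p+\alpha+\beta$ with $0\leq\alpha\leq\beta$.

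Next I would dispose of the boundary cases where one of the indices is negative or outside the support. If $p<0$ then $x_p=0$ by the extension convention and the right-hand side vanishes; similarly if $x_s=0$ the inequality is trivial. Otherwise $x_p>0$ and $x_s>0$, and since $p\leq q,r\leq s$, the no-internal-zeros condition forces $x_q>0$ and $x_r>0$. So the entire block $x_p,x_{p+1},\dots,x_s$ is strictly positive, and I can work with ratios freely.

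The main step is then the classical observation that log-concavity together with no internal zeros implies that the sequence of ratios $(x_{k+1}/x_k)$ is non-increasing on the support: indeed $x_k^2\geq x_{k-1}x_{k+1}$ rearranges to $x_{k+1}/x_k\leq x_k/x_{k-1}$ whenever the denominators are non-zero. Applying this shift by $\beta$ yields $x_{p+k+1}/x_{p+k}\geq x_{p+\beta+k+1}/x_{p+\beta+k}$ for each $0\leq k\leq\alpha-1$, and telescoping the product gives
\begin{equation*}
\frac{x_{p+\alpha}}{x_p}\;=\;\prod_{k=0}^{\alpha-1}\frac{x_{p+k+1}}{x_{p+k}}\;\geq\;\prod_{k=0}^{\alpha-1}\frac{x_{p+\beta+k+1}}{x_{p+\beta+k}}\;=\;\frac{x_{p+\alpha+\beta}}{x_{p+\beta}},
\end{equation*}
which after cross-multiplying is exactly $x_qx_r\geq x_px_s$.

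I do not expect any real obstacle here; the only subtlety worth flagging is the careful handling of zeros, which is precisely where the no-internal-zeros part of the log-concavity definition is used to ensure that the ratio argument makes sense on the whole block from $p$ to $s$.
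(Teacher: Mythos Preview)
Your argument is correct and matches the paper's approach: both reduce the $2\times 2$ minor inequality to the monotonicity of the ratios $x_{k+1}/x_k$ on the support and conclude by telescoping a product of such ratios, with the no-internal-zeros condition ensuring that all denominators are positive on the relevant block. One tiny expository slip: your ``Otherwise $x_p>0$'' does not quite follow from negating only the cases $p<0$ and $x_s=0$; you should also dispose of the (equally trivial) case $p\geq 0$ with $x_p=0$, or simply phrase the boundary case as ``if $x_p x_s=0$ the right-hand side vanishes'', as the paper does.
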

%{\color{red}
\begin{proof}
	Let $0\leq i<i'$ and $0\leq {j}<{j'}$.
	If $x_{{i}-{j'}} x_{{i'}-{j}}=0$ then the inequality is trivial.
	Otherwise the support of $\x$ contains $\{i-j',i-j'+1,\dots,i'-j\}$, since $\x$ has no internal zeros, even after extending with $x_{-1}=x_{-2}=\dots=0$.
	For $\ell\in\{i,i+1,\dots,i'\}$ we have
	\begin{align*}
	\frac{x_{\ell-{j}}}{x_{\ell-{j'}}}=
	\left(\frac{x_{\ell-{j'}+1}}{x_{\ell-{j'}+0}}\right)
	\cdot\left(\frac{x_{\ell-{j'}+2}}{x_{\ell-{j'}+1}}\right)\cdots
	\left(\frac{x_{\ell-{j'}+({j'}-{j})}}{x_{\ell-{j'}+({j'}-{j}-1)}}\right).
	\end{align*}
	Notice that all indices in the last display belong to $\{i-j',i-j'+1,\dots,i'-j\}$, hence the denominators are non-zero.
	By log-concavity of $\x$, each factor in the right-hand side is non-increasing in $\ell\in\{i,i+1,\dots,i'\}$.
	Hence we have
	$
	\frac{x_{{i}-{j}}}{x_{{i}-{j'}}}
	\geq \frac{x_{{i'}-{j}}}{x_{{i'}-{j'}}}
	$
	since ${i}<{i'}$.
\end{proof}

\subsection{Checking the inequalities}
\label{subsec:checking-inequalities}

Consider $\w=(w_0,w_1,\dots)$ a non-negative sequence with $w_0w_1>0$.
For $n,k\geq 1$, we denote by $\forests_{n,k}$ the set of ordered forests $F=(\pt_1,\dots,\pt_k)$ made of $k$ plane trees $\pt_1,\dots,\pt_k$ having $n$ vertices \textit{in total}, in the sense that $\sum_j\#\,\pt_j=n$.
Given a forest $F=(\pt_1,\dots,\pt_k)\in\forests_{n,k}$, we write $\omega(F)=\omega(\pt_1)\omega(\pt_2)\cdots\omega(\pt_k)$.
Then, for $n,k\geq 1$, we may set
\begin{align}\label{eq:def-PFforests}
\PFforests{\w}{n,k}=\sum_{\letter n_1 \cdots \letter n_k \composes n}\PFtrees{\w}{ n_1}\cdots \PFtrees{\w}{ n_k}
	=\sum_{F\in\forests_{n,k}} \omega(F).
\end{align}
Note in particular that $\PFforests{\w}{n,k}=0$ when $k>n$.

\begin{lemma}\label{lem:recursion-fnk}
	The array $(\PFforests{\w}{n,k})_{n,k\geq 1}$ satisfies the recursion
	\begin{align}\label{eq:recursion}
	\PFforests{\w}{n,k}
		=\sum_{i\geq 0}w_i \,\PFforests{\w}{n-1,k+i-1},
	\qquad n\geq 2,\qquad k\geq 1,
	\end{align}
	where we make the convention that $\PFforests{\w}{n,0}=0$ for all $n\geq 1$.
\end{lemma}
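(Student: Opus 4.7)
The plan is to prove the recursion \eqref{eq:recursion} by a direct bijective decomposition of a forest $F=(\pt_1,\dots,\pt_k)\in\forests_{n,k}$ at the root of its first tree $\pt_1$. I will classify forests according to the number $i=k_\emptyset(\pt_1)\geq 0$ of children of the root of $\pt_1$, and show that for every fixed $i\geq 0$, the map
\begin{align*}
\Phi_i\colon F=(\pt_1,\pt_2,\dots,\pt_k)\longmapsto F'=\bigl(\pt_1^{[1]},\dots,\pt_1^{[i]},\pt_2,\dots,\pt_k\bigr)
\end{align*}
(where $\phi(\pt_1)=(\pt_1^{[1]},\dots,\pt_1^{[i]})$ denotes the sequence of subtrees of descendants of the root of $\pt_1$, as defined in Section~\ref{subsec:link-with-compositions}) restricts to a bijection between $\{F\in\forests_{n,k}\colon k_\emptyset(\pt_1)=i\}$ and $\forests_{n-1,k+i-1}$ whenever $k+i-1\geq 1$. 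The inverse map takes a forest $F'$ of $k+i-1$ trees and replants its first $i$ trees as the subtrees of descendants of a newly-created root, leaving the remaining $k-1$ trees untouched; this is well-defined since $\phi$ itself is a bijection.

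Under $\Phi_i$, the product identity~\eqref{eq:proof-link-compos:product-form} already established in the proof of Proposition~\ref{prop:link-with-compositions}, namely $\omega(\pt_1)=w_i\,\omega(\pt_1^{[1]})\cdots\omega(\pt_1^{[i]})$, immediately gives the weight transformation
\begin{align*}
\omega(F)=w_i\cdot\omega(F').
\end{align*}
Summing over $F'\in\forests_{n-1,k+i-1}$ and using the definition~\eqref{eq:def-PFforests} yields
\begin{align*}
\sum_{\substack{F\in\forests_{n,k}\\k_\emptyset(\pt_1)=i}}\omega(F)=w_i\,\PFforests{\w}{n-1,k+i-1}.
\end{align*}
Summing over $i\geq 0$ then produces the right-hand side of \eqref{eq:recursion}.

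The only bookkeeping to check is the boundary index $k+i-1=0$, which can only arise when $k=1$ and $i=0$. In that case the forest $F=(\pt_1)$ with $k_\emptyset(\pt_1)=0$ consists of the unique one-vertex tree, forcing $n=1$; since we assume $n\geq 2$, this contribution is empty, in agreement with the convention $\PFforests{\w}{n-1,0}=0$. For any other $i\geq 0$ we have $k+i-1\geq 1$ and the bijection $\Phi_i$ is genuine. I expect no real obstacle here: once the decomposition at the root of $\pt_1$ is set up, the recursion follows from the multiplicative structure of $\omega$ and a careful reading of the boundary convention.
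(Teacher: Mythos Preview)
Your proof is correct and follows essentially the same approach as the paper: decompose $F\in\forests_{n,k}$ at the root of its first tree via $\phi$, use the multiplicative identity $\omega(\pt_1)=w_i\,\omega(\pt_1^{[1]})\cdots\omega(\pt_1^{[i]})$ to obtain $\omega(F)=w_i\,\omega(F')$, and sum over the resulting bijection with $\bigsqcup_{i\geq0}\forests_{n-1,k+i-1}$. Your explicit treatment of the boundary case $k+i-1=0$ is a nice addition that the paper leaves implicit.
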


\begin{proof}
	Let $n\geq 2$ and $k\geq1$, and consider a forest of $k$ trees $F=(\pt_1,\dots \pt_k)$ having $n$ vertices in total.
	Consider the forest $F'=(\pt_1^{[1]},\cdots,\pt_1^{[i]},\pt_2,\cdots,\pt_k)$, where $\phi(\pt_1)=(\pt_1^{[1]},\cdots,\pt_1^{[i]})$ using the notation of section \ref{subsec:link-with-compositions}.
	As in \eqref{eq:proof-link-compos:product-form}, we get:
	\begin{align*}
	\omega(F)
			&=\omega(\pt_1)\omega(\pt_2)\cdots\omega(\pt_k)\\
			&=w_i\cdot \omega(\pt^{[1]})\cdots\omega(\pt^{[i]})\omega(\pt_2)\cdots\omega(\pt_k)\\
			&=w_i\cdot\omega(F').
	\end{align*}
	The application $F\mapsto (i, F')$ maps $\forests_{n,k}$ onto $\bigsqcup_{i\geq 0}\{i\}\times\forests_{n-1,k+i-1}$ bijectively, since $(i,T_1,\dots,T_{k+i-1})\mapsto(\phi^{-1}(T_1,\dots,T_i),T_{i+1},\dots,T_{k+i-1})$ is its inverse mapping.
	Therefore,
	\begin{align*}
	\PFforests{\w}{n,k}
	=\sum_{F\in\forests_{n,k}}\omega(F)
	=\sum_{\substack{i\geq 0\\ F'\in\forests_{n-1,k+i-1}}}w_i\cdot\omega(F')
	=\sum_{i\geq 0}w_i \,\PFforests{\w}{n-1,k+i-1},
	\end{align*}
	which is the claimed recursion.	
\end{proof}

In the theory of total positivity and production matrices, the recursion \eqref{lem:recursion-fnk} implies the following corollary, which is not new since it is a direct consequence of a result by Pétréolle, Sokal and Zhu \cite[Theorem 9.15]{PetreolleSokalZhu23}.

\begin{cor}\label{cor:TP2-forests-partition-function}
	If $\w$ is log-concave, then the array $(\PFforests{\w}{n,k})_{n,k\geq1}$ is TP2, that is
	\begin{align}\label{eq:ineq-forests}
	\forall 1\leq {n}\leq{n'},\quad \forall 1\leq{k}\leq{k'},\qquad
	\PFforests{\w}{n,k}\,\PFforests{\w}{n',k'}
	\geq \PFforests{\w}{n,k'}\,\PFforests{\w}{n',k}.
	\end{align}
\end{cor}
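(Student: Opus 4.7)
The strategy is to recognize the recursion of Lemma~\ref{lem:recursion-fnk} as iterated multiplication by a TP2 \emph{production matrix} and then propagate the TP2 property via the Cauchy--Binet formula. This is precisely the specialization to our setting of the Pétréolle--Sokal--Zhu framework, but it can be written out in just a few lines.

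To set up, I would introduce the infinite matrix $M = (M_{k, j})_{k, j \geq 1}$ defined by $M_{k, j} = w_{j - k + 1}$, with the convention $w_\ell = 0$ for $\ell < 0$. With the convention $\PFforests{\w}{n, 0} = 0$ used in Lemma~\ref{lem:recursion-fnk}, the recursion rewrites as $\PFforests{\w}{n, k} = \sum_{j \geq 1} M_{k, j}\, \PFforests{\w}{n-1, j}$ for $n \geq 2$. Combined with the initial condition $\PFforests{\w}{1, k} = w_0\, \indic{k = 1}$, iterating yields the closed form
\[
\PFforests{\w}{n, k} = w_0 \cdot (M^{n-1})_{k, 1}, \qquad n, k \geq 1.
\]

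Next I would check that $M$ itself is TP2. For $1 \leq k < k'$ and $1 \leq j < j'$, the corresponding $2 \times 2$ minor of $M$ equals $w_a w_b - w_c w_d$ with $a = j - k + 1$, $b = j' - k' + 1$, $c = j' - k + 1$, $d = j - k' + 1$. One checks directly that $a + b = c + d$, $d \leq \min(a, b)$, and $\max(a, b) \leq c$, so applying Proposition~\ref{prop:Toeplitz-TP2} with the choice $(i, i', j, j') = (b, c, 0, c - a)$ yields $w_a w_b \geq w_c w_d$, establishing the TP2-ness of $M$.

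Finally, products of TP2 matrices are TP2 by Cauchy--Binet, so every power $M^m$ ($m \geq 0$) is TP2. For $1 \leq n \leq n'$ and $1 \leq k \leq k'$, set $P = M^{n-1}$ and $Q = M^{n' - n}$, and expand $(M^{n' - 1})_{k, 1} = \sum_{d \geq 1} P_{k, d} Q_{d, 1}$ together with its analogue for $k'$ to obtain
\begin{align*}
\tfrac{1}{w_0^2}\bigl(\PFforests{\w}{n, k}\PFforests{\w}{n', k'} - \PFforests{\w}{n, k'}\PFforests{\w}{n', k}\bigr) = \sum_{d \geq 1} Q_{d, 1}\bigl(P_{k, 1} P_{k', d} - P_{k', 1} P_{k, d}\bigr),
\end{align*}
and each summand is non-negative because $Q_{d, 1} \geq 0$ and the TP2-ness of $P$ gives $P_{k, 1} P_{k', d} \geq P_{k', 1} P_{k, d}$ whenever $k \leq k'$ and $d \geq 1$. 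The only real subtlety is the careful bookkeeping around the boundary conventions $w_\ell = 0$ for $\ell < 0$ and $\PFforests{\w}{n, 0} = 0$ when setting up the matrix form; once these are in place the argument runs without further obstacle.
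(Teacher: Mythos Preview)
Your proof is correct. The argument is sound: the closed form $\PFforests{\w}{n,k}=w_0(M^{n-1})_{k,1}$ follows from the recursion and the initial condition; the matrix $M$ is TP2 by the Toeplitz argument (and one should note that when $b<0$ or $a<0$ one has $d<0$ as well, so the minor is trivially non-negative and no appeal to Proposition~\ref{prop:Toeplitz-TP2} is needed in that case); Cauchy--Binet applies since every sum involved in forming $(M^m)_{k,j}$ is finite (the Hessenberg shape $M_{k,j}=0$ for $j<k-1$ forces $(M^m)_{\ell,d}=0$ whenever $\ell>d+m$); and the final expansion with $P=M^{n-1}$, $Q=M^{n'-n}$ cleanly reduces the desired inequality to the TP2 property of $P$.

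The paper takes a closely related but differently packaged route. Rather than passing through the closed form in matrix powers and invoking Cauchy--Binet, it isolates a general statement (Proposition~\ref{prop:TP2-fnk}): if an array $F$ satisfies a recursion $F_{n,k}=\sum_i A_{i,k}F_{n-1,i-1}$ with $A$ TP2, then $F$ is TP2. The proof is a direct induction on $n$, the inductive step being the same symmetrization trick you use implicitly in your final expansion. Your version is a bit slicker for this one corollary, since Cauchy--Binet is a black box; the paper's version has the advantage that Proposition~\ref{prop:TP2-fnk} is stated once and then reused verbatim in the arithmetic case (Section~\ref{sec:arithmetic-case}, Proposition~\ref{prop:TP2-forests-partition-function-new}), where the production matrix is no longer Toeplitz and the closed-form-in-powers viewpoint would need to be re-derived.
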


This will be a direct consequence of the following statement, which is a specialization of \cite[Theorems 9.4]{PetreolleSokalZhu23}. We include a self-contained proof for the convenience of the reader.

\begin{prop}\label{prop:TP2-fnk}
	Let $A=(A_{i,k})_{i,k\geq0}$ and $F=(F_{n,k})_{n,k\geq0}$ be two arrays
		% \footnote{Notice that the ranges of indices for $n$ and $k$ in the array $(F_{n,k})_{n,k\geq0}$ start at $0$. This will be convenient for use with auxiliary arrays in the arithmetic case, but it may be slightly confusing in the present context. Indeed, $(F_{n,k})_{n,k\geq0}$ will correspond to the array $(\PFforests{\w}{n,k})_{n,k\geq1}$ but with \textit{indices shifted by 1}.}
	of non-negative numbers, such that $F_{n,k}=0$ when $k>n$ and such that the following recursion is satisfied:
	\begin{align}\label{eq:recursion-Fnk-TP2}
	F_{n,k}	=\sum_{i\geq 0}A_{i,k} F_{n-1,i-1},	
	\qquad n\geq 1,\quad k\geq 0,
	\end{align}
	where we make the convention $F_{n,-1}=0$ for all $n\geq 0$.
	If the array $A$ is TP2, then so is the array $F$.
	% Let $\x=(x_i)_{i\geq0}$ be a non-negative sequence and let $(F_{n,k})_{n,k\geq0}$ be an array
	% 	\footnote{Notice that the ranges of indices for $n$ and $k$ in the array $(F_{n,k})_{n,k\geq0}$ start at $0$. This will be convenient for use with auxiliary arrays in the arithmetic case, but it may be slightly confusing in the present context. Indeed, $(F_{n,k})_{n,k\geq0}$ will correspond to the array $(\PFforests{\w}{n,k})_{n,k\geq1}$ but with \textit{indices shifted by 1}.}
	% of non-negative numbers such that $F_{n,k}=0$ when $k>n$ and satisfying the following recursion, where we make the convention $F_{n,-1}=0$ for all $n\geq 0$,
	% \begin{align}\label{eq:recursion-Fnk-TP2}
	% F_{n,k}	=\sum_{i\geq 0}x_i F_{n-1,k+i-1},	
	% \qquad n\geq 1,\quad k\geq 0.
	% \end{align}	
	% If $\x$ is log-concave then the array $(F_{n,k})_{n,k\geq0}$ is TP2, that is for every $0\leq {n}\leq{n'}$ and every $0\leq{k}\leq{k'}$ we have
	% \begin{align}\label{eq:ineq-Fnk-TP2}
	% F_{n,k}F_{n',k'}
	% \geq F_{n,k'}F_{n',k}.
	% \end{align}
\end{prop}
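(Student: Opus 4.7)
The strategy is a Cauchy--Binet-style induction. After the reindexing $j=i-1$, the recursion reads $F_{n,k}=\sum_{j\geq0}B_{j,k}F_{n-1,j}$ with $B_{j,k}:=A_{j+1,k}$; since $B$ is obtained from $A$ by deleting its first row, it inherits the TP2 property. We then prove by induction on $N\geq 0$ the statement $\mathrm{IH}(N)$: for all $0\leq n_1\leq n_2\leq N$ and $k_1\leq k_2$, the inequality $F_{n_1,k_1}F_{n_2,k_2}\geq F_{n_1,k_2}F_{n_2,k_1}$ holds. The base case $N=0$ and the case $n_1=n_2$ of the inductive step are trivial because the relevant minor vanishes. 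When $n_1=0<n_2=N$, the hypothesis $F_{0,k}=0$ for $k\geq 1$ reduces the inequality either to $0\geq 0$ or to $F_{0,0}F_{N,k_2}\geq 0$.

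The core of the proof is the case $1\leq n_1 < N$. Expanding both $F_{n_1,\cdot}$ and $F_{N,\cdot}$ via the recursion yields
\begin{align*}
F_{n_1,k_1}F_{N,k_2}-F_{n_1,k_2}F_{N,k_1}=\sum_{\ell,j\geq 0}F_{n_1-1,\ell}F_{N-1,j}\bigl[B_{\ell,k_1}B_{j,k_2}-B_{\ell,k_2}B_{j,k_1}\bigr].
\end{align*}
The classical antisymmetrization trick---splitting the double sum into its $\ell<j$ and $\ell>j$ parts and relabelling the indices in the second to match---rewrites this as
\begin{align*}
\sum_{\ell<j}\bigl[F_{n_1-1,\ell}F_{N-1,j}-F_{n_1-1,j}F_{N-1,\ell}\bigr]\bigl[B_{\ell,k_1}B_{j,k_2}-B_{\ell,k_2}B_{j,k_1}\bigr].
\end{align*}
Each summand is a product of two $2\times 2$ minors: the second bracket is non-negative by TP2 of $B$, and the first is non-negative by $\mathrm{IH}(N-1)$ applied to the rows $n_1-1\leq N-1$ and columns $\ell<j$. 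Thus every term, and therefore the whole sum, is non-negative, completing the induction.

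I do not foresee any serious obstacle: this is exactly the standard Cauchy--Binet / antisymmetrization scheme underlying the classical fact that a product of TP2 kernels is TP2. The only points that require a touch of care are the boundary case $n_1=0$ (where the recursion cannot be applied to $F_{n_1,\cdot}$, so one relies directly on $F_{0,k}=0$ for $k\geq 1$), and the observation that $B$ inherits the TP2 property from $A$ after deleting the first row.
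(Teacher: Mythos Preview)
Your proposal is correct and follows essentially the same approach as the paper: induction on $N$, a separate treatment of the $n_1=0$ row using $F_{0,k}=0$ for $k\geq1$, and an antisymmetrization of the double sum so that each term is a product of two $2\times2$ minors whose signs are controlled by the induction hypothesis and by TP2. The only cosmetic differences are your reindexing $B_{j,k}=A_{j+1,k}$ (the paper keeps $A$ and the shift inside $F_{n-1,i-1}$) and your restriction of the sum to $\ell<j$, whereas the paper sums over all $i,i'$ and observes that the two factors always share the same sign; the two computations differ by a factor of $2$ and are equivalent.
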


\begin{proof}
	We need to prove that for every $0\leq {n}\leq{n'}$ and $0\leq{k}\leq{k'}$, we have:
	\begin{align}\label{eq:ineq-Fnk-TP2}
		F_{n,k}F_{n',k'}
		\geq F_{n,k'}F_{n',k}.
	\end{align}
	First, in the case $n=0$, we have $F_{n,k'}= 0$ for all $k'\geq 1$, so that the right-hand side of \eqref{eq:ineq-Fnk-TP2} is zero unless $k'=0$, but then $k=k'$ and there is equality in \eqref{eq:ineq-Fnk-TP2}.
	Let us now prove by induction on $N\geq 0$ the statement $\mathcal H_N$ that the inequalities \eqref{eq:ineq-Fnk-TP2} hold for all $0\leq n\leq n'\leq N$ and all $0\leq k\leq k'$.
	The base case $N=0$ follows from the preceding discussion.	
	Consider now $N\geq 1$ and assume that $\mathcal H_{N-1}$ holds true.
	Let $1\leq n\leq n'\leq N$ and $0\leq k\leq k'$.
	Consider the following sum:
	\begin{align}\label{eq:proof:TP2-of-fnk:key-expression}
	\sum_{i,i'\geq 0}
	(	A_{i,k}  A_{i',k'}-A_{i,k'} A_{i',k})
	\left(
	F_{n-1,i-1} F_{n'-1,i'-1}
	- F_{n-1,i'-1} F_{n'-1,i-1}
	\right).
	\end{align}
	By the assumption that $A$ is TP2 and by the induction hypothesis $\mathcal H_{N-1}$, each term in the sum is a product of two factors which are both non-negative when $i\leq i'$ and both non-positive when $i\geq i'$.
	As a consequence, the sum in \eqref{eq:proof:TP2-of-fnk:key-expression} is non-negative, which means by expanding the products that
	\begin{multline*}
	2\biggl[\sum_{i}A_{i,k}F_{n-1,i-1}\biggr]
	\biggl[\sum_{i'}A_{i',k'}F_{n'-1,i'-1}\biggr]
	\\-
	2\biggl[\sum_{i}A_{i,k'}F_{n-1,i-1}\biggr]
	\biggl[\sum_{i'}A_{i',k}F_{n'-1,i'-1}\biggr]
	\geq 0,
	\end{multline*}
	where the sums run over $i\geq 0$ and $i'\geq 0$.
	By the recursion \eqref{eq:recursion-Fnk-TP2} this is equivalent to
	\begin{align*}
	F_{n,k}\,F_{n',k'} \geq  F_{n,k'}\,F_{n',k}.
	\end{align*}
	Hence inequality \eqref{eq:ineq-Fnk-TP2} holds when $1\leq n\leq n'\leq N$ and $0\leq k\leq k'$, and it also holds in the case $n=0$ by the discussion at the beginning of the proof.
	This concludes the inductive proof.
\end{proof}

By Lemma~\ref{lem:recursion-fnk} and Proposition~\ref{prop:Toeplitz-TP2}, we can apply Proposition~\ref{prop:TP2-fnk} to the arrays $F=(F_{n,k})_{n,k\geq0}$ and $A=(A_{i,k})_{i,k\geq0}$, given by $F_{n,k}=\PFforests{\w}{n+1,k+1}$ for ${n,k\geq0}$ and $A_{i,k}=x_{i-k}$ for ${i,k\geq0}$. This proves Corollary \ref{cor:TP2-forests-partition-function}.

\begin{rem}
	The results of Pétréolle, Sokal and Zhu in \cite{PetreolleSokalZhu23} are actually far more general than what we need here and deal with total positivity of arbitrary order,%
		\footnote{Total positivity or order $r$ means that all $\ell\times\ell$ minors are non-negative for all $\ell\leq r$.}
	for combinatorially defined arrays related to lattice paths with height-dependent weights.
	Their theorem  10.15 is obtained using the theory of production matrices.
	Starting from a ``production matrix'' $A$, one builds a so-called ``output matrix'' $F$ \textit{via} a recursion of the form~\eqref{eq:recursion-Fnk-TP2}, and the construction has the property that the output matrix is totally positive of order $r\geq2$, whenever the production matrix is.
	We refer the reader to \cite[pp.~132--134]{Karlin68}, \cite[Theorem 1.11]{Pinkus09} and \cite[Theorems 9.4]{PetreolleSokalZhu23}.
\end{rem}

We are now in a position to show that the compositions defined in Proposition \ref{prop:link-with-compositions} satisfy the hypotheses of Corollary \ref{cor:sufficient-inequalities}.
We assumed that $w_0w_1>0$ at the beginning of this section, hence if additionally $\w$ has no internal zeros then its support has the form $\{\ell\colon 0\leq\ell\leq r\}$ for some $r=r(\w)\in\{1,2,\dots\}\cup\{\infty\}$.
Write then $\wb=(\PFtrees{\w}{n})_{n\geq 1}$ and for $\ell=0,\dots,r-1$ let $\shift \ell \w=(w_{i+\ell})_{i\geq 0}$ be the $\ell$-th shift of $\w$ to the left.

\begin{prop}\label{prop:checking-the-inequalities}
	Assume that $\w$ is log-concave and that $r=r(\w)$ is finite.
	Then the following holds for all $n\geq 0$,
	\begin{equation}\label{eq:checking-main-inequalities}
	\frac{\PFtrees{\w}{n+1}}{\PFtrees{\w}{n}}
	=
	\frac{
		\PFComp{\shift{r-1}\w,\wb}{n+1}
	}{
		\PFComp{\shift{r-1}\w,\wb}{n}}
	\leq
	\frac{
		\PFComp{\shift{r-2}\w,\wb}{n+1}
	}{
		\PFComp{\shift{r-2}\w,\wb}{n}}
	\leq
	\dots
	\leq
	\frac{
		\PFComp{\shift{1}\w,\wb}{n+1}
	}{
		\PFComp{\shift{1}\w,\wb}{n}}
	\leq
	\frac{
		\PFComp{\shift{0}\w,\wb}{n+1}
	}{
		\PFComp{\shift{0}\w,\wb}{n}}
	=
	\frac{\PFtrees{\w}{n+2}}{\PFtrees{\w}{n+1}},
	\end{equation}
	where we omit the ill-defined equality on the left when $n=0$.
\end{prop}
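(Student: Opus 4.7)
The two outer equalities are direct. For the rightmost one, Proposition~\ref{prop:link-with-compositions} gives $\PFComp{\w,\wb}{n} = \PFtrees{\w}{n+1}$ for all $n \geq 0$. For the leftmost one (valid for $n \geq 1$), the support condition $w_k = 0$ for $k > r$ forces only the $i = 1$ term to contribute in the expansion
\begin{equation*}
\PFComp{\shift{r-1}\w,\wb}{n}
= \sum_{i\geq 0} w_{i+r-1}\,\PFforests{\w}{n,i},
\end{equation*}
giving $\PFComp{\shift{r-1}\w,\wb}{n} = w_r\,\PFtrees{\w}{n}$, from which the claimed ratio follows.

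The chain of middle inequalities, after clearing denominators, amounts to showing that for every $n \geq 0$ and $0 \leq \ell \leq r-2$,
\begin{equation*}
\PFComp{\shift\ell\w,\wb}{n+1}\cdot\PFComp{\shift{\ell+1}\w,\wb}{n}
\;\geq\;
\PFComp{\shift{\ell+1}\w,\wb}{n+1}\cdot\PFComp{\shift{\ell}\w,\wb}{n}.
\end{equation*}
I would in fact prove the stronger TP2-type statement that the analogous inequality holds for \emph{arbitrary} $n \leq n'$ and $\ell \leq \ell'$ in range, since the two-parameter version is no harder and makes the structure transparent.

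The plan is to write $N_{n,\ell} := \PFComp{\shift\ell\w,\wb}{n}$ as the bilinear pairing $N_{n,\ell} = \sum_{i\geq 0} w_{i+\ell}\,G_{n,i}$, where $G_{n,i} = \PFforests{\w}{n,i}$ for $n,i \geq 1$, $G_{0,0} = 1$, and $G$ vanishes on the remaining boundary. A Cauchy--Binet-style symmetrization then yields
\begin{equation*}
N_{n',\ell}N_{n,\ell'} - N_{n',\ell'}N_{n,\ell}
= \sum_{i<i'}\bigl(G_{n',i}G_{n,i'} - G_{n',i'}G_{n,i}\bigr)\bigl(w_{i+\ell}w_{i'+\ell'} - w_{i+\ell'}w_{i'+\ell}\bigr).
\end{equation*}
For $n \leq n'$ and $i < i'$, Corollary~\ref{cor:TP2-forests-partition-function} (the boundary cases being trivial) shows the first factor is $\leq 0$. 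For $\ell \leq \ell'$ and $i < i'$, log-concavity of $\w$ combined with the no-internal-zeros condition yields the ``spreading inequality'' $w_a w_b \leq w_c w_d$ whenever $a \leq c \leq d \leq b$ with $a+b = c+d$, from which the second factor is $\leq 0$. Each summand is thus a product of two non-positive numbers, so the whole sum is non-negative, which is precisely the desired inequality.

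The main technical point is the spreading inequality in the possible presence of zeros at the boundary of $\supp \w$; this is a routine telescoping of the ratios $w_{k+1}/w_k$, once one observes that whenever $w_a w_b > 0$ the no-internal-zeros hypothesis makes the intermediate ratios $w_{a+1}/w_a,\dots,w_b/w_{b-1}$ well defined, and that the remaining boundary cases ($w_a w_b = 0$ or $w_c w_d = 0$) are immediate. Structurally, one may view the whole argument as the statement that the matrix product $G \cdot T$ with $T_{i,\ell} = w_{i+\ell}$ inherits a sign-reversed TP2 property from its two factors, which is the Cauchy--Binet identity in disguise.
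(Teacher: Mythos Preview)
Your argument is correct and essentially identical to the paper's own proof: both expand $\PFComp{\shift\ell\w,\wb}{n}=\sum_i w_{i+\ell}\,\PFforests{\w}{n,i}$, form the symmetrized double sum, and conclude by combining the Toeplitz TP2 property of $\w$ (your ``spreading inequality'') with the TP2 property of the forest partition function from Corollary~\ref{cor:TP2-forests-partition-function}. The only cosmetic difference is that the paper handles the case $n=0$ separately (reducing it directly to the log-concavity inequalities $w_i^2\geq w_{i-1}w_{i+1}$), whereas you absorb it into the general argument via the boundary convention $G_{0,0}=1$.
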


\begin{proof}
	Since $\shift{r-1}{\w}=(w_{r-1},w_r,0,0,\dots)$, we have $\PFComp{\shift{r-1}\w,\wb}{n}=w_r\cdot\PFtrees{\w}{n}$ for $n\geq 1$, by the definition from \eqref{eq:def-proba-compositions}.
	Therefore the equality on the left of \eqref{eq:checking-main-inequalities} holds for all $n\geq 1$.
	Now by Proposition \ref{prop:link-with-compositions}, we also have that $\PFtrees{\w}{n+1}=\PFComp{\w,\wb}{n}$ for all $n\geq 0$, which gives that the equality on the right of \eqref{eq:checking-main-inequalities} also holds for all $n\geq 0$.
	Let us check the inequalities in \eqref{eq:checking-main-inequalities} in the case $n=0$.
	By definition from \eqref{eq:def-proba-compositions}, they read
	\begin{align*}
	\frac{w_r\cdot\PFtrees{\w}{1}}{w_{r-1}}
	\leq
	\frac{w_{r-1}\cdot\PFtrees{\w}{1}}{w_{r-2}}
	\leq
	\cdots
	\leq
	\frac{w_2\cdot\PFtrees{\w}{1}}{w_{1}}
	\leq
	\frac{w_1\cdot\PFtrees{\w}{1}}{w_{0}},
	\end{align*}
	or equivalently $(w_i)^2\geq w_{i-1}w_{i+1}$ for $1\leq i\leq r-1$, which holds by log-concavity of $\w$.
	We finally verify the inequalities in \eqref{eq:checking-main-inequalities} when $n\geq 1$. Let $n\geq 1$ and consider, for some $\ell=0,\dots r-2$, the sum
	\begin{align}\label{eq:proof-key-inequalities:key-expression}
	\sum_{i,j\geq 0}
		(w_{i+\ell+1}\,w_{j+\ell}-w_{i+\ell}\,w_{j+\ell+1})
		\left(
			\PFforests{\w}{n,i}\,\PFforests{\w}{n+1,j}
			-	\PFforests{\w}{n+1,i}\,\PFforests{\w}{n,j}
		\right).
	\end{align}
	By Proposition \ref{prop:Toeplitz-TP2} and by Corollary \ref{cor:TP2-forests-partition-function}, each term in the sum is a product of two factors which are both non-negative when $i\leq j$ and both non-positive when $i\geq j$, as in the proof of Proposition \ref{prop:TP2-fnk}.
	Thus the expression in \eqref{eq:proof-key-inequalities:key-expression} is non-negative, and by developing the products and using that by definition we have
	\begin{align*}
	\PFComp{\shift \ell \w,\wb}{n}
	= \sum_{i\geq 0}w_{i+\ell}\PFforests{\w}{n,i}
	\qquad n\geq 1, \quad\ell\in\{0,\dots,r-1\},
	\end{align*}
	one obtains for $n\geq1$ and $\ell=0,\dots,r-2$ the inequality $	\PFComp{\shift{\ell+1}\w,\wb}{n+1}\PFComp{\shift{\ell}\w,\wb}{n}
	\leq
	\PFComp{\shift{\ell}\w,\wb}{n+1}\PFComp{\shift{\ell+1}\w,\wb}{n}$, which once re-written as
	\begin{align*}
	\frac{
		\PFComp{\shift{\ell+1}\w,\wb}{n+1}
	}{
		\PFComp{\shift{\ell+1}\w,\wb}{n}}
	\leq
	\frac{
		\PFComp{\shift{\ell}\w,\wb}{n+1}
	}{
		\PFComp{\shift{\ell}\w,\wb}{n}},
	\end{align*}
	are precisely the inequalities we need to conclude.
\end{proof}

\subsection{Proof of Theorems \ref{thm:main-thm-BGW} and \ref{thm:main-thm-SG}}

We recall that Theorems~\ref{thm:main-thm-BGW} and~\ref{thm:main-thm-SG} are equivalent, so that we only need to prove the latter.
Let $\w=(w_0,w_1,\dots)$ be a non-negative log-concave sequence with $w_0w_1>0$.
By Proposition \ref{prop:growing-comp-is-sufficient}, it is sufficient to show that $(\w,\wb)$ is an admissible weight pair where $\wb=(\PFtrees{\w}{n})_{n\geq0}$.
If $\w$ has finite support, that is if $r(\w)<\infty$, then this follows from Proposition \ref{prop:checking-the-inequalities} and Corollary \ref{cor:sufficient-inequalities}.
Otherwise $\w$ is the pointwise limit $i\rightarrow\infty$ of the log-concave and finitely supported sequences $\w_i=(w_0,w_1,\dots w_i,0,0,\dots)$, $i\geq1$, and we conclude by Lemma~\ref{lem:closure-admissibility}.\qed
%
%\mypar{The general case}
%Let $\w$ be a general log-concave sequence and let $n\geq 0$.
%Notice that the distribution $\PTree{\w}{n}$ does not depend on the value of $w_i$ when $i\geq n$.
%In particular, for $r\geq n$ we have $\PTree{\w}{n}=\PTree{\w^{[r]}}{n}$ and $\PTree{\w}{n+1}=\PTree{\w^{[r]}}{n+1}$ for the truncated sequence $\w^{[r]}=(w_0,w_1,\dots,w_r,0,0,\dots)$.
%For all $r\geq1$, the sequence $\w^{[r]}$ is finitely supported, and log-concave by log-concavity of $\w$.
%In particular, there exists a coupling $(T_n,T_{n+1})$ where $T_n$ and $T_{n+1}$ are distributed as simply generated trees with weight $\w^{[r]}$ of size $n$ and $n+1$ respectively, such that $T_{n+1}$ is obtained from $T_n$ by adding a right-leaning leaf.
%By the above, if we choose $r\geq n$ then this is actually a coupling for the weight $\w$, as desired.
%We again conclude that we can form a Markov process $(T_n)$ corresponding to the weight $T_n$ which adds right-leaning leaves, similarly to the proof of Corollary \ref{cor:cor-key-lemma}.
%This concludes our proof of Theorem \ref{thm:main-thm-SG}.

%-------------------------------------------------------------

\section{Growing conditioned BGW trees II: the arithmetic case}
\label{sec:arithmetic-case}

\subsection{Random compositions with arithmetic conditions}

\mypar{Arithmetic conditions on compositions}
Let $d\geq 1$ and $s\in\{0,\dots,d-1\}$.
For $n\geq 0$ and a composition $c=\letter n_1\dots\letter n_i\composes n$, we say that $c$ satisfies the $(d,s)$-arithmetic condition if its number $i$ of parts has residue $s$ modulo $d$, and if each of its parts $(n_j)_{1\leq j\leq i}$ has residue $1$ modulo $d$.
In this case we write $c\composes[d,s]n$ and we denote by $\Compos[d,s]{n}$ the set of such compositions.
In particular for $n\geq0$, the set $\Compos[d,s]{n}$ is non-empty if and only if $n$ has residue $s$ modulo $d$.

\mypar{Modified covering and order relations}
Let $d\geq 1$.
We say that a composition $c$ is $d$-covered by a composition $c'$ and write $c\covered[d] c'$ if we can obtain $c'$ from $c$ by either incrementing by $d$ one of its parts or by adding $d$ new parts with value $1$ on its right.
More precisely, if $c={\letter{n}_1 \cdots   \letter{n}_r}$ then $c'$ covers $c$ if it is either one of the following compositions
\begin{align*}
\bigl(\letter{n}_1  \cdots  \letter{n}_{i-1}  \:\letter{\,n_i+d\,}\:  \letter{n}_{i+1} \cdots    \letter{n}_r\bigr),
\end{align*}
for some $i\in\{1,\dots,r\}$, or if it is the following composition
\begin{align*}
\letter{n}_1  \cdots   \letter{n}_r  \:\underbrace{\letter {\,1\,}\dots \:\letter {\,1\,}}_{\text{$d$ times}}.
\end{align*}
Notice that if $c\in\Compos[d,s]{n}$ for $n\geq 0$ with residue $s$ modulo $d$, then a composition $c'$ which $d$-covers $c$ also satisfies the $(d,s)$-arithmetic condition and thus is in $\Compos[d,s]{n+d}$.
The relation $\covered[d]$ is extended by transitivity into a partial order $\preceq^d$ on the set of all compositions of integers.

\begin{figure}[p]
	\begin{center}
		\includegraphics[page=4, scale=.6]{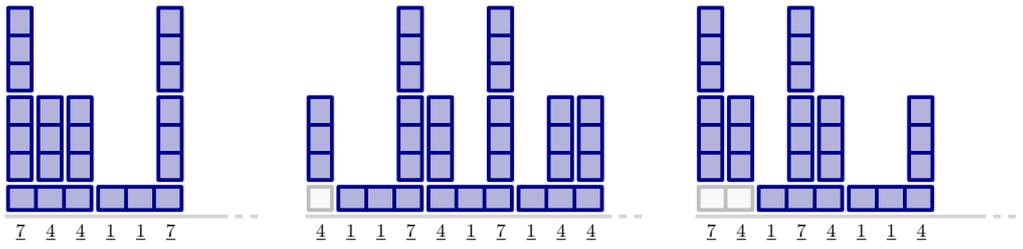}
	\end{center}
	\caption{A graphical representation of compositions satisfying respectively the $(3,0)$, $(3,1)$ and $(3,2)$ arithmetic conditions, as piles of square bricks.
		These square bricks have been grouped by $3$ except possibly the bottom left one, so that the arithmetic conditions are clearly visible.
		The parts of the composition are the numbers of \textit{squares} in each column.}
\end{figure}

\mypar{Arithmetic conditions on weight pairs}
Let $d\geq 1$ and let $s\in\{0,\dots,d-1\}$.
A weight pair $(\wa,\wb)$ will be called a $(d,s)$-\textit{non-degenerate} weight pair if the following conditions hold.
\begin{enumerate}
	\item 
	The sequence $\wa$ satisfies $a_s\neq0$ and its support is an interval in $\{s,d+s,2d+s,\dots\}$, not reduced to $\{0\}$ when $s=0$.
	More precisely its support is $\{\ell d+s\colon 0\leq\ell\leq r\}$ for some $r=r_{d,s}(\wa)\in\{0,1,2,\dots\}\cup\{+\infty\}$, with the requirement that $r_{d,s}(\wa)\geq 1$ when $s=0$.
	\item The sequence $\wb$ has support $\{1,d+1,2d+1,\dots\}$.
\end{enumerate}

\mypar{Corresponding random compositions}
Consider $d\geq1$ and $s\in\{0,1,\dots,d-1\}$, and let $(\wa,\wb)$ be a $(d,s)$-non-degenerate weight pair.
Consider as in the non-arithmetic case the partition function $\PFComp{\wa,\wb}n$ for every $n\geq0$,
\begin{align}\label{eq:def-Z-new}
\PFComp{\wa,\wb}n=
\sum_{\substack{i\geq 0\\\letter{n}_1  \cdots   \letter{n}_i\composes n}}a_i\cdot(b_{{n}_1}\cdots b_{{n}_i}).
\end{align}
The $(d,s)$-non-degeneracy of $(\wa,\wb)$ implies that compositions  which do not satisfy the $(d,s)$-arithmetic condition do not contribute to the sum.
Hence $n$ must have residue $s$ modulo $d$ for the last display to be non-zero.
Conversely if $n$ has residue $s$ modulo $d$, then $\PFComp{\wa,\wb}n\neq 0$ since when $s\neq 0$ the composition $\letter{n-(s-1)}\letter1\dots\letter1$ (with $s-1$ parts $\letter 1$ after the first part) contributes a non-zero term in the sum, while when $s=0$ the composition $\letter{n-(d-1)}\letter1\dots\letter1$ does.
All in all, we have for all $n\geq 0$ and all $s'\in\{0,\dots,d-1\}$,
\begin{align*}
\PFComp{\wa,\wb}{nd+s'}\neq0
\qquad \iff \qquad
s'=s.
\end{align*}
And in this case, for all $n\geq 0$ and all $\letter{n}_1  \cdots   \letter{n}_i\composes[d,s](nd+s)$, we can define
\begin{align}\label{eq:def-proba-compositions-new}
\ProbComp{\wa,\wb} {nd+s} {\letter{n}_1  \cdots   \letter{n}_i}&=
\frac{a_i\cdot (b_{{n}_1}\cdots b_{{n}_i})}{\PFComp{\wa,\wb} {dn+s}},
\quad
\PFComp{\wa,\wb}{nd+s}=
\sum_{\substack{i\geq 0\\\letter{n}_1  \cdots   \letter{n}_i\composes[d,s] n+s}}a_i\cdot(b_{{n}_1}\cdots b_{{n}_i}).
\end{align}
Notice that the last sum runs over compositions satisfying the $(d,s)$-arithmetic condition.
Observe, as in the non-arithmetic case that when $s=0$, the case $n=0$ is still well-defined if we make the convention that an empty product evaluates to $1$.

\begin{figure}[p]
	\begin{center}
		\includegraphics[page=5,scale=.6]{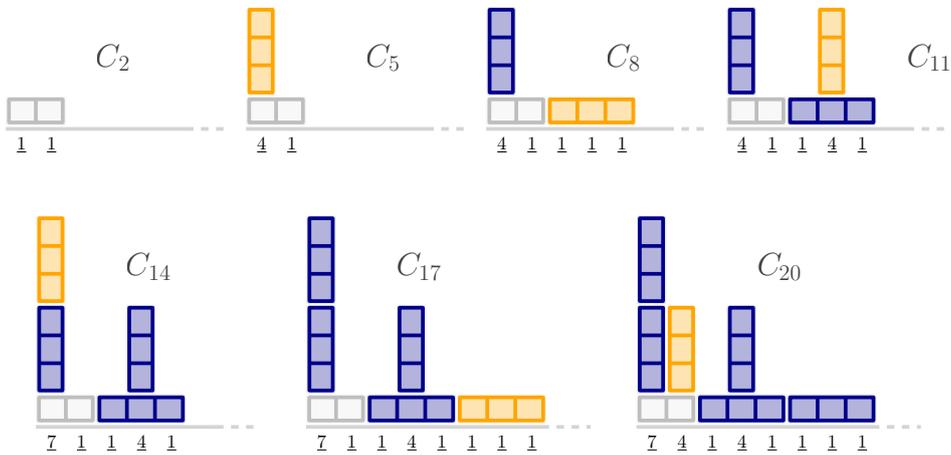}
	\end{center}
	\caption{An example of how of a sequence $C_2\covered[3] C_5\covered[3] C_8\covered[3]\dots$ may begin.
		At each step, the newly added group of $3$ bricks is colored in yellow.}
\end{figure}

\begin{figure}[p]
	\begin{center}
		\includegraphics[page=6,scale=.6]{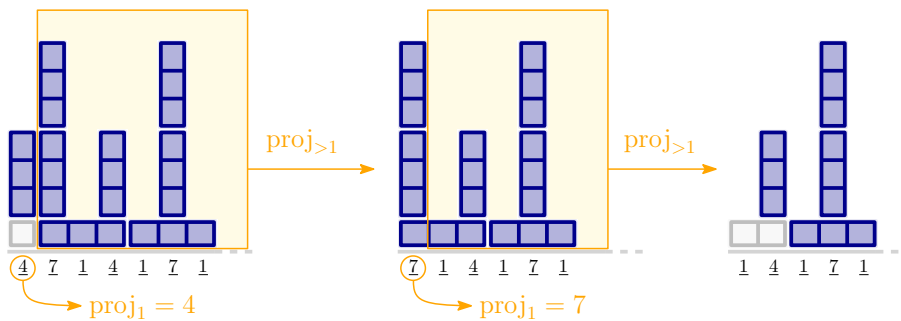}
	\end{center}
	\caption{An illustration of the action of ${\mathrm{proj}}_1$ and ${\mathrm{proj}}_{>1}$ on compositions satisfying the $(3,s)$-arithmetic condition with $s\in\{0,1,2\}$.
		Notice how the value of $s$ changes when applying ${\mathrm{proj}}_{>1}$.}
	\label{fig:proj-compositions-arithmetic}
\end{figure}

\begin{defin}[Modified admissibility]
	For $d\geq1$ and $s\in\{0,1,\dots,d-1\}$, we call $(d,s)$-\textit{admissible} the $(d,s)$-non-degenerate weight pairs $(\wa,\wb)$ for which the random compositions $(C_{nd+s})_{n\geq 0}$  with distributions $(\PComp{\wa,\wb}{nd+s})_{n\geq 0}$ can be coupled in such a way that
	\begin{align*}
	C_s\covered[d] C_{d+s}\covered[d] C_{2d+s}\covered[d] \dots\covered[d] C_{nd+s}\covered[d] C_{(n+1)d+s}\covered[d]\dots.
	\end{align*}
\end{defin}

A counterpart of Lemma~\ref{lem:closure-admissibility} then holds as follows.

\begin{prop}\label{prop:closure-admissibility-new}
	for $d\geq 1$ and $s\in\{0,\dots,d-1\}$, the set of $(d,s)$-admissible weight pairs is closed in the set of $(d,s)$-non-degenerate weight pairs equipped with the topology of pointwise convergence.
\end{prop}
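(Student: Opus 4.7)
The plan is to mimic the proof of Lemma~\ref{lem:closure-admissibility} with only cosmetic changes. Two ingredients need to be set up. First, for each $n\geq 0$, let
\[
A_n \;=\; \bigl\{(C,C')\in \Compos[d,s]{nd+s}\times\Compos[d,s]{(n+1)d+s}\;:\; C\covered[d] C'\bigr\},
\]
a finite set, so that the map sending a probability measure $\mfrak$ on $\Compos[d,s]{nd+s}\times\Compos[d,s]{(n+1)d+s}$ to its mass $\mfrak(A_n)$ is continuous for convergence in distribution. Second, for every fixed $n$ and every $c\composes[d,s](nd+s)$, the map $(\wa,\wb)\mapsto \PComp{\wa,\wb}{nd+s}(c)$ is continuous on the set of $(d,s)$-non-degenerate weight pairs: from \eqref{eq:def-proba-compositions-new} it is a rational function in finitely many entries of $\wa$ and $\wb$ whose denominator is $\PFComp{\wa,\wb}{nd+s}$, and this denominator is non-vanishing on the $(d,s)$-non-degenerate pairs by the discussion preceding \eqref{eq:def-proba-compositions-new}.

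Given then a sequence $(\wa_i,\wb_i)\to (\wa,\wb)$ of $(d,s)$-admissible pairs converging pointwise to a $(d,s)$-non-degenerate pair, I would pick for each $i$ a $(d,s)$-admissible coupling $(C^{(i)}_{nd+s})_{n\geq 0}$ and view its law as a Borel probability measure on the countable product $\prod_{n\geq 0}\Compos[d,s]{nd+s}$, which is compact metric by Tychonoff. By sequential compactness, some subsequence converges weakly to a limit $\mathfrak m$. The continuity from the second ingredient forces the $n$-th one-dimensional marginal of $\mathfrak m$ to be $\PComp{\wa,\wb}{nd+s}$ for every $n\geq 0$, while the continuity of $\mfrak\mapsto \mfrak(A_n)$ from the first ingredient, applied to consecutive two-dimensional marginals, forces $\mathfrak m$ to be concentrated on $\covered[d]$-increasing sequences. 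This exhibits a $(d,s)$-admissible coupling for $(\wa,\wb)$, proving the proposition.

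The only point deserving a bit of care, compared with the treatment of Lemma~\ref{lem:closure-admissibility}, is that we extract a single limiting coupling on the full infinite product, rather than pair couplings for each $n$ separately; this is made painless by Tychonoff's theorem together with the fact that, on a countable product of finite spaces, weak convergence is equivalent to the convergence of all finite-dimensional marginals, so that the various marginal continuities combine into the two assertions we need.
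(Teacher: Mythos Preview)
Your proof is correct and follows essentially the same approach as the paper, which simply states that the proof of Lemma~\ref{lem:closure-admissibility} adapts easily. The only cosmetic difference is that you extract a single subsequential limit on the full product $\prod_{n\geq 0}\Compos[d,s]{nd+s}$ via Tychonoff, whereas the paper's proof of Lemma~\ref{lem:closure-admissibility} extracts pair couplings $(C_{nd+s},C_{(n+1)d+s})$ for each $n$ separately and then implicitly chains them into a full increasing sequence; your variant is arguably cleaner since it sidesteps that chaining step.
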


\begin{proof}
	The proof of Lemma~\ref{lem:closure-admissibility} adapts easily.
\end{proof}

\subsection{Admissibility and shifted \texorpdfstring{$\wa$}{a}-weights, revisited}
We recycle from Section \ref{subsec:admissiblity-and-shift} the notation $\wa^+$ for the sequence $\wa$ shifted one unit to the left, $\projParts_1(c)$ for the first part of a composition $c$, and $\projParts_{>1}$ for the remaining parts, and we shall now provide adaptations of the results proved there.
First, note that for $d\geq 1$ and $s\in\{0,\dots,d-1\}$, the application $\projParts_1$ induces on each $\Compos[d,s]{nd+s}$ for $n\geq 0$ an application
\begin{align*}
\projParts_1&\colon
	\Compos[d,s]{nd+s}\longrightarrow\{1,d+1,2d+1,\dots\}.
\end{align*}
Also for $n\geq 1$ and a composition $\letter n_1\dots\letter n_i\composes[d,s](nd+s)$, if we write $n_1=n'_1 d+1$ then $\projParts_{>1}(\letter n_1\dots\letter n_i)=\letter n_2\dots\letter n_i$ is an element of
\begin{align*}
\begin{cases}
\Compos[d,s-1]{(n-n'_1)d+(s-1)}		&\qquad\text{if $s\neq0$},\\
\Compos[d,d-1]{(n-n'_1-1)d+(d-1)}	&\qquad\text{if $s=0$},
\end{cases}
\end{align*}
as illustrated in Figure \ref{fig:proj-compositions-arithmetic}.
Clearly, if $(\wa,\wb)$ is a $(d,s)$-non-degenerate weight pair with $r_{d,s}(\wa)\geq1$ if $s\neq 0$ or $r_{d,s}(\wa)\geq2$ if $s=0$, then $(\wa^+,\wb)$ is $(d,\overline{s-1})$-non-degenerate, where we use for all $m\in\Z$ the notation $\overline m$ to designate the residue of $m$ modulo $d$, which is an element of $\{0,\dots,d-1\}$.
Proposition \ref{prop:role-of-shifted-a} becomes the following.

\begin{prop}\label{prop:role-of-shifted-a_new}
	Let $d\geq1$ and $s\in\{0,\dots,d-1\}$.
	Consider $(\wa,\wb)$ a $(d,s)$-non-degenerate weight pair for which $(\wa^+,\wb)$ is $(d,\overline{s-1})$-non-degenerate.
	Let $C_{nd+s}$ be a $\PComp{\wa,\wb}{nd+s}$-distributed random composition  for some fixed $n\geq 0$.
	If we let $X_{nd+s}=\projParts_1(C_{nd+s})$ and $C'_{nd+s}=\projParts_{>1}(C_{nd+s})$, then when $s\neq0$ the variable $X_{nd+s}$ has distribution given by
	\begin{align}\label{eq:law-X_n_new}
	\Prob{X_{nd+s}=md+1}&=\frac{b_{md+1}\cdot\PFComp{\wa^+,\wb}{(n-m)d+(s-1)}}{\PFComp{\wa,\wb}{nd+s}},
	\qquad
	&&m=1,\dots,n,
	\intertext{while when $s=0$ the variable $X_{dn}$ has distribution given by}
	\Prob{X_{dn}=md+1}&=\frac{b_{md+1}\cdot\PFComp{\wa^+,\wb}{(n-m-1)d+(d-1)}}{\PFComp{\wa,\wb}{nd+s}},
	\qquad
	&&m=1,\dots,n-1,
	\end{align}
	Conditionally on $X_{nd+s}$, the composition $C'_{nd+s}$ has distribution $\PComp{\wa^+,\wb}{nd+s-X_n}$.
\end{prop}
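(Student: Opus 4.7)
My plan is to mirror the argument of Proposition~\ref{prop:role-of-shifted-a}, taking care of the arithmetic bookkeeping that now arises when stripping off the first part of a composition. The entire proof will reduce to computing a joint probability and then summing over the tail to obtain the marginal law of $X_{nd+s}$.

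The first step is to write, for $m\geq 0$ and a composition $c=\letter{n_1}\cdots\letter{n_i}$ such that $\letter{md+1}\,c$ belongs to $\Compos[d,s]{nd+s}$, the identity
\begin{align*}
\Prob{X_{nd+s}=md+1,\,C'_{nd+s}=c}
=\Prob{C_{nd+s}=\letter{md+1}\,c}
=\frac{a_{i+1}\,(b_{md+1}\,b_{n_1}\cdots b_{n_i})}{\PFComp{\wa,\wb}{nd+s}},
\end{align*}
directly from \eqref{eq:def-proba-compositions-new}. The key combinatorial point is then to determine in which arithmetic class the tail $c$ must lie. Since $\letter{md+1}\,c$ has $i+1$ parts (each congruent to $1$ modulo $d$) with $i+1\equiv s\pmod d$, the tail has $i$ parts with $i\equiv s-1\pmod d$. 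When $s\geq 1$ this yields $c\in\Compos[d,s-1]{(n-m)d+(s-1)}$, whereas for $s=0$ the residue wraps around, so that $c\in\Compos[d,d-1]{(n-m-1)d+(d-1)}$. This dichotomy, already visible in Figure~\ref{fig:proj-compositions-arithmetic}, is the main place where the arithmetic case differs from the non-arithmetic one.

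Summing the above joint probability over $c$ in the appropriate set, I use the re-indexing identity $a_{i+1}=a^+_i$ together with the defining formula \eqref{eq:def-Z-new} applied to $(\wa^+,\wb)$ to recognize the sum as
\begin{align*}
\frac{b_{md+1}\,\PFComp{\wa^+,\wb}{nd+s-(md+1)}}{\PFComp{\wa,\wb}{nd+s}}.
\end{align*}
The hypothesis that $(\wa^+,\wb)$ is $(d,\overline{s-1})$-non-degenerate is precisely what ensures that this partition function is supported on the correct arithmetic class and does not vanish, producing the stated expressions for $\Prob{X_{nd+s}=md+1}$ in each of the two cases (with $nd+s-(md+1)$ equal to $(n-m)d+(s-1)$ when $s\neq 0$ and to $(n-m-1)d+(d-1)$ when $s=0$). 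Dividing the joint law by this marginal cancels $b_{md+1}$ and $\PFComp{\wa,\wb}{nd+s}$, leaving exactly $\ProbComp{\wa^+,\wb}{nd+s-(md+1)}{c}$; this identifies the conditional law of $C'_{nd+s}$ given $X_{nd+s}$ with $\PComp{\wa^+,\wb}{nd+s-X_{nd+s}}$, as claimed.

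I expect essentially no conceptual obstacle. The work is entirely bookkeeping: handling $s=0$ and $s\neq 0$ in parallel, and verifying at each step that the shifted weight pair $(\wa^+,\wb)$ is non-degenerate in the correct arithmetic class so that the partition functions which appear are honest and non-vanishing on their natural support.
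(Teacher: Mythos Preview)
Your proposal is correct and follows exactly the approach the paper takes: the paper's own proof simply states that the argument of Proposition~\ref{prop:role-of-shifted-a} adapts \textit{mutatis mutandis}, which is precisely what you have written out in detail, including the arithmetic bookkeeping distinguishing the cases $s\neq 0$ and $s=0$.
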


\begin{proof}
	The proof adapts \textit{mutatis mutandi} from  that of Proposition \ref{prop:role-of-shifted-a}.
\end{proof}

Then, Proposition \ref{prop:admissibility-vs-shift} is turned into the following.

\begin{prop}\label{prop:admissibility-vs-shift-new}
	Let $d\geq1$ and $s\in\{0,1,\dots,d-1\}$.
	Let $(\wa,\wb)$ be a $(d,s)$-non-degenerate weight pair.
	Then there holds the following.
	\begin{enumerate}
		\item 
		\begin{enumerate}
			\item If $d=1$, $s=0$ and $r_{1,0}(\wa)=1$, that is if $\wa=(a_0,a_1,0,0,\dots)$, then $(\wa,\wb)$ is $(1,0)$ admissible.
			\item If $d\geq 2$, $s=1$ and $r_{d,s}(\wa)=0$, that is if $\wa=(0,a_1,0,0,\dots)$, then $(\wa,\wb)$  is $(d,1)$-admissible.
		\end{enumerate}
		\item
		\begin{enumerate}
			\item If $s\neq 0$ and the weight pair $(\wa^+,\wb)$ is $(d,s-1)$-admissible and if the inequalities
			\begin{align}\label{eq:ineq-shift-new}
			\frac{\PFComp{\wa^+,\wb}{(n+1-m)d+(s-1)}}{\PFComp{\wa^+,\wb}{(n-m)d+(s-1)}}
			\leq
			\frac{\PFComp{\wa,\wb}{(n+1)d+s}}{\PFComp{\wa,\wb}{nd+s}}
			\geq
			\frac{b_{md+1}}{b_{(m-1)d+1}},
			\end{align}
			are satisfied for $m=1,\dots,n$, then $(\wa,\wb)$ is $(d,s)$-admissible.
			\item If $s= 0$ and the weight pair $(\wa^+,\wb)$ is $(d,d-1)$-admissible and if the inequalities
			\begin{align}\label{eq:ineq-shift-new-2}
			\frac{\PFComp{\wa^+,\wb}{(n+1-m-1)d+(d-1)}}{\PFComp{\wa^+,\wb}{(n-m-1)d+(d-1)}}
			\leq
			\frac{\PFComp{\wa,\wb}{(n+1)d}}{\PFComp{\wa,\wb}{dn}}
			\geq
			\frac{b_{md+1}}{b_{(m-1)d+1}},
			\end{align}
			are satisfied for $m=1,\dots,n-1$, then $(\wa,\wb)$ is $(d,0)$-admissible.
		\end{enumerate}
	\end{enumerate}
\end{prop}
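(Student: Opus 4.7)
The plan is to mirror the proof of Proposition~\ref{prop:admissibility-vs-shift} in Section~\ref{subsec:admissiblity-and-shift}, while carefully tracking the arithmetic residue class as the first part of a composition is stripped. The base cases 1(a) and 1(b) are trivial: the constraints on $\wa$ force every composition in the support of $\PComp{\wa,\wb}{\cdot}$ to consist of a single part, so the deterministic sequence $\letter{n} \covered \letter{n+1}$ (in case 1(a)) or $\letter{nd+1} \covered[d] \letter{(n+1)d+1}$ (in case 1(b)) yields the desired coupling.

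For the inductive steps, I first record the arithmetic analogue of the equivalence~\eqref{eq:equivalence-covering-relation}: if $c \in \Compos[d,s]{nd+s}$ and $c' \in \Compos[d,s]{(n+1)d+s}$, then
\[
c \covered[d] c' \iff \projParts_1(c) \leq \projParts_1(c') \text{ and } \projParts_{>1}(c) \preceq^d \projParts_{>1}(c'),
\]
where $\projParts_{>1}$ lands in $\Compos[d, s-1]{\cdot}$ when $s \neq 0$ and in $\Compos[d, d-1]{\cdot}$ when $s = 0$. Writing $s' = s - 1$ if $s \neq 0$ and $s' = d - 1$ if $s = 0$, the $(d, s')$-admissibility of $(\wa^+, \wb)$ supplies a $\covered[d]$-increasing coupling $(C^+_{nd + s'})_{n \geq 0}$. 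Proposition~\ref{prop:role-of-shifted-a_new} identifies the law $\mu_n$ of $\projParts_1$ under $\PComp{\wa, \wb}{nd+s}$ as supported on $\{md + 1 : m \geq 0\}$ with probabilities proportional to $b_{md+1}\, \PFComp{\wa^+, \wb}{nd + s - (md+1)}$. After the change of variables $m \mapsto md + 1$, the hypothesized inequalities~\eqref{eq:ineq-shift-new} (resp.~\eqref{eq:ineq-shift-new-2}) are precisely the Key Lemma hypotheses $\mu_{n+1}(md+1) \leq \mu_n(md+1) \geq \mu_{n+1}((m+1)d+1)$, so Corollary~\ref{cor:cor-key-lemma}, applied on this rescaled scale, produces a coupling $(X_{nd + s})_{n \geq 0}$ of the $\mu_n$'s with $X_{(n+1)d+s} - X_{nd+s} \in \{0, d\}$ almost surely, which I take independent of the $(C^+)$'s.

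Setting $C_{nd+s} = \letter{X_{nd+s}}\, C^+_{nd + s - X_{nd+s}}$ (with $C_0 = \emptyset$ in case 2(b) when $n = 0$), Proposition~\ref{prop:role-of-shifted-a_new} ensures that $C_{nd+s}$ has the desired law $\PComp{\wa, \wb}{nd+s}$. The increment condition on $(X_{nd+s})$ forces both $(X_{nd+s})_n$ and $(nd + s - X_{nd+s})_n$ to be non-decreasing with increments in $\{0, d\}$, so the $\covered[d]$-increase of $(C^+_{kd + s'})_k$ yields $C^+_{nd+s-X_{nd+s}} \preceq^d C^+_{(n+1)d+s-X_{(n+1)d+s}}$; combined with $X_{nd+s} \leq X_{(n+1)d+s}$ and the displayed equivalence, this gives $C_{nd+s} \preceq^d C_{(n+1)d+s}$. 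Since the totals differ by exactly $d$, this $\preceq^d$ relation reduces to the single step $C_{nd+s} \covered[d] C_{(n+1)d+s}$.

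The main new feature compared to Section~\ref{subsec:admissiblity-and-shift}---and the only genuinely delicate point, particularly for case 2(b)---is the residue bookkeeping: stripping the first part of a composition in $\Compos[d,0]{nd}$ lands in $\Compos[d,d-1]{\cdot}$, so the recursion ``loops back'' through the full cycle of residues $s, s{-}1, \ldots, 0, d{-}1, \ldots$ rather than terminating after one shift. This is precisely what Proposition~\ref{prop:role-of-shifted-a_new} has been stated to accommodate, and it is why eventually $d$ iterated shifts will be needed in the proof of Theorem~\ref{thm:main-thm-BGW-arithmetic}.
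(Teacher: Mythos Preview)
Your proposal is correct and follows essentially the same approach as the paper's proof: the paper handles 1(a) and 1(b) by the same triviality argument, records the arithmetic analogue~\eqref{eq:equivalence-covering-relation-new} of the equivalence, and then simply states that the proof of the second case of Proposition~\ref{prop:admissibility-vs-shift} ``adapts seamlessly'' via Proposition~\ref{prop:role-of-shifted-a_new}. You have in fact written out more of the details (the rescaled application of Corollary~\ref{cor:cor-key-lemma}, the explicit reconstruction $C_{nd+s}=\letter{X_{nd+s}}\,C^+_{nd+s-X_{nd+s}}$, and the residue bookkeeping for $s=0$) than the paper does.
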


\begin{proof}
In cases 1.(a) and 1.(b), a sample of $\PComp{\wa,\wb}{nd+s}$ is a composition of $nd+s$ into one part for every $n\geq 0$, that is the trivial composition $\letter{nd+s}$.
Thus $(\wa,\wb)$ is trivially admissible in that case, since at each step the first and only part of the composition gets an increment of $d$.
Now for two compositions $c$ and $c'$ satisfying the $(d,s)$-arithmetic condition, we have the equivalence
\begin{align}\label{eq:equivalence-covering-relation-new}
c\preceq^d c' \qquad\iff\qquad\projParts_1(c)\leq \projParts_1(c')\quad\text{and}\quad \projParts_{>1}(c)\preceq^d \projParts_{>1}(c').
\end{align}
This generalizes \eqref{eq:equivalence-covering-relation}.
The proof of the second case in Proposition \ref{prop:admissibility-vs-shift} then adapts seamlessly to prove 2.(a) and 2.(b), using Proposition \ref{prop:admissibility-vs-shift-new} instead of Proposition \ref{prop:admissibility-vs-shift}.
\end{proof}

If $(\wa,\wb)$ is a $(d,0)$-non-degenerate weight pair with $d\geq 1$ and if we write $r=r_{d,0}(\wa)$, then we can consider for every $\ell\in\{0\dots,rd-1\}$ the $\ell$-th shift of $\wa$ to the left, which we notate $\shift{\ell} \wa$.
For every $q$ in $\{0,\dots,r-1\}$ and $s\in\{0,\dots,d-1\}$, the weight pair $(\shift{(qd+s)}\wa,\wb)$ is then $(d,\overline{d-s})$-non-degenerate.
Corollary \ref{cor:sufficient-inequalities} may be adapted as follows.

\begin{cor}\label{cor:sufficient-inequalities-new}
	Let $(\wa,\wb)$ be a $(d,0)$-non-degenerate weight pair with $d\geq 1$ and assume that $r=r_{d,0}(\wa)$ is finite.
	Assume that the following inequalities hold for all $n\geq 0$, all $q\leq q'$ in $\{0,\dots,r-1\}$, and all $s \leq s'$ in $\{0,\dots,d-1\}$,
	\begin{equation}\label{eq:main-inequalities-new}
	\frac{b_{nd+1}}{b_{(n-1)d+1}}
	\leq
	\frac{
		\PFComp{\shift{q' d+s'}\wa,\wb}{nd+(d-s')}
	}{
		\PFComp{\shift{q' d+s'}\wa,\wb}{(n-1)d+(d-s')}}
	\leq
	\frac{
		\PFComp{\shift{q d+s}\wa,\wb}{nd+(d-s)}
	}{
		\PFComp{\shift{q d+s}\wa,\wb}{(n-1)d+(d-s)}}
	\leq
	\frac{b_{(n+1)d+1}}{b_{nd+1}},
	\end{equation}
	where we omit the ill-defined inequality on the left when $n=0$.
	Then $(\wa,\wb)$ is $(d,0)$-admissible, and $(\shift{qd+s}\wa,\wb)$ is $(d,\overline{d-s})$-admissible for every $s\in\{0,\dots,d-1\}$ and $q\in\{0,\dots,r-1\}$.
\end{cor}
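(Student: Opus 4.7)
My plan is to mirror the proof of Corollary~\ref{cor:sufficient-inequalities} from the non-arithmetic case. I proceed by descending induction on the shift index $\ell$ from $\ell = rd - 1$ down to $\ell = 0$, establishing at each step that $(\shift{\ell}\wa, \wb)$ is admissible with the appropriate residue. Writing $\ell = qd + s$ with $s \in \{0, \ldots, d-1\}$, the pair $(\shift{\ell}\wa, \wb)$ is $(d, \overline{d - s})$-non-degenerate, and the goal at each step is its $(d, \overline{d - s})$-admissibility.

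For the base case $\ell = rd - 1$: if $d = 1$, then $\shift{r-1}\wa = (a_{r-1}, a_r, 0, 0, \ldots)$ with $a_{r-1} a_r > 0$, and case 1.(a) of Proposition~\ref{prop:admissibility-vs-shift-new} yields $(1, 0)$-admissibility. If $d \geq 2$, the assumption that the support of $\wa$ is $\{0, d, 2d, \ldots, rd\}$ forces $\shift{rd - 1}\wa = (0, a_{rd}, 0, 0, \ldots)$, and case 1.(b) yields $(d, 1)$-admissibility directly.

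For the inductive step, with $\ell = qd + s$, I invoke case 2.(a) (if $s \neq 0$) or case 2.(b) (if $s = 0$) of Proposition~\ref{prop:admissibility-vs-shift-new} applied to $(\shift{\ell}\wa, \wb)$, playing the role of $(\wa, \wb)$ in the proposition, with $(\shift{\ell}\wa)^+ = \shift{\ell+1}\wa$. A direct check on residues shows that the type of $\shift{\ell+1}\wa$ matches what the proposition requires in each case, so the admissibility hypothesis of the proposition is supplied by the induction hypothesis. To verify the inequalities~\eqref{eq:ineq-shift-new} or~\eqref{eq:ineq-shift-new-2}, I chain the hypothesis~\eqref{eq:main-inequalities-new} across~$n$ exactly as in the proof of Corollary~\ref{cor:sufficient-inequalities}: specializing~\eqref{eq:main-inequalities-new} to $q = q'$, $s = s'$, and reading it for consecutive values of $n$, first shows that the sequence $(b_{(k+1)d+1}/b_{kd+1})_{k \geq 1}$ is non-decreasing; combining this monotonicity with the full inequality~\eqref{eq:main-inequalities-new} then gives, for $1 \leq M \leq n$ and $1 \leq m \leq n$,
\begin{align*}
\frac{\PFComp{\shift{\ell+1}\wa, \wb}{Md + \sigma'}}{\PFComp{\shift{\ell+1}\wa, \wb}{(M-1)d + \sigma'}}
\leq \frac{b_{(n+1)d+1}}{b_{nd+1}}
\leq \frac{\PFComp{\shift{\ell}\wa, \wb}{(n+1)d + \sigma}}{\PFComp{\shift{\ell}\wa, \wb}{nd + \sigma}},
\qquad
\frac{b_{md+1}}{b_{(m-1)d+1}} \leq \frac{\PFComp{\shift{\ell}\wa, \wb}{(n+1)d + \sigma}}{\PFComp{\shift{\ell}\wa, \wb}{nd + \sigma}},
\end{align*}
where $\sigma, \sigma'$ denote the appropriate residues for the shifts $\ell$ and $\ell + 1$. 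After the change of index $M = n + 1 - m$ (resp. $M = n - m$ in the $s = 0$ case) these are precisely the inequalities demanded by Proposition~\ref{prop:admissibility-vs-shift-new}, completing the inductive step.

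The main obstacle I anticipate is not conceptual but one of bookkeeping: the residue parameter of $(\shift{\ell}\wa, \wb)$ shifts with $\ell$, and the boundary between cases 2.(a) and 2.(b) of Proposition~\ref{prop:admissibility-vs-shift-new} must be handled carefully each time $\ell$ crosses a multiple of $d$. Once the indices are aligned, the argument reduces essentially to the chaining in Corollary~\ref{cor:sufficient-inequalities}, and the induction terminates at $\ell = 0$, providing $(d, 0)$-admissibility of $(\wa, \wb)$ together with $(d, \overline{d - s})$-admissibility of every $(\shift{qd + s}\wa, \wb)$.
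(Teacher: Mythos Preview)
Your proposal is correct and follows essentially the same approach as the paper's own proof, which is itself only a sketch: descending induction on the shift index $\ell=qd+s$ from $\ell=rd-1$ down to $\ell=0$, using cases 1.(a)/1.(b) of Proposition~\ref{prop:admissibility-vs-shift-new} for the base case and cases 2.(a)/2.(b) for the inductive step, with the inequalities chained across~$n$ exactly as in Corollary~\ref{cor:sufficient-inequalities}. The paper says no more than this and explicitly leaves the chaining verification to the reader; your version supplies more of that detail, and your closing remark about bookkeeping at multiples of~$d$ is apt, since the middle term in your displayed chain should read $b_{nd+1}/b_{(n-1)d+1}$ rather than $b_{(n+1)d+1}/b_{nd+1}$ in the $s=0$ case.
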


\begin{proof}
The proof is almost the same as that of Corollary \ref{cor:sufficient-inequalities}.
That is, the first case of Proposition \ref{prop:admissibility-vs-shift-new} proves that $(\shift{(r-1)d+(d-1)}\wa,\wb)$ is $(d,1)$-admissible, and then one propagates admissibility to the weight pairs $(\shift{\ell}\wa,\wb)$ in descending order of $\ell=qd+s$ using the second case of Proposition \ref{prop:admissibility-vs-shift-new} when $s\neq 0$ and the third case when $s=0$.
We let the reader check that by chaining the inequalities \eqref{eq:main-inequalities-new} as $n$ varies, just as in the proof of Corollary \ref{cor:sufficient-inequalities}, we indeed have all necessary inequalities to apply Proposition \ref{prop:admissibility-vs-shift-new} to the weight pairs $(\shift{\ell}\wa,\wb)$ for $\ell<(r-1)d+(d-1)$.
\end{proof}

\subsection{Simply generated trees in the arithmetic case}\label{subsec:defs-trees-new}

For $d\geq 1$ and $n\geq 0$, we let $\trees[d]_{nd+1}$ denote the set of rooted plane trees with $nd+1$ vertices in which all vertices have a number of children which is divisible by $d$, and $\trees[d]=\bigcup_{n\geq0}\trees[d]_{nd+1}$.
Such trees must indeed have a number of vertices whose residue modulo $d$ is $1$ since one may partition their vertex sets into first the root vertex and then the set of all vertices which are children of some vertex, the latter set having cardinality a multiple of $d$.
Conversely, any complete $d$-ary tree with $n\geq0$ interior vertices is an element of $\trees[d]_{nd+1}$.

\begin{defin}
	For $d\geq1$, a non-negative sequence $\w$ whose support takes the form $\{\ell d\colon 0\leq \ell\leq r\}$ for some $r=r_d(\w)\in\{1,2,\dots\}\cup\{+\infty\}$ will be called a \textit{$d$-arithmetic} sequence.
\end{defin}

The corresponding simply generated tree distributions are then supported in $\trees[d]$ and given for $n\geq 0$ and $\pt\in\trees[d]_{nd+1}$ by
\begin{align*}
\SimpGen{\w}{nd+1}(\pt)=\frac{\omega(\pt)}{\PFtrees{\w}{dn+1}},
\qquad\text{where}\qquad
\PFtrees{\w}{nd+1}
=\sum_{\pt\in\trees[d]_{nd+1}}\omega(\pt),
\end{align*}
where $\omega(\pt)=\prod_{\u\in\pt}w_{k\u(\pt)}$ for every $\pt\in\trees$.
Notice that the sum in the last display runs over $\trees[d]_{nd+1}$, since $\omega(\pt)$ may be non-zero only for $\pt\in \trees[d]_{nd+1}$, by $d$-arithmeticity of $\w$.
The other terms of the sequence $(\PFtrees{\w}{n})_{n\geq 0}$ are zero, that is
\begin{align*}
\PFtrees{\w}{nd+s}=0\qquad\text{for}\qquad n\geq0, \quad s\in\{0,\dots,d-1\}\setminus\{1\}.
\end{align*}
In Section \ref{subsec:defs-trees} we explained how Theorems \ref{thm:main-thm-BGW} and \ref{thm:main-thm-SG} are equivalent.
Similarly, Theorem \ref{thm:main-thm-BGW-arithmetic} is equivalent to the following statement in terms of simply generated trees.
\begin{thm}\label{thm:main-thm-SG-arithmetic}
	Let $d\geq 1$ and let $\w$ be a non-negative $d$-arithmetic sequence with $w_0w_d>0$.
	$(w_0,w_d,w_{2d},\dots)$ is log-concave, then the random trees with respective distributions $(\SimpGen{\w}{nd+1})_{n\geq 0}$ can be realized as a Markov process $(\T_{nd+1})_{n\geq 0}$ such that $\T_1\subset\T_{d+1}\subset\T_{2d+1}\subset\dots$.
\end{thm}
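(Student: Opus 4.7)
My plan is to transpose the strategy of Section~\ref{sec:application-to-trees} into the arithmetic framework developed in Sections 4.1--4.2. First, I would prove an arithmetic analog of Proposition~\ref{prop:growing-comp-is-sufficient}: Theorem~\ref{thm:main-thm-SG-arithmetic} reduces to showing that the weight pair $(\w,\wb)$ is $(d,0)$-admissible, where $\wb$ is defined by $b_{nd+1}=\PFtrees{\w}{nd+1}$ for $n\geq0$ and is zero elsewhere. The recursive construction of the Markov process $(\T_{nd+1})_{n\geq 0}$ is then carried out as in the non-arithmetic case: at each step, a covering pair $C_{nd}\covered[d]C_{(n+1)d}$ under the joint coupling is used to assemble the root's subtrees of descendants, relying on independent copies of the couplings already built at smaller sizes. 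The $(d,0)$-arithmetic condition on these compositions captures exactly the structural constraints of $\trees[d]$, and an arithmetic analog of Proposition~\ref{prop:link-with-compositions} should identify the composition $c(\T_{nd+1})$ and the subtree sizes with the correct joint distribution.

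The core technical task is then to verify the hypotheses of Corollary~\ref{cor:sufficient-inequalities-new}, namely the chain of inequalities~\eqref{eq:main-inequalities-new} across all shift classes $\ell=qd+s$ with $0\leq q\leq r-1$ and $0\leq s\leq d-1$ (where $r=r_d(\w)$). The key tool will be an arithmetic counterpart to Corollary~\ref{cor:TP2-forests-partition-function}. A crucial subtlety arises here: the full Toeplitz matrix $(w_{i-j})$ of a $d$-arithmetic log-concave sequence is \emph{not} TP2, as one sees already with $d=2$ and $\w=(1,0,1,0,1,\dots)$, so Proposition~\ref{prop:Toeplitz-TP2} does not apply to $\w$ directly. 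The remedy is to exploit the hypothesis that the subsequence $(w_{kd})_{k\geq 0}$ is log-concave, whose Toeplitz matrix \emph{is} TP2. The forest partition functions $\PFforests{\w}{n,k}$ defined in~\eqref{eq:def-PFforests} vanish unless $n\equiv k\pmod d$, and on this sub-lattice the recursion of Lemma~\ref{lem:recursion-fnk} reduces to $\PFforests{\w}{n,k}=\sum_{j\geq0}w_{jd}\,\PFforests{\w}{n-1,k+jd-1}$. A reindexed version of the inductive argument of Proposition~\ref{prop:TP2-fnk} should then yield the restricted TP2 inequality
\begin{align*}
\PFforests{\w}{n,k}\,\PFforests{\w}{n',k'}\geq\PFforests{\w}{n,k'}\,\PFforests{\w}{n',k}
\end{align*}
for $n\leq n'$ and $k\leq k'$ with $k'-k\equiv 0\pmod d$ (which automatically forces $n'-n\equiv 0\pmod d$ on nontrivial terms).

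With this restricted TP2 in place, the chain of inequalities~\eqref{eq:main-inequalities-new} should follow from a suitable adaptation of the cross-product identity~\eqref{eq:proof-key-inequalities:key-expression} in the proof of Proposition~\ref{prop:checking-the-inequalities}, now separating the treatment of within-block transitions $s\to s+1$ (with $q$ fixed) from the boundary transitions $(q,d-1)\to(q+1,0)$: the former change the residue class of the first shift index by $1$ while keeping the sub-lattice structure of the nontrivial contributions stable, whereas the latter correspond to a genuine shift by $d$ in the nonzero support of $\w$. The case $r_d(\w)=+\infty$ is then handled by truncating $\w$ at its first $r$ nonzero terms, applying the finite-support result, and passing to the pointwise limit via Proposition~\ref{prop:closure-admissibility-new}. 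The main obstacle I anticipate is the combinatorial bookkeeping needed to chain the inequalities across all $rd$ shift classes and to formulate cleanly the restricted TP2 statement; the underlying algebraic mechanism, however, should be essentially the same as in the non-arithmetic case.
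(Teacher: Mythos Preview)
Your overall architecture matches the paper's: reduce to $(d,0)$-admissibility of $(\w,\wb)$ via an arithmetic analogue of Proposition~\ref{prop:growing-comp-is-sufficient}, then feed a chain of ratio inequalities into Corollary~\ref{cor:sufficient-inequalities-new}, and finish the unbounded-support case by truncation and Proposition~\ref{prop:closure-admissibility-new}. The paper does exactly this.

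Your approach to the restricted TP2 statement is actually somewhat cleaner than the paper's. The paper stratifies the forest partition functions into arrays $F^s_{n,k}=\PFforests{\w}{nd+s,kd+s}$ and then iterates the one-step recursion $d$ times to get a self-recursion for each $F^s$ with production matrix built from $\W^{*d}$. This introduces a boundary anomaly at $k=0$ for $s\neq0$ (the production matrix is no longer Toeplitz there), and the paper has to check separately that this modified matrix is still TP2. Your single-step proposal sidesteps this: working directly with $\PFforests{\w}{n,k}$ and restricting the minors to $k'\equiv k\pmod d$, the only contributing $(i,i')$ in the key sum also lie in a fixed residue class, so the relevant $2\times2$ minors of $(w_{i-j})$ are exactly those of the Toeplitz matrix of $\W$, which is TP2. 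The induction of Proposition~\ref{prop:TP2-fnk} then goes through verbatim on the restricted indices. This is a legitimate and nice simplification.

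Where your sketch runs into trouble is the ordering of the chain of inequalities. You propose ``within-block'' steps $s\to s+1$ with $q$ fixed, and ``boundary'' steps $(q,d-1)\to(q+1,0)$. But the cross-product trick from~\eqref{eq:proof-key-inequalities:key-expression} compares two ratios that are sums over the \emph{same} forest array with the \emph{same} $n$-indices, differing only in the $\w$-shift. For an $s\to s+1$ step, the expression $\PFComp{\shift{qd+s}\w,\wb}{nd+(d-s)}=\sum_i w_{i+qd+s}\,\PFforests{\w}{nd+(d-s),i}$ lives over the array $F^{d-s}$, while the $s{+}1$ side lives over $F^{d-s-1}$; the cross terms $w_{i+\ell'}w_{i'+\ell}$ vanish by arithmeticity, so the key sum does not factor as (TP2 of $\w$)$\times$(TP2 of $f$) and the argument stalls. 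The paper resolves this by reversing the roles: it varies $q$ first with $s$ fixed (same $F^{d-s}$, clean cross-product), and handles the transition between successive $s$ only at the endpoint $q=r-1$, where a one-step unfolding (Lemma~\ref{lem:expr-Z-arithmetic-case-2}) re-expresses $\PFComp{\shift{(r-1)d+s}\w,\wb}{nd+(d-s)}$ as $W_r\sum_i W_i F^{d-s-1}_{n,i}$, i.e.\ in the \emph{same} array as the $s{+}1$ side. With that ordering, every comparison is again a single cross-product over one TP2 array. Your plan is salvageable, but the chain has to be rerouted through $q$ first; the ``within-block $s\to s+1$'' inequalities you describe are not the ones amenable to the identity~\eqref{eq:proof-key-inequalities:key-expression}.
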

The remainder of Section \ref{sec:arithmetic-case} is dedicated to proving Theorem \ref{thm:main-thm-SG-arithmetic}, by providing suitable adaptations to the proof of Theorem \ref{thm:main-thm-SG}.

\subsection{Relationship with compositions satisfying arithmetic conditions}

We recall from Section \ref{subsec:link-with-compositions} some notation.
For $n\geq 1$ and $\pt\in\trees_n$ whose root has $i\geq 0$ children, $\phi(\pt)=(\pt^{(1)},\dots,\pt^{(i)})$ is the possibly empty collection where $\pt^{(j)}=\{\u\in\U\colon j\u\in\pt\}$ is the subtree of descendants of the $j$-th child of the root, $1\leq j\leq i$.
Recall also the definition of the composition $c(\pt)=\letter{n}_1 \cdots  \letter{n}_i$ where $n_j=\#{\pt^{(j)}}$ for $1\leq j\leq i$.

\begin{prop}\label{prop:link-with-compositions-new-1}
	Let $\w$ be a $d$-arithmetic weight sequence, $d\geq 1$, and let $\wb=(\PFtrees{\w}{n})_{n\geq 0}$.
	Then $(\w,\wb)$ is a $(d,0)$-non-degenerate weight pair.
\end{prop}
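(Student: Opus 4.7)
The plan is to directly verify the two conditions that define $(d,0)$-non-degeneracy of the weight pair $(\w,\wb)$, namely:
(i) The sequence $\w$ satisfies $w_0 \neq 0$ and its support is $\{\ell d \colon 0 \leq \ell \leq r\}$ for some $r \in \{1,2,\dots\}\cup\{+\infty\}$.
(ii) The sequence $\wb = (\PFtrees{\w}{n})_{n\geq 1}$ has support $\{1, d+1, 2d+1,\dots\}$.

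Condition (i) is immediate: by the definition of a $d$-arithmetic sequence, the support of $\w$ is of the form $\{\ell d \colon 0\leq \ell\leq r\}$ with $r = r_d(\w) \geq 1$, and since $w_0 w_d > 0$ by hypothesis we have both $w_0 \neq 0$ and $r \geq 1$, as required.

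For condition (ii), I would split the verification into two claims. First, $\PFtrees{\w}{m}=0$ whenever $m \not\equiv 1 \pmod d$: indeed, any plane tree $\pt$ with $\omega(\pt) \neq 0$ must satisfy $w_{k_\u(\pt)} > 0$ for every $\u \in \pt$, which by $d$-arithmeticity of $\w$ forces $k_\u(\pt) \in d\Z_+$ for all such $\u$. Summing over $\u\in \pt$ and partitioning the vertices of $\pt$ into the root and the children of each vertex gives $\#\pt = 1 + \sum_{\u\in\pt} k_\u(\pt) \equiv 1 \pmod d$, so only trees with $\#\pt \equiv 1 \pmod d$ contribute to $\PFtrees{\w}{m}$.

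Second, to show that $\PFtrees{\w}{nd+1} > 0$ for all $n \geq 0$, I would exhibit one explicit tree $\pt_n \in \trees[d]_{nd+1}$ with $\omega(\pt_n) > 0$. For $n=0$, take the single-vertex tree, which has weight $w_0 > 0$. For $n \geq 1$, take the ``caterpillar'' tree in which a chain of $n$ vertices $\emptyset, (1), (1,1), \dots, (1,1,\dots,1)$ along the ancestral line of the leftmost branch are each given $d$ children in $\pt_n$, and every other vertex is a leaf. This produces exactly $n$ internal vertices (each with $d$ children) and $nd - (n-1) = nd - n + 1$ leaves, for a total of $nd+1$ vertices. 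Its $\omega$-weight is $w_d^n \cdot w_0^{nd-n+1} > 0$ since $w_0 w_d > 0$.

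There is no real obstacle here: the proposition is purely a bookkeeping verification of support and non-vanishing, and its only genuine content is the congruence argument in the first claim of (ii) together with the explicit construction of a positive-weight witness tree in the second claim.
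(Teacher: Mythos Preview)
Your proof is correct and follows essentially the same approach as the paper's: verify directly that the support of $\w$ has the required form (immediate from the definition of $d$-arithmetic), and that $\wb$ has support exactly $\{1,d+1,2d+1,\dots\}$ via the congruence argument plus an explicit positive-weight tree. The only cosmetic difference is your choice of witness tree: the paper uses the complete $d$-ary tree with $n$ interior vertices, whereas you use a caterpillar; both give $nd+1$ vertices and weight a power of $w_0w_d$, so either works.
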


\begin{proof}
	The sequence $(\PFtrees{\w}{n})_{n\geq 0}$ has support included in $\{1,d+1,2d+1,\dots\}$ from Section \ref{subsec:defs-trees-new}.
	Conversely, any complete $d$-ary tree with $n\geq0$ interior vertices, and thus $nd+1$ vertices in total, gets non-zero $\omega$-weight since $w_0w_d>0$.
	Thus the support of $\wb$ is precisely $\{1,d+1,2d+1,\dots\}$.
	Since additionally $\w$ has support $\{\ell d\colon 0\leq \ell\leq r\}$ for some $r\in\{1,2,\dots\}\cup\{+\infty\}$, we have that $(\w,\wb)$ is indeed a $(d,0)$-non-degenerate weight pair.
\end{proof}

\begin{prop}\label{prop:link-with-compositions-new-2}
	Under the assumptions of Proposition \ref{prop:link-with-compositions-new-1}, we have for $n\geq 0$ the identity $\PFtrees\w {nd+1}=\PFComp{\w,\wb}{nd}$, and for $\pt\in\trees[d]_{nd+1}$ if we write $c(\pt)=\letter{n}_1 \cdots  \letter{n}_i$ and $\phi(\pt)=(\pt^{[1]},\dots,\pt^{[i]})$, then
	\begin{align}\label{eq:description-laws-trees-compositions-new}
	\SimpGen{\w}{nd+1}(\pt)=\PComp{\w,\wb}{n-1}\bigl(c(\pt)\bigr)\cdot\left(\SimpGen{\w}{n_1}\bigl(\pt^{[1]}\bigr)\cdots \SimpGen{\w}{n_i}\bigl(\pt^{[i]}\bigr)\right).
	\end{align}
\end{prop}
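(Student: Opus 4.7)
The plan is to follow the same structure as the proof of Proposition~\ref{prop:link-with-compositions}, being careful about the arithmetic constraints. The key initial observation is that if $\pt\in\trees[d]_{nd+1}$, then its root has $i=k_\emptyset(\pt)$ children with $i$ a multiple of $d$, and for each $j\in\{1,\dots,i\}$ the subtree $\pt^{[j]}$ again belongs to $\trees[d]$, so its size $n_j=\#\pt^{[j]}$ has residue $1$ modulo $d$. Therefore the composition $c(\pt)=\letter{n}_1\cdots\letter{n}_i$ satisfies the $(d,0)$-arithmetic condition, i.e.\ $c(\pt)\composes[d,0] nd$. Moreover, the map $\pt\in\trees[d]_{nd+1}\mapsto (c(\pt), (\pt^{[1]},\dots,\pt^{[i]}))$ is a bijection onto the set of pairs consisting of a composition $c=\letter{n}_1\cdots\letter{n}_i\composes[d,0]nd$ together with a tuple $(\pt^{[j]})_{1\leq j\leq i}\in\trees[d]_{n_1}\times\cdots\times\trees[d]_{n_i}$.

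Next, as in~\eqref{eq:proof-link-compos:product-form}, the multiplicative structure of $\omega$ gives
\begin{align*}
\omega(\pt)=w_i\cdot \omega(\pt^{[1]})\cdots\omega(\pt^{[i]}).
\end{align*}
Dividing by $\PFtrees{\w}{nd+1}$ and inserting the partition functions $\PFtrees{\w}{n_j}$ of the sub-pieces, one rewrites
\begin{align*}
\SimpGen{\w}{nd+1}(\pt)
=\frac{w_i\cdot(\PFtrees{\w}{n_1}\cdots\PFtrees{\w}{n_i})}{\PFtrees{\w}{nd+1}}\cdot\SimpGen{\w}{n_1}(\pt^{[1]})\cdots\SimpGen{\w}{n_i}(\pt^{[i]}).
\end{align*}
Using $\wb=(\PFtrees{\w}{n})_{n\geq 0}$ and the formula~\eqref{eq:def-proba-compositions-new} defining $\PComp{\w,\wb}{nd}$ (which sums over $(d,0)$-arithmetic compositions of $nd$, exactly matching the shape of $c(\pt)$), this becomes
\begin{align*}
\SimpGen{\w}{nd+1}(\pt)
=\frac{\PFComp{\w,\wb}{nd}}{\PFtrees{\w}{nd+1}}\cdot\ProbComp{\w,\wb}{nd}{c(\pt)}\cdot\SimpGen{\w}{n_1}(\pt^{[1]})\cdots\SimpGen{\w}{n_i}(\pt^{[i]}).
\end{align*}

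Finally, to identify the prefactor, I would sum this identity first over all $\pt$ with a fixed value of $c(\pt)=c$, using that the $\SimpGen{\w}{n_j}$ are probability measures on $\trees[d]_{n_j}$, and then sum the resulting identity over all $c\composes[d,0]nd$. Since both $\SimpGen{\w}{nd+1}$ and $\PComp{\w,\wb}{nd}$ are probability measures, both sums equal $1$, which forces $\PFtrees{\w}{nd+1}=\PFComp{\w,\wb}{nd}$. Substituting back gives exactly~\eqref{eq:description-laws-trees-compositions-new}. There is no real obstacle here: the only subtlety is making sure that the domain of summation on the composition side is $\Compos[d,0]{nd}$ (and not all compositions of $nd$), but this is automatic because the definition~\eqref{eq:def-proba-compositions-new} already enforces the $(d,0)$-arithmetic condition via the non-degeneracy of $(\w,\wb)$ established in Proposition~\ref{prop:link-with-compositions-new-1}.
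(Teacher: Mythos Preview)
Your proposal is correct and follows essentially the same approach as the paper, which simply states that ``the proof of Proposition~\ref{prop:link-with-compositions} still works.'' You have in fact supplied more detail than the paper does here, in particular the explicit verification that $c(\pt)$ satisfies the $(d,0)$-arithmetic condition and that the bijection $\phi$ restricts appropriately to $\trees[d]$, which is exactly the care needed to transport the earlier argument to the arithmetic setting.
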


\begin{proof}
	The proof of Proposition \ref{prop:link-with-compositions} still works.
\end{proof}

\begin{prop}\label{prop:growing-comp-is-sufficient-new}
	Let $\w$ be a non-negative $d$-arithmetic sequence, $d\geq 1$, such that $w_0w_d>0$ and let $\wb=(\PFtrees{\w}{n})_{n\geq 1}$.
	If the weight pair $(\w,\wb)$ is $(d,0)$-admissible the random trees with respective distributions $(\SimpGen{\w}{nd+1})_{n\geq 0}$ can be realized as a Markov process $(\T_{nd+1})_{n\geq 0}$ in which at each step a right-leaning bouquet of $d$ leaves is added.
\end{prop}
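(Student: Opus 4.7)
The plan is to adapt the proof of Proposition~\ref{prop:growing-comp-is-sufficient} by using $\covered[d]$ in place of $\covered$, while inductively maintaining the stronger invariant that each transition adds a right-leaning bouquet of $d$ leaves. Initialize $\T_1$ as the one-vertex tree. Suppose inductively that a Markov chain $(\T_{md+1})_{0 \leq m \leq n}$ has been constructed coupling $(\SimpGen{\w}{md+1})_{0 \leq m \leq n}$ so that $\T_1 \subset \T_{d+1} \subset \cdots \subset \T_{nd+1}$, with each transition adding a right-leaning bouquet of $d$ leaves.

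Take independent copies $(\T^{(j)}_{md+1})_{0 \leq m \leq n}$, $j \geq 1$, of this chain. By the $(d,0)$-admissibility of $(\w, \wb)$, choose, independently of these copies, a coupling $(C_{nd}, C_{(n+1)d})$ of $\PComp{\w,\wb}{nd}$ and $\PComp{\w,\wb}{(n+1)d}$ with $C_{nd} \covered[d] C_{(n+1)d}$. Writing $C_{nd} = \letter{n_1} \cdots \letter{n_i}$ and $C_{(n+1)d} = \letter{n'_1} \cdots \letter{n'_{i'}}$ (with all parts and both $i, i'$ respecting the $(d,0)$-arithmetic condition), set
\begin{align*}
\T = \{\emptyset\} \cup \bigcup_{1 \leq j \leq i} j\,\T^{(j)}_{n_j},
\qquad
\T' = \{\emptyset\} \cup \bigcup_{1 \leq j \leq i'} j\,\T^{(j)}_{n'_j}.
\end{align*}
By Proposition~\ref{prop:link-with-compositions-new-2}, the trees $\T$ and $\T'$ have laws $\SimpGen{\w}{nd+1}$ and $\SimpGen{\w}{(n+1)d+1}$ respectively, so defining $p_n(\pt, \pt') = \condProb{\T' = \pt'}{\T = \pt}$ extends the Markov chain one step further.

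The key verification is that $\T' \setminus \T$ is a right-leaning bouquet of $d$ leaves in $\T'$. The relation $C_{nd} \covered[d] C_{(n+1)d}$ splits into two cases. Case 1: some part $n_j$ becomes $n_j + d$ while all other parts and $i' = i$ are unchanged. Writing $n_j = m_j d + 1$, the transition $\T^{(j)}_{n_j} \subset \T^{(j)}_{n_j + d}$ is a single step of the inductive chain, so the inductive hypothesis provides a right-leaning bouquet of $d$ leaves in $\T^{(j)}_{n_j + d} \setminus \T^{(j)}_{n_j}$; the relabelling $\u \mapsto j\u$ preserves relative sibling order and produces the desired bouquet inside $\T'$. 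Case 2: $i' = i + d$ with $n'_{i+1} = \cdots = n'_{i+d} = 1$; then $\T^{(i+k)}_1$ is the single-vertex tree for each $k = 1, \ldots, d$, so the new vertices $(i+1), \ldots, (i+d)$ are precisely the $d$ rightmost children of the root of $\T'$ and are leaves, hence a right-leaning bouquet. The main subtlety is purely bookkeeping of sibling orderings, which is transparent because the map $\u \mapsto j\u$ respects the relative order of siblings; once the inductive invariant is correctly stated, both cases fall out immediately from the structure of the $d$-covering relation.
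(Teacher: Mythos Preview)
The proposal is correct and follows essentially the same approach as the paper, which merely states that the proof of Proposition~\ref{prop:growing-comp-is-sufficient} adapts with $C_{nd}\covered[d]C_{(n+1)d}$ replacing $C_n\covered C_{n+1}$. Your explicit two-case verification of the right-leaning bouquet property fills in detail that the paper leaves implicit.
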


\begin{proof}
	The proof of Proposition \ref{prop:growing-comp-is-sufficient} adapts with appropriate adjustments.
	In particular, for $n\geq 0$ the compositions $C_n\covered C_{n+1}$ in the proof are now replaced by $C_{nd}\covered[d]C_{(n+1)d}$ obtained by $(d,0)$-admissibility of $(\w,\wb)$.
\end{proof}

\subsection{Checking the inequalities}

Let $d\geq 1$ and fix $\w=(w_i)_{i\geq0}$ a non-negative $d$-arithmetic sequence.
The notation from Section \ref{subsec:checking-inequalities} is imported and we additionally denote by $\forests[d]_{nd+s,kd+s}$ the set of forests containing $kd+s$ trees form $\trees[d]$, with $nd+s$ vertices in total.
We have now for $s,\in\{0,\dots,d-1\}$ and for all $n,k\geq 0$,
\begin{align}\label{eq:def-PFforests-new}
\PFforests{\w}{nd+s,kd+s}=\sum_{\letter n_1 \cdots \letter n_{kd+s} \composes[d,s] nd+s}\PFtrees{\w}{ n_1}\cdots \PFtrees{\w}{ n_{kd+s}}
=\sum_{F\in\forests[d]_{nd+s,kd+s}} \omega(F),
\end{align}
and we also have $\PFforests{\w}{nd+s,kd+s'}=0$ for $s\neq s'$ in $\{0,\dots,d-1\}$.
In order to lighten notation, we let
\begin{align*}
F^s_{n,k}=\PFforests{\w}{nd+s,kd+s},\quad\text{for}\quad
	\begin{cases}
		n,k\geq0,\qquad s\in\{1,\dots,d-1\};\text{ or,}\\
		n, k\geq1,\qquad s=0.
	\end{cases}
\end{align*}
We let also $\W=(W_0,W_1,W_2,\dots)=(w_0,w_d,w_{2d},\dots)$ be the sequence defined by $W_j=w_{jd}$ for all $j\geq 0$.

\begin{lemma}\label{lem:recursion-fnk-new}
	The arrays $(F^s_{n,k})_{n,k\geq0}$ corresponding to adjacent values of $s\in\{0,\dots,d-1\}$ are linked by the following relations:
	\begin{align}
	F^s_{n,k}
	&=\sum_{j\geq 0}W_{j} \,F^{s-1}_{n,k+j},
	\qquad &&s\in\{1,\dots,d-1\},\label{eq:relation-fnk-different-s-new}\\
	F^0_{n,k}
	&=\sum_{j\geq 0}W_{j} \,F^{d-1}_{n-1,k+j-1},
	\qquad &&s=0,\label{eq:relation-fnk-different-s-new-2}
	\end{align}
	for all $s,n,k$ such that $nd+s\geq 2$ and $kd+s\geq1$.
	To give sense to \eqref{eq:relation-fnk-different-s-new} when $s=1$ and $k=0$, we make the convention that $F^0_{n,0}=0$ for all $n\geq1$.
\end{lemma}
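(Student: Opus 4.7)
The plan is to adapt the bijective argument from the proof of Lemma~\ref{lem:recursion-fnk} by replacing its single ``root-expansion'' operation by a $d$-arithmetic variant. Given a forest $F = (\pt_1, \dots, \pt_{kd+s}) \in \forests[d]_{nd+s, kd+s}$, I would look at the root of the first tree $\pt_1$. Since $\pt_1 \in \trees[d]$, the number of its children is necessarily of the form $jd$ for some unique $j \geq 0$. With the notation $\phi(\pt_1) = (\pt_1^{[1]}, \dots, \pt_1^{[jd]})$ introduced in Section~\ref{subsec:link-with-compositions}, I would form the new forest
$$F' = (\pt_1^{[1]}, \dots, \pt_1^{[jd]}, \pt_2, \dots, \pt_{kd+s}),$$
which has $nd + s - 1$ vertices in total and $(k+j)d + s - 1$ trees, all still belonging to $\trees[d]$. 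Exactly as in Lemma~\ref{lem:recursion-fnk}, the map $F \mapsto (j, F')$ is a bijection, whose inverse regroups the first $jd$ trees of the image forest under a new common root.

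Next, I would compute how $\omega$ transforms. By the product factorization~\eqref{eq:proof-link-compos:product-form}, I have $\omega(\pt_1) = w_{jd}\, \omega(\pt_1^{[1]}) \cdots \omega(\pt_1^{[jd]})$, hence $\omega(F) = w_{jd}\, \omega(F') = W_j\, \omega(F')$ from $W_j = w_{jd}$. Summing this identity over $F \in \forests[d]_{nd+s, kd+s}$ through the bijection would then yield the desired recursion, the only remaining task being the bookkeeping of the residues of $nd + s - 1$ and $(k+j)d + s - 1$ modulo $d$.

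This residue bookkeeping is precisely what produces the split between the two cases. When $s \in \{1, \dots, d-1\}$, both $nd + s - 1 = nd + (s-1)$ and $(k+j)d + s - 1 = (k+j)d + (s-1)$ still have residue $s-1$ modulo $d$, so $F'$ contributes to $F^{s-1}_{n, k+j}$, yielding~\eqref{eq:relation-fnk-different-s-new}. When $s = 0$, the residue wraps around: $nd - 1 = (n-1)d + (d-1)$ and $kd + jd - 1 = (k+j-1)d + (d-1)$, so $F'$ lives in $\forests[d]_{(n-1)d + (d-1),\, (k+j-1)d + (d-1)}$ and contributes to $F^{d-1}_{n-1, k+j-1}$, yielding~\eqref{eq:relation-fnk-different-s-new-2}. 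I do not anticipate any real obstacle; the only slightly subtle point is the borderline case $s = 1$, $k = 0$, $j = 0$, in which the bijection would formally produce an empty forest with $nd \geq d$ vertices, which of course does not exist and is exactly accounted for by the convention $F^0_{n,0} = 0$ for $n \geq 1$ declared in the statement.
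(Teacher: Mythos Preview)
Your proof is correct and is exactly the approach the paper takes: the paper's proof simply rewrites the identities in terms of $\PFforests{\w}{\cdot,\cdot}$, observes they are the non-arithmetic recursion~\eqref{eq:recursion} specialized to a $d$-arithmetic $\w$, and says ``the proof of~\eqref{eq:recursion} adapts without change.'' You have written out that adaptation in full, including the residue bookkeeping and the handling of the $s=1$, $k=0$, $j=0$ boundary case via the convention $F^0_{n,0}=0$.
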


\begin{proof}
	These identities may be written
	\begin{align*}
	\PFforests{\w}{nd+s,kd+s}
	&=\sum_{j\geq 0}w_{jd} \,\PFforests{\w}{nd+(s-1),(k+j)d+(s-1)},
	&&s\in\{1,\dots,d-1\},\\
	\PFforests{\w}{nd,kd}
	&=\sum_{j\geq 0}w_{jd} \,\PFforests{\w}{(n-1)d+(d-1),(k+j-1)d+(d-1)},
	&&s=0,
	\end{align*}
	where $s,n,k$ run over the specified ranges of indices.
	Those correspond to \eqref{eq:recursion} after taking into account that $\w$ is $d$-arithmetic.
	The proof of \eqref{eq:recursion} adapts without change.
\end{proof}

For $\ell\geq 1$ we denote by $\W^{*\ell}=(W^{*\ell}_j)_{j\geq0}$ the $\ell$-fold self-convolution of the sequence $\W$.

\begin{cor}\label{cor:TP2-fnk-new}
	For $s\in\{0,\dots,d-1\}$, the array $(F^s_{n,k})_{n,k\geq 0}$ satisfies the following recursion:
	\begin{align}\label{eq:recursion-fnk-new}
	F^s_{n,k}
	=\sum_{j\geq 0}W^{*d}_{j} \,F^s_{n-1,k+j-1},
	\quad
	\begin{cases}
		n\geq 1,\quad k\geq1,\quad s\in\{1,\dots,d-1\};\text{ or,}\\
		n\geq 2,\quad k\geq1,\quad s=0,\\
	\end{cases}
	\end{align}
	where we make the convention that $F^0_{n,0}=0$ for all $n\geq1$.
	When $s\in\{1,\dots,d-1\}$ and $k=0$, we have the modified recursion:
	\begin{multline}\label{eq:recursion-fnk-new-2}
		F^s_{n,0}
		=\sum_{j\geq 0}\Bigl(W^{*d}_{j}-W^{*s}_0\cdot W^{*(d-s)}_j\Bigr) \,F^s_{n-1,j-1},
		\\
			n\geq 1,\quad k=0,\quad s\in\{1,\dots,d-1\},
	\end{multline}
	where we make the convention that $F^s_{n,-1}=0$ for all $s\in\{1,\dots,d-1\}$ and $n\geq0$.
\end{cor}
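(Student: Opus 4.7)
The plan is to derive both \eqref{eq:recursion-fnk-new} and \eqref{eq:recursion-fnk-new-2} by iterating the two identities from Lemma~\ref{lem:recursion-fnk-new} a total of $d$ times, so as to return from $F^s_{n,\cdot}$ to $F^s_{n-1,\cdot}$ while picking up the $d$-fold self-convolution $\W^{*d}$.

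Concretely, for $s\in\{1,\dots,d-1\}$, $n\geq 1$ and $k\geq 1$, I will first apply \eqref{eq:relation-fnk-different-s-new} $s$ times in a row to reach $F^s_{n,k} = \sum_{j\geq 0} W^{*s}_j F^0_{n,k+j}$, then apply \eqref{eq:relation-fnk-different-s-new-2} to each $F^0_{n,k+j}$ (whose second index is $\geq 1$ because $k\geq 1$), and finally apply \eqref{eq:relation-fnk-different-s-new} $d-1-s$ further times to each resulting $F^{d-1}_{n-1,\cdot}$. Re-indexing the triple sum and using associativity of convolution in $\W^{*s}*\W*\W^{*(d-1-s)} = \W^{*d}$ yields \eqref{eq:recursion-fnk-new}. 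The case $s=0$, $n\geq 2$, $k\geq 1$ is symmetric, starting with a single application of \eqref{eq:relation-fnk-different-s-new-2} followed by $d-1$ applications of \eqref{eq:relation-fnk-different-s-new}. The validity of each intermediate step is routine: the assumption $k\geq 1$ keeps every intermediate second index strictly positive, and the first-index condition $n'd+s'\geq 2$ reduces (when $n'=n-1$ and $s'\in\{s+1,\dots,d-1\}$) to $s'\geq 2$, which is automatic since $s\geq 1$.

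The main subtlety is the boundary case $k=0$ leading to \eqref{eq:recursion-fnk-new-2}. Here, after the first $s$ applications of \eqref{eq:relation-fnk-different-s-new}, one formally reaches $\sum_{j\geq 0} W^{*s}_j F^0_{n,j}$, and the $j=0$ term must be discarded because $F^0_{n,0} = 0$ by convention. Continuing the iteration on the restricted sum $j\geq 1$ then produces
\[F^s_{n,0} \;=\; \sum_{j\geq 1,\,m\geq 0} W^{*s}_j\,W^{*(d-s)}_m\,F^s_{n-1,\,j+m-1},\]
and the announced formula \eqref{eq:recursion-fnk-new-2} follows from the elementary identity
\[\sum_{\substack{j\geq 1,\,m\geq 0\\ j+m=\ell}} W^{*s}_j\,W^{*(d-s)}_m \;=\; W^{*d}_\ell - W^{*s}_0\,W^{*(d-s)}_\ell,\]
which accounts precisely for the missing $j=0$ contribution. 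The bookkeeping of this correction term is the step I expect to require the most care, together with checking that the final phase of applications of \eqref{eq:relation-fnk-different-s-new} remains valid at $n=1$, which once again reduces to the automatic inequality $s'\geq s+1\geq 2$.
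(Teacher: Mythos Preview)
Your proposal is correct and follows essentially the same route as the paper: iterate Lemma~\ref{lem:recursion-fnk-new} in the order $(i)$ apply \eqref{eq:relation-fnk-different-s-new} $s$ times, $(ii)$ apply \eqref{eq:relation-fnk-different-s-new-2} once, $(iii)$ apply \eqref{eq:relation-fnk-different-s-new} another $d-1-s$ times, collect the convolution $\W^{*s}*\W*\W^{*(d-1-s)}=\W^{*d}$, and in the boundary case $k=0$ drop the $j=0$ term after phase $(i)$ using the convention $F^0_{n,0}=0$, which produces exactly the correction $-W^{*s}_0\,W^{*(d-s)}_j$. Your validity checks, in particular the observation that at $n=1$ phase $(iii)$ only involves $s'\geq s+1\geq 2$, match the paper's verification.
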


\begin{proof}
	Let $s,n,k$ be as specified in~\eqref{eq:recursion-fnk-new}.
	Whenever this is legitimate, if we (i) apply $s$ times the relation \eqref{eq:relation-fnk-different-s-new}, (ii) apply one time the relation \eqref{eq:relation-fnk-different-s-new-2}, and (iii) apply the relation \eqref{eq:relation-fnk-different-s-new} another $d-s-1$ times; then we we get:
	\begin{align*}
	F^s_{n,k}\overset{(i)}{=}\sum_{j\geq 0}W^{*s}_{j} \,F^0_{n,k+j}
	\overset{(ii)}{=}\sum_{j\geq 0}W^{*(s+1)}_{j} \,F^{d-1}_{n-1,k+j-1}
	\overset{(iii)}{=}\sum_{j\geq 0}W^{*d}_{j} \,F^{d-1-(d-s-1)}_{n-1,k+j-1},
	\end{align*}
	which is precisely \eqref{eq:recursion-fnk-new}.
	These are indeed legitimate applications of Lemma~\ref{lem:recursion-fnk-new} provided that:
	\begin{align*}
		(i)\colon &\left\{
		\begin{matrix*}[l]
			nd+s' \geq 2\\
			(k+j)d+s' \geq 1
		\end{matrix*}
		\right.
		&&
			s'=1,\dots,s,&
			j\geq 0;
		\\
		% ----------------------------
		(ii)\colon  &\left\{
		\begin{matrix*}[l]
			nd \geq 2\\
			(k+j)d \geq 1
		\end{matrix*}
		\right.
		&&
			&
			j\geq 0;
		\\
		% ----------------------------
		(iii)\colon  &\left\{
			\begin{matrix*}[l]
				(n-1)d+s' \geq 2\\
				(k+j-1)d+s' \geq 1
			\end{matrix*}
			\right.
			&&
				s'=s+1,\dots,d-1,&
				j\geq 0.
	\end{align*}
	% Keeping only the more constraining inequalities and simplifying slightly, these are equivalent to:
	% \begin{align*}
	% 	(i)\colon &\left\{
	% 	\begin{matrix*}[l]
	% 		nd \geq 1\\
	% 		kd \geq 0
	% 	\end{matrix*}
	% 	\right.
	% 	&&
	% 	% ----------------------------
	% 	(ii)\colon  &\left\{
	% 	\begin{matrix*}[l]
	% 		nd \geq 2\\
	% 		kd \geq 1
	% 	\end{matrix*}
	% 	\right.
	% 	&&
	% 	% ----------------------------
	% 	(iii)\colon  &\left\{
	% 		\begin{matrix*}[l]
	% 			(n-1)d+s \geq 1\\
	% 			(k-1)d+s \geq 0
	% 		\end{matrix*}
	% 		\right.
	% \end{align*}
	One then easily checks that, for the $s,n,k$ specified in~\eqref{eq:recursion-fnk-new}, the preceding inequalities are all satisfied.%
		\footnote{Notice in particular that $n\geq2$ when $d=1$ since the case $s\neq0$ is now non-existent. In particular, we have $nd\geq2$ even in this case, as needed.}
		Observe however that the second inequality in (ii) fails when $k=j=0$, hinting that the case $k=0$ may need to be treated separately.
	Let us now observe that when $n\geq1$, $k=0$ and $s\neq 0$; we get the claimed modified recursion upon making the convention that $F^s_{\cdot,-1}$ is identically zero. The steps are the same as above except that we use%
		\footnote{Dealing with the case $j_1=0$ separately is in fact necessary, since the identity $F^0_{n,j_1}=\sum_{j_2 \geq 0 }W_{j_2} \,F^{d-1}_{n-1,j_1+j_2-1}$ is \textit{not} valid when $j_1=0$ because the left-hand side is zero whereas the right-hand side is not.}
	the convention that $F^0_{n,0}=0$ between the first and second step. This gives:
	\begin{align*}
		F^s_{n,0}\overset{(i)}{=}\sum_{j_1\geq 0}W^{*s}_{j_1} \,F^0_{n,j_1}
			&= \sum_{j_1>0 } W^{*s}_{j_1}\,F^0_{n,j_1}\\
			&\overset{(ii)}{=} \sum_{j_1>0 } W^{*s}_{j_1}\,\sum_{j_2 \geq 0 }W_{j_2} \,F^{d-1}_{n-1,j_1+j_2-1}\\
			&\overset{(iii)}{=} \sum_{j_1>0 } W^{*s}_{j_1}\,\sum_{j_2,j_3 \geq 0 } W_{j_2} W^{*(d-s-1)}_{j_3} \,F^s_{n-1,j_1+j_2+j_3-1}\\
			&= \sum_{j> 0}\Bigl(W^{*d}_{j}-W^{*s}_0\cdot W^{*(d-s)}_j\Bigr) \,F^s_{n-1,j-1}.
		% \overset{(ii)}{=}F^0_{n,0}+\sum_{j> 0}W^{*(s+1)}_{j} \,F^{d-1}_{n-1,j-1}
		% \overset{(iii)}{=}F^0_{n,0}+\sum_{j> 0}W^{*d}_{j} \,F^{d-1-(d-s-1)}_{n-1,j-1}.
	\end{align*}
	Making the convention that $F^s_{\cdot,-1}$ is identically zero, we can include $j=0$ in the last display, as claimed.
\end{proof}

We may now adapt Corollary \ref{cor:TP2-forests-partition-function}, in the form of the following proposition.

\begin{prop}\label{prop:TP2-forests-partition-function-new}
	Assume that $\w$ is $d$-arithmetic and that the sequence $\W=(w_0,w_d,w_{2d,\dots})$ is log-concave.
	Then, the arrays $(F^s, s\in\{0,\dots,d-1\})$, are TP2, that is we have:
	\begin{align}\label{eq:ineq-forests-new}
	F^s_{n,k}\,F^s_{n',k'}
	\geq F^s_{n,k'}\,F^s_{n',k}.
	&&\begin{cases}
		0\leq {n}<{n'},\quad  0\leq{k}<{k'}, & s\in\{1,\dots,d-1\}\\
		1\leq {n}<{n'},\quad  1\leq{k}<{k'}, & s=1.
	\end{cases}
	\end{align}
\end{prop}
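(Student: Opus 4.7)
The plan is to establish the TP2 property for $F^0$ directly via Proposition~\ref{prop:TP2-fnk}, and then propagate TP2 from $F^{s-1}$ to $F^s$ for $s \in \{1, \ldots, d-1\}$ using the relation~\eqref{eq:relation-fnk-different-s-new} together with a general ``convolution preserves TP2'' lemma. Since $\W$ is log-concave, so is the self-convolution $\W^{*d}$.

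For the case $s = 0$, set $\tilde F_{n, k} := F^0_{n+1, k+1}$ for $n, k \geq 0$. The recursion~\eqref{eq:recursion-fnk-new} at $s = 0$, after the shift $(n, k) \mapsto (n+1, k+1)$, reads $\tilde F_{n, k} = \sum_{i \geq 0} W^{*d}_{i - k} \tilde F_{n-1, i-1}$ for $n \geq 1$, $k \geq 0$, with $\tilde F_{n, -1} = F^0_{n+1, 0} = 0$ by convention and $\tilde F_{n, k} = 0$ for $k > n$ (a forest of $(k+1)d$ trees has at least $(k+1)d$ vertices). The production matrix $A_{i, k} := W^{*d}_{i - k}$ is TP2 by Proposition~\ref{prop:Toeplitz-TP2}, so Proposition~\ref{prop:TP2-fnk} applied to $\tilde F$ yields its TP2 property; translated back, this gives TP2 for $F^0$ on $\{(n, k) : n, k \geq 1\}$, which is exactly~\eqref{eq:ineq-forests-new} in the case $s = 0$.

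The auxiliary lemma states: if $(G_{n, k})_{n, k \geq 0}$ is TP2 and $\mathbf{u} = (u_j)_{j \geq 0}$ is log-concave, then $H_{n, k} := \sum_{j \geq 0} u_j\, G_{n, k+j}$ is TP2. With the convention $u_{<0} := 0$, a standard symmetrization yields
\[H_{n,k} H_{n',k'} - H_{n,k'} H_{n',k} = \tfrac12 \sum_{\ell, \ell' \geq 0} (u_{\ell - k} u_{\ell' - k'} - u_{\ell - k'} u_{\ell' - k})(G_{n, \ell} G_{n', \ell'} - G_{n, \ell'} G_{n', \ell}).\]
The first bracket is a Toeplitz minor of $\mathbf u$ (non-negative when $\ell \leq \ell'$ by Proposition~\ref{prop:Toeplitz-TP2}) and the second is a TP2 minor of $G$ (non-negative when $\ell \leq \ell'$ by hypothesis); both change sign simultaneously as $\ell$ crosses $\ell'$, so every summand is non-negative.

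Finally, I would apply this lemma inductively on $s$, with $\mathbf{u} = \W$ and $G = F^{s-1}$, identifying $F^s$ as the corresponding $H$-array via~\eqref{eq:relation-fnk-different-s-new}. Two preliminary verifications are needed: (i) extending TP2 for $F^0$ from $n, k \geq 1$ to the full range $n, k \geq 0$, which is straightforward since $F^0_{0, 0} = 1$, $F^0_{n, 0} = 0$ for $n \geq 1$, $F^0_{0, k} = 0$ for $k \geq 1$, so that all boundary minors are trivially non-negative; and (ii) checking that~\eqref{eq:relation-fnk-different-s-new} remains valid in the edge case $n = 0$, $s = 1$ (which the stated range formally excludes), which a direct inspection of both sides confirms. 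The induction then yields TP2 of $F^s$ on $n, k \geq 0$ for all $s \in \{1, \ldots, d-1\}$. The main technical obstacle is the careful handling of boundaries, both in the auxiliary lemma (via the convention $u_{<0} = 0$) and in the extension of the recursion to $s = 1$, $n = 0$; the interior case is a direct symmetrization argument.
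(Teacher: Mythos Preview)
Your proof is correct and takes a genuinely different route from the paper's. For $s=0$ both approaches coincide (apply Proposition~\ref{prop:TP2-fnk} to the shifted array with Toeplitz production matrix $W^{*d}_{i-k}$). For $s\in\{1,\dots,d-1\}$, however, the paper does \emph{not} induct on $s$ via the one-step relation~\eqref{eq:relation-fnk-different-s-new}; instead it combines $d$ applications of Lemma~\ref{lem:recursion-fnk-new} into a single $n$-recursion (Corollary~\ref{cor:TP2-fnk-new}) and observes that the resulting production matrix $A^s_{j,k}=W^{*d}_{j-k}-\indic{\{k=0\}}W^{*s}_0 W^{*(d-s)}_j$ differs from the Toeplitz matrix only on the column $k=0$, so Proposition~\ref{prop:TP2-fnk} applies once one checks directly that the $2\times2$ minors involving that column are non-negative (a somewhat technical computation). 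Your inductive scheme replaces this with the clean ``log-concave convolution along $k$ preserves TP2'' lemma, which is essentially the same symmetrization trick as in Proposition~\ref{prop:TP2-fnk} but applied in the $k$-direction rather than the $n$-direction. The price you pay is the boundary bookkeeping: extending $F^0$ to $n=0$ or $k=0$ (with $F^0_{0,0}=1$, otherwise $0$) and checking that~\eqref{eq:relation-fnk-different-s-new} remains valid at $s=1$, $n=0$ under this extension---both of which you handle correctly. Overall your argument is shorter and avoids the modified-production-matrix analysis; the paper's approach has the advantage of treating every $s$ uniformly via a single application of Proposition~\ref{prop:TP2-fnk}.
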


\begin{proof}
	Let $A^0=(W^{*d}_{i-j})_{i,j\geq 0}$ be the Toeplitz matrix of $\W^{*d}$.
	Since convolution preserves log-concavity, the sequence $\W^{*d}$ is log-concave and thus $A^0$ is TP2 using Proposition~\ref{prop:Toeplitz-TP2}.
	Hence, in the case $s=0$, we obtain the result by applying Proposition \ref{prop:TP2-fnk} with the arrays $A=A^0$ and $F=(F^0_{n+1,k+1})_{n,k\geq0}$, and the recursion given in Corollary \ref{cor:TP2-fnk-new}.

	Now let $s\in\{1,\dots,d-1\}$. The situation is slightly more involved in this case since the recursion in Corollary~\ref{cor:TP2-fnk-new} behaves differently on the coordinate $k=0$. This recursion can be written as:
	\begin{align*}
		F^s_{n,k}=\sum_{j\geq0} A^s_{j,k} F^s_{n-1,k-1}, && n\geq1, k\geq0,
	\end{align*}
	where we write:
	\begin{align*}
		A^s_{j,k}=W^{*d}_{j-k}-\indic{\{k=0\}}\cdot W^{*s}_0 \cdot W^{(d-s)}_j && j,k\geq0.
	\end{align*}
	By Proposition \ref{prop:TP2-fnk}, in order to get the result it is sufficient to prove that $A^s$ is TP2. Outside the $k=0$ column, the array $A^s$ coincides with the Toeplitz matrix of the log-concave sequence $\W^{*d}$, so that the corresponding $2\times 2$ minors are non-negative by Proposition~\ref{prop:Toeplitz-TP2}. Let us now consider a generic $2\times2$ minor \textit{involving} the column $k=0$, that is we fix $k'>0$ and $0\leq j\leq j'$ and we shall prove that $A^s_{j,0}A^s_{j',k'}\geq A^s_{j',0} A^s_{j,k'}$. This inequality reads as follows:
	\begin{align*}
		\bigl(W^{*d}_{j}- W^{*s}_0 \cdot W^{(d-s)}_j\bigr)\cdot W^{*d}_{j'-k'}
			\geq \bigl(W^{*d}_{j'}- W^{*s}_0 \cdot W^{(d-s)}_{j'}\bigr)\cdot W^{*d}_{j-k'},
	\end{align*}
	where we recall the convention that $u_{-1}=u_{-2}=\dots=0$ for a sequence $(u_0,u_1,\dots)$ indexed by $\Z_+$.
	By exploiting that $\W^{*d}$ is the convolution of $\W^{*s}$ and $\W^{*(d-s)}$, the latter inequality can be re-written as:
	\begin{multline*}
		\Bigl[\sum_{i\geq0} W^{*(d-s)}_{j-i} \cdot \indic{\{i\neq0\}}W^{*s}_{i}\Bigr]
			\cdot
			\Bigl[\sum_{i\geq0} W^{*(d-s)}_{j'-i} \cdot W^{*s}_{i-k'}\Bigr]\\
		\geq
		\Bigl[\sum_{i\geq0} W^{*(d-s)}_{j'-i} \cdot \indic{\{i\neq0\}}W^{*s}_{i}\Bigr]
			\cdot
			\Bigl[\sum_{i\geq0} W^{*(d-s)}_{j-i} \cdot W^{*s}_{i-k'}\Bigr].
	\end{multline*}
	In order to prove this inequality, consider the following expression:
	\begin{multline*}
		\sum_{i_1,i_2\geq0}
			\Bigl(W^{*(d-s)}_{j-i_1}\cdot W^{*(d-s)}_{j'-i_2}-W^{*(d-s)}_{j-i_2}\cdot W^{*(d-s)}_{j'-i_1}\Bigr)\\
			\cdot
			\Bigl(\indic{\{i_1\neq0\}} W^{*s}_{i_1} \cdot W^{*s}_{i_2-k'}-\indic{\{i_2\neq0\}} W^{*s}_{i_2}\cdot W^{*s}_{i_1-k'}\Bigr).
	\end{multline*}
	Log-concavity being preserved by convolution, the sequence $\W^{*(d-s)}$ is log-concave so that by Proposition~\ref{prop:Toeplitz-TP2}, the expression between the first set of parentheses is non-negative when $i_1\leq i_2$, and non-positive when $i_1\geq i_2$. The same goes for the expression between the second set of parentheses: (i) with the same reasoning applied to $\W^{*s}$ when both indicators take the value 1, and (ii) by observing that the expression is trivially non-negative when $0=i_1<i_2$ and non-positive when $i_1>i_2=0$. Hence the expressions between both sets of parentheses have the same sign, so that the sum in the last display is non-negative. This gives the desired inequality by developing the product.
\end{proof}

Let us introduce a bit of notation for readability.
Assume that $\w$ is $d$-arithmetic and that $r=r_{d}(\w)$ is finite, and  write $\wb=(\PFtrees{\w}{n})_{n\geq 1}$.
For every $\ell\in\{0,\dots,rd-1\}$, we let $\shift\ell\w$ be the $\ell$-th shift of $\w$ to the left.
For $q$ in $\{0,\dots,r-1\}$ and every $s\in\{0,\dots,d-1\}$, the weight pair $(\shift{qd+s}\w,\wb)$ is then $(d,\overline{d-s})$-non-degenerate.
We set for $n\geq0$,
\begin{align*}
R_n(q,s)=
\frac{
	\PFComp{\shift{qd+s}\w,\wb}{nd+(d-s)}
}{
	\PFComp{\shift{qd+s}\w,\wb}{(n-1)d+(d-s)}},
\quad
\begin{cases}
q\in\{0,\dots,r-1\},\,s\in\{0,\dots,d-1\}	&\text{if $n\geq 1$,}\\
q\in\{0,\dots,r-1\},\,s=0	&\text{if $n=0$}.
\end{cases}
\end{align*}
We can now provide a generalization of Proposition \ref{prop:checking-the-inequalities}.

\begin{prop}\label{prop:checking-the-inequalities-new}
	Assume that $\w$ is $d$-arithmetic, that $\W=(w_0,w_d,w_{2d,\dots})$ is a log-concave sequence, and that $r=r_{d}(\w)$ is finite.
	Then we have
	\begin{align}\label{eq:ineq-partfunct-arithmetic-case}
	\tfrac{\PFtrees{\w}{d+1}}{\PFtrees{\w}{1}}
	= R_0(0,0)\geq R_0(1,0)\geq\dots\geq R_0\bigl(r-1,0\bigr),
	\end{align}
	and for all $n\geq1$, we have the following long chain of inequalities, where each line continues the preceding one,
	\begin{align*}
	\begin{matrix}
		&	 R_n(0,0)
			&\geq& 	R_n(1,0)
				&\geq&	\dots
					&\geq& 	R_n\bigl(r-1,0\bigr)
								\\
%	R_n\bigl((r-1)d,0\bigr)
		\geq&  R_n(0,1)
			&\geq& 	R_n(1,1)
				&\geq&	\dots
					&\geq& 	R_n\bigl(r-1,1\bigr)
								\\
%	\vdots
		&  \vdots
			&& 	\vdots
				&&	\vdots
					&& 	\vdots
								\\
%	R_n\bigl((r-1)d,1\bigr)
		\geq&  R_n(0,d-2)
			&\geq& 	R_n(1,d-2)
				&\geq&	\dots
					&\geq& 	R_n\bigl(r-1,d-2\bigr)
								\\
%	R_n\bigl((r-1)d,d-2\bigr)
		\geq&  R_n(0,d-1)
			&\geq& 	R_n(1,d-1)
				&\geq&	\dots
					&\geq& 	R_n\bigl(r-1,d-1\bigr).
	\end{matrix}
	\end{align*}
	The first and last term of this chain of inequalities read as follows:
	\begin{align*}
	 R_n(0,0) 
	 	= \frac{\PFtrees{\w}{(n+1)d+1}}{\PFtrees{\w}{nd+1}}
	 \qquad\text{and}\qquad
	 R_n\bigl(r-1,d-1\bigr) 
	 	= \frac{\PFtrees{\w}{nd+1}}{\PFtrees{\w}{(n-1)d+1}}.
	\end{align*}
\end{prop}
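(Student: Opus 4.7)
The plan is to adapt the proof of Proposition~\ref{prop:checking-the-inequalities} to the arithmetic setting, using the TP2 property of the arrays $(F^s_{n,k})$ provided by Proposition~\ref{prop:TP2-forests-partition-function-new}, together with the log-concavity of $\W$ (equivalently, TP2 of its Toeplitz matrix by Proposition~\ref{prop:Toeplitz-TP2}). The first step is to expand $\PFComp{\shift{qd+s}\w,\wb}{nd+(d-s)}$ by summing over the possible number $J$ of parts of a contributing composition, which is constrained modulo $d$. Setting $J=\ell d+(d-s)$ for $s\neq 0$ or $J=\ell d$ for $s=0$, one obtains for $n\geq 1$ the uniform expressions
\begin{align*}
R_n(q,s) &= \frac{\sum_{\ell\geq 0} W_{q+\ell+1}\,F^{d-s}_{n,\ell}}{\sum_{\ell\geq 0} W_{q+\ell+1}\,F^{d-s}_{n-1,\ell}} \qquad (s\in\{1,\ldots,d-1\}),\\
R_n(q,0) &= \frac{\sum_{\ell\geq 1} W_{q+\ell}\,F^{0}_{n+1,\ell}}{\sum_{\ell\geq 1} W_{q+\ell}\,F^{0}_{n,\ell}}.
\end{align*}

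From these formulas the endpoints are easy to identify: $R_n(0,0)$ reduces to $\PFComp{\w,\wb}{(n+1)d}/\PFComp{\w,\wb}{nd}$, which by Proposition~\ref{prop:link-with-compositions-new-2} equals $\PFtrees{\w}{(n+1)d+1}/\PFtrees{\w}{nd+1}$, while $\shift{rd-1}\w$ is supported only at the single index $1$, forcing $R_n(r-1,d-1)=\PFtrees{\w}{nd+1}/\PFtrees{\w}{(n-1)d+1}$. The base case $n=0$ (where only $s=0$ is meaningful) is handled by direct computation: only the composition with $d$ parts equal to $\letter{1}$ contributes to the numerator of $R_0(q,0)$ and only the empty composition to its denominator, which yields $R_0(q,0)=W_{q+1}W_0^d/W_q$, so that \eqref{eq:ineq-partfunct-arithmetic-case} becomes exactly the log-concavity of $\W$.

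For $n\geq 1$, both the horizontal inequalities $R_n(q,s)\geq R_n(q+1,s)$ and the vertical transitions $R_n(r-1,s)\geq R_n(0,s+1)$ will be reduced to a single template inequality of the form
\begin{align*}
\frac{\sum_\ell W_{a+\ell}\,F^{\sigma}_{n,\ell}}{\sum_\ell W_{a+\ell}\,F^{\sigma}_{n-1,\ell}} \geq \frac{\sum_\ell W_{a+\ell+1}\,F^{\sigma}_{n,\ell}}{\sum_\ell W_{a+\ell+1}\,F^{\sigma}_{n-1,\ell}},
\end{align*}
for suitable $\sigma\in\{0,\ldots,d-1\}$, non-negative integer $a$, and range of the summation index $\ell$. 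After cross-multiplying and antisymmetrizing in the two summation indices, this inequality is equivalent to the non-negativity of the double sum
\begin{align*}
\sum_{i,j}\bigl(W_{a+i}W_{a+j+1}-W_{a+j}W_{a+i+1}\bigr)\bigl(F^{\sigma}_{n,i}F^{\sigma}_{n-1,j}-F^{\sigma}_{n,j}F^{\sigma}_{n-1,i}\bigr).
\end{align*}
Each term is the product of a $2\times 2$ minor of the Toeplitz matrix of $\W$, controlled by log-concavity (Proposition~\ref{prop:Toeplitz-TP2}), with a $2\times 2$ minor of the array $(F^\sigma_{n,k})_{n,k}$, controlled by Proposition~\ref{prop:TP2-forests-partition-function-new}; both minors change sign simultaneously when the summation indices are swapped, so their product is non-negative term by term, exactly as in the proof of Proposition~\ref{prop:checking-the-inequalities}.

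The non-trivial manoeuvre required is to bring the vertical transitions into the above template. For $s\in\{1,\ldots,d-2\}$, the support of $\shift{(r-1)d+s}\w$ reduces to the single index $d-s$, so that the only composition contributing is the single-block one and $R_n(r-1,s)=F^{d-s}_{n,0}/F^{d-s}_{n-1,0}$; the recursion~\eqref{eq:relation-fnk-different-s-new} of Lemma~\ref{lem:recursion-fnk-new} then rewrites $F^{d-s}_{n,0}=\sum_j W_j F^{d-s-1}_{n,j}$, which puts $R_n(r-1,s)$ into the template with $\sigma=d-s-1$ and $a=0$, to be compared to $R_n(0,s+1)$ which is of the same form with $a=1$. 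The case $s=0$ is analogous, using \eqref{eq:relation-fnk-different-s-new-2} at $k=1$ to obtain $F^0_{n+1,1}=\sum_j W_j F^{d-1}_{n,j}$. I expect this last step, namely unifying the vertical transitions with the horizontal inequalities via the recursions of Lemma~\ref{lem:recursion-fnk-new}, to be the \emph{main source of bookkeeping}: one has to track shifts, boundary ranges of summation, and the slight structural difference between the $s=0$ and $s\neq 0$ cases.
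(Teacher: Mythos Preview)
Your proposal is correct and follows essentially the same approach as the paper: express the partition functions via Lemmas~\ref{lem:expr-Z-arithmetic-case} and~\ref{lem:expr-Z-arithmetic-case-2} (which you derive inline), reduce both the horizontal and the vertical inequalities to the single template~\eqref{eq:final-inequ-to-prove}, and conclude via the TP2/log-concavity double-sum argument exactly as in Proposition~\ref{prop:checking-the-inequalities}. One small wording slip: for $s\in\{1,\dots,d-2\}$ the contributing composition has $d-s$ parts, not one, but your formula $R_n(r-1,s)=F^{d-s}_{n,0}/F^{d-s}_{n-1,0}$ is correct regardless.
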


Before proceeding with the proof of Proposition \ref{prop:checking-the-inequalities-new}, we will need the two following lemmas.

\begin{lemma}\label{lem:expr-Z-arithmetic-case}
	For $n\geq 0$ we have the expressions
	\begin{align}
	\PFComp{\shift{qd+s}\w,\wb}{nd+(d-s)}
	&=\sum_{i\geq0}W_{i+q+1}F^{d-s}_{n,i},
	&&q\in\{0,\dots,r-1\},
	&&s\in\{1,\dots,d-1\},\label{eq:expression-Z-1}\\
	\PFComp{\shift{qd+0}\w,\wb}{nd+(d-0)}
	&=\sum_{i\geq0}W_{i+q}F^{0}_{n+1,i},
	&&q\in\{0,\dots,r-1\},
	&&s=0.\label{eq:expression-Z-2}
	\end{align}
\end{lemma}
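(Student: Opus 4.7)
The plan is to simply unfold the definition of $\PFComp{\wa,\wb}{\cdot}$ from~\eqref{eq:def-Z-new} with $\wa = \shift{qd+s}\w$, and then reinterpret the resulting double sum in terms of forests by using $d$-arithmeticity of both $\w$ (which forces $\wa$ to have its support on a specific arithmetic progression) and $\wb$ (recall $b_{n_\ell} = \PFtrees{\w}{n_\ell}$ vanishes unless $n_\ell \equiv 1 \pmod{d}$, from Section~\ref{subsec:defs-trees-new}). The key observation is that the arithmetic conditions on the \emph{length} of a composition and on its \emph{individual parts} combine to pick out exactly the compositions counted by $F^{d-s}_{n,\cdot}$ when $s\neq0$, or by $F^0_{n+1,\cdot}$ when $s=0$.

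For the first identity~\eqref{eq:expression-Z-1}, with $s \in \{1, \dots, d-1\}$, I would observe that $a_i = w_{i+qd+s}$ is nonzero if and only if $i+qd+s$ lies in the support $\{0, d, \dots, rd\}$ of $\w$, which forces $i = jd+(d-s)$ for some $j \in \{0, \dots, r-q-1\}$, and in that case $a_i = w_{(j+q+1)d} = W_{j+q+1}$. At such fixed $i$, the inner sum runs over compositions $\letter{n_1}\cdots\letter{n_i}$ of $nd+(d-s)$, but only those with every $n_\ell \equiv 1 \pmod{d}$ contribute. Together with $i = jd+(d-s) \equiv d-s \pmod{d}$, this is exactly the $(d,d-s)$-arithmetic condition, so the inner sum equals $\PFforests{\w}{nd+(d-s),\, jd+(d-s)} = F^{d-s}_{n,j}$ by~\eqref{eq:def-PFforests-new}, yielding~\eqref{eq:expression-Z-1} after summation.

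For the identity~\eqref{eq:expression-Z-2}, with $s = 0$, the reasoning is parallel: $a_i = w_{i+qd}$ is nonzero exactly when $i = jd$ with $j \in \{0, \dots, r-q\}$, in which case $a_i = W_{j+q}$, and the inner sum collapses to $\PFforests{\w}{(n+1)d,\, jd}$. For $j \geq 1$ this equals $F^0_{n+1, j}$, and for $j = 0$ the only ``composition'' of $(n+1)d \geq 1$ with zero parts would be the empty word, which contributes nothing, so the $j=0$ term vanishes and matches $W_q \cdot F^0_{n+1, 0}$ under the convention $F^0_{n+1, 0} = 0$ already in force (see Corollary~\ref{cor:TP2-fnk-new}). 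This allows one to start the sum from $j = 0$ as claimed. There is no real obstacle here: the proof is pure bookkeeping, and the only care needed lies in carefully matching the arithmetic conditions embedded in the $\PFComp{\cdot}{\cdot}$ sum against those built into the $F^s$ arrays, and in handling the slightly degenerate $j=0$ term in the $s=0$ case via the stated convention.
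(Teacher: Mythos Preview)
Your proof is correct and follows essentially the same approach as the paper: both unfold the definition of $\PFComp{\shift{\ell}\w,\wb}{\cdot}$ as $\sum_{i\geq0} w_{i+\ell}\,\PFforests{\w}{nd+(d-s),i}$, then use $d$-arithmeticity of $\w$ to restrict the sum to the appropriate residue class $i\equiv d-s\pmod d$ and re-index. Your handling of the $j=0$ term in the $s=0$ case via the convention $F^0_{n+1,0}=0$ is in fact more explicit than the paper's own proof, which passes over this point silently.
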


\begin{lemma}\label{lem:expr-Z-arithmetic-case-2}
	For $n\geq 1$ we have the expressions
	\begin{align}
	\PFComp{\shift{(r-1)d+s}\w,\wb}{nd+(d-s)}
	&=W_r\sum_{i\geq0}W_{i}F^{d-s-1}_{n,i},
	&&s\in\{0,\dots,d-1\}\label{eq:expression-Z-3}.
	\end{align}
\end{lemma}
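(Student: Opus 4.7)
The plan is to unpack the definition \eqref{eq:def-Z-new} of $\PFComp{\shift{(r-1)d+s}\w,\wb}{nd+(d-s)}$ and to exploit the fact that the large shift by $(r-1)d+s$ pushes $\w$ so close to the top of its support that only very few values of the number of parts $j$ of a composition can contribute. Concretely, I would write a generic contributing composition as $\letter{n_1}\cdots\letter{n_j}$ with $n_1+\cdots+n_j=nd+(d-s)$ and examine when the corresponding summand $w_{(r-1)d+s+j}\cdot\prod_k\PFtrees{\w}{n_k}$ is non-zero. The $d$-arithmeticity of $\w$ confines $(r-1)d+s+j$ to lie in $\{0,d,\dots,rd\}$, which combined with $j\geq 0$ and $s\in\{0,\dots,d-1\}$ leaves only $j=d-s$ when $s\in\{1,\dots,d-1\}$, and $j\in\{0,d\}$ when $s=0$. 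The value $j=0$ in the $s=0$ case corresponds to the empty composition, whose total is $0\neq nd+d$, and may thus be discarded. In every remaining case the contributing weight is $w_{rd}=W_r$, and the composition has exactly $d-s$ parts (when $s\geq 1$) or $d$ parts (when $s=0$).

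Next, writing each $n_k$ as $m_kd+1$ with $m_k\geq 0$ (forced by the support condition on $\wb$), the equation $\sum n_k=nd+(d-s)$ becomes $\sum m_k=n$. I would recognize the resulting weighted sum as a forest partition function in the notation of Section~\ref{subsec:checking-inequalities}: one obtains
$W_r\cdot F^{d-s}_{n,0}$ when $s\in\{1,\dots,d-1\}$, and $W_r\cdot F^0_{n+1,1}$ when $s=0$.

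Finally, I would apply the appropriate relation from Lemma~\ref{lem:recursion-fnk-new} to convert these into the claimed form. For $s\in\{1,\dots,d-1\}$, relation~\eqref{eq:relation-fnk-different-s-new} applied with its $s$-index equal to $d-s$ and $k=0$ yields $F^{d-s}_{n,0}=\sum_{i\geq 0}W_i F^{d-s-1}_{n,i}$; for $s=0$, relation~\eqref{eq:relation-fnk-different-s-new-2} applied at indices $(n+1,1)$ yields $F^0_{n+1,1}=\sum_{i\geq 0}W_i F^{d-1}_{n,i}$. Multiplying through by $W_r$ gives the desired identity \eqref{eq:expression-Z-3} in both cases. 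I do not foresee any genuine obstacle: the proof is essentially bookkeeping of arithmetic support conditions combined with a single invocation of Lemma~\ref{lem:recursion-fnk-new}, the only subtlety being that the $s=0$ case requires ruling out the empty composition and appealing to a different recursion than the $s\geq 1$ case; the convention $F^0_{n,0}=0$ takes care of the $i=0$ term in the resulting sum when $d-s-1=0$.
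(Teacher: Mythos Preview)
Your proposal is correct and follows essentially the same route as the paper. The only cosmetic difference is that the paper first cites Lemma~\ref{lem:expr-Z-arithmetic-case} (equations \eqref{eq:expression-Z-1}--\eqref{eq:expression-Z-2}) and then specializes to $q=r-1$ to see that only the $i=0$ (resp.\ $i=1$) term survives, whereas you unpack the definition \eqref{eq:def-Z-new} directly to reach the same intermediate expressions $W_r F^{d-s}_{n,0}$ and $W_r F^0_{n+1,1}$; both arguments then conclude identically via Lemma~\ref{lem:recursion-fnk-new}.
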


The proofs of Lemmas \ref{lem:expr-Z-arithmetic-case} and \ref{lem:expr-Z-arithmetic-case-2} are postponed to the end of this section.

\begin{proof}[Proof of Proposition \ref{prop:checking-the-inequalities-new}]
	By Proposition \ref{prop:link-with-compositions-new-2}, we have $\PFtrees{\w}{nd+1}=\PFComp{\w,\wb}{nd}$ for all $n\geq 0$.
	This yields the claimed identity for the first term of the sequence of inequalities,
	\begin{align*}
	R_n(0,0) = \frac{\PFtrees{\w}{(n+1)d+1}}{\PFtrees{\w}{nd+1}},\qquad n\geq0.
	\end{align*}
	Now notice that $\shift{(r-1)d+d-1}{\w}=\shift{rd-1}{\w}$ is of the form $(w_{rd-1},w_{rd},0,0,\dots)$ if $d=1$ or $(0,w_{rd},0,0,\dots)$ if $d\geq 2$.
	Thus we have $\PFComp{\shift{rd-1}\w,\wb}{(n-1)d+1}
	=w_{dr}\cdot\PFtrees{\w}{(n-1)d+1}$ for all $n\geq 1$ by definition in \eqref{eq:def-proba-compositions-new}.
	Hence the claimed identity for the last term of the sequence of inequalities,
	\begin{align*}
	R_n(r-1,d-1)=
	\frac{\PFtrees{\w}{nd+1}}{\PFtrees{\w}{(n-1)d+1}},\qquad n\geq1.
	\end{align*}
	Now the inequalities in \eqref{eq:ineq-partfunct-arithmetic-case} correspond by definition to the inequalities
	\begin{align*}
	{\frac{w_d\cdot\PFtrees{\w}{1}}{w_{0}}}
%	_{R(0,0)}
	\geq
	{\frac{w_{2d}\cdot\PFtrees{\w}{1}}{w_{d}}}
%	_{R(1,0)}
	\geq
	\cdots
	\geq
	{\frac{w_{(r-1)d}\cdot\PFtrees{\w}{1}}{w_{(r-2)d}}}
%	_{R(r-2,0)}
	\geq
	{\frac{w_{rd}\cdot\PFtrees{\w}{1}}{w_{(r-1)d}}},
%	_{R(r-1,0)},
	\end{align*}
	which are equivalent to the statement that $\W=(w_0,w_d,w_{2d,\dots})$ is log-concave.
	In order to conclude we are left with proving the following
	\begin{enumerate}
		\item
			The inequality $R_n(q,s)\geq R_n(q+1,s)$ for all $n\geq1$, $q\in\{0,\dots,r-2\}$, and $s\in\{0,\dots,d-1\}$.
			Using Lemma \ref{lem:expr-Z-arithmetic-case}, this reads as follows when $s\neq0$,
			\begin{align}\label{eq:long-chain-left-to-prove-1}
			\frac{
				\sum_{i\geq0}W_{i+q+1}F^{d-s}_{n,i}
			}{
				\sum_{i\geq0}W_{i+q+1}F^{d-s}_{n-1,i}}
			\geq
			\frac{
				\sum_{i\geq0}W_{i+q+2}F^{d-s}_{n,i}
			}{
				\sum_{i\geq0}W_{i+q+2}F^{d-s}_{n-1,i}},
			\end{align}
			while when $s=0$ it reads
			\begin{align}\label{eq:long-chain-left-to-prove-1bis}
			\frac{
				\sum_{i\geq0}W_{i+q}F^{0}_{n+1,i}
			}{
				\sum_{i\geq0}W_{i+q}F^{0}_{n,i}}
			\geq
			\frac{
				\sum_{i\geq0}W_{i+q+1}F^{0}_{n+1,i}
			}{
				\sum_{i\geq0}W_{i+q+1}F^{0}_{n,i}}.
			\end{align}
		\item
			The inequality $R_n(r-1,s)\geq R_n(0,s+1)$ for all $n\geq1$ and all $s\in\{0,\dots,d-2\}$.
			Using Lemma \ref{lem:expr-Z-arithmetic-case-2} to express $R_n(r-1,s)$ and Lemma \ref{lem:expr-Z-arithmetic-case} to express $R_n(0,s+1)$, this reads
			\begin{align}\label{eq:long-chain-left-to-prove-2}
			\frac{
				\sum_{i\geq 0}W_{i} \,F^{d-s-1}_{n,i}
			}{
				\sum_{i\geq 0}W_{i} \,F^{d-s-1}_{n-1,i}}
			\geq
			\frac{
				\sum_{i\geq0}W_{i+1}F^{d-s-1}_{n,i}
			}{
				\sum_{i\geq0}W_{i+1}F^{d-s-1}_{n-1,i}}.
			\end{align}
	\end{enumerate}
	The inequalities \eqref{eq:long-chain-left-to-prove-1}, \eqref{eq:long-chain-left-to-prove-1bis} and \eqref{eq:long-chain-left-to-prove-2} which we need to prove are all instances of the following inequalities
	\begin{align}\label{eq:final-inequ-to-prove}
	\frac{
		\sum_{i\geq0}W_{i+q}F^{s}_{n,i}
	}{
		\sum_{i\geq0}W_{i+q}F^{s}_{n-1,i}}
	\geq
	\frac{
		\sum_{i\geq0}W_{i+q+1}F^{s}_{n,i}
	}{
		\sum_{i\geq0}W_{i+q+1}F^{s}_{n-1,i}},
	&&n\geq1,
	&& 0\leq q<r,
	&& 0\leq s<d.
	\end{align}
	The proof of these inequalities is similar to the proof of \eqref{eq:checking-main-inequalities} in Proposition \ref{prop:checking-the-inequalities}.
	Namely we can consider the expression
	\begin{align*}
	\sum_{i,j\geq 0}
	(W_{i+q+1}W_{j+q}-W_{i+q}W_{j+q+1})
		\left(
		F^{s}_{n-1,i}\,F^{s}_{n,j}
		-	F^{s}_{n,i}\,F^{s}_{n-1,j}
		\right),
	\end{align*}
	and then use Proposition \ref{prop:Toeplitz-TP2} for the sequence $\W=(w_0,w_d,w_{2d},\dots)$---which is assumed to be log-concave---as well as Proposition \ref{prop:TP2-forests-partition-function-new} for the array $(F^{s}_{n,k})_{n,k\geq0}$ to deduce as in the proof of Proposition \ref{prop:checking-the-inequalities} that this expression is non-negative.
	By developing the products we obtain the desired inequalities \eqref{eq:final-inequ-to-prove}.
\end{proof}

It remains to prove Lemmas \ref{lem:expr-Z-arithmetic-case} and \ref{lem:expr-Z-arithmetic-case-2}.

\begin{proof}[Proof of Lemma \ref{lem:expr-Z-arithmetic-case}]
	Let  $n\geq 0$ and $\ell\in\{0,\dots,rd-1\}$.
	By comparing the expressions \eqref{eq:def-PFforests-new} and \eqref{eq:def-proba-compositions-new} we see that
	\begin{align*}
	\PFComp{\shift{\ell}\w,\wb}{nd+(d-s)}=
	\sum_{i\geq 0}w_{i+\ell}\PFforests{\w}{nd+(d-s),i}.
	\end{align*}
	Since $\w$ is $d$-arithmetic, the indices $i$ in the sum for which the corresponding term may be non-zero are those for which $i+\ell$ is a multiple of $d$.
	If $\ell=qd+s$ for some $q\in\{0,\dots,r-1\}$ and $s\in\{1,\dots,d-1\}$, then they have the form $i=i'd+{(d-s)}$ for some $i'\geq0$, and we have
	\begin{align*}
	\PFComp{\shift{qd+s}\w,\wb}{nd+(d-s)}=
	\sum_{i\geq 0}w_{i+(qd+s)}\,\PFforests{\w}{nd+(d-s),i}
	=\sum_{i'\geq0}w_{(i'+q+1)d}\,\PFforests{\w}{nd+(d-s),i'd+(d-s)},
	\end{align*}
	which is precisely \eqref{eq:expression-Z-1}.
	If now $\ell=qd$ for some $q\in\{0,\dots,r-1\}$, then the aforementioned indices have the form $i=i'd$ for some $i'\geq0$, and we have
	\begin{align*}
	\PFComp{\shift{qd}\w,\wb}{nd+d}=
	\sum_{i\geq 0}w_{i+qd}\,\PFforests{\w}{nd+d,i}
	=\sum_{i'\geq0}w_{(i'+q)d}\,\PFforests{\w}{(n+1)d,i'd},
	\end{align*}
	which gives \eqref{eq:expression-Z-1}.
\end{proof}

\begin{proof}[Proof of Lemma \ref{lem:expr-Z-arithmetic-case-2}]
	Let $n\geq0$.
	We first treat the case $s\in\{1,\dots,d-1\}$.
	By definition of $r=r_d(\w)$, the sequence $(W_{i+r})_{i\geq0}$ has the form $(W_r,0,0,\dots)$ so that \eqref{eq:expression-Z-1} becomes for $s\in\{1,\dots,d-1\}$,
	\begin{align*}
	\PFComp{\shift{(r-1)d+s}\w,\wb}{nd+(d-s)}
	=\sum_{i\geq0}W_{i+r}F^{d-s}_{n,i}
	&=W_r F^{d-s}_{n,0},
	\end{align*}
	and we obtain \eqref{eq:expression-Z-3} by applying \eqref{eq:relation-fnk-different-s-new} from Lemma \ref{lem:recursion-fnk-new}.
	Similarly the sequence $(W_{i+r-1})_{i\geq0}$ has the form $(W_{r-1},W_{r},0,0,\dots)$ so that \eqref{eq:expression-Z-2} becomes for $s=0$,
	\begin{align*}
	\PFComp{\shift{qd+0}\w,\wb}{nd+(d-0)}
	&=\sum_{i\geq0}W_{i+q}F^{0}_{n+1,i}
	=W_{r-1} F^{0}_{n+1,0} + W_{r} F^{0}_{n+1,1}
	=W_{r} F^{0}_{n+1,1},
	\end{align*}
	where the last equality comes from the fact that $F^{0}_{n+1,0}=0$.
	We therefore obtain the $s=0$ case of \eqref{eq:expression-Z-3} by applying \eqref{eq:relation-fnk-different-s-new-2} from Lemma \ref{lem:recursion-fnk-new}.
\end{proof}

\subsection{Proof of Theorems \ref{thm:main-thm-BGW-arithmetic} and \ref{thm:main-thm-SG-arithmetic}}

Theorems~\ref{thm:main-thm-BGW-arithmetic} and~\ref{thm:main-thm-SG-arithmetic} are equivalent, so let us prove the latter.
The proof is similar to that of Theorem \ref{thm:main-thm-SG}, but using our modified intermediary results
Let $d\geq 1$ and let $\w$ be a non-negative $d$-arithmetic sequence with $w_0w_d>0$ such that $\W=(w_0,w_d,w_{2d},0,0,\dots)$ is log-concave.
By Proposition \ref{prop:growing-comp-is-sufficient-new}, it is sufficient to show that $(\w,\wb)$ is a $(d,0)$-admissible weight pair where $\wb=(\PFtrees{\w}{n})_{n\geq0}$.
If $\w$ has finite support, that is if $r_d(\w)<\infty$, then this follows from Proposition \ref{prop:checking-the-inequalities-new} and Corollary \ref{cor:sufficient-inequalities-new}.
Otherwise $\w$ is the pointwise limit $i\rightarrow\infty$ of the log-concave, $d$-arithmetic, and finitely supported sequences $\w_i=(w_0,w_1,\dots w_i,0,0,\dots)$, $i\geq1$, and we conclude by Proposition \ref{prop:closure-admissibility-new}.\qed

%-------------------------------------------------------------

\section{Application to a model of random subtrees}
\label{sec:application-random-subtrees}

Let us recall some of the definitions from the introduction.
A \textit{rooted subtree} of $\U$ is a non-empty finite subset $\t$ of $\U$ such that:
\begin{align*}
\forall\u\in\U,\,\forall i\in\{1,2,\dots\},	\qquad
\u i\in\t
\qquad\implies\qquad
\u\in\t.
\end{align*}
We also recall that a \textit{plane tree} is a rooted subtree $T$ of $\U$ such that:
\begin{align*}
\forall\u\in\U,\,\forall i\in\{1,2,\dots\},	\qquad
\u i\in\pt
\qquad\implies\qquad
\u j\in\pt,\quad 1\leq j\leq i.
\end{align*}
The set of \textit{rooted subtrees} of $\U$, resp.~the set of those having $n$ vertices, is denoted by $\subtrees$, resp.~$\subtrees_n$, $n\geq1$, while the set of \textit{plane trees}, resp.~the set of those having $n$ vertices, is denoted by $\trees$, resp.~$\trees_n$, $n\geq1$.

Lastly, let us recall the probabilistic model we introduced on rooted subtrees of $\U$.
For $\wtheta=(\theta_1,\theta_2,\dots)$ a non-negative sequence such that $0<\sum_i \theta_i<\infty$, and for $n\geq1$, the measure $\Sub{\wtheta}{n}$ on $\subtrees_n$ is defined as follows:
\begin{align}
\forall \t\in\subtrees_n,\quad
\Sub{\wtheta}{n}(\t)=\frac{\prod_{i\geq 1}\theta_i^{\# V_i(\t)}}{\PFsubtrees{\wtheta}{n}},
\qquad \PFsubtrees{\wtheta}{n}=\sum_{\t\in\trees_n}\prod_{i\geq 1}\theta_i^{\# V_i(\t)},
\end{align}
where we recall that $N_i(\t)=\#\, V_i(\t)$, where $V_i(\t)$ is the set of vertices of type $i$ in $\t$, that is those vertices $\v\in\t$ which have the form $\v=u i$ for some $\u\in\U$.

\subsection{Moving subtrees around}
\label{subsec:moving-subtrees}

\begin{defin}
	For $\t\in\subtrees$ and a vertex $\u\in\t$, we define $\Pos(\t)=(\pos_\u(\t))_{\u\in\t}$ the collection of \textit{children-positions}, where:
	\begin{align*}
	\pos_\u(\t)=\bigl\{i\in\{1,2,\dots\}\colon\u i\in \t\bigr\},\qquad\u\in\t.
	\end{align*}
\end{defin}

\begin{rem}\label{rem:plane-trees-and-positions}
	Notice that $\t\in\subtrees$ is a plane tree if and only if for every $\u\in\t$, the set of children-positions $\pos_\u(\t)$ has the form $\{1,2,\dots,k\}$ for some $k\geq1$.
\end{rem}
Now, if for every $\u\in\t$ we are given an injective mapping $\g_\u\colon\pos_\u(\t)\to\{1,2,\dots\}$, then we can ``shuffle'' $\t$ using the collection $\boldg=(\g_\u)_{\u\in\t}$ by ``re-positioning'' the children of each vertex $\u$ using $\g_\u$.
Let us formalize this idea.

For a finite subset $S\subset\{1,2,\dots\}$, we denote by $\G_S$ the set of injective mappings $\g\colon S\to\{1,2,\dots\}$.
We will use the notation:
\begin{align*}
\G=\bigsqcup_{S\subset\{1,2,\dots\},\,|S|<\infty} \G_S.
\end{align*}
For $\t\in\subtrees$, we denote by $\G(\t)$ the set of collections $\boldg=(\g_\u)_{\u\in\t}$ such that for all $\u\in\t$ we have $\g_\u\in\G_{\pos_\u(\t)}$.
In this setup, for $\t\in\subtrees$ and $\boldg\in\G(\t)$, and for every $\u=(u_1,\dots,u_h)\in\t$, with ancestral line $\u_i=(u_1,\dots,u_i)$, ${0\leq i\leq h}$, we can define
\begin{align}\label{eq:def-action-g}
\boldg\cdot\u=\bigl(\g_{\u_0}(u_1),\g_{\u_1}(u_2),\dots,\g_{\u_{h-1}}(u_h)\bigr).
\end{align}
It is easily verified that $\boldg\cdot\t$ is a rooted subtree of $\U$, where for $V\subseteq \t$, we write $\boldg\cdot V=\{\boldg\cdot\u\colon\u\in V\}$.

For some finite $S\subset\{1,2,\dots\}$ and some $\g\in\G_S$, since $\g$ is injective, it admits an inverse on its image, which we denote by $\g^{-1}\colon\g(S)\to\{1,2,\dots\}$, thus forming an element of $\G_{g(S)}\subset\G$.
Our first observation, in the following lemma, is that the ``shuffling'' operation we have defined is bijective, with an explicit inverse.

\begin{lemma}\label{lem:inverse-g}
	For every $\t\in\subtrees$ and $\boldg\in\G(\t)$, there exists a unique $\boldg^{-1}\in\G(\boldg\cdot\t)$ such that $\boldg^{-1}\cdot(\boldg\cdot\u)=\u$ for all $\u\in \t$, and $\boldg\cdot(\boldg^{-1}\cdot\u')=\u'$ for all $\u'\in\boldg\cdot\t$.
\end{lemma}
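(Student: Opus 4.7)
The plan is to build $\boldg^{-1}$ componentwise from the injective maps $\g_\u$, by inverting each on its image. The structural fact I will rely on throughout is the recursion
\[
\boldg\cdot(\u i)=(\boldg\cdot\u)\,\g_\u(i),\qquad \u\in\t,\ i\in\pos_\u(\t),
\]
which falls out of unwinding the definition \eqref{eq:def-action-g} (the ancestral line of $\u i$ is the ancestral line of $\u$ with one extra letter $i$, read through $\g_\u$). This recursion will let me do everything by induction on heights.

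First I would use it to show that $\u\mapsto\boldg\cdot\u$ is a bijection from $\t$ onto $\boldg\cdot\t$ and that the position sets transform as
\[
\pos_{\boldg\cdot\u}(\boldg\cdot\t)=\g_\u\bigl(\pos_\u(\t)\bigr),\qquad\u\in\t.
\]
Injectivity comes from the injectivity of each $\g_\u$ by comparing ancestral lines letter by letter until they first differ, and surjectivity onto $\boldg\cdot\t$ holds by construction. The position-set identity follows by observing that every child of $\boldg\cdot\u$ in $\boldg\cdot\t$ is the image under the recursion above of a child of $\u$ in $\t$.

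Armed with this, I would define $\boldg^{-1}=(\boldh_{\u'})_{\u'\in\boldg\cdot\t}$ by setting $\boldh_{\boldg\cdot\u}=\g_\u^{-1}$, the inverse of $\g_\u$ on its image $\g_\u(\pos_\u(\t))$. The position-set identity makes $\g_\u^{-1}$ a legitimate element of $\G_{\pos_{\boldg\cdot\u}(\boldg\cdot\t)}$, and the bijection from the previous step ensures that this definition is unambiguous. The two inverse relations $\boldg^{-1}\cdot(\boldg\cdot\u)=\u$ and $\boldg\cdot(\boldg^{-1}\cdot\u')=\u'$ then follow by induction on height, reading off one letter at a time and using that $\g_\u$ and $\g_\u^{-1}$ compose to the identity on the relevant sets.

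Uniqueness is then immediate: any $\boldg'\in\G(\boldg\cdot\t)$ satisfying $\boldg'\cdot(\boldg\cdot\u)=\u$ for all $\u\in\t$ must, by applying this to $\u i$ and comparing last letters of ancestral lines, satisfy $\g'_{\boldg\cdot\u}(\g_\u(i))=i$ for every $\u\in\t$ and $i\in\pos_\u(\t)$, which pins down $\g'_{\boldg\cdot\u}$ on its entire domain $\pos_{\boldg\cdot\u}(\boldg\cdot\t)=\g_\u(\pos_\u(\t))$. The only mild obstacle throughout is the bookkeeping of the recursive identity and confirming that $\u\mapsto\boldg\cdot\u$ maps $\t$ exactly onto $\boldg\cdot\t$, so that $\boldg^{-1}$ is well-defined and indexed by the correct set; once the recursion is set up cleanly, all inductions are routine.
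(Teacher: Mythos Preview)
Your proposal is correct and follows essentially the same route as the paper: both define $\boldg^{-1}$ componentwise by setting the component at $\boldg\cdot\u$ to be $\g_\u^{-1}$, and both verify the inverse relations by reading off one letter at a time along the ancestral line. You are somewhat more explicit than the paper about well-definedness (your position-set identity is stated separately in the paper as Lemma~\ref{lem:children-after-shuffling}) and about uniqueness, but the core construction is identical.
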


\begin{proof}
	Let  $\t\in\subtrees$ and $\boldg=(\g_\u)_{\u\in\t}\in\G(\t)$.
	Let $\u=(u_1,\dots,u_h)\in\t$ with ancestral line $\u_i=(u_1,\dots,u_i)$, ${0\leq i\leq h}$, and form $\u'=\boldg\cdot\u$, that is $\u'=(u'_1,\dots,u'_h)$ with $u'_j=\g_{\u_{j-1}}(u_j)$ for $1\leq j\leq h$.
	Then $u_1=\g_{\emptyset}^{-1}(u'_1)$ and by an immediate induction we have
	\begin{align}\label{eq:inverting-g}
	u_j=\g_{\u_{j-1}}^{-1}(u'_j),\qquad 1\leq j\leq h.
	\end{align}
	Hence $\u$ is completely determined by $\boldg$ and $\u'$, and the last display tells that $\u=\boldg'\cdot\u'$ where $\boldg'=(\g'_{\v'})_{\v'\in\boldg\cdot\t}$ is obtained by setting for every $\v'\in\boldg\cdot\t$ the value $\g'_{\v'}=\g_{\v}^{-1}$, where $\v$ is the only preimage of $\v'$ by $\v\mapsto\boldg\cdot\v$.
	Then $\boldg'\cdot(\boldg\cdot\u)=\boldg'\cdot\u'=\u$ for all $\u\in \t$, and $\boldg\cdot(\boldg'\cdot\u')=\boldg\cdot\u=\u'$ for all $\u'\in\boldg\cdot\t$.
\end{proof}

\begin{rem}
	The algebraic structure we obtain is that of a \textit{groupoid}, which morally generalizes the notion of a group by lifting the restriction that all pairs of elements can be multiplied.
	In our case, the elements of the groupoid are the $\boldg\in\bigsqcup_{\t\in\subtrees}\G(\t)$, which are all invertible, but which can only be multiplied with elements satisfying the natural compatibility conditions.
\end{rem}

\paragraph{The push-forward operation}
For the remainder of this section, we fix some $\t\in\subtrees$.
For any set $\bbX$ and for all $\boldg\in\G(\t)$, Lemma~\ref{lem:inverse-g} allows to define a mapping $\bbX^\t\to\bbX^{\boldg\cdot\t}$ which to $\x=(x_\u)_{\u\in\t}$ associates its \textit{push-forward} $\boldg_*\x$, defined as follows:
\begin{align}\label{eq:def-push-forward}
\boldg_*\x=(x_{\boldg^{-1}\cdot\u'})_{\u'\in \boldg\cdot\t}.
\end{align}
This definition of the push-forward also makes sense when $\bbX=\G$, and this actually allows to give a concise description of $\boldg^{-1}$ for $\boldg\in\G(\t)$, as in Lemma~\ref{lem:other-expression-inverse-g} below.
Given $\boldg\in\G(\t)$, we let 
\begin{align*}
\overline\boldg=(g_\u^{-1})_{\u\in\t},
\end{align*}
which belongs%
	\footnote{Note that $\overline\boldg$ is not necessarily an element of $\G(\t)$.
	Indeed, for $\u\in\t$ we have $g_\u^{-1}\in\G_{g_\u(\pos_\u(\t))}$, which is different from $\G_{\pos_\u(\t)}$ unless $g_{\u}(\pos_\u(\t))=\pos_\u(\t)$.}
to the set $\G^\t$, for every $\boldg\in\G(\t)$.
We collect here a series of easy but useful lemmas on this push-forward operation.
The following lemma is a mere reformulation of the identity \eqref{eq:inverting-g}, while the next one follows from Lemma~\ref{lem:inverse-g}.

\begin{lemma}\label{lem:other-expression-inverse-g}
	For $\boldg\in\G(\t)$, we have $\boldg^{-1}=\boldg\pf\overline\boldg$.
\end{lemma}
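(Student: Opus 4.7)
The proof should reduce to a direct identification of the two families, so my plan is essentially to unfold the two sides of the equality and observe they are literally the same expression. The entire content of the lemma is to repackage the construction given in the proof of Lemma~\ref{lem:inverse-g} using the more economical push-forward notation, so I do not expect any genuine obstacle.

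First, I will revisit the construction of $\boldg^{-1}$ carried out in the proof of Lemma~\ref{lem:inverse-g}. There, we saw that $\boldg^{-1}$ is the collection $(g'_{\v'})_{\v' \in \boldg \cdot \t}$, where for each $\v' \in \boldg \cdot \t$ the map $g'_{\v'}$ equals $g_\v^{-1}$ with $\v \in \t$ the unique preimage of $\v'$ under $\u \mapsto \boldg \cdot \u$. The identity $\boldg^{-1} \cdot (\boldg \cdot \u) = \u$ from Lemma~\ref{lem:inverse-g} identifies this preimage as $\v = \boldg^{-1} \cdot \v'$, giving the compact rewriting
\begin{align*}
\boldg^{-1} = \bigl(g^{-1}_{\boldg^{-1} \cdot \v'}\bigr)_{\v' \in \boldg \cdot \t}.
\end{align*}

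Second, I would write down the right-hand side $\boldg \pf \overline{\boldg}$ by applying the push-forward definition \eqref{eq:def-push-forward} to the family $\overline{\boldg} = (g_\u^{-1})_{\u \in \t} \in \G^\t$. Since by that definition $(\boldg \pf \x)_{\v'} = x_{\boldg^{-1} \cdot \v'}$ for all $\v' \in \boldg \cdot \t$, taking $x_\u = g_\u^{-1}$ yields
\begin{align*}
\boldg \pf \overline{\boldg} = \bigl(g^{-1}_{\boldg^{-1} \cdot \v'}\bigr)_{\v' \in \boldg \cdot \t},
\end{align*}
which is exactly the expression above for $\boldg^{-1}$, concluding the argument.

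The only point requiring mild care is tracking which index set is being used: $\overline{\boldg}$ lives a priori in $\G^\t$, whereas $\boldg^{-1}$ must be indexed by $\boldg \cdot \t$, and the push-forward is precisely the operation that transports a family along the bijection $\u \mapsto \boldg \cdot \u$ from $\t$ to $\boldg \cdot \t$. Once this is kept straight, the lemma is immediate.
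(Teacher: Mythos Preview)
Your proof is correct and is essentially the same as the paper's approach: the paper simply remarks that the lemma is a mere reformulation of the identity \eqref{eq:inverting-g} established in the proof of Lemma~\ref{lem:inverse-g}, and what you have written is exactly the unpacking of that remark.
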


\begin{lemma}\label{lem:inverse-pushforward-g}
	Given a set $\bbX$ and $\boldg\in\G(\t)$, we have $\boldg^{-1}\pf(\boldg\pf\x)=\x$ for every $\x\in\bbX^\t$ and $\boldg\pf(\boldg^{-1}\pf\x')=\x'$ for every $\x'\in\bbX^{\boldg\cdot \t}$.
\end{lemma}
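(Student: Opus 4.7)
The plan is to unfold the definition of the push-forward \eqref{eq:def-push-forward} twice and invoke Lemma~\ref{lem:inverse-g} to simplify. First I would note that the definition of $\boldg_*\x$ needs to be applied also with $\boldg$ replaced by $\boldg^{-1}$, which requires knowing the ``inverse of an inverse''. By the uniqueness part of Lemma~\ref{lem:inverse-g} (applied to the pair consisting of the rooted subtree $\boldg\cdot\t$ and the shuffling element $\boldg^{-1}\in\G(\boldg\cdot\t)$), we have $(\boldg^{-1})^{-1}=\boldg$, and moreover $\boldg^{-1}\cdot(\boldg\cdot\t)=\t$ by that same lemma. So for any $\y\in\bbX^{\boldg\cdot\t}$,
\begin{align*}
\boldg^{-1}\pf\y=(y_{\boldg\cdot\u})_{\u\in\t}\in\bbX^{\t}.
\end{align*}

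Then, given $\x\in\bbX^{\t}$ and setting $\y=\boldg\pf\x=(x_{\boldg^{-1}\cdot\u'})_{\u'\in\boldg\cdot\t}$, I apply the previous identity to get, for each $\u\in\t$,
\begin{align*}
(\boldg^{-1}\pf(\boldg\pf\x))_\u=y_{\boldg\cdot\u}=x_{\boldg^{-1}\cdot(\boldg\cdot\u)}=x_\u,
\end{align*}
where the last equality is again Lemma~\ref{lem:inverse-g}. This establishes the first identity. The second identity $\boldg\pf(\boldg^{-1}\pf\x')=\x'$ follows by the exact same computation after interchanging the roles of $\boldg$ and $\boldg^{-1}$ (and using $(\boldg^{-1})^{-1}=\boldg$ together with $\boldg\cdot(\boldg^{-1}\cdot\t')=\t'$ where $\t'=\boldg\cdot\t$).

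There is no real obstacle here: the lemma is a bookkeeping statement that simply records that the push-forward of functions is contravariant with respect to the invertible shuffling operation of Lemma~\ref{lem:inverse-g}. The only thing to double-check is that the indexing sets match up correctly at each stage, which is precisely what Lemma~\ref{lem:inverse-g} guarantees.
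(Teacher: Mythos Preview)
Your proof is correct and is essentially the same approach as the paper's: the paper does not give a detailed argument but simply remarks that this lemma follows from Lemma~\ref{lem:inverse-g}, which is precisely what you unfold explicitly.
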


\noindent
Next, we have a convenient description of the action of an element of $\G(\t)$ on the sets of children-positions.

\begin{lemma}\label{lem:children-after-shuffling}
	For every $\boldg\in\G(\t)$ and $\u\in\t$, we have $\pos_{\boldg\cdot\u}(\boldg\cdot\t)=g_\u(\pos_{\u}(\t))$ and in particular we have the identity $\#\,\pos_{\boldg\cdot\u}(\boldg\cdot\t)=\#\,\pos_{\u}(\t)$.
\end{lemma}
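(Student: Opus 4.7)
The plan is to argue directly from the definition \eqref{eq:def-action-g} of the action of $\boldg$ on vertices of $\t$. The key observation is that the action respects the parent–child structure in a very transparent way: if $\u=(u_1,\dots,u_h)\in\t$ and $i\in\pos_\u(\t)$, then applying \eqref{eq:def-action-g} to $\u i=(u_1,\dots,u_h,i)\in\t$, one sees that the first $h$ coordinates of $\boldg\cdot(\u i)$ are exactly the coordinates of $\boldg\cdot\u$, while the last one is $g_\u(i)$. In other words, I would first establish the identity
\begin{equation*}
\boldg\cdot(\u i)\;=\;(\boldg\cdot\u)\,g_\u(i),\qquad \u\in\t,\ i\in\pos_\u(\t).
\end{equation*}
This immediately yields the inclusion $g_\u(\pos_\u(\t))\subseteq\pos_{\boldg\cdot\u}(\boldg\cdot\t)$, since for each $i\in\pos_\u(\t)$, the vertex $(\boldg\cdot\u)\,g_\u(i)=\boldg\cdot(\u i)$ belongs to $\boldg\cdot\t$.

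For the reverse inclusion, I would take an arbitrary $j\in\pos_{\boldg\cdot\u}(\boldg\cdot\t)$; by definition there exists $\v\in\t$ with $\boldg\cdot\v=(\boldg\cdot\u)\,j$. Since the action \eqref{eq:def-action-g} preserves the height of a vertex, $\v$ must have height $h+1$, where $h$ is the height of $\u$. Using the explicit inversion formula \eqref{eq:inverting-g} from the proof of Lemma~\ref{lem:inverse-g}, the first $h$ coordinates of $\v$ are recovered from the first $h$ coordinates of $\boldg\cdot\v=(\boldg\cdot\u)\,j$, and these agree with the corresponding coordinates of $\u$. Hence $\v=\u i$ for some $i\in\{1,2,\dots\}$, which forces $i\in\pos_\u(\t)$ (because $\v\in\t$) and $j=g_\u(i)$ by comparing last coordinates. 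This gives $j\in g_\u(\pos_\u(\t))$ and completes the proof of the set equality.

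The cardinality statement is then immediate: $g_\u$ is, by definition of $\G(\t)$, an injective mapping from $\pos_\u(\t)$ into $\{1,2,\dots\}$, so $\#\,g_\u(\pos_\u(\t))=\#\,\pos_\u(\t)$. I do not anticipate any genuine obstacle here; the only mild subtlety is to be careful that a child of $\boldg\cdot\u$ in $\boldg\cdot\t$ really does come from a child of $\u$ in $\t$ rather than from some other vertex, which is precisely what the injectivity encoded in \eqref{eq:inverting-g} guarantees.
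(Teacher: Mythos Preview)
Your proposal is correct and follows essentially the same approach as the paper: the paper's proof simply records the equivalence ``$\v=\u i$ if and only if $\boldg\cdot\v=(\boldg\cdot\u)\,g_\u(i)$'' as easily verified from the definitions, and deduces both identities from it. Your argument is just a fleshed-out version of this, spelling out the two inclusions and the use of the inversion formula~\eqref{eq:inverting-g} where the paper leaves the verification to the reader.
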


\begin{proof}
	It is easily verified from the definitions that given $\u,\v\in \t$ and $i\in\{1,2,\dots\}$, we have $\v=\u i$ if and only if $\boldg\cdot\v=(\boldg\cdot\u)g_\u(i)$.
	Hence the first identity.
	The second follows since $g_\u$ is injective for every $\u\in\t$.
\end{proof}
\noindent
Given two sets $\bbX$ and $\bbY$, and some mapping $f\colon\bbX\to\bbY$, we denote by $f(\x)$ the pointwise evaluation of $f$ on $\x=(x_\u)_{\u\in\t}\in\bbX^\t$, that is
\begin{align*}
f(\x)=(f(x_\u))_{\u\in\t},
\end{align*}
which is an element of $\bbY^\t$.

\begin{lemma}\label{lem:commutation-pushforward-g-pointwise-eval}
	Given two sets $\bbX$ and $\bbY$, and a mapping $f\colon\bbX\to\bbY$, we have the identity $\boldg_*\bigl( f(\x)\bigr)= f(\boldg_*\x)$ for every $\x\in\bbX^\t$ and every $\boldg\in\G(\t)$.
\end{lemma}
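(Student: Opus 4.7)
The proof will be a direct unwinding of the definition of the push-forward in~\eqref{eq:def-push-forward}, with no real obstacle to overcome. The plan is to fix $\x=(x_\u)_{\u\in\t}\in\bbX^\t$ and $\boldg\in\G(\t)$, and to check that both sides of the claimed identity, viewed as elements of $\bbY^{\boldg\cdot\t}$, have the same component at each $\u'\in\boldg\cdot\t$.

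On the one hand, $f(\x)=(f(x_\u))_{\u\in\t}\in\bbY^\t$ by definition of the pointwise evaluation, so applying~\eqref{eq:def-push-forward} with this family in place of $\x$ gives
\begin{equation*}
\boldg_*\bigl(f(\x)\bigr)=\bigl(f(x_{\boldg^{-1}\cdot\u'})\bigr)_{\u'\in\boldg\cdot\t}.
\end{equation*}
On the other hand, $\boldg_*\x=(x_{\boldg^{-1}\cdot\u'})_{\u'\in\boldg\cdot\t}\in\bbX^{\boldg\cdot\t}$, and applying $f$ pointwise to this family yields
\begin{equation*}
f(\boldg_*\x)=\bigl(f(x_{\boldg^{-1}\cdot\u'})\bigr)_{\u'\in\boldg\cdot\t}.
\end{equation*}
The two families agree component-wise, hence they are equal. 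This concludes the proof.

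Since the argument is purely definitional, there is no step I would expect to be delicate; the only point worth being careful about is to keep track of the index sets (both families are indexed by $\boldg\cdot\t$, thanks to Lemma~\ref{lem:inverse-g}), so that comparing components at a given $\u'\in\boldg\cdot\t$ makes sense.
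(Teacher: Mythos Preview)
Your proof is correct and follows exactly the same approach as the paper: both sides are unwound to the collection $\bigl(f(x_{\boldg^{-1}\cdot\u'})\bigr)_{\u'\in\boldg\cdot\t}$, and the identity follows.
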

\noindent
This follows from the observation that both sides of the identity consist of the collection $(f(x_{\boldg^{-1}\cdot\u'}))_{\u'\in\boldg\cdot\pt}$, where $\x=(x_\u)_{\u\in\pt}$.
In particular, if we apply this with $f\colon\G\to\G$, $g\mapsto g^{-1}$, we obtain the following.
\begin{lemma}\label{lem:commutation-bar-pushforward-g}
	For every $\boldg,\boldh\in\G(\t)$, we have $\overline{(\boldg\pf\boldh)}=\boldg\pf\overline\boldh$.
\end{lemma}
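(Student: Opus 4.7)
The plan is essentially to observe that this is an immediate corollary of Lemma~\ref{lem:commutation-pushforward-g-pointwise-eval}, which is already hinted at in the sentence preceding the statement. More precisely, I would consider the inversion map $f\colon\G\to\G$ defined by $f(g)=g^{-1}$, which is well-defined on all of $\G$ since every element of $\G$ is an injection from a finite subset of $\{1,2,\dots\}$ into $\{1,2,\dots\}$ and hence admits an inverse on its image.

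With this choice, by definition we have $\overline\boldh = f(\boldh)$ for every $\boldh \in \G(\t)$, interpreting the pointwise evaluation $f(\boldh) = (f(h_\u))_{\u\in\t} = (h_\u^{-1})_{\u\in\t}$ introduced just before Lemma~\ref{lem:commutation-pushforward-g-pointwise-eval}. Similarly, for any $\boldk \in \G^{\boldg\cdot\t}$ we have $\overline\boldk = f(\boldk)$. In particular, $\overline{(\boldg\pf\boldh)} = f(\boldg\pf\boldh)$.

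Applying Lemma~\ref{lem:commutation-pushforward-g-pointwise-eval} with $\bbX = \bbY = \G$ and $\x = \boldh$, we get
\begin{align*}
\boldg\pf\overline\boldh = \boldg\pf\bigl(f(\boldh)\bigr) = f(\boldg\pf\boldh) = \overline{(\boldg\pf\boldh)},
\end{align*}
which is the claimed identity.

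The only subtlety worth flagging is making sure that the inversion map $f$ is indeed well-defined as a map $\G\to\G$: this requires that inversion does not depend on the domain $S$ of an element $g\in\G_S$, which holds because we consistently view the inverse $g^{-1}$ as an element of $\G_{g(S)}$, in accordance with the convention fixed earlier in the section. There is no real obstacle here; the statement is a direct unfolding of definitions combined with the preceding lemma.
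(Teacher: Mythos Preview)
Your proof is correct and follows exactly the same approach as the paper: the paper itself states just before the lemma that it is obtained by applying Lemma~\ref{lem:commutation-pushforward-g-pointwise-eval} with $f\colon\G\to\G$, $g\mapsto g^{-1}$. Your write-up simply makes this one-line observation explicit.
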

\noindent
This concludes our series of lemmas on the push-forward operation.

\subsection{A bijection with some decorated plane trees}

Let $\pt\in\trees$ and let $\bbX$ be a set.
We call $\pt$-tuple of elements of $\bbX$ an element in the Cartesian product 
\begin{align*}
\bbX^\pt=\{(x_\v)_{\v\in \pt}\colon \forall\v\in \pt,\,x_\v\in\bbX\}.
\end{align*}

\begin{defin}
	An $\bbX$-\emph{decorated plane tree} is a pair $(\pt,\x)$ with $\pt\in\trees$ and $\x=(x_\v)_{\v\in \pt}\in\bbX^\pt$.
	The set of $\bbX$-decorated plane trees $(\pt,\x)$ such that $\pt\in\trees_n$, $n\geq1$, is denoted by $\trees_n[\bbX]$, and we set $\trees{[\bbX]}=\bigcup_{n\geq1}\trees_n[\bbX]$.
\end{defin}

In practice, we often need the decorations to satisfy some compatibility conditions with respect to the degree of the vertex they are assigned to, in the following sense.
We call \textit{graded set} a set $\bbX$ with a decomposition as a disjoint union $\bbX=\bigsqcup_{k\geq0}\bbX_k$, called the \textit{grading} of $\bbX$,  which will often be implicit.
Then a $\pt$-tuple of elements of $\bbX$ is said to be \textit{grading-compatible} if it belongs to
\begin{align*}
\bbX_\pt=\{(x_\v)_{\v\in \pt}\colon \forall\v\in \pt,\,x_\v\in\bbX_{k_\v(\pt)}\}\subset \bbX^\pt.
\end{align*}

\begin{defin}
	Let  $\bbX=\bigsqcup_k\bbX_k$ be a graded set.
	An $\bbX$-{decorated plane tree}  $(\pt,\x)$ is said to be \textit{grading-compatible} if $\x\in\bbX_\pt$.
	The set of grading-compatible $\bbX$-decorated plane trees is denoted by $\treesgr_n{[\bbX]}$, and we set $\treesgr[{\bbX}]=\bigcup_{n\geq1}\treesgr_n[\bbX]$.
\end{defin}

We will be particularly interested in plane trees decorated by finite subsets of $\{1,2,\dots\}$.
Hence, let us consider the graded set of finite subsets of $\{1,2,\dots\}$,
\begin{align*}
\Parts=\bigsqcup_{k\geq0}\Parts_k,
\end{align*}
where $\Parts_k$ is the set of $k$-element subsets of $\{1,2,\dots\}$, for every $k\geq0$.
For $S$ a finite subset of $\{1,2,\dots\}$, we let $p_S\in\G_S$ be the unique (injective) non-decreasing mapping $p_S\colon S\to\{1,2,\dots\}$ such that $p_S(S)=\{1,2,\dots,|S|\}$.
Morally, $p_S$ ``pushes'' the elements of $S$ as far to the left as possible, while the inverse mapping $p_S^{-1}\colon \{1,2,\dots,|S|\}\to\{1,2,\dots\}$ corresponds to listing the elements of $S$ from left to right.
Then for $\pt\in\trees$ and $\boldS=(S_\u)_{\u\in\pt}\in\Parts_\pt$, we set:
\begin{align}\label{eq:def-p_S}
p_\boldS&=\left(p_{S_\u}\right)_{\u\in\pt},
\end{align}
that is $p_\boldS$ is obtained from $\boldS$ by pointwise evaluation of the mapping $\Parts\to\G$ given by $S\mapsto p_S$.
For $\t\in\subtrees$, we will use the shorthand notation:
\begin{align}\label{eq:def-boldp}
\boldp_\t=p_{\Pos(\t)}.
\end{align}

\mypar{Enriching the mapping ``push''}
With the above notation, the mapping $\push$ which we alluded to in the introduction is simply $\push\colon\t\mapsto	\boldp_\t\cdot\t$.
It is indeed easily verified that $\push(\t)$ thus defined is a plane tree for every $\t\in\subtrees$.
In order to retrieve $\t$ from $\push(\t)$, we need to keep the information given by the collection $\Pos(\t)$, using the push-forward operation defined in~\eqref{eq:def-push-forward}.
Hence we consider an ``enriched'' version of the mapping $\push$, as follows:
\begin{align*}
\epush\colon
\t				\longmapsto			\Bigl(\boldp_\t\cdot\t,\,(\boldp_\t)\pf\Pos(\t)\Bigr).
\end{align*}

\begin{lemma}
	This defines a mapping $\epush\colon\subtrees\to\treesgr{[\Parts]}$.
\end{lemma}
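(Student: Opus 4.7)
The plan is to verify the two conditions which make $\epush(\t)=(\boldp_\t\cdot\t,(\boldp_\t)_*\Pos(\t))$ a valid element of $\treesgr[\Parts]$: namely, (i) that $\boldp_\t\cdot\t$ is a plane tree (not merely a rooted subtree), and (ii) that $(\boldp_\t)_*\Pos(\t)$ is grading-compatible, in the sense that at each $\u'\in\boldp_\t\cdot\t$ it assigns a set in $\Parts_{k_{\u'}(\boldp_\t\cdot\t)}$. Note that since $\boldp_\t=(p_{\pos_\u(\t)})_{\u\in\t}$ and each $p_{\pos_\u(\t)}\in\G_{\pos_\u(\t)}$, we do have $\boldp_\t\in\G(\t)$, so that the ``shuffling'' $\boldp_\t\cdot\t$ and the push-forward $(\boldp_\t)_*\Pos(\t)$ are both well-defined.

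For (i), I invoke Remark~\ref{rem:plane-trees-and-positions}: I need to show that $\pos_{\u'}(\boldp_\t\cdot\t)$ is of the form $\{1,2,\dots,k\}$ for every $\u'\in\boldp_\t\cdot\t$. Writing $\u'=\boldp_\t\cdot\u$ for the unique preimage $\u\in\t$ (uniqueness by Lemma~\ref{lem:inverse-g}), Lemma~\ref{lem:children-after-shuffling} gives $\pos_{\u'}(\boldp_\t\cdot\t)=p_{\pos_\u(\t)}(\pos_\u(\t))$, and by the very definition of $p_S$ this equals $\{1,2,\dots,|\pos_\u(\t)|\}$, which has the required form.

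For (ii), the definition of the push-forward in~\eqref{eq:def-push-forward} gives $(\boldp_\t)_*\Pos(\t)=(\pos_{\boldp_\t^{-1}\cdot\u'}(\t))_{\u'\in\boldp_\t\cdot\t}$. For $\u'\in\boldp_\t\cdot\t$, setting $\u=\boldp_\t^{-1}\cdot\u'$, the decoration at $\u'$ is $\pos_\u(\t)\in\Parts_{|\pos_\u(\t)|}$. The second identity in Lemma~\ref{lem:children-after-shuffling} yields $|\pos_\u(\t)|=|\pos_{\u'}(\boldp_\t\cdot\t)|=k_{\u'}(\boldp_\t\cdot\t)$, so this decoration lies in the correct graded component $\Parts_{k_{\u'}(\boldp_\t\cdot\t)}$. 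Combining (i) and (ii), the pair $\epush(\t)$ is indeed an element of $\treesgr[\Parts]$.

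I do not anticipate any real obstacle: both steps are immediate from Lemma~\ref{lem:children-after-shuffling} and the definition of $p_S$ as the order-preserving bijection of $S$ onto $\{1,\dots,|S|\}$. The only subtle point is being careful with the direction of the push-forward and the invocation of Lemma~\ref{lem:inverse-g} to ensure the unique preimage $\u=\boldp_\t^{-1}\cdot\u'$ is well-defined.
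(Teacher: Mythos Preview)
Your proof is correct and follows essentially the same approach as the paper: both verify grading-compatibility by invoking Lemma~\ref{lem:children-after-shuffling} to identify $\#\pos_{\boldp_\t^{-1}\cdot\u'}(\t)$ with $k_{\u'}(\boldp_\t\cdot\t)$. The only difference is that you spell out part (i) explicitly via Remark~\ref{rem:plane-trees-and-positions} and Lemma~\ref{lem:children-after-shuffling}, whereas the paper had already asserted just before the lemma that $\push(\t)=\boldp_\t\cdot\t$ is a plane tree and simply quotes this fact.
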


\begin{proof}
	For $\t\in\subtrees$, if we write $\epush(\t)=(\pt,\boldS)$ with $\boldS=(S_\u)_{\u\in\pt}$, then we have $\pt=\push(\t)\in\trees$.
	We need to justify that $\boldS$ is grading-compatible.
	For $\u\in\pt$, we have $S_\u=\pos_{\boldp_\t^{-1}\cdot\u}(\t)$ by definition.
	Now by Lemma~\ref{lem:children-after-shuffling}, we have the identity $\#\,\pos_{\boldp_\t^{-1}\cdot\u}(\t)=\#\,\pos_{\u}(\boldp_\t\cdot\t)=k_{\u}(\pt)$.
	Wrapping up, for every $\u\in\pt$ we have $\#\, S_\u=k_\u(\pt)$, so that $S_\u\in\Parts_{k_\u(\pt)}$ and $\boldS$ is indeed grading-compatible.
\end{proof} 

There is a natural candidate to be an inverse of the mapping $\epush$, namely the mapping $(\pt,\boldS)\mapsto\overline{p_{\boldS}}\cdot\pt$.
This is verified by the following proposition.

\begin{prop}\label{prop:bijection-push}
	The mappings 
	\begin{align*}
	&\epush\colon
	\begin{pmatrix}
	\subtrees		&\longrightarrow			&\treesgr{[\Parts]}\\
	\t				&\longmapsto				&\bigl(\boldp_\t\cdot\t,\, (\boldp_\t)\pf\Pos(\t)\bigr)
	\end{pmatrix},\qquad
	&\epush^{-1}\colon
	\begin{pmatrix}
	\treesgr{[\Parts]}		&\longrightarrow			&\subtrees		\\
	(\pt,\boldS)			&\longmapsto				&\overline{p_{\boldS}}\cdot\pt
	\end{pmatrix},
	\end{align*}
	 are inverse of each other.
\end{prop}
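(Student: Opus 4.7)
The plan is to verify $\epush^{-1}\circ\epush=\mathrm{id}_\subtrees$ and $\epush\circ\epush^{-1}=\mathrm{id}_{\treesgr[\Parts]}$ separately, relying on the algebraic identities collected in Lemmas~\ref{lem:inverse-g}--\ref{lem:commutation-bar-pushforward-g}. The key observation is a naturality property: for any compatible $\boldg$ and $\boldS$ we have $p_{\boldg\pf\boldS}=\boldg\pf p_\boldS$ (this is Lemma~\ref{lem:commutation-pushforward-g-pointwise-eval} applied to the mapping $S\mapsto p_S$), and together with the formula $\boldg^{-1}=\boldg\pf\overline\boldg$ from Lemma~\ref{lem:other-expression-inverse-g} this will let me identify $\overline{p_\boldS}$ with $\boldp_\t^{-1}$ (respectively $\boldp_\t$ with $(\overline{p_\boldS})^{-1}$) at the crucial step.

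First I would establish $\epush^{-1}\circ\epush=\mathrm{id}_\subtrees$. Given $\t\in\subtrees$, set $(\pt,\boldS)=\epush(\t)$, so that $\pt=\boldp_\t\cdot\t$ and $\boldS=(\boldp_\t)\pf\Pos(\t)$. The naturality above yields $p_\boldS=(\boldp_\t)\pf\boldp_\t$, then Lemma~\ref{lem:commutation-bar-pushforward-g} gives $\overline{p_\boldS}=(\boldp_\t)\pf\overline{\boldp_\t}$, which by Lemma~\ref{lem:other-expression-inverse-g} is exactly $\boldp_\t^{-1}$. Lemma~\ref{lem:inverse-g} then concludes: $\overline{p_\boldS}\cdot\pt=\boldp_\t^{-1}\cdot(\boldp_\t\cdot\t)=\t$.

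Next, for $\epush\circ\epush^{-1}=\mathrm{id}_{\treesgr[\Parts]}$, I would fix $(\pt,\boldS)\in\treesgr[\Parts]$ and put $\t=\overline{p_\boldS}\cdot\pt$. The first task is to identify $\Pos(\t)$: since $\pt$ is a plane tree, Remark~\ref{rem:plane-trees-and-positions} gives $\pos_\u(\pt)=\{1,\dots,k_\u(\pt)\}$, while grading-compatibility of $\boldS$ forces $|S_\u|=k_\u(\pt)$, so that $p_{S_\u}^{-1}(\pos_\u(\pt))=S_\u$. Lemma~\ref{lem:children-after-shuffling} then yields $\pos_{\overline{p_\boldS}\cdot\u}(\t)=S_\u$ for every $\u\in\pt$, which is exactly the identity $\Pos(\t)=\overline{p_\boldS}\pf\boldS$. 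Replaying the same chain of lemmas as in the first direction, but with $\boldg=\overline{p_\boldS}$ (whose ``bar'' is $p_\boldS$), I obtain $\boldp_\t=p_{\Pos(\t)}=\overline{p_\boldS}\pf p_\boldS=(\overline{p_\boldS})^{-1}$. Then Lemma~\ref{lem:inverse-g} gives $\boldp_\t\cdot\t=\pt$, and Lemma~\ref{lem:inverse-pushforward-g} gives $(\boldp_\t)\pf\Pos(\t)=(\overline{p_\boldS})^{-1}\pf(\overline{p_\boldS}\pf\boldS)=\boldS$, so $\epush(\t)=(\pt,\boldS)$ as required.

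The main technical difficulty will not lie in any individual computation but in the bookkeeping: each of $\boldg$, $\overline\boldg$, $\boldg^{-1}$ lives on a different tree and acts differently, and one must be careful about which object each symbol is applied to. Once the naturality $p_{\boldg\pf\boldS}=\boldg\pf p_\boldS$ is isolated as the central ingredient, the rest reduces to chained applications of the preparatory lemmas from Section~\ref{subsec:moving-subtrees}.
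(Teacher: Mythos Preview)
Your proposal is correct and follows essentially the same argument as the paper's proof: both directions hinge on the naturality $p_{\boldg\pf\boldS}=\boldg\pf p_\boldS$ (Lemma~\ref{lem:commutation-pushforward-g-pointwise-eval}) combined with $\boldg^{-1}=\boldg\pf\overline\boldg$ (Lemma~\ref{lem:other-expression-inverse-g}), and the second direction first identifies $\Pos(\t)=\overline{p_\boldS}\pf\boldS$ via Lemma~\ref{lem:children-after-shuffling} exactly as the paper does. Your write-up is if anything slightly cleaner in isolating $\overline{\overline{p_\boldS}}=p_\boldS$ as the reason why $\overline{p_\boldS}\pf p_\boldS=(\overline{p_\boldS})^{-1}$.
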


\begin{proof}
	Let us temporarily write $\mathfrak Q\colon(\pt,\boldS)\mapsto \overline{p_{\boldS}}\cdot\pt$, and prove that it is indeed an inverse of $\epush$.
	For $\t\in\subtrees$, if we write $\epush(\t)=(\pt,\boldS)$, then Lemma~\ref{lem:commutation-pushforward-g-pointwise-eval} applied to the mapping $\Parts\to\G$, $S\mapsto p_S$ gives that:
	\begin{align*}
	p_{\boldS}
		=p_{(\boldp_\t)_*\Pos(\t)}
		=(\boldp_\t)\pf (p_{\Pos(\t)})
		\overset{\eqref{eq:def-boldp}}{=}(\boldp_\t)\pf \boldp_\t
	\end{align*}
	In particular, using Lemmas~\ref{lem:commutation-bar-pushforward-g} and~\ref{lem:inverse-g}, we have that $\overline{p_{\boldS}} = (\boldp_\t)\pf\overline{\boldp_\t}=\boldp_\t^{-1}$.
	Hence $\overline{p_{\boldS}}\cdot\pt=\boldp_\t^{-1}\cdot\pt=\t$, since $\pt=\overline{p_{\boldS}}\cdot\pt$.
	Hence we have proven:
	\begin{align}\label{eq:proof-epush-bij-1}
	\mathfrak Q\bigl(\epush(\t)\bigr)=\t,\qquad \t\in\subtrees.
	\end{align}
	
	On the other hand, for $(\pt,\boldS)\in\treesgr{[\Parts]}$ with $\boldS=(S_\u)_{\u\in\pt}$, if we set $\t=\mathfrak Q(\pt,\boldS)$, then for every $\u\in\pt$, we have by Lemma~\ref{lem:children-after-shuffling} applied with $\boldg=\overline{p_{\boldS}}=(p_{S_\u}^{-1})_{\u\in\pt}$,
	\begin{align*}
	\pos_{\overline{p_{\boldS}}\cdot\u}(\t)
		=\pos_{\overline{p_{\boldS}}\cdot\u}(\overline{p_{\boldS}}\cdot\pt)
		=p_{S_\u}^{-1}\bigl(\pos_\u(\pt)\bigr).
	\end{align*}
	Now since $\pt$ is a plane tree, we necessarily have that $\pos_\u(\pt)=\{1,2,\dots,k_\u(\pt)\}$.
	By definition, $p_{S_\u}^{-1}$ is such that $p_{S_\u}^{-1}(\{1,2,\dots,|S_\u|\})=S_\u$.
	Since $\boldS$ is grading-compatible we have $|S_\u|=k_\u(\pt)$.
	The conclusion of these observations is that $p_{S_\u}^{-1}\bigl(\pos_\u(\pt)\bigr)=S_\u$.
	Plugging this into the last display we have
	\begin{align*}
	\pos_{\overline{p_{\boldS}}\cdot\u}(\t)
		=S_\u,\qquad\u\in\pt.
	\end{align*}
	This expresses that $\boldS=(\overline{p_{\boldS}})^{-1}_*\Pos(\t)$, or equivalently $\Pos(\t)=(\overline{p_{\boldS}})_*\boldS$.
	Now, as above, we can apply Lemma~\ref{lem:commutation-pushforward-g-pointwise-eval} with the mapping $\Parts\to\G$, $S\mapsto p_S$ to obtain the identity $p_{\Pos(\t)}=(\overline{p_{\boldS}})^{-1}_*\,p_{\boldS}$, that is $\boldp_\t=(\overline{p_{\boldS}})^{-1}$ using the definition \eqref{eq:def-boldp} and Lemma~\ref{lem:inverse-g}.
	Hence, $\boldp_\t\cdot\t=(\overline{p_{\boldS}})^{-1}\cdot\t=\pt$, since $\t=\overline{p_{\boldS}}\cdot\pt$.
	Also, we have $(\boldp_\t)_*\Pos(\t)=(\overline{p_{\boldS}})^{-1}_*\Pos(\t)=\boldS$, since we justified that $\Pos(\t)=(\overline{p_{\boldS}})_*\boldS$.
	Wrapping up, we have $\boldp_\t\cdot\t=\pt$ and $(\boldp_\t)_*\Pos(\t)=\boldS$, so that we have proven:
	\begin{align}\label{eq:proof-epush-bij-2}
	\epush\bigl(\mathfrak Q(\pt,\boldS)\bigr)=(\pt,\boldS),\qquad (\pt,\boldS)\in\treesgr{[\Parts]}.
	\end{align}
	The combination of \eqref{eq:proof-epush-bij-1} and \eqref{eq:proof-epush-bij-2} gives the desired result.
\end{proof}

\begin{rem}\label{rem:bij-push-embed}
	The mappings $\epush\colon\subtrees\to\treesgr{[\Parts]}$ and $\epush^{-1}\colon\treesgr{[\Parts]}\to\subtrees$ actually induce mappings $\epush\colon\subtrees_n\to\treesgr_n{[\Parts]}$ and $\epush^{-1}\colon\treesgr_n{[\Parts]}\to\subtrees_n$ respectively, for every $n\geq1$.
	Indeed, if $\t=p_\boldS\cdot\pt$, then $\t$ and $\pt$ must have the same number of vertices since $\u\mapsto p_{\boldS}\cdot\u$ maps bijectively $\pt$ onto $\t$.
\end{rem}

\subsection{Probabilistic consequences}

\begin{defin}
	We call a non-negative sequence $\wtheta=(\theta_1,\theta_2,\dots)$ such that $0<\sum_i \theta_i<\infty$ a \textit{summable weight sequence}, and the number of elements in the support of a summable weight sequence $\wtheta$ is denoted by $N_\wtheta\in\N\cup\{\infty\}$.
\end{defin}

When $\wtheta$ is a summable weight sequence, in order to treat in a unified way the cases $N_\wtheta<\infty$ and $N_\wtheta=\infty$, we make the convention that `` $i\leq N_\wtheta$ '' means ``$i$ is \textit{finite} and at most equal to $N_\wtheta$''.

\mypar{A model of random subsets}
Let $\wtheta$ be a summable weight sequence.
We denote by $\e(\wtheta)=(e_0(\wtheta),e_1(\wtheta),e_2(\wtheta),\dots)$ the elementary symmetric functions evaluated at the coefficients of $\wtheta$, that is:
\begin{align*}
e_0(\wtheta)=1&&\text{and}&&e_k(\wtheta)&=\sum_{1\leq i_1<i_2<\dots<i_k} \theta_{i_1}\theta_{i_2}\cdots \theta_{i_k},
\quad k\geq1.
\end{align*}
Notice that for $k\geq1$ we have $e_k(\wtheta)\leq (\sum_i \theta_i)^k<\infty$, and also that $e_k(\wtheta)>0$ when $0\leq k\leq {N_\wtheta}$, while $e_k(\wtheta)=0$ for every $k>{N_\wtheta}$.
In particular, the sequence $\e(\wtheta)$ has no internal zeros.
There is a natural model of random subsets of $\{1,2,\dots\}$ associated to $\wtheta$, which we define as follows.
For\footnote{Recall our convention that when ${N_\wtheta}=\infty$, we only consider finite values of $k$.} $0\leq k\leq {N_\wtheta}$, the probability distribution $\randSubset{\wtheta}{k}$ on $\Parts_{k}$ is defined by
\begin{align}\label{eq:def-random-subsets}
\randSubset{\wtheta}{k}(S)=\frac{\prod_{i\in S}\theta_i}{e_k(\wtheta)},\qquad S\in\Parts_{k}.
\end{align}
In particular, $\randSubset{\wtheta}{k}$ is supported on finite subsets of the support of $\wtheta$.

\begin{rem}\label{rem:random-subset-product-measure}
	The use of the letter B in the notation $\randSubset{\wtheta}{k}$ is motivated by an alternative description of the corresponding probability distribution in terms of a  Bernouilli vector, as follows.
	Consider independent random variables $(B_i)_{i\geq 1}$ with respective distributions $(\mathrm{Ber}(\frac{\theta_i}{1+\theta_i}))_{i\geq1}$.
	The function $i\mapsto B_i$ is the indicator function of a random finite subset $S(B_1,B_2,\dots)\subset\{1,2,\dots\}$, and for $S\in\Parts$ we have $S(B_1,B_2,\dots)=S$ with probability $C\cdot\prod_{i\in S}\theta_i$, where $C$ is the constant $C=1/\prod_i (1+\theta_i)$.
	Hence the conditional distribution of $S(B_1,B_2,\dots)$ given that it has $k$ elements is $\randSubset{\wtheta}{k}(\diff S)$, for every $0\leq k\leq N_\wtheta$.
\end{rem}

\begin{prop}\label{prop:comparison-SG-and-ST}
	Let $n\geq1$ and let $\wtheta$ be a summable sequence.
	Fix some $\t\in\subtrees$ and $(\pt,\boldS)\in\treesgr_n[\Parts]$ such that $\epush(\t)=(\pt,\boldS)$.
	If we write $\boldS=(S_\u)_{\u\in\pt}$, then for every $n\geq1$ the following identity holds:
	\begin{align}\label{eq:comparison-SG-and-ST}
	\Sub{\wtheta}{n}(\t)=\SimpGen{\e(\wtheta)}{n}(\pt)\cdot\prod_{\u\in \pt}\randSubset{\wtheta}{k_\u(\pt)}(S_\u).
	\end{align}
\end{prop}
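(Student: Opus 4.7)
My plan is to unpack both sides in terms of elementary products indexed by the vertices of $\pt$, and show that they agree up to the right normalizing constants.

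First I would factorize the numerator of $\Sub{\wtheta}{n}(\t)$. Each non-root vertex $\v\in\t$ has a unique type $i$ defined by $\v=\u i$ for its parent $\u$, and conversely the vertices of type $i$ in $\t$ are in bijection with pairs $(\u,i)$ such that $\u\in\t$ and $i\in\pos_\u(\t)$. Therefore
\begin{align*}
\prod_{i\geq 1}\theta_i^{N_i(\t)}
=\prod_{\u\in\t}\prod_{i\in\pos_\u(\t)}\theta_i.
\end{align*}
Using that $\epush(\t)=(\pt,\boldS)$, i.e.\ that $\pt=\boldp_\t\cdot\t$ and $\boldS=(\boldp_\t)_*\Pos(\t)$, the map $\u\mapsto\boldp_\t\cdot\u$ is a bijection $\t\to\pt$ with $\pos_\u(\t)=S_{\boldp_\t\cdot\u}$. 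So the right-hand side rewrites as $\prod_{\u\in\pt}\prod_{i\in S_\u}\theta_i$.

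Next I would recognize the right factorization. By grading-compatibility we have $|S_\u|=k_\u(\pt)$ for each $\u\in\pt$, so that by~\eqref{eq:def-random-subsets},
\begin{align*}
\prod_{i\in S_\u}\theta_i
= e_{k_\u(\pt)}(\wtheta)\cdot\randSubset{\wtheta}{k_\u(\pt)}(S_\u).
\end{align*}
Taking the product over $\u\in\pt$, and using that $\omega(\pt)=\prod_{\u\in\pt}e_{k_\u(\pt)}(\wtheta)$ for the weight sequence $\w=\e(\wtheta)$, we obtain
\begin{align*}
\prod_{i\geq1}\theta_i^{N_i(\t)}
=\omega(\pt)\cdot\prod_{\u\in\pt}\randSubset{\wtheta}{k_\u(\pt)}(S_\u).
\end{align*}

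It then suffices to show that the normalizing constants agree, namely $\PFsubtrees{\wtheta}{n}=\PFtrees{\e(\wtheta)}{n}$. Summing the identity above over $\t\in\subtrees_n$, using the bijection $\epush\colon\subtrees_n\to\treesgr_n[\Parts]$ recorded in Remark~\ref{rem:bij-push-embed}, gives
\begin{align*}
\PFsubtrees{\wtheta}{n}
= \sum_{\pt\in\trees_n}\omega(\pt)\sum_{\boldS\in\Parts_\pt}\prod_{\u\in\pt}\randSubset{\wtheta}{k_\u(\pt)}(S_\u)
= \sum_{\pt\in\trees_n}\omega(\pt)= \PFtrees{\e(\wtheta)}{n},
\end{align*}
since the inner sum factorizes as $\prod_{\u\in\pt}\sum_{S\in\Parts_{k_\u(\pt)}}\randSubset{\wtheta}{k_\u(\pt)}(S)=1$. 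Dividing both sides of the previous identity by this common normalizing constant yields~\eqref{eq:comparison-SG-and-ST}. The only mildly delicate point is bookkeeping the bijection~$\epush$ so that $\pos_\u(\t)$ correctly corresponds to~$S_{\boldp_\t\cdot\u}$; everything else is a direct manipulation of definitions.
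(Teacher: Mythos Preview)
Your proof is essentially the same as the paper's: both rewrite $\prod_{i\geq1}\theta_i^{N_i(\t)}$ as $\prod_{\u\in\pt}\prod_{i\in S_\u}\theta_i$ via the bijection $\epush$, identify the inner product with $e_{k_\u(\pt)}(\wtheta)\cdot\randSubset{\wtheta}{k_\u(\pt)}(S_\u)$, and sum to obtain $\PFsubtrees{\wtheta}{n}=\PFtrees{\e(\wtheta)}{n}$. The only point the paper handles that you do not is the degenerate case where some vertex $\v\in\pt$ has $k_\v(\pt)>N_\wtheta$: then $\randSubset{\wtheta}{k_\v(\pt)}$ is not defined, so your factorisation step and the claim that the inner sum equals~$1$ are not literally valid for such $\pt$; the paper checks separately that both sides of~\eqref{eq:comparison-SG-and-ST} vanish in that case (since $e_{k_\v(\pt)}(\wtheta)=0$ forces $\SimpGen{\e(\wtheta)}{n}(\pt)=0$, and $|S_\v|>N_\wtheta$ forces $\prod_{i\in S_\v}\theta_i=0$).
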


\begin{proof}
	By construction, for all $i\in\{1,2,\dots\}$ we have
	\begin{align*}
	\#V_i(\t)=\#\{\u\in\t\colon i\in\pos_\t(\u)\}
	= \#\{\u\in \pt\colon i\in S_\u\}=\sum_{\u\in \pt}\indic{i\in S_\u}.
	\end{align*}
	Hence we obtain
	\begin{align}\label{eq:proof-of-relation-SG-vs-subtrees}
	\prod_{i\geq1}\theta_i^{\#V_i(\t)}
	=\prod_{i\geq1}\prod_{\u\in \pt}\theta_i^{\indic{i\in S_\u}}
	=\prod_{\u\in \pt}\prod_{i\in S_\u}\theta_i,
	\end{align}
	Now by Prop~\ref{prop:bijection-push} and Remark~\ref{rem:bij-push-embed}, summing the left-hand side over $\t\in\subtrees_n$ and summing the right-hand side over $(\pt,\boldS)\in\treesgr_n[\Parts]$ gives the same result.
	Hence,
	\begin{align*}
	\sum_{\t\in\subtrees_n}\prod_{i\geq1}\theta_i^{\#V_i(\t)}
	=\sum_{(\pt,(S_\u)_{\u\in \pt})\in\treesgr_n[\Parts]}\prod_{\u\in \pt}\prod_{i\in S_\u}\theta_i.
	\end{align*}
	Using \eqref{eq:def-subtree-model} and the definition of $\treesgr_n[\Parts]$, we can re-write the last display as
	\begin{align*}
	\PFsubtrees{\wtheta}{n}
	=\sum_{\pt\in\trees_n}
	\sum_{\substack{(S_\u)_{\u\in \pt}\\ \forall \u,\, S_\u\in\Parts_{k_\u(\pt)}}}
	\prod_{\u\in \pt}
	\prod_{i\in S_\u}\theta_i
	=\sum_{\pt\in\trees_n}\prod_{\u\in \pt}\biggl(\sum_{S_\u\in\Parts_{k_\u(\pt)}}\prod_{i\in S_\u}\theta_i\biggr)
	=\sum_{\pt\in\trees_n}\prod_{\u\in \pt}e_{k_\u(\pt)}.
	\end{align*}
	But by the definition~\eqref{eq:def-SG}, the right-hand side is none other than $\PFtrees{\e(\wtheta)}{n}$.
	Hence we have proved that $\PFsubtrees{\wtheta}{n}=\PFtrees{\e(\wtheta)}{n}$.
	
	We now return to \eqref{eq:proof-of-relation-SG-vs-subtrees}, for some fixed $\t\in\subtrees_n$ and $(\pt,\boldS)\in\treesgr_n[\Parts]$.
	Suppose first that $k_\u(\pt)\leq {N_\wtheta}$ for all $\u\in \pt$.
	In this case, we can factor in the right-hand side of \eqref{eq:proof-of-relation-SG-vs-subtrees} by $e_{k_\u(\pt)}\neq0$ to get
	\begin{align*}
	\prod_{i\geq1}\theta_i^{\#V_i(\t)}
	=\prod_{\u\in \pt}\prod_{i\in S_\u}\theta_i
	= \prod_{\u\in \pt}
	e_{k_\u(\pt)}\cdot \randSubset{\wtheta}{k_\u(\pt)}(S_\u),
	\end{align*}
	where we used the definition \eqref{eq:def-random-subsets}.
	Since we proved that $\PFsubtrees{\wtheta}{n}=\PFtrees{\e(\wtheta)}{n}$, we can divide the left-hand side by $\PFsubtrees{\wtheta}{n}$ and the right-hand side by $\PFtrees{\e(\wtheta)}{n}$ to obtain the claimed identity \eqref{eq:comparison-SG-and-ST}, using the definitions \eqref{eq:def-subtree-model} and \eqref{eq:def-SG}.
	
	We supposed above that $k_\u(\pt)\leq {N_\wtheta}$ for all $\u\in \pt$.
	Let us justify that when this condition is not met, the result is trivial.
	Hence suppose that $k_{\v}(\pt)> {N_\wtheta}$ for some vertex $\v\in\pt$.
	Then we get that $\prod_{i\in S_{\v}}\theta_i=0$ since the inequality $k_{\v}(\pt)> {N_\wtheta}$ says that $\#\,S_{\v}>\#\,\supp(\wtheta)$.
	Therefore $\Sub{\wtheta}{n}(\t)=(\PFsubtrees{\wtheta}{n})^{-1}\prod_{\u\in \pt}\prod_{i\in S_\u}\theta_i=0$, and the left-hand side of \eqref{eq:comparison-SG-and-ST} is zero.
	On the other hand, remember that $e_k(\wtheta)=0$ whenever $k>N_\wtheta$, so that in particular $e_{k_\v(\pt)}(\wtheta)=0$.
	Hence $\SimpGen{\e(\wtheta)}{n}(\pt)=\smash{(\PFtrees{\e(\wtheta)}{n})^{-1}}\prod_{\u\in\pt}e_{k_\u(\pt)}(\wtheta)=0$, and the right-hand side of \eqref{eq:comparison-SG-and-ST} is also zero.
\end{proof}

\subsection{Growing the plane trees}

The fact that our subtree model admits a description in terms of simply generated trees with weight sequence $\e(\wtheta)$ is particularly nice for our purposes, thanks to the well-known Newton's inequalities.

\begin{lemma}[Newton's inequalities]\label{lem:Newton-inequalities}
	Let $(\alpha_1,\dots,\alpha_m)$ be a non-negative and non-zero sequence with finite length $m\geq1$, and let $(e_k)_{0\leq k\leq m}$ be the sequence of the elementary symmetric polynomials in the $(\alpha_1,\dots,\alpha_m)$, or equivalently let $e_k=e_k(\alpha_1,\dots,\alpha_m,0,0,\dots)$ for $0\leq k\leq m$.
	Then the sequence $(e_k)_{0\leq k\leq m}$ satisfies the \emph{ultra-log-concavity} inequality, that is
	\begin{align}\label{eq:ULC}
	\frac{e_k^2}{\binom{m}{k}^2}\geq 
		\frac{e_{k-1}}{\binom{m}{k-1}}\cdot
		\frac{e_{k+1}}{\binom{m}{k+1}},
		\qquad k\in\{1,\dots,m-1\}.
	\end{align}
	In particular, $\e(\alpha_1,\dots,\alpha_m,0,0,\dots)$ is log-concave.%
		\footnote{
		This is deduced from the inequality $\binom{m}{k}^2\geq\binom{m}{k-1}\binom{m}{k+1}$ for $0<k<m$, which holds by log-concavity of the binomial coefficients.
		Note that in this paper we require log-concave sequences to have no internal zeros, but we already justified that this is the case for sequences having the form $\e(\wtheta)$ with $\wtheta$ a summable weight sequence.
		}
\end{lemma}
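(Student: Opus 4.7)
The plan is to invoke the classical proof of Newton's inequalities via the real-rootedness of the generating polynomial. Form
$$P(x) \;=\; \prod_{i=1}^{m}(x+\alpha_i) \;=\; \sum_{j=0}^{m} e_j\, x^{m-j},$$
so that the roots of $P$ are the non-positive reals $-\alpha_1,\dots,-\alpha_m$. Two operations preserve the class of polynomials with only real roots: differentiation (by Rolle's theorem) and the polynomial reversal $Q(x)\mapsto x^{\deg Q}Q(1/x)$, whose roots are the reciprocals of the non-zero roots of $Q$. I will combine these to squeeze the information contained in three consecutive coefficients $e_{k-1}, e_k, e_{k+1}$ into a single quadratic whose real-rootedness must hold.

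Fix $k\in\{1,\dots,m-1\}$. If $e_{k+1}=0$ then $e_{k-1}$ vanishes as well (since the sequence $(e_j)$ has no internal zeros) or the right-hand side of~\eqref{eq:ULC} is zero, so the inequality is trivial. Otherwise $e_{k+1}>0$, and I apply to $P$ the following chain of operations: take $m-k-1$ derivatives, take the reversal, then take $k-1$ further derivatives. Since $e_{k+1}\neq0$ the constant term $e_{k+1}(m-k-1)!$ of $P^{(m-k-1)}$ is non-zero, so the reversal has degree exactly $k+1$ and remains real-rooted. A direct expansion then shows the final polynomial equals $A x^2+Bx+C$ with
$$A = \tfrac{(k+1)!\,(m-k-1)!}{2}\, e_{k+1},\qquad B = k!\,(m-k)!\, e_k,\qquad C = \tfrac{(k-1)!\,(m-k+1)!}{2}\, e_{k-1},$$
all non-negative. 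Being a real-rooted quadratic, its discriminant satisfies $B^2\geq 4AC$, which simplifies to
$$k(m-k)\, e_k^2 \;\geq\; (k+1)(m-k+1)\, e_{k-1}\, e_{k+1}.$$
Using $\binom{m}{k}/\binom{m}{k-1}=(m-k+1)/k$ and $\binom{m}{k}/\binom{m}{k+1}=(k+1)/(m-k)$, this rearranges exactly into~\eqref{eq:ULC}. The ``in particular'' clause follows by combining~\eqref{eq:ULC} with the log-concavity of the binomial coefficients, as indicated in the footnote.

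The main obstacle, and essentially the only one, is bookkeeping: one must set up the chain of differentiations and reversal in the correct order so that precisely the three coefficients $e_{k-1}, e_k, e_{k+1}$ survive in the final quadratic, and one must verify the real-rootedness is preserved through the reversal step when some $\alpha_i$ vanish. The case analysis $e_{k+1}=0$ versus $e_{k+1}>0$ above takes care of the degenerate situation; in the non-degenerate case the polynomial produced has no root at $0$ immediately before the reversal, so the reversal does not drop degree and the Rolle argument applies without further care.
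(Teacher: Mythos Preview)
Your proof is correct and follows the classical real-rootedness argument for Newton's inequalities. The paper itself does not give a proof of this lemma; it simply refers the reader to Hardy--Littlewood--P\'olya and to Br\"and\'en's survey, so there is nothing to compare against beyond noting that your argument is precisely the textbook one found in those references. One cosmetic remark: in your handling of the degenerate case you write ``if $e_{k+1}=0$ then $e_{k-1}$ vanishes as well \dots\ or the right-hand side is zero''---the second alternative already holds automatically (the right-hand side contains $e_{k+1}$ as a factor), so the first clause is unnecessary and in fact need not be true.
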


Proofs of Lemma~\ref{lem:Newton-inequalities} may be found in many references.
See for instance Theorem~51 in \cite{HardyLittlewoodPolya52}, or Lemma~7.1.1 in the more recent reference \cite{Branden15}.

\begin{cor}\label{cor:LC-elem-sym-func}
	The sequence $\e(\wtheta)$ is log-concave for every a summable weight sequence $\wtheta$.
\end{cor}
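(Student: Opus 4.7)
The plan is to reduce to the finitely supported case via truncation and a passage to the limit, and then invoke Newton's inequalities as stated in Lemma~\ref{lem:Newton-inequalities}.

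First I would check the ``no internal zeros'' condition: the paper has already observed, right after the definition of $\e(\wtheta)$, that $e_k(\wtheta) > 0$ for $0 \le k \le N_\wtheta$ and $e_k(\wtheta) = 0$ for $k > N_\wtheta$, so the support of $\e(\wtheta)$ is the interval $\{0, 1, \dots, N_\wtheta\}$ (or all of $\Z_+$ when $N_\wtheta = \infty$), which trivially has no internal zeros. Hence it suffices to verify the inequality $e_k(\wtheta)^2 \geq e_{k-1}(\wtheta)\, e_{k+1}(\wtheta)$ for every $k \geq 1$.

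If $N_\wtheta < \infty$, then $\wtheta$ has the form $(\theta_1, \dots, \theta_m, 0, 0, \dots)$ after relabeling, and the conclusion follows directly from Newton's inequalities \eqref{eq:ULC} in Lemma~\ref{lem:Newton-inequalities}, which state even the stronger ultra-log-concavity property.

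If $N_\wtheta = \infty$, I would approximate $\wtheta$ by its finite truncations $\wtheta^{(m)} = (\theta_1, \theta_2, \dots, \theta_m, 0, 0, \dots)$ for $m \geq 1$. For every fixed $k \geq 0$, the quantity
\begin{align*}
e_k\bigl(\wtheta^{(m)}\bigr) = \sum_{1 \leq i_1 < \dots < i_k \leq m} \theta_{i_1} \cdots \theta_{i_k}
\end{align*}
is non-decreasing in $m$, and converges to $e_k(\wtheta)$ by monotone convergence; the limit is finite since $e_k(\wtheta) \leq \bigl(\sum_i \theta_i\bigr)^k < \infty$ by assumption. The finite-support case already handled gives $e_k(\wtheta^{(m)})^2 \geq e_{k-1}(\wtheta^{(m)})\, e_{k+1}(\wtheta^{(m)})$ for every $m$ and every $k \geq 1$, and letting $m \to \infty$ yields the desired inequality $e_k(\wtheta)^2 \geq e_{k-1}(\wtheta)\, e_{k+1}(\wtheta)$.

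There is no real obstacle here: the argument is a standard ``pass to the limit'' once Newton's inequalities are known, and the finiteness of $\sum_i \theta_i$ is exactly what ensures every $e_k(\wtheta)$ is finite so that the limit makes sense.
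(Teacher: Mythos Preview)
Your proof is correct and follows essentially the same approach as the paper: reduce to the finitely supported case via truncation, invoke Newton's inequalities (Lemma~\ref{lem:Newton-inequalities}), and pass to the limit. The paper's version is simply more terse, appealing to the fact (recalled in the introduction) that pointwise limits of log-concave sequences are log-concave, without separating the finite and infinite cases explicitly.
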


\begin{proof}
	 For $\wtheta$ a summable weight sequence, the sequence $\e(\wtheta)$ is log-concave since it is the pointwise limit $\ell\rightarrow\infty$ of the sequences $\e(\theta_1,\dots,\theta_\ell,0,0,\dots)$, $\ell\geq1$, which are log-concave by Lemma~\ref{lem:Newton-inequalities}.
\end{proof}

\begin{cor}\label{cor:growing-underlying-plane-trees}
	The distributions $(\SimpGen{\e(\wtheta)}{n})_{n\geq1}$ can be coupled as a Markov process $(\T_n)_{n\geq1}$ such that $\T_1\subset\T_2\subset\T_3\subset\dots$.
\end{cor}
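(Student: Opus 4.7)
The plan is to invoke Theorem~\ref{thm:main-thm-SG} directly, with the weight sequence $\w = \e(\wtheta)$. Two hypotheses must be verified: that $\e(\wtheta)$ is log-concave, and that $e_0(\wtheta)\,e_1(\wtheta) > 0$.

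The first hypothesis is precisely the content of Corollary~\ref{cor:LC-elem-sym-func}, which has just been established via Newton's inequalities and closure of log-concavity under pointwise limits. The second hypothesis is immediate from the definitions: by convention $e_0(\wtheta) = 1$, and the assumption that $\wtheta$ is a summable weight sequence guarantees $e_1(\wtheta) = \sum_{i\geq 1}\theta_i > 0$.

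With both hypotheses checked, Theorem~\ref{thm:main-thm-SG} produces a Markov process $(\T_n)_{n\geq 1}$ coupling the distributions $(\SimpGen{\e(\wtheta)}{n})_{n\geq 1}$ such that $\T_1 \subset \T_2 \subset \T_3 \subset \dots$, in which a right-leaning leaf is added at each step. This is exactly the conclusion we need (the right-leaning property, while available, is not required for the statement of the corollary but will be crucial in the sequel when we upgrade this coupling to one for the subtree model $\Sub{\wtheta}{n}$).

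There is no real obstacle here: the corollary is a direct consequence of the main theorem combined with the log-concavity of elementary symmetric polynomials. The proof consists of checking hypotheses and citing; no further argument is required.
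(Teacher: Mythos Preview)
Your proof is correct and follows exactly the same approach as the paper: apply Theorem~\ref{thm:main-thm-SG} to $\w=\e(\wtheta)$, using Corollary~\ref{cor:LC-elem-sym-func} for log-concavity. Your explicit check that $e_0(\wtheta)e_1(\wtheta)>0$ is a nice addition that the paper leaves implicit.
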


\begin{proof}
	Apply Theorem~\ref{thm:main-thm-SG} together with Corollary~\ref{cor:LC-elem-sym-func}.
\end{proof}

\subsection{Growing the decorations}

In order to construct increasing couplings for the distributions $(\Sub{\wtheta}{n})_{n\geq1}$ using Proposition~\ref{prop:comparison-SG-and-ST} and Corollary~\ref{cor:growing-underlying-plane-trees}, we will need a way to couple in an increasing way the distributions $(\randSubset{\wtheta}{k})_{0\leq k\leq {N_\wtheta}}$.
This is accomplished as follows.

\begin{prop}\label{prop:growing-subsets}
	Let $\wtheta$ be a summable weight sequence.
	There exists a random sequence  $X=(X_1,X_2,\dots,X_{N_\wtheta})$ of distinct elements of $\{1,2,\dots\}$ such that for every $0\leq k\leq N_{\wtheta}$, the random set $S_k=\{X_1,X_2,\dots,X_k\}$ has distribution $\randSubset{\wtheta}{k}$.
\end{prop}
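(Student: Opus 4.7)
The plan is to proceed by induction on $N_\wtheta$ in the finite-support case, and to handle the infinite-support case by a limiting argument.

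For the inductive step, the idea is to pick any $i^* \in \supp(\wtheta)$ and let $\wtheta'$ be the weight sequence obtained from $\wtheta$ by zeroing out its $i^*$-th coordinate, so that $N_{\wtheta'} = N_\wtheta - 1$. By the inductive hypothesis there exists a random sequence $(Y_1,\dots,Y_{N_\wtheta - 1})$ of distinct elements with $\{Y_1,\dots,Y_j\} \sim \randSubset{\wtheta'}{j}$ for every $j$. I would then construct $(X_1,\dots,X_{N_\wtheta})$ by inserting $i^*$ at a random position $K$ into the sequence $(Y_j)$, taking $K$ \emph{independent} of $(Y_j)$ with
\[
\Prob{K \leq k} = p_k := \frac{\theta_{i^*}\, e_{k-1}(\wtheta')}{e_k(\wtheta)}, \qquad 0 \leq k \leq N_\wtheta,
\]
with the convention $e_{-1}(\wtheta') = 0$.

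Two things need to be checked. First, that $(p_k)_{0 \leq k \leq N_\wtheta}$ is a valid cumulative distribution function: it starts at $p_0 = 0$, the identity $e_{N_\wtheta}(\wtheta) = \theta_{i^*}\, e_{N_\wtheta - 1}(\wtheta')$ yields $p_{N_\wtheta} = 1$, and the monotonicity $p_{k+1} \geq p_k$ reduces---via the Pascal-type relation $e_k(\wtheta) = e_k(\wtheta') + \theta_{i^*}\, e_{k-1}(\wtheta')$---exactly to the log-concavity inequality $e_k(\wtheta')^2 \geq e_{k-1}(\wtheta')\, e_{k+1}(\wtheta')$. This is precisely Newton's inequalities (Lemma~\ref{lem:Newton-inequalities}), or equivalently Corollary~\ref{cor:LC-elem-sym-func}, applied to $\wtheta'$. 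Second, that the marginals of the constructed sequence are correct: for $S \in \Parts_k$ with $i^* \in S$, independence of $K$ and $Y$ gives $\Prob{\{X_1,\dots,X_k\} = S} = p_k \cdot \randSubset{\wtheta'}{k-1}(S \setminus \{i^*\})$, which unwinds via the definition of $p_k$ to $\randSubset{\wtheta}{k}(S)$; for $i^* \notin S$ one obtains $\Prob{\{X_1,\dots,X_k\} = S} = (1 - p_k)\, \randSubset{\wtheta'}{k}(S)$, which again matches using the same Pascal-type identity applied to $1 - p_k$.

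The base case $N_\wtheta = 1$ is handled by setting $X_1$ equal to the unique element of $\supp(\wtheta)$. For the infinite-support case, I would apply the finite-support result to the truncations $\wtheta^{(N)} = (\theta_1, \dots, \theta_N, 0, 0, \dots)$ and extract a subsequential limit in distribution of the resulting sequences, using tightness on $\N^{\N}$; the continuity of $\randSubset{\cdot}{k}$ under pointwise convergence of summable weight sequences---which follows from $e_k(\wtheta^{(N)}) \to e_k(\wtheta)$---guarantees that the limiting sequence has the desired marginals.

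The main ingredient---and the one step where log-concavity is genuinely used---is the monotonicity of $(p_k)$; without Newton's inequalities, the candidate distribution for $K$ would not even be a probability measure. Everything else in the induction is bookkeeping.
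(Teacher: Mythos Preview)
Your proof is correct and follows essentially the same approach as the paper: induction on $N_\wtheta$ with the same insertion construction (the paper phrases it via a uniform $U$ and the threshold events $\{U\leq p_k\}$, which is exactly your random position $K$), the same marginal computation, and the same reduction of the monotonicity of $(p_k)$ to log-concavity of $\e(\wtheta')$ via Newton's inequalities (the paper isolates this as a separate lemma). The infinite-support case is likewise dispatched by a limiting argument in both.
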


By Remark~\ref{rem:random-subset-product-measure}, this is equivalent to the statement that if $\mathcal B=(B_i,i\geq1)$ is a sequence of independent Bernoulli random variables whose sum is finite almost surely, then the distributions of $\mathcal B$ conditionally on $\sum_i B_i=k$, $0\leq k\leq N$, admit an increasing coupling, \textit{i.e.} one which is coordinate-wise non-decreasing.
Here $N$ is the number of $i\in\{1,2,\dots\}$ such that $B_i$ is not almost surely zero.

In the case $N<\infty$, this result is already known, with several proofs.
Jonasson and Nerman leverage in \cite[Proposition~6.2]{JonassonNerman96} a Markov chain whose stationary distributions are the ones we need to couple.
Broman, van~de~Brug and Kager exhibit in \cite[Lemma~2.2]{BromanVDBrugKager12} an explicit Markov kernel which builds the coupling.
More generally, Pemantle proves in \cite[Lemma~3.3]{Pemantle00} a similar statement for a class of negatively dependent measures which contains products of Bernoulli measures.
Lastly, Borcea, Br\"and\'en and Liggett generalize the latter, in \cite[Thm.~4.19]{BorceaBrandenLiggett09}, to a large class of negatively dependent measures.

Still, we feel it is useful to give a self-contained proof of Proposition~\ref{prop:growing-subsets}.
Our proof has close similarities with that of \cite[Lemma~3.3]{Pemantle00}.
For $i\in\{1,2,\dots\}$, we consider the event: 
\begin{align*}
\Ecal_i=\{S\in\Parts\colon i\in S\}.
\end{align*}
We will need the following lemma, which resembles Lemma~3.2.(i) in the latter reference.

\begin{lemma}\label{lem:preparation-incr-coupling-subsets}
	Let $\wtheta$ be a summable weight sequence and let $0\leq k<N_\wtheta$.
	Then,
	\begin{align}\label{eq:inrease-prob-contains-i}
	\anyProb{\randSubset{\wtheta}{k}}{\Ecal_i}
		\leq \anyProb{\randSubset{\wtheta}{k+1}}{\Ecal_i},
		\qquad i\in\{1,2,\dots\}.
	\end{align}
\end{lemma}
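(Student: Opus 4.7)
The plan is to reduce the inequality to the log-concavity of the elementary symmetric functions $\e(\wthetai)$, where $\wthetai = (\theta_j \cdot \indic{j \neq i})_{j \geq 1}$ is obtained from $\wtheta$ by zeroing out its $i$-th coordinate. This log-concavity has already been established in Corollary~\ref{cor:LC-elem-sym-func}, so the only real work is the algebraic reduction.

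First, I would dispose of the trivial case $\theta_i = 0$: then any $S \in \Parts$ with $i \in S$ has $\prod_{j \in S}\theta_j = 0$, so both sides of \eqref{eq:inrease-prob-contains-i} vanish. Now assume $\theta_i > 0$. By partitioning $k$-element subsets of $\supp(\wtheta)$ according to whether they contain $i$, one gets
\begin{align*}
e_k(\wtheta) = e_k(\wthetai) + \theta_i\, e_{k-1}(\wthetai)
\qquad\text{and}\qquad
\sum_{\substack{S \in \Parts_k \\ i \in S}} \prod_{j \in S}\theta_j = \theta_i\, e_{k-1}(\wthetai),
\end{align*}
with the convention $e_{-1}(\wthetai) = 0$. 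Dividing these yields
\begin{align*}
\anyProb{\randSubset{\wtheta}{k}}{\Ecal_i}
= \frac{\theta_i\, e_{k-1}(\wthetai)}{e_k(\wthetai) + \theta_i\, e_{k-1}(\wthetai)},
\end{align*}
and similarly for $k+1$. Setting $a = e_{k-1}(\wthetai)$, $b = e_k(\wthetai)$, $c = e_{k+1}(\wthetai)$, the desired inequality becomes $\frac{\theta_i a}{b + \theta_i a} \leq \frac{\theta_i b}{c + \theta_i b}$, which after cross-multiplying and cancelling the $\theta_i a b$ terms collapses cleanly to
\begin{align*}
a \, c \leq b^2,
\quad\text{i.e.,}\quad
e_{k-1}(\wthetai) \cdot e_{k+1}(\wthetai) \leq e_k(\wthetai)^2.
\end{align*}

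It remains to justify this log-concavity inequality for $\wthetai$. If $\wthetai$ is a summable weight sequence (i.e., has at least one non-zero coordinate), this is exactly Corollary~\ref{cor:LC-elem-sym-func}. In the edge case where $\wthetai \equiv 0$ we must have $\wtheta = \theta_i \boldsymbol{\delta}_i$ and $N_{\wtheta} = 1$, so the range $0 \leq k < N_\wtheta$ forces $k = 0$, in which case $a = e_{-1}(\wthetai) = 0$ and the inequality is immediate. The case $k = 0$ with general $\wthetai$ is likewise immediate for the same reason.

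There is no serious obstacle here: the crux is recognizing that conditioning on $\{i \in S\}$ naturally produces the elementary symmetric functions of $\wthetai$, at which point the inequality is a repackaging of Newton's inequalities already in our toolkit.
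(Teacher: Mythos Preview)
Your proof is correct and follows essentially the same route as the paper's: both express $\anyProb{\randSubset{\wtheta}{k}}{\Ecal_i}$ via the decomposition $e_k(\wtheta)=e_k(\wthetai)+\theta_i\,e_{k-1}(\wthetai)$ and reduce the inequality to $e_k(\wthetai)^2\geq e_{k-1}(\wthetai)\,e_{k+1}(\wthetai)$, then invoke Corollary~\ref{cor:LC-elem-sym-func}. Your treatment of the edge cases ($\theta_i=0$, $k=0$, and $\wthetai\equiv 0$) is slightly more explicit than the paper's, but the substance is identical.
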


\begin{proof}
	Let $i\in\{1,2,\dots\}$, which we assume to be in $\supp(\wtheta)$ to avoid trivialities.
	The $k=0$ case is trivial since $\anyProb{\randSubset{\wtheta}{0}}{\Ecal_i}=0$, so let $0< k<N_\wtheta$.
	Observe that if we write ${\wthetai}=(\theta_1,\dots \theta_{i-1},0,\theta_{i+1},\dots)$, then the inequality \eqref{eq:inrease-prob-contains-i} we need to prove may be written as follows:
	\begin{align*}
	\frac{
		\theta_i\cdot e_{k-1}({\wthetai})
	}{
		e_{k}(\wtheta)
	}
	\leq
	\frac{
		\theta_i\cdot e_{k}({\wthetai})
	}{
		e_{k+1}(\wtheta)
	}.
	\end{align*}
	Note that $e_\ell(\wtheta)=e_\ell({\wthetai})+\theta_i\cdot e_{\ell-1}({\wthetai})$ for $\ell\geq 1$.
	Hence the last display reads
	\begin{align*}
	\frac{
		\theta_i\cdot e_{k-1}({\wthetai})
	}{
		e_k({\wthetai})+\theta_i\cdot e_{k-1}({\wthetai})
	}
	\leq
	\frac{
		\theta_i\cdot e_{k}({\wthetai})
	}{
		e_{k+1}({\wthetai})+\theta_i\cdot e_{k}({\wthetai})
	},
	\end{align*}
	or $ e_k({\wthetai})^2\geq e_{k+1}({\wthetai})e_{k-1}({\wthetai})$ after re-writing.
	This inequality follows from Corollary~\ref{cor:LC-elem-sym-func}.
\end{proof}

\begin{proof}[Proof of Proposition~\ref{prop:growing-subsets}]
	It is sufficient to treat the case when $\wtheta$ is finitely supported since we can  deduce the general case by a limiting argument.%
		\footnote{
			Say, using a suitable adaptation of Lemma~\ref{lem:closure-admissibility} in our context.
		}
	We do so by induction on $1\leq N_\wtheta<\infty$.	
	The base case $N_\wtheta=1$ is trivial, so assume that $N_\wtheta>1$.
	Let $i\in\supp(\wtheta)$ and ${\wthetai}=(\theta_1,\dots \theta_{i-1},0,\theta_{i+1},\dots)$.
	Since $N_{\wthetai}=N_{\wtheta}-1$, we get by induction an increasing coupling $(S^+_k)_{0\leq k\leq N_\wtheta-1}$ of the distributions $(\randSubset{{\wthetai}}{k})_{0\leq k\leq {N_\wtheta-1}}$.
	We set $p_k= \anyProb{\randSubset{\wtheta}{k}}{\Ecal_i}$, $0\leq k \leq N_\wtheta$.
	Lastly, we let $U$ be uniformly random in $[0,1]$, independently from all other variables.
	We set%
		\footnote{
			When $k=N_\wtheta$, we have $p_k=1$ by definition.
			Hence we de not use $S^+_{N_\wtheta}$, which is undefined.
		}
	\begin{align}\label{eq:def-incr-coupling-subsets}
	S_0=\emptyset,\qquad\text{and}\qquad
	\forall 1\leq k\leq N_\wtheta,\quad
	S_k=
		\begin{cases}
			S^+_{k-1}\cup\{i\}		&\text{if $U\leq p_k$,}\\
			S^+_{k}				&\text{if $U> p_k$.}
		\end{cases}
	\end{align}
	By construction $S_0=\emptyset$ as needed, and if we let $1\leq k\leq N_\wtheta$ and $S\in\Parts_k$, then if $i\in S$, we have
	\begin{align*}
	\Prob{S_k=S}
		=\Prob{S^+_{k-1}=S\setminus\{i\},\, U\leq p_k}
		= p_k\cdot
		\anyProb{\randSubset{{\wthetai}}{k-1}}{S\setminus\{i\}},
	\end{align*}
	while if $i\notin S$, we have
	\begin{align*}
	\Prob{S_k=S}
		=\Prob{S^+_{k}=S,\, U> p_k}
		= (1-p_k)\cdot\anyProb{\randSubset{{\wthetai}}{k}}{S}.
	\end{align*}
	Let $\theta_S=\prod_{j\in S}\theta_j$.
	The two last displays give that
	\begin{align*}
	\begin{matrix}
	\Prob{S_k=S}
	&=& \indic{i\in S} 
	\cdot p_k\cdot	\anyProb{\randSubset{{\wthetai}}{k-1}}{S\setminus\{i\}}
	&+&
	\indic{i\notin S}\cdot
	(1-p_k)\cdot\anyProb{\randSubset{{\wthetai}}{k}}{S}\\
	&=& \indic{i\in S}\cdot\frac{\theta_i \cdot e_{k-1}({\wthetai})}{e_k(\wtheta)}
	\cdot
	\frac{ \theta_S/\theta_i}{e_{k-1}({\wthetai})}
	&+&
	\indic{i\notin S}\cdot \frac{e_{k}({\wthetai})}{e_k(\wtheta)}
	\cdot
	\frac{\theta_S}{e_{k}({\wthetai})}.
	\end{matrix}
	\end{align*}
	This simplifies to $\Prob{S_k=S}={\theta_S}/{e_{k}({\wthetai})}=\anyProb{\randSubset{\wtheta}{k}}{S}$, as needed.
	
	Now let $1\leq k< N_\wtheta$.
	By Lemma~\ref{lem:preparation-incr-coupling-subsets}, we have $p_k\leq p_{k+1}$, so that in order to verify that $S_k\subset S_{k+1}$, we only need to verify it on the events $\{U\leq p_k\}$, $\{p_k< U\leq p_{k+1}\}$, and $\{U>p_{k+1}\}$.
	Since $(S^+_\ell)_{0\leq\ell\leq N_{\wtheta}}$ is increasing, we have $S_k\subset S_{k+1}$ on the events $\{U\leq p_k\}$ and $\{U>p_{k+1}\}$.
	On the remaining event $\{p_k< U\leq p_{k+1}\}$, we have $S_k=S^+_k\subset S^+_k\cup\{i\}=S_{k+1}$.
	Hence we have verified that $S_0\subset S_{1}\subset\dots\subset S_{N_\wtheta}$.
	Since $\#\,S_k=k$ for $0\leq k\leq N_\wtheta$, we can set $X_k$ to be the unique element in $S_k\setminus S_{k-1}$ for $1\leq k\leq N_\wtheta$, and the proof is complete.
\end{proof}

\subsection{Matching the growths by shuffling the decorated plane trees}
\label{subsec:coupling-using-shuffling}

Fix $\wtheta=(\theta_1,\theta_2,\dots)$ a non-negative sequence such that $0<\sum_i \theta_i<\infty$.
Let us first describe a naive attempt to combine the increasing couplings for $(\SimpGen{\e(\wtheta)}{n})_{n\geq1}$ and for $(\randSubset{\wtheta}{k})_{0\leq k \leq N_\wtheta}$.

\mypar{Naive coupling}
Consider mutually independent copies
\begin{align*}
X_\u=(X_{1,\u},\dots,X_{N_\wtheta,\u}),\qquad\u\in\U,
\end{align*}
of the random sequence $X=(X_{1},\dots,X_{N_\wtheta})$ given by Proposition~\ref{prop:growing-subsets}.
Independently, let $(\T_n)_{n\geq1}$ be the coupling given by Corollary~\ref{cor:growing-underlying-plane-trees}.
For $n\geq1$, we let $\boldS_n=(S_{k_\u(\T_n),\u})_{\u\in\T_n}$, where $S_{k,\u}=\{X_{1,\u},\dots,X_{k,\u}\}$ for every $\u\in\T_n$ and every $0\leq k\leq N_\wtheta$.
This yields an element $\boldS_n\in\Parts_{\T_n}$.

\begin{prop}[Naive coupling]\label{prop:naive-coupling}
	In the above setting, if one sets
	\begin{align*}
	\subT_n=\embed\left(\T_n,\boldS_n\right),
	\qquad n\geq1,
	\end{align*}
	then the sequence $(\subT_n)_{n\geq1}$ is a coupling of the distributions $(\Sub{\wtheta}{n})_{n\geq 1}$.
\end{prop}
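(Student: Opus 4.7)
The plan is to compute the joint distribution of $(\T_n, \boldS_n)$ for each fixed $n \geq 1$, then transfer this via the bijection $\epush$ of Proposition~\ref{prop:bijection-push} to identify the law of $\subT_n$ with $\Sub{\wtheta}{n}$, using the key factorization identity of Proposition~\ref{prop:comparison-SG-and-ST}.

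First, I observe that $\boldS_n$ is automatically grading-compatible. By Proposition~\ref{prop:growing-subsets} we have $\#\, S_{k, \u} = k$, so $\#\, S_{k_\u(\T_n), \u} = k_\u(\T_n)$ for every $\u \in \T_n$ (note that $k_\u(\T_n) \leq N_\wtheta$ almost surely, since otherwise $e_{k_\u(\T_n)}(\wtheta) = 0$ and thus $\SimpGen{\e(\wtheta)}{n}(\T_n) = 0$). Consequently $(\T_n, \boldS_n) \in \treesgr_n[\Parts]$, and by Proposition~\ref{prop:bijection-push} together with Remark~\ref{rem:bij-push-embed}, $\subT_n = \embed(\T_n, \boldS_n)$ lies in $\subtrees_n$ with $\epush(\subT_n) = (\T_n, \boldS_n)$.

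Second, I compute the joint law of $(\T_n, \boldS_n)$. Fix $(\pt, \boldS) \in \treesgr_n[\Parts]$ with $\boldS = (S_\u)_{\u \in \pt}$. Since the collection $(X_\u)_{\u \in \U}$ is independent of $(\T_n)_{n \geq 1}$ with mutually independent coordinates, conditioning on $\{\T_n = \pt\}$ leaves the joint law of $(X_\u)_{\u \in \pt}$ unchanged, and each $S_{k_\u(\pt), \u}$ still has distribution $\randSubset{\wtheta}{k_\u(\pt)}$ by Proposition~\ref{prop:growing-subsets}. Hence
\begin{align*}
\Prob{\T_n = \pt,\, \boldS_n = \boldS}
= \Prob{\T_n = \pt} \cdot \prod_{\u \in \pt} \Prob{S_{k_\u(\pt),\u} = S_\u}
= \SimpGen{\e(\wtheta)}{n}(\pt) \cdot \prod_{\u \in \pt} \randSubset{\wtheta}{k_\u(\pt)}(S_\u).
\end{align*}

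Finally, for any $\t \in \subtrees_n$, writing $\epush(\t) = (\pt, \boldS)$ with $\boldS = (S_\u)_{\u \in \pt}$, the bijectivity of $\epush$ combined with the previous display and Proposition~\ref{prop:comparison-SG-and-ST} yields
\begin{align*}
\Prob{\subT_n = \t}
= \Prob{(\T_n, \boldS_n) = (\pt, \boldS)}
= \SimpGen{\e(\wtheta)}{n}(\pt) \cdot \prod_{\u \in \pt} \randSubset{\wtheta}{k_\u(\pt)}(S_\u)
= \Sub{\wtheta}{n}(\t),
\end{align*}
which concludes the argument. There is no real obstacle here: the proof is essentially a translation of the deterministic bijection Proposition~\ref{prop:bijection-push} into probability, glued together by the factorization~\eqref{eq:comparison-SG-and-ST} and the independence structure of the construction. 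The genuine difficulty, deferred to the remainder of Section~\ref{sec:application-random-subtrees}, lies in modifying this naive coupling so that it becomes increasing with respect to inclusion.
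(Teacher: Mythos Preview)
Your proof is correct and follows the same approach as the paper, which simply states that the result is a direct consequence of Proposition~\ref{prop:comparison-SG-and-ST}. You have spelled out in detail what that one-line proof leaves implicit: the grading-compatibility check, the computation of the joint law of $(\T_n,\boldS_n)$ via independence, and the transfer through the bijection $\epush$.
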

\begin{proof}
	This is a direct consequence of Proposition~\ref{prop:comparison-SG-and-ST}.
\end{proof}

At first sight, the coupling in Proposition~\ref{prop:naive-coupling} may look increasing, since the ones in Corollary~\ref{cor:growing-underlying-plane-trees} and Proposition~\ref{prop:growing-subsets} are, but in fact it is not.
The problem is that $(\T_n)_{n\geq1}$ ``grows from the right'' while for $\u\in\U$ the sets $(S_{k,\u})_{0\leq k\leq {{N_\wtheta}}}$, can in principle ``grow at any place''.
At the cost of properly shuffling the plane trees $(\T_n)_{n\geq1}$ given the history of the couplings $(S_{k,\u})_{0\leq k\leq {{N_\wtheta}}}$, ${\u\in\U}$, we can ``align'' the two growths and obtain a new coupling $(\subT_n)_{n\geq1}$ which is actually increasing.

\mypar{Shuffling decorated plane trees}
We specialize to the context of decorated plane trees the shuffling operations we defined in Section~\ref{subsec:moving-subtrees} for arbitrary rooted subtrees of $\U$, as follows.
For $k\geq0$, we let $\perm_k$ be the set of permutations of $\{1,2,\dots,k\}$, where $\perm_0$ is a singleton containing the empty permutation of the empty set.
We denote by $\perm$ the graded set 
\begin{align*}
\perm=\bigsqcup_{k\geq0}\perm_k.
\end{align*}
An element $\boldsigma$ of the set $\perm_\pt$ of grading-compatible $\pt$-tuples of elements of $\perm$ is then just a collection $\boldsigma=(\sigma_\u)_{\u\in\pt}$ such that $\sigma_\u\in\perm_{k_\u(\pt)}$ for all $\u\in\pt$.
Note that for every $\pt\in\trees$, we have that $\perm_T\subset\G(\pt)$.
Indeed, since $\pt$ is a plane tree, we have $\pos_\u(\pt)=\{1,2,\dots,k_\u(\pt)\}$ for all $\u\in\pt$, so that for every $\boldsigma=(\sigma_\u)_{\u\in\pt}\in\perm_\pt$ and every $\u\in\pt$, the permutation $\sigma_\u$ is indeed an injective mapping $\pos_\u(\pt)\to\{1,2,\dots,|\pos_\u(\pt)|\}$, that is an element of $\G_{\pos_\u(\pt)}$.

In particular, for $\pt\in\trees$ and $\boldsigma\in\perm_\pt$, and for every $\u=(u_1,\dots,u_h)\in \pt$ with ancestral line $\u_i=(u_1,\dots,u_i)$, ${0\leq i\leq h}$, we can consider as in Section~\ref{subsec:moving-subtrees},
\begin{align*}
\boldsigma\cdot\u
=\bigl(\sigma_\emptyset(u_1),\sigma_{\u_1}(u_2),\dots,\sigma_{\u_{h-1}}(u_h)\bigr).
\end{align*}
This is just a sub-case of the definition~\eqref{eq:def-action-g}, and we can directly import the relevant notation and use the properties proved in this section.

\begin{rem}\label{rem:from-g-to-sigma}
	For $\pt\in\trees$ and $\boldsigma\in\perm_\pt$, observe that $\boldsigma\cdot\pt$ is also a plane tree.
	More generally, given any  set $\bbX$, if $(\pt,\x)$ is an $\bbX$-decorated plane tree, then so is $(\boldsigma\cdot\pt,\boldsigma\pf\x)$ for every $\boldsigma\in\perm\pt$.
	Observe also that the ``inverse'' $\boldsigma^{-1}\in\G(\boldsigma\cdot\pt)$ given by Lemma~\ref{lem:inverse-g} is actually in $\perm_{\boldsigma\cdot\pt}$ for every $\boldsigma\in\perm_\pt$, $\pt\in\trees$.
\end{rem}

\mypar{Choosing the permutations}
Let us consider the set $\bbXseq$ of sequences made of $N_\wtheta$ pairwise distinct elements of $\{1,2,\dots\}$.
Given any  $x=(x_1,\dots,x_{N_\wtheta})\in\bbXseq$ and some $0\leq k\leq N_\wtheta$, we let
\begin{align}\label{eq:choice-sigma-of-g}
g_{k,x}(\ell)&=x_\ell,\qquad 1\leq \ell\leq k,
\end{align}
which defines an injective mapping $g_{k,x}\colon\{1,2,\dots,k\}\to\{x_1,\dots,x_k\}$ since by definition of $\bbXseq$, the elements $(x_1,\dots,x_{N_\wtheta})$ are pairwise distinct.
This means that $g_{k,x}\in\G_{\{1,2,\dots,k\}}$.
Then, we set
\begin{align}\label{eq:choice-sigma-of-x}
\sigma_{k,x}&= p_{\{x_1,\dots,x_k\}}\circ g_{k,x},
\end{align}
where $p_{\{x_1,\dots,x_k\}}\colon \{x_1,\dots,x_k\}\to\{1,2,\dots,k\}$ is the injective mapping which we defined in \eqref{eq:def-p_S}.
By composition of injective mappings, this defines is an injective mapping $\sigma_{k,x}\colon\{1,\dots,k\}\to\{1,\dots,k\}$, that is a permutation $\sigma_{k,x}\in\perm_k$.

\mypar{A better coupling strategy}
The construction begins as in the ``naive coupling''.
Consider again mutually independent copies
\begin{align*}
X_\u=(X_{1,\u},\dots,X_{N_\wtheta,\u}),\qquad\u\in\U,
\end{align*}
of the random sequence $X=(X_{1},\dots,X_{N_\wtheta})$ given by Proposition~\ref{prop:growing-subsets}.
Independently, let $(\T_n)_{n\geq1}$ be the coupling given by Corollary~\ref{cor:growing-underlying-plane-trees}.
As in the ``naive coupling'', for $n\geq1$, we let $\boldS_n=(S_{k_\u(\T_n),\u})_{\u\in\T_n}$, where $S_{k,\u}=\{X_{1,\u},\dots,X_{k,\u}\}$ for every $\u\in\T_n$ and every $0\leq k\leq N_\wtheta$.
Now comes the difference with the ``naive coupling''.
For every $n\geq1$, we let:
\begin{align*}
\boldsigma_n=(\sigma_{k_\u(\T_n),X_\u})_{\u\in\T_n},
\end{align*}
where the notation $\sigma_{k,x}$ is defined in \eqref{eq:choice-sigma-of-x}.
This gives an element $\boldsigma_n\in\perm_{\T_n}$.

\begin{lemma}\label{lem:identity-distrib-after-shuffling}
	In the above setting, the random pair $(\boldsigma_n\cdot\T_n,(\boldsigma_n)\pf\boldS_n)$ has the same joint distribution as $(\T_n,\boldS_n)$, for every $n\geq1$.
\end{lemma}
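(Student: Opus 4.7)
The plan is to compute $\P(\boldsigma_n\cdot\T_n=\tau,(\boldsigma_n)\pf\boldS_n=\boldS')$ directly for each $(\tau,\boldS')\in\treesgr_n{[\Parts]}$ and match it to $\P(\T_n=\tau,\boldS_n=\boldS')$. The starting point is a bijective parametrization: by Lemma~\ref{lem:inverse-pushforward-g} and Remark~\ref{rem:from-g-to-sigma}, the triples $(T,\boldsigma,\boldS)$ with $T\in\trees_n$, $\boldsigma\in\perm_T$, $\boldsigma\cdot T=\tau$ and $\boldsigma\pf\boldS=\boldS'$ are in bijection with $\perm_\tau$ via $\boldpi\mapsto(\boldpi\cdot\tau,\boldpi^{-1},\boldpi\pf\boldS')$, the inverse map being $(T,\boldsigma,\boldS)\mapsto\boldsigma^{-1}$. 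This yields
\begin{align*}
\P(\boldsigma_n\cdot\T_n=\tau,(\boldsigma_n)\pf\boldS_n=\boldS')=\sum_{\boldpi\in\perm_\tau}\P(\T_n=\boldpi\cdot\tau,\,\boldsigma_n=\boldpi^{-1},\,\boldS_n=\boldpi\pf\boldS').
\end{align*}

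Next I would exploit two facts. First, by Lemma~\ref{lem:children-after-shuffling} permuting siblings preserves children counts, so $\omega(\boldpi\cdot\tau)=\omega(\tau)$ and therefore $\P(\T_n=\boldpi\cdot\tau)=\SimpGen{\e(\wtheta)}{n}(\tau)$ for every $\boldpi\in\perm_\tau$. Second, conditional on $\T_n=T$, the sequences $(X_\v)_{\v\in T}$ remain independent and identically distributed, since $\T_n$ is by construction independent of the family $(X_\u)_{\u\in\U}$. Since $\boldsigma_n$ and $\boldS_n$ are vertex-wise deterministic functions of the $X_\v$'s, the joint event $\{\boldsigma_n=\boldpi^{-1},\boldS_n=\boldpi\pf\boldS'\}$ factorizes across vertices $\v\in T=\boldpi\cdot\tau$. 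Re-indexing $\v=\boldpi\cdot\u$ for $\u\in\tau$ and using $k_\v(\boldpi\cdot\tau)=k_\u(\tau)$ from Lemma~\ref{lem:children-after-shuffling}, each factor becomes
\begin{align*}
\P\bigl(\sigma_{k_\u(\tau),X}=\pi_\u^{-1},\,S_{k_\u(\tau),X}=S'_\u\bigr),
\end{align*}
where $X$ denotes a fresh copy of the coupling from Proposition~\ref{prop:growing-subsets} and I write $S_{k,X}:=\{X_1,\dots,X_k\}$.

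The final step is to interchange the sum over $\boldpi\in\perm_\tau$ with the product over $\u\in\tau$, which is valid because the components $\pi_\u$ vary independently over $\perm_{k_\u(\tau)}$:
\begin{align*}
\sum_{\boldpi\in\perm_\tau}\prod_{\u\in\tau}\P\bigl(\sigma_{k_\u(\tau),X}=\pi_\u^{-1},S_{k_\u(\tau),X}=S'_\u\bigr)=\prod_{\u\in\tau}\sum_{\sigma\in\perm_{k_\u(\tau)}}\P\bigl(\sigma_{k_\u(\tau),X}=\sigma,S_{k_\u(\tau),X}=S'_\u\bigr).
\end{align*}
The inner sum collapses to the marginal $\P(S_{k_\u(\tau),X}=S'_\u)=\anyProb{\randSubset{\wtheta}{k_\u(\tau)}}{S'_\u}$ by Proposition~\ref{prop:growing-subsets}. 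Multiplying by $\SimpGen{\e(\wtheta)}{n}(\tau)$ and invoking Proposition~\ref{prop:comparison-SG-and-ST} then recovers $\P(\T_n=\tau,\boldS_n=\boldS')$, which completes the argument.

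The main obstacle is the bookkeeping required to verify the bijective parametrization and to correctly track the vertex relabelings under the push-forward operation; the groupoid calculus developed in Section~\ref{subsec:moving-subtrees} (particularly Lemmas~\ref{lem:inverse-g}, \ref{lem:inverse-pushforward-g}, and~\ref{lem:children-after-shuffling}) is essential here. Notably, no exchangeability property of the coupling $X=(X_1,\dots,X_{N_\wtheta})$ is needed: summing over $\sigma\in\perm_k$ of the joint probability $\P(\sigma_{k,X}=\sigma,S_{k,X}=S)$ always returns the marginal law of $S_{k,X}$, regardless of how $\sigma_{k,X}$ and $S_{k,X}$ are coupled—this is the structural cancellation that makes the construction work.
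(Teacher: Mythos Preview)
Your proof is correct. The argument is essentially the same as the one the paper carries out, but you do it directly for this specific situation, whereas the paper packages the computation into an abstract framework in Appendix~\ref{app:appendix-shuffling}: it verifies that the rule $\psi(\pt,\x,\u)=\sigma_{k_\u(\pt),x_\u}$ is $\perm$-equivariant and then invokes Corollary~\ref{cor:shuffling-sg-with-iid}, whose proof (via Proposition~\ref{prop:shuffling-general-m}) is precisely your sum-over-$\boldpi$ decomposition followed by a relabeling and a collapse of the indicator $\{\boldsigma_n=\boldpi^{-1}\}$ upon summing over $\boldpi$. Your version is more elementary and self-contained; the paper's version isolates the invariance principle as a reusable statement about arbitrary $\perm$-equivariant shuffling rules and $\perm$-consistent measures, which is what the i.i.d.\ factorization expresses in your step~3.

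One small point: the final identification $\P(\T_n=\tau,\boldS_n=\boldS')=\SimpGen{\e(\wtheta)}{n}(\tau)\prod_{\u}\anyProb{\randSubset{\wtheta}{k_\u(\tau)}}{S'_\u}$ follows directly from the construction (independence of $\T_n$ from $(X_\u)_\u$ together with Proposition~\ref{prop:growing-subsets}), not from Proposition~\ref{prop:comparison-SG-and-ST}, which concerns $\Sub{\wtheta}{n}$ rather than the joint law of $(\T_n,\boldS_n)$.
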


The proof of Lemma~\ref{lem:identity-distrib-after-shuffling} relies on some material collected in Appendix~\ref{app:appendix-shuffling}, more precisely Corollary~\ref{cor:shuffling-sg-with-iid}, which proves a result similar to Lemma~\ref{lem:identity-distrib-after-shuffling}, but for a more general class of ``decoration-dependent'' shuffling operations.

\begin{proof}[Proof of Lemma~\ref{lem:identity-distrib-after-shuffling}]
	For $\pt\in\trees$, for $\x=(x_\u)_{\u\in\pt}\in\bbXseq^\pt$, and for $\u\in\pt$, we let $\psi(\pt,\x,\u)=\sigma_{k_{\u}(\pt),x_\u}$, where the notation $\sigma_{k,x}$ is introduced in~\eqref{eq:choice-sigma-of-x}.
	Then for every $\boldpi\in\perm_\pt$, we have
	\begin{align*}
	\psi(\boldpi\cdot\pt,\boldpi\pf\x,\boldpi\cdot\u)
		=\bigl.\sigma_{k,x}\bigr|^{%
				k=k_{\boldpi\cdot\u}(\boldpi\cdot\pt)
			}_{%
				x=x_{\boldpi^{-1}\cdot(\boldpi\cdot\u)}
			}
		=\bigl.\sigma_{k,x}\bigr|^{%
				k=k_{\u}(\pt)
			}_{%
				x=x_{\u}
			}
		=\psi(\pt,\x,\u),
	\end{align*}
	where we used that $k_{\boldpi\cdot\u}(\boldpi\cdot\pt)=k_\u(\pt)$ by Lemma~\ref{lem:children-after-shuffling}.
	In the language of Appendix~\ref{app:appendix-shuffling}, this means that $\psi$ is a $\perm$-equivariant $\bbXseq$-shuffling rule.
	If we let $\nu$ be the distribution of the sequence $(X_1,\dots,X_{N_\wtheta})$ of Proposition~\ref{prop:growing-subsets}, then Corollary~\ref{cor:shuffling-sg-with-iid} gives the result, with these choices of $\psi$ and $\nu$. 
\end{proof}

\begin{cor}[Final coupling]\label{cor:final-coupling-subtrees}
	In the above setting, if one sets for $n\geq1$,
	\begin{align}\label{eq:expr-final-coupling}
	\subT'_n=\embed\bigl(\boldsigma_n\cdot\T_n,(\boldsigma_n)\pf\boldS_n\bigr),
	\end{align}
	then the sequence $(\subT'_n)_{n\geq1}$ is a coupling of the distributions $(\Sub{\wtheta}{n})_{n\geq 1}$.
\end{cor}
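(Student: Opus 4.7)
The plan is very short because almost all of the work has been set up by the preceding propositions and lemmas. The strategy is to reduce the statement to Proposition~\ref{prop:naive-coupling} (which already produces a coupling of the desired marginals via the naive construction $\subT_n = \embed(\T_n, \boldS_n)$) by showing that each $\subT'_n$ has the same distribution as $\subT_n$. This is precisely the content of Lemma~\ref{lem:identity-distrib-after-shuffling}, pushed through the deterministic map $\embed$.

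In more detail, I would proceed as follows. First, fix $n \geq 1$. I need to check that the pair $(\boldsigma_n \cdot \T_n, (\boldsigma_n)_* \boldS_n)$ lies in $\treesgr[\Parts]$, so that $\embed$ can be evaluated at it. By Remark~\ref{rem:from-g-to-sigma}, the set $\boldsigma_n \cdot \T_n$ is a plane tree. For grading-compatibility, pick any $\u' \in \boldsigma_n \cdot \T_n$ and write $\u = \boldsigma_n^{-1} \cdot \u' \in \T_n$; then the $\u'$-th coordinate of $(\boldsigma_n)_* \boldS_n$ is $S_{k_\u(\T_n),\u}$, a set of cardinality $k_\u(\T_n)$, which equals $k_{\u'}(\boldsigma_n \cdot \T_n)$ by Lemma~\ref{lem:children-after-shuffling}. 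Hence $\bigl(\boldsigma_n \cdot \T_n,\, (\boldsigma_n)_* \boldS_n\bigr) \in \treesgr[\Parts]$.

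Next, by Lemma~\ref{lem:identity-distrib-after-shuffling}, the pairs $(\boldsigma_n \cdot \T_n,\, (\boldsigma_n)_* \boldS_n)$ and $(\T_n, \boldS_n)$ have the same joint distribution on $\treesgr[\Parts]$. Applying the (deterministic) mapping $\embed \colon \treesgr[\Parts] \to \subtrees$ to both sides preserves the equality in distribution, so
\[
\subT'_n = \embed\bigl(\boldsigma_n \cdot \T_n,\, (\boldsigma_n)_* \boldS_n\bigr) \stackrel{d}{=} \embed(\T_n, \boldS_n) = \subT_n.
\]
Finally, by Proposition~\ref{prop:naive-coupling}, $\subT_n$ has distribution $\Sub{\wtheta}{n}$, and therefore so does $\subT'_n$. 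Since this holds for every $n\geq 1$, the sequence $(\subT'_n)_{n\geq 1}$ is a coupling of the distributions $(\Sub{\wtheta}{n})_{n\geq 1}$.

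There is no real obstacle here; the content of the corollary is essentially bookkeeping, and the non-trivial input is exactly Lemma~\ref{lem:identity-distrib-after-shuffling}, whose proof in turn relies on the shuffling machinery developed in Appendix~\ref{app:appendix-shuffling}. The point of stating this as a separate corollary is to set up $(\subT'_n)_{n\geq 1}$ as a new, ``shuffled'' coupling whose monotonicity $\subT'_1 \subset \subT'_2 \subset \cdots$ can then be analyzed in subsequent arguments; the corollary itself only records the (easy) fact that the marginals are correct.
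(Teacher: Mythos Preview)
Your proof is correct and follows essentially the same route as the paper: invoke Lemma~\ref{lem:identity-distrib-after-shuffling} to get equality in distribution of the shuffled and unshuffled pairs, push this through the deterministic map $\embed$, and conclude via Proposition~\ref{prop:naive-coupling}. Your additional check that $(\boldsigma_n\cdot\T_n,(\boldsigma_n)_*\boldS_n)\in\treesgr[\Parts]$ is a nice bit of care that the paper leaves implicit.
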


\begin{proof}
	Let $n\geq1$.
	By Lemma~\ref{lem:identity-distrib-after-shuffling}, the pairs $(\boldsigma_n\cdot\T_n,(\boldsigma_n)\pf\boldS_n)$ and $(\T_n,\boldS_n)$ have the same distribution, so that $\subT'_n=\embed(\boldsigma_n\cdot\T_n,(\boldsigma_n)\pf\boldS_n)$ has the same distribution as $\subT_n=\embed\bigl(\T_n,\boldS_n\bigr)$, which is $\Sub{\wtheta}{n}$ by Proposition~\ref{prop:naive-coupling}.
\end{proof}

\subsection{Proof of Theorem~\ref{thm:growing-subtrees}}

We first prove two deterministic lemmas.
Let $(x_\u)_{\u\in\U}$ be an arbitrary \textit{fixed} collection of elements of $\bbXseq$, where $x_\u=(x_{1,\u},x_{2,\u},\dots,x_{N_\wtheta,\u})$ for every $\u\in\U$.
For every plane tree $\pt\in\trees$, we set:
\begin{align}\label{eq:notation-before-lem-expr-shuffling}
\boldS_{\pt}=(S_{k_\u(\pt),\u})_{\u\in\pt},\qquad
\boldsigma_{\pt}=(\sigma_{k_\u(\pt),x_\u})_{\u\in\pt},\qquad
\boldg_{\pt}=(g_{k_\u(\pt),x_\u})_{\u\in\pt},
\end{align}
where the notation $g_{k,x}$ and $\sigma_{k,x}$ are defined in \eqref{eq:choice-sigma-of-x}, and where we write $S_{k,\u}=\{x_{1,\u},\dots,x_{k,\u}\}$ for $\u\in\pt$ and $0\leq k\leq N_\wtheta$.

\begin{lemma}\label{lem:expr-after-shuffling}
	Let $\pt\in\trees$.
	For every $\u\in\pt$, we have
	\begin{align*}
	\overline{p_{(\boldsigma_{\pt})\pf\boldS_{\pt}}}\cdot\bigl(\boldsigma_{\pt}\cdot\u\bigr)
		=\boldg_{\pt}\cdot\u.
	\end{align*}
\end{lemma}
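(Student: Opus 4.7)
The plan is to unfold all definitions and compute the action coordinate by coordinate along the ancestral line of $\boldsigma_{\pt}\cdot\u$, leveraging the factorisation $\sigma_{k,x}=p_{\{x_1,\dots,x_k\}}\circ g_{k,x}$ built into~\eqref{eq:choice-sigma-of-x} so that the $p_S$'s cancel with their inverses. For readability, write $\u=(u_1,\dots,u_h)$ with ancestral line $(\u_0,\u_1,\dots,\u_h)$, and for $\v\in\pt$ abbreviate $\sigma_\v=\sigma_{k_\v(\pt),x_\v}$, $g_\v=g_{k_\v(\pt),x_\v}$ and $S_\v=S_{k_\v(\pt),\v}$, so that by~\eqref{eq:choice-sigma-of-x} we have $\sigma_\v=p_{S_\v}\circ g_\v$. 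It follows directly from~\eqref{eq:def-action-g} that the ancestral line of $\v:=\boldsigma_\pt\cdot\u$ is $(\v_0,\dots,\v_h)$ with $\v_i=\boldsigma_\pt\cdot\u_i$ and $v_{i+1}=\sigma_{\u_i}(u_{i+1})$ for $0\leq i<h$.

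The main step is to identify the $\v_i$-th component of $p_{(\boldsigma_\pt)_*\boldS_\pt}$. By Lemma~\ref{lem:commutation-pushforward-g-pointwise-eval} applied to the pointwise mapping $S\mapsto p_S$, we have $p_{(\boldsigma_\pt)_*\boldS_\pt}=(\boldsigma_\pt)_*(p_{\boldS_\pt})$, and then the definition~\eqref{eq:def-push-forward} of the pushforward gives that its $\v_i$-th component equals $p_{S_{\boldsigma_\pt^{-1}\cdot\v_i}}=p_{S_{\u_i}}$. Taking the inverse, the $\v_i$-th component of $\overline{p_{(\boldsigma_\pt)_*\boldS_\pt}}$ equals $p_{S_{\u_i}}^{-1}$.

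Plugging this into~\eqref{eq:def-action-g} to describe the action of $\overline{p_{(\boldsigma_\pt)_*\boldS_\pt}}$ on $\v$, the $(i+1)$-th coordinate of $\overline{p_{(\boldsigma_\pt)_*\boldS_\pt}}\cdot\v$ is
\[
p_{S_{\u_i}}^{-1}(v_{i+1})=p_{S_{\u_i}}^{-1}\bigl(\sigma_{\u_i}(u_{i+1})\bigr)=p_{S_{\u_i}}^{-1}\bigl(p_{S_{\u_i}}(g_{\u_i}(u_{i+1}))\bigr)=g_{\u_i}(u_{i+1}),
\]
which by~\eqref{eq:def-action-g} is exactly the $(i+1)$-th coordinate of $\boldg_\pt\cdot\u$. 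Doing this for every $0\leq i<h$ yields the claimed identity. The only real subtlety is the bookkeeping linking vertices of $\pt$ to their images in $\boldsigma_\pt\cdot\pt$ under the pushforward of decorations, but Lemma~\ref{lem:commutation-pushforward-g-pointwise-eval} handles this cleanly so no genuine obstacle arises.
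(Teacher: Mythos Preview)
Your proof is correct and follows essentially the same approach as the paper: unfold the definitions, compute coordinate by coordinate along the ancestral line, and use the factorisation $\sigma_\v=p_{S_\v}\circ g_\v$ so that $p_{S_{\u_i}}^{-1}\circ\sigma_{\u_i}=g_{\u_i}$. The only cosmetic difference is that you invoke Lemma~\ref{lem:commutation-pushforward-g-pointwise-eval} to identify the $\v_i$-th component of $p_{(\boldsigma_\pt)_*\boldS_\pt}$, whereas the paper reads this off directly from the definition~\eqref{eq:def-push-forward} of the push-forward.
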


\begin{proof}
	In order to lighten notation, let us write $S_\u=S_{k_\u(\pt),\u}$, and $\sigma_\u=\sigma_{k_\u(\pt),x_\u}$, as well as $g_\u=g_{k_\u(\pt),x_\u}$, for every $\u\in\pt$.
	Notice that by \eqref{eq:choice-sigma-of-x}, we have 
	\begin{align}\label{eq:proof-expr-after-shuffling-0}
	g_\u=p^{-1}_{S_\u}\circ\sigma_\u,\qquad\u\in\pt.
	\end{align}
	For every $\u'=(u'_1,\dots,u'_{h})\in\boldsigma_{\pt}\cdot\pt$ with ancestral line $(\u'_\ell)_{0\leq\ell\leq h}$, we have by definition:
	\begin{align}\label{eq:proof-expr-after-shuffling-1}
	\overline{p_{(\boldsigma_{\pt})\pf\boldS_{\pt}}}\cdot\u'
		=	\Bigl(
				p^{-1}_{S_{\boldsigma_{\pt}^{-1}\cdot\u'_0}}(u'_1),
				p^{-1}_{S_{\boldsigma_{\pt}^{-1}\cdot\u'_1}}(u'_2),
				\dots,
				p^{-1}_{S_{\boldsigma_{\pt}^{-1}\cdot\u'_{h-1}}}(u'_{h})
			\Bigr).
	\end{align}
	Let $\u=(u_1,\dots,u_h)\in\pt$, with ancestral line $(\u_\ell)_{0\leq\ell\leq h}$.
	We have by definition:
	\begin{align}\label{eq:proof-expr-after-shuffling-2}
	\boldsigma_{\pt}\cdot\u
		=\Bigl(
			\sigma_{\u_0}(u_1),
			\sigma_{\u_1}(u_2),
			\dots,
			\sigma_{\u_{h-1}}(u_h)
		\Bigr).
	\end{align}
	If we combine \eqref{eq:proof-expr-after-shuffling-1} and \eqref{eq:proof-expr-after-shuffling-2} with $\u'=\boldsigma_{\pt}\cdot\u$, whose ancestral line in $\boldsigma_{\pt}\cdot\pt$ is given by $\u'_\ell=\boldsigma_{\pt}\cdot\u_\ell$, $0\leq\ell\leq h$, we obtain:
	\begin{align*}
	\overline{p_{(\boldsigma_{\pt})\pf\boldS_{\pt}}}\cdot(\boldsigma_{\pt}\cdot\u)
		&=	\Bigl(
				p^{-1}_{S_{\u_0}}\bigl(\sigma_{\u_0}(u_1)\bigr),\,
				p^{-1}_{S_{\u_1}}\bigl(\sigma_{\u_0}(u_2)\bigr),
				\dots,\,
				p^{-1}_{S_{\u_{h-1}}}\bigl(\sigma_{\u_0}(u_{h})\bigr)
			\Bigr)\\
		&=	\Bigl(
				g_{\u_0}(u_1),\,
				g_{\u_1}(u_2),
				\dots,\,
				g_{\u_{h-1}}(u_{h})
			\Bigr)\\
		&=	\boldg_{\pt}\cdot\u,
	\end{align*}
	where the second equality uses \eqref{eq:proof-expr-after-shuffling-0}.
	This concludes the proof.
\end{proof}

\begin{lemma}\label{lem:increase-after-shuffling}
	If ${\pt}$ and ${\widetilde\pt}$ be two plane trees such that ${\pt}\subseteq{\widetilde\pt}$, then we have
	\begin{align*}
	\embed
		\Bigl(
			\boldsigma_{{\pt}}\cdot{\pt},
			(\boldsigma_{{\pt}})\pf\boldS_{{\pt}}
		\Bigr)
	\subseteq
	\embed
		\Bigl(
			\boldsigma_{{\widetilde\pt}}\cdot{\widetilde\pt},
			(\boldsigma_{{\widetilde\pt}})\pf\boldS_{{\widetilde\pt}}
		\Bigr).
	\end{align*}
\end{lemma}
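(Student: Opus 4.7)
The plan is to reduce the shuffled-embedding expressions $\embed(\boldsigma_\pt\cdot\pt,(\boldsigma_\pt)\pf\boldS_\pt)$ and $\embed(\boldsigma_{\widetilde\pt}\cdot\widetilde\pt,(\boldsigma_{\widetilde\pt})\pf\boldS_{\widetilde\pt})$ to the simpler forms $\boldg_\pt\cdot\pt$ and $\boldg_{\widetilde\pt}\cdot\widetilde\pt$ respectively, and then to exploit the fact that $\boldg_\pt\cdot\u$ actually does not depend on $\pt$, only on $\u$ and on the fixed collection $(x_\v)_{\v\in\U}$.

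First, I would rewrite both sides using Proposition~\ref{prop:bijection-push}, which gives $\embed(\pt',\boldS')=\overline{p_{\boldS'}}\cdot\pt'$. Applied to $(\pt',\boldS')=(\boldsigma_\pt\cdot\pt,(\boldsigma_\pt)\pf\boldS_\pt)$, and then applying Lemma~\ref{lem:expr-after-shuffling} pointwise to each $\u\in\pt$, I obtain
\begin{align*}
\embed\bigl(\boldsigma_\pt\cdot\pt,(\boldsigma_\pt)\pf\boldS_\pt\bigr)
=\overline{p_{(\boldsigma_\pt)\pf\boldS_\pt}}\cdot(\boldsigma_\pt\cdot\pt)
=\bigl\{\boldg_\pt\cdot\u\colon \u\in\pt\bigr\}
=\boldg_\pt\cdot\pt,
\end{align*}
and likewise $\embed(\boldsigma_{\widetilde\pt}\cdot\widetilde\pt,(\boldsigma_{\widetilde\pt})\pf\boldS_{\widetilde\pt})=\boldg_{\widetilde\pt}\cdot\widetilde\pt$. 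Hence it is enough to show that $\boldg_\pt\cdot\pt\subseteq\boldg_{\widetilde\pt}\cdot\widetilde\pt$ whenever $\pt\subseteq\widetilde\pt$.

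The key observation, which I expect to be the crux (though really it is elementary once written out), is that the value $\boldg_\pt\cdot\u$ for $\u=(u_1,\dots,u_h)\in\pt$ with ancestral line $(\u_0,\dots,\u_{h-1})$ is given by \eqref{eq:def-action-g} as the sequence of images $g_{k_{\u_{j-1}}(\pt),x_{\u_{j-1}}}(u_j)$ for $1\leq j\leq h$. By the defining formula \eqref{eq:choice-sigma-of-g}, however, $g_{k,x}(\ell)=x_\ell$ depends only on the input coordinate $\ell$ and on $x$, not on the parameter $k$ (as long as $\ell\leq k$, which is guaranteed by $u_j\leq k_{\u_{j-1}}(\pt)$). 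Therefore
\begin{align*}
\boldg_\pt\cdot\u=\bigl((x_{\u_0})_{u_1},(x_{\u_1})_{u_2},\dots,(x_{\u_{h-1}})_{u_h}\bigr),
\end{align*}
an expression that depends only on $\u$ and on the ambient collection $(x_\v)_{\v\in\U}$, and in particular not on the tree $\pt$ itself.

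Consequently, for any $\u\in\pt\subseteq\widetilde\pt$, we have $\boldg_\pt\cdot\u=\boldg_{\widetilde\pt}\cdot\u$, so
\begin{align*}
\boldg_\pt\cdot\pt=\bigl\{\boldg_\pt\cdot\u\colon\u\in\pt\bigr\}=\bigl\{\boldg_{\widetilde\pt}\cdot\u\colon\u\in\pt\bigr\}\subseteq\bigl\{\boldg_{\widetilde\pt}\cdot\v\colon\v\in\widetilde\pt\bigr\}=\boldg_{\widetilde\pt}\cdot\widetilde\pt,
\end{align*}
which is the desired inclusion. The only potential hiccup is the implicit well-definedness condition $k_\u(\pt)\leq N_\wtheta$ for all $\u\in\pt$ (so that $x_{\u}$ has enough coordinates for $g_{k_\u(\pt),x_\u}$ to make sense), but this is automatic in the regime of interest since any tree $\widetilde\pt$ carrying positive $\SimpGen{\e(\wtheta)}{n}$-weight has $e_{k_\u(\widetilde\pt)}(\wtheta)\neq0$, forcing $k_\u(\widetilde\pt)\leq N_\wtheta$ for every $\u\in\widetilde\pt$.
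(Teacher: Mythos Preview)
Your proof is correct and follows essentially the same approach as the paper: both use Lemma~\ref{lem:expr-after-shuffling} together with Proposition~\ref{prop:bijection-push} to reduce the claim to $\boldg_\pt\cdot\pt\subseteq\boldg_{\widetilde\pt}\cdot\widetilde\pt$, and then observe that $\boldg_\pt\cdot\u$ depends only on $\u$ and the ambient collection $(x_\v)_{\v\in\U}$, not on $\pt$. The only cosmetic difference is the order of presentation (you invoke Lemma~\ref{lem:expr-after-shuffling} first, the paper last), and your closing remark on the well-definedness condition $k_\u(\pt)\leq N_\wtheta$ is an additional observation not present in the paper's deterministic argument.
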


\begin{proof}
	By \eqref{eq:notation-before-lem-expr-shuffling}, we have $\boldg_{\pt}=(g_{k_\u(\pt),x_\u})_{\u\in\pt}$ for every $\pt\in\trees$.
	Thanks to how we defined $g_{k,x}$ in \eqref{eq:choice-sigma-of-g}, we have for every $\pt\in\trees$ and every $\u\in\pt$ with ancestral line $(\u_\ell)_{0\leq\ell\leq h}$ the expression:
	\begin{align*}
	\boldg_{\pt}\cdot\u
		=\bigl(x_{\u_0}(u_1),x_{\u_1}(u_2),\dots,x_{\u_{h-1}}(u_h)\bigr),
	\end{align*}
	where for clarity we have written $x_\u(k)$ as a substitute for the notation $x_{k,\u}$.
	Note that the right-hand side does not depend on $\pt$.
	In particular, given two plane trees ${\pt}$ and ${\widetilde\pt}\in\trees$ such that ${\pt}\subseteq{\widetilde\pt}$, we have $\boldg_{{\pt}}\cdot\u=\boldg_{{\widetilde\pt}}\cdot\u$ for every $\u\in{\pt}$.
	Since by assumption we have $\smash{{\pt}\subseteq{\widetilde\pt}}$, we get:
	\begin{align*}
	\boldg_{{\pt}}\cdot{\pt}=\boldg_{{\widetilde\pt}}\cdot{\pt}\subseteq\boldg_{{\widetilde\pt}}\cdot{\widetilde\pt}.
	\end{align*}
	Using Lemma~\ref{lem:expr-after-shuffling} and the expression $\embed\colon(\pt,\boldS)\mapsto\overline{p_\boldS}\cdot\pt$ of Proposition~\ref{prop:bijection-push}, this means that
	\begin{align*}
	\embed
	\Bigl(
	\boldsigma_{{\pt}}\cdot{\pt},
	(\boldsigma_{{\pt}})\pf\boldS_{{\pt}}
	\Bigr)
	\subseteq
	\embed
	\Bigl(
	\boldsigma_{{\widetilde\pt}}\cdot{\widetilde\pt},
	(\boldsigma_{{\widetilde\pt}})\pf\boldS_{{\widetilde\pt}}
	\Bigr),
	\end{align*}
	which concludes the proof.
\end{proof}

We are now equipped to prove Theorem~\ref{thm:growing-subtrees}.
It is sufficient to verify that the coupling $(\subT'_n)_{n\geq1}$ of the distributions $(\Sub{\wtheta}{n})_{n\geq 1}$ given by Corollary~\ref{cor:final-coupling-subtrees} is indeed increasing.
The Markov property we refer to in Theorem~\ref{thm:growing-subtrees} can then be enforced by successively resampling $\subT'_{n+1}$ conditionally on $\subT'_n$ for $n=1,2,\dots$.

We have to verify that $\subT'_n\subset\subT'_{n+1}$ for all $n\geq1$, where $(\subT'_n)_{n\geq1}$ are given by the expression \eqref{eq:expr-final-coupling}, that is:
\begin{align*}
\subT'_n=\embed\bigl(\boldsigma_n\cdot\T_n,(\boldsigma_n)\pf\boldS_n\bigr),\qquad n\geq1.
\end{align*}
If we reason conditionally on the collection $(X_\u)_{\u\in\U}$, thus treating it as a deterministic collection $(x_\u)_{\u\in\U}$, then with the notation~\eqref{eq:notation-before-lem-expr-shuffling} we have $\boldS_n=\boldS_{\T_n}$, and $\boldsigma_n=\boldsigma_{\T_n}$. 
In particular, Lemma~\ref{lem:increase-after-shuffling} gives that $\subT'_n\subset\subT'_{n+1}$ for every $n\geq1$, as needed.
This concludes the proof of Theorem~\ref{thm:growing-subtrees}.\qed

%-------------------------------------------------------------

\appendix
\section{Decoration-dependent shufflings of decorated plane trees}
\label{app:appendix-shuffling}

Given a probability space $(\bbX,\mathcal X,\nu)$ and some non-negative sequence $\w=(w_0,w_1,\dots)$ with $w_0w_1>0$, the probability distribution $\SimpGen{\w}{n}(\mathrm{d}\pt)\cdot\nu^{\otimes\pt}(\mathrm{d}\x)$ is morally left invariant by ``shuffling'' the subtrees of descendants at every vertex.
Such a symmetry has been used for instance by Addario-Berry and Albenque in \cite{Addario-BerryAlbenque21}, where the shuffling operation is performed uniformly at random.
The main goal of this section is to arrive at Corollary~\ref{cor:shuffling-sg-with-iid}, which states that the measure $\SimpGen{\w}{n}(\mathrm{d}\pt)\cdot\nu^{\otimes\pt}(\mathrm{d}\x)$ is left invariant by some more general ``shuffling operations'' which are allowed to depend on the tree and its decorations.

As in Section~\ref{subsec:coupling-using-shuffling}, for $\pt\in\trees$, we denote by $\perm_\pt$ the set of collections $\boldsigma=(\sigma_\u)$, where $\sigma_\u$ is a permutation belonging to $\perm_{k_\u(\pt)}$ for every $\u\in\pt$.
Also, as in Sections~\ref{subsec:moving-subtrees} and~\ref{subsec:coupling-using-shuffling}, for every $\u=(u_1,\dots,u_h)\in \pt$ with ancestral line $\u_\ell=(u_1,\dots,u_\ell)$, ${0\leq i\leq h}$, we let
\begin{align*}
\boldsigma\cdot\u
=\bigl(\sigma_\emptyset(u_1),\sigma_{\u_1}(u_2),\dots,\sigma_{\u_{h-1}}(u_h)\bigr),
\end{align*}
and $\boldsigma\cdot\pt=\{\boldsigma\cdot\u\colon\u\in\pt\}$.
We have proven in Section~\ref{sec:application-random-subtrees}, see Lemma~\ref{lem:inverse-g} and Remark~\ref{rem:from-g-to-sigma}, that there exists $\boldsigma^{-1}\in\perm_{\boldsigma\cdot\pt}$ such that $\boldsigma^{-1}\cdot(\boldsigma\cdot\u)=\u$ for all $\u\in \t$, and $\boldsigma\cdot(\boldsigma^{-1}\cdot\u')=\u'$ for all $\u'\in\boldsigma\cdot\t$.
Given a set $\bbX$, this allows to define a push-forward mapping $\boldsigma\pf\colon\bbX^\pt\to\bbX^{\boldsigma\cdot\pt}$ by setting $\boldsigma\pf\x=(x_{\boldsigma^{-1}\cdot\u'})_{\u'\in\boldsigma\cdot\pt}$.
Other related properties have been established in Section~\ref{subsec:moving-subtrees}, which we will refer to when necessary.

\mypar{Shuffling rules}
Let $\bbX$ be a fixed set.
We consider the set of \textit{pointed} $\bbX$-decorated plane trees:
\begin{align*}
\trees^{\bullet}[\bbX]
=\{(\pt,\x,\u)\colon \pt\in\trees,\,\x\in\bbX^T,\,\u\in\pt\}.
\end{align*}

\begin{defin}
	An $\bbX$-\textit{shuffling rule} is a mapping $\psi\colon\trees^{\bullet}[\bbX]\to\perm$ such that for every $(\pt,\x,\u)\in\trees^{\bullet}[\bbX]$, its image $\psi(\pt,\x,\u)$ is an element of $\perm_{k_\u(\pt)}$.
	Given an $\bbX$-shuffling rule, for every  $\pt\in\trees$ and every $\x\in\bbX^T$, we set:
	\begin{align*}
	\boldsigma^\psi_{\pt,\x}=\bigl(\psi(\pt,\x,\u)\bigr)_{\u\in\pt},
	\end{align*}
	which defines an element of $\perm_\pt$ 
\end{defin}

Without additional conditions on $\psi$, there is no guarantee that the mapping $\pt\mapsto (\boldsigma^\psi_{\pt,\x})\cdot\pt$ is even bijective.
For instance, we can choose $\psi$ so that the action of $\boldsigma^\psi_{\pt,\x}$ on the tree $\pt$ is to switch the positions of the leftmost, resp.~the biggest, subtree of descendants of the root.
We then cannot ``go back'' since the information of where the biggest subtree was is lost.
We therefore introduce a natural condition on $\bbX$-shuffling rules, as follows.

\begin{defin}
	Let $\psi$ be an $\bbX$-shuffling rule.
	We say that $\psi$ is \textit{$\perm$-equivariant}%
	\footnote{
		The motivation for this name is the equivalence \eqref{eq:conseq-perm-invariance-psi}, by analogy with the notion of \textit{equivariance} of some mapping with respect to some group action, which means commutation of that mapping with the group action.
	}
	if for every $(\pt,\x,\u)\in\trees^{\bullet}[\bbX]$ and every $\boldsigma\in\perm_T$ we have:
	\begin{align*}
	\psi(\pt,\x,\u)=\psi\bigl(\boldsigma\cdot\pt,\,\boldsigma\pf\x,\,\boldsigma\cdot\u\bigr).
	\end{align*}
\end{defin}
Observe that given $\psi$ an $\bbX$-shuffling rule, we have the equivalence:
\begin{align}\label{eq:conseq-perm-invariance-psi}
\text{$\psi$ is $\perm$-equivariant}
\qquad\iff\qquad
\forall\boldpi\in\perm_\pt,\quad
\boldsigma^\psi_{\boldpi\cdot\pt,\boldpi_*\x}=\boldpi \pf\bigl(\boldsigma^\psi_{\pt,\x}\bigr).
\end{align}
The following lemma elucidates the structure of the ``reverse'' shuffling operation.

\begin{lemma}[Unshuffling lemma]\label{lem:unshuffling-lemma}
	Let $\bbX$ be a set and let $\psi$ be an $\bbX$-shuffling rule.
	If $\psi$ is $\perm$-equivariant, then for all $\pt,\pt'\in\trees$, $\x\in\bbX^\pt$, $\x'\in\bbX^{\pt'}$, $\boldpi\in\perm_\pt$, we have the equivalence
	\begin{align}\label{eq:equivalence-unshuffling-lemma}
	\begin{cases}
	\pt'	= \bigl(\boldsigma^\psi_{\pt,\x}\bigr)\cdot\pt\\
	\x' = \bigl(\boldsigma^\psi_{\pt,\x}\bigr)\pf\x
	\end{cases}
	\quad\iff\quad
	\begin{cases}
	\pt	=	\bigl(\overline{\boldsigma}^\psi_{\pt',\x'}\bigr)\cdot\pt'\\
	\x = \bigl(\overline{\boldsigma}^\psi_{\pt',\x'}\bigr)\pf\x',
	\end{cases}
	\end{align}
	and if one of these equivalent conditions is satisfied then $\boldsigma^\psi_{\pt',\x'}=(\overline\boldsigma^\psi_{\pt,\x})^{-1}$.
\end{lemma}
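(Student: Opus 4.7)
The plan is to exploit the $\perm$-equivariance of $\psi$, in the form~\eqref{eq:conseq-perm-invariance-psi}, combined with the algebraic identities for the push-forward operation established in Section~\ref{subsec:moving-subtrees}. Throughout, I write $\boldsigma = \boldsigma^\psi_{\pt,\x}$ for brevity.

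For the forward direction of~\eqref{eq:equivalence-unshuffling-lemma}, the starting point will be the hypothesis $\pt' = \boldsigma \cdot \pt$ and $\x' = \boldsigma\pf\x$. Applying~\eqref{eq:conseq-perm-invariance-psi} with $\boldpi = \boldsigma$ gives directly
\begin{align*}
\boldsigma^\psi_{\pt',\x'} \;=\; \boldsigma\pf\boldsigma.
\end{align*}
Taking the pointwise inverse of both sides, then invoking Lemma~\ref{lem:commutation-bar-pushforward-g} followed by Lemma~\ref{lem:other-expression-inverse-g}, produces the key identity
\begin{align*}
\overline{\boldsigma^\psi_{\pt',\x'}} \;=\; \overline{\boldsigma\pf\boldsigma} \;=\; \boldsigma\pf\overline{\boldsigma} \;=\; \boldsigma^{-1}.
\end{align*}
From here, Lemma~\ref{lem:inverse-g} yields $\overline{\boldsigma^\psi_{\pt',\x'}}\cdot\pt' = \boldsigma^{-1}\cdot(\boldsigma\cdot\pt) = \pt$, and Lemma~\ref{lem:inverse-pushforward-g} gives $(\overline{\boldsigma^\psi_{\pt',\x'}})\pf\x' = (\boldsigma^{-1})\pf(\boldsigma\pf\x) = \x$, which is precisely the right-hand system of~\eqref{eq:equivalence-unshuffling-lemma}. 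The final assertion $\boldsigma^\psi_{\pt',\x'} = (\overline\boldsigma^\psi_{\pt,\x})^{-1}$ will then be read off from the same chain of equalities.

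For the reverse implication, the symmetry of the setup lets one re-run the forward argument after interchanging $(\pt,\x)$ with $(\pt',\x')$: one uses $\overline{\boldsigma^\psi_{\pt',\x'}} \in \perm_{\pt'}$ in place of $\boldsigma$, and invokes equivariance~\eqref{eq:conseq-perm-invariance-psi} once more. The identities from Section~\ref{subsec:moving-subtrees}, in particular the involutive nature of the operations $\boldsigma\mapsto\boldsigma^{-1}$ and $\x\mapsto\boldsigma\pf\x$ furnished by Lemmas~\ref{lem:inverse-g} and~\ref{lem:inverse-pushforward-g}, will ensure that the argument closes up without leaving any residual discrepancy.

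The main obstacle to anticipate is purely one of careful bookkeeping: several collections of permutations are floating around, living respectively in $\perm_\pt$, $\perm_{\pt'}$, and $\perm_{\overline{\boldsigma}\cdot\pt}$, and one must track which set each object belongs to while ensuring that equivariance is invoked only where it is meaningful. Once this is handled cleanly, the proof reduces to a transparent sequence of applications of the lemmas of Section~\ref{subsec:moving-subtrees}, with equivariance as the sole new ingredient.
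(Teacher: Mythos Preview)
Your proposal is correct and follows essentially the same route as the paper: apply equivariance~\eqref{eq:conseq-perm-invariance-psi} with $\boldpi=\boldsigma^\psi_{\pt,\x}$ to get $\boldsigma^\psi_{\pt',\x'}=\boldsigma\pf\boldsigma$, then combine Lemmas~\ref{lem:commutation-bar-pushforward-g} and~\ref{lem:other-expression-inverse-g} to identify $\overline{\boldsigma^\psi_{\pt',\x'}}$ with $\boldsigma^{-1}$, and conclude via Lemmas~\ref{lem:inverse-g} and~\ref{lem:inverse-pushforward-g}; the paper likewise dispatches the converse by symmetry.
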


\begin{proof}
	Suppose that the left-hand side of \eqref{eq:equivalence-unshuffling-lemma} holds.
	Since $\psi$  is $\perm$-equivariant, if we let $\boldpi=\boldsigma^\psi_{\pt,\x}$, then by \eqref{eq:conseq-perm-invariance-psi}, we have $\boldsigma^\psi_{\boldpi\cdot\pt,\boldpi_*\x}=\boldpi \pf\bigl(\boldsigma^\psi_{\pt,\x}\bigr)$, that is,
	\begin{align*}
	\boldsigma^\psi_{\pt',\x'}=\bigl(\boldsigma^\psi_{\pt,\x}\bigr) \pf\bigl(\boldsigma^\psi_{\pt,\x}\bigr).
	\end{align*}
	By Lemma~\ref{lem:commutation-bar-pushforward-g}, this yields that
	\begin{align*}
	\overline\boldsigma^\psi_{\pt',\x'}=\bigl(\boldsigma^\psi_{\pt,\x}\bigr) \pf\bigl(\overline\boldsigma^\psi_{\pt,\x}\bigr).
	\end{align*}
	By Lemma~\ref{lem:other-expression-inverse-g}, this means that $\overline\boldsigma^\psi_{\pt',\x'}=\bigl(\boldsigma^\psi_{\pt,\x}\bigr)^{-1}$, which gives $\pt=\bigl(\overline\boldsigma^\psi_{\pt',\x'}\bigr)\cdot\pt'$ and $\x=\bigl(\overline\boldsigma^\psi_{\pt',\x'}\bigr)\pf\x'$ using
	Lemmas~\ref{lem:inverse-g} and~\ref{lem:inverse-pushforward-g}.
	Hence the left-hand side implies the right-hand side in \eqref{eq:equivalence-unshuffling-lemma}.
	The converse implication is proven similarly.
	We have already justified above that $\boldsigma^\psi_{\pt',\x'}=(\overline\boldsigma^\psi_{\pt,\x})^{-1}$ when one of the two equivalent conditions is satisfied.
\end{proof}

For purely technical reasons, we will need to rephrase Lemma~\ref{lem:unshuffling-lemma} as follows.

\begin{cor}\label{cor:unshuffling-cor}
	Let $\bbX$ be a set and let $\psi$ be an $\bbX$-shuffling rule.
	If $\psi$ is $\perm$-equivariant, then for every $\pt,\pt'\in\trees$, $\x\in\bbX^\pt$, and $\boldpi\in\perm_{\pt'}$, we have
	\begin{align}\label{eq:equi-cor-unshuffling-lemma}
	\begin{cases}
	\boldsigma^\psi_{\pt,\x}\cdot\pt=\pt',\\
	\boldsigma^\psi_{\pt,\x}=\boldpi^{-1}
	\end{cases}
	\qquad\iff\qquad
	\begin{cases}
	\pt=\boldpi\cdot\pt',\\
	\boldsigma^\psi_{\pt',\boldpi^{-1}_*\x}=\overline\boldpi.
	\end{cases}
	\end{align}
\end{cor}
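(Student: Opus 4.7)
The plan is to derive both implications of the corollary directly from the characterization of $\perm$-equivariance in~\eqref{eq:conseq-perm-invariance-psi}, namely that $\boldsigma^\psi_{\boldpi_1\cdot\pt,\boldpi_1\pf\x}=\boldpi_1\pf\boldsigma^\psi_{\pt,\x}$ for any $\boldpi_1\in\perm_\pt$. The only nontrivial algebraic inputs beyond this are the identities $\boldpi\pf\overline\boldpi=\boldpi^{-1}$ (Lemma~\ref{lem:other-expression-inverse-g}) and $\boldg\pf(\boldg^{-1}\pf\y)=\y$ (Lemma~\ref{lem:inverse-pushforward-g}), from which one easily derives the dual relation $\boldpi^{-1}\pf\boldpi^{-1}=\overline\boldpi$ by applying $\boldpi^{-1}\pf$ to both sides of the former.

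For the forward direction I would assume $\boldsigma^\psi_{\pt,\x}\cdot\pt=\pt'$ and $\boldsigma^\psi_{\pt,\x}=\boldpi^{-1}$. Since $\boldsigma^\psi_{\pt,\x}\in\perm_\pt$ must coincide with $\boldpi^{-1}\in\perm_{\boldpi\cdot\pt'}$, the identity $\pt=\boldpi\cdot\pt'$ is forced. I would then apply~\eqref{eq:conseq-perm-invariance-psi} with $\boldpi_1=\boldpi^{-1}\in\perm_\pt$ to obtain
\[
\boldsigma^\psi_{\pt',\,\boldpi^{-1}\pf\x}
=\boldpi^{-1}\pf\boldsigma^\psi_{\pt,\x}
=\boldpi^{-1}\pf\boldpi^{-1}
=\overline\boldpi,
\]
where the leftmost equality uses $\boldpi^{-1}\cdot\pt=\pt'$ and the rightmost uses the dual identity just mentioned.

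For the backward direction I would assume $\pt=\boldpi\cdot\pt'$ and $\boldsigma^\psi_{\pt',\boldpi^{-1}\pf\x}=\overline\boldpi$, and apply~\eqref{eq:conseq-perm-invariance-psi} this time at the pair $(\pt',\boldpi^{-1}\pf\x)$ with $\boldpi_1=\boldpi\in\perm_{\pt'}$. Using $\boldpi\cdot\pt'=\pt$ and $\boldpi\pf(\boldpi^{-1}\pf\x)=\x$ (Lemma~\ref{lem:inverse-pushforward-g}), this would give
\[
\boldsigma^\psi_{\pt,\x}
=\boldpi\pf\boldsigma^\psi_{\pt',\boldpi^{-1}\pf\x}
=\boldpi\pf\overline\boldpi
=\boldpi^{-1},
\]
after which $\boldsigma^\psi_{\pt,\x}\cdot\pt=\boldpi^{-1}\cdot(\boldpi\cdot\pt')=\pt'$ is immediate.

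The main bookkeeping challenge will be keeping careful track of which tree each collection of permutations is indexed over, since the operations $\overline{(\cdot)}$, $(\cdot)^{-1}$ and $\boldg\pf(\cdot)$ each affect the index set differently. Once the index sets are correctly aligned, the computations themselves reduce to a single line each, so no other obstacle is anticipated.
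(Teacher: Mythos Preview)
Your proof is correct. It differs from the paper's approach: the paper deduces the corollary from Lemma~\ref{lem:unshuffling-lemma} (the Unshuffling Lemma), applying that lemma with $\x'=\boldpi^{-1}\pf\x$ and then checking that the resulting three-line systems on each side reduce to the two-line systems of~\eqref{eq:equi-cor-unshuffling-lemma}. You instead bypass Lemma~\ref{lem:unshuffling-lemma} entirely and work directly from the equivariance identity~\eqref{eq:conseq-perm-invariance-psi} together with Lemmas~\ref{lem:other-expression-inverse-g} and~\ref{lem:inverse-pushforward-g}. In effect you are fusing the proof of Lemma~\ref{lem:unshuffling-lemma} with its application, which makes your argument for the corollary more self-contained, while the paper's route has the advantage of isolating the Unshuffling Lemma as a reusable statement. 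One small simplification: in the forward direction you can get $\pt=\boldpi\cdot\pt'$ more directly by substituting $\boldsigma^\psi_{\pt,\x}=\boldpi^{-1}$ into $\boldsigma^\psi_{\pt,\x}\cdot\pt=\pt'$ and applying $\boldpi$ to both sides, rather than reasoning via the index sets.
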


\begin{proof}
	Let $\pt,\pt'\in\trees$, $\x\in\bbX^\pt$, and $\boldpi\in\perm_{\pt'}$.
	Then, Lemma~\ref{lem:unshuffling-lemma} applied with $\x'=\pi^{-1}_*\x$ gives the equivalence:
	\begin{align*}
	\begin{cases}
	(\boldsigma^\psi_{\pt,\x})\cdot\pt=\pt',\\
	\bigl(\boldsigma^\psi_{\pt,\x}\bigr)\pf\x=\boldpi^{-1}_*\x\\
	\boldsigma^\psi_{\pt,\x}=\boldpi^{-1}
	\end{cases}
	\iff
	\begin{cases}
	\pt= (\overline\boldsigma^\psi_{\pt',\boldpi^{-1}_*\x})\cdot\pt',\\
	\x=\bigl(\overline\boldsigma^\psi_{\pt',\boldpi^{-1}_*\x}\bigr)\pf(\boldpi^{-1}_*\x)\\
	\boldsigma^\psi_{\pt',\boldpi^{-1}_*\x}=\overline\boldpi.
	\end{cases}
	\end{align*}
	But the left/right-hand sides of the last display are clearly equivalent to the left/right-hand sides of \eqref{eq:equi-cor-unshuffling-lemma} respectively.
	In particular, the last display expresses the same equivalence as \eqref{eq:equi-cor-unshuffling-lemma}.
\end{proof}

\mypar{Invariance under shuffling}
Given a measurable space $(\bbX,\Xcal)$, we turn $\trees{[\bbX]}$ into a measurable space by endowing it with the $\sigma$-algebra generated by the sets $\{\pt\}\times A$ for $\pt\in\trees$ and $A\in\Xcal^{\otimes \pt}$.
We similarly endow $\trees^\bullet{[\bbX]}$ with the $\sigma$-algebra generated by the sets $\{\pt\}\times A\times\{\u\}$ for $\pt\in\trees$, $A\in\Xcal^{\otimes \pt}$, and $\u\in\pt$.

\begin{defin}
	A collection $(\mfrak_\pt)_{\pt\in\trees}$ of finite measures on $\bbX^T$, $\pt\in\trees$, respectively will be called \textit{$\perm$-consistent} if the image measure with respect to the mapping $\x\mapsto\boldsigma\pf\x$ of the measure $\mfrak_\pt$ is the measure $\mfrak_{\boldsigma\cdot\pt}$, for every $\pt\in\trees$ and every $\boldsigma\in\perm_\pt$.
\end{defin}

This notion of $\perm$-consistency allows to generate invariant measures on the set of decorated trees, which are invariant with respect to our ``decoration-dependent'' shuffling operations.

\begin{prop}\label{prop:shuffling-general-m}
	Let $(\bbX,\mathcal X)$ be a measurable space.
	Let $\psi$ be a measurable $\bbX$-shuffling rule, and let $(\mfrak_\pt)_{\pt\in\trees}$ be a collection  of finite measures on $\bbX^T$ respectively.
	If $\psi$ is $\perm$-equivariant  and $(\mfrak_\pt)_{\pt\in\trees}$ is $\perm$-consistent, then the measure $\mfrak$ on $\trees{[\bbX]}$ given by
	\begin{align}\label{eq:form-m-prop-shuffling-inv}
	\mfrak(\diff\pt,\diff\x)=\sum_{\pt_0\in\trees}\delta_{\pt_0}(\diff\pt)\,\mfrak_{\pt_0}(\diff\x)
	\end{align}
	is invariant%
	\footnote{
		We say that a measure $m$ is invariant under $f$ if we have $f_\#\,m=m$.
	}
	under the mapping $F^\psi\colon(\pt,\x)\mapsto \bigl((\boldsigma^\psi_{\pt,\x})\cdot\pt,(\boldsigma^\psi_{\pt,\x})\pf\x\bigr)$.
\end{prop}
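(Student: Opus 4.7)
\emph{The plan is} to verify the measure-preservation relation $\mfrak((F^\psi)^{-1}(A)) = \mfrak(A)$ on sets $A$ of the form $\{\pt'\} \times B$ with $\pt' \in \trees$ and $B \in \Xcal^{\otimes \pt'}$, which generate the $\sigma$-algebra on $\trees{[\bbX]}$, and then to exploit the $\perm$-consistency of $(\mfrak_\pt)_\pt$ together with the bijectivity of $F^\psi$ furnished by Lemma~\ref{lem:unshuffling-lemma}.

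\emph{First I would} unfold the definition of $\mfrak$:
\begin{align*}
\mfrak\bigl((F^\psi)^{-1}(A)\bigr)
  = \sum_{\pt_0 \in \trees} \mfrak_{\pt_0}\Bigl(\bigl\{\x \in \bbX^{\pt_0} \colon \boldsigma^\psi_{\pt_0,\x}\cdot\pt_0 = \pt',\; (\boldsigma^\psi_{\pt_0,\x})\pf\x \in B\bigr\}\Bigr).
\end{align*}
For each $\pt_0$ I would partition the set appearing inside $\mfrak_{\pt_0}$ by the value $\boldsigma^\psi_{\pt_0,\x} = \boldpi$: set $E^{\pt_0}_\boldpi = \{\x \in \bbX^{\pt_0} : \boldsigma^\psi_{\pt_0,\x} = \boldpi\}$, which is measurable by the measurability of $\psi$. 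Only those $\boldpi \in \perm_{\pt_0}$ with $\boldpi\cdot\pt_0 = \pt'$ contribute, and on each such $E^{\pt_0}_\boldpi$ the map $F^\psi$ agrees with the coordinate-reordering bijection $\boldpi\pf \colon \bbX^{\pt_0} \to \bbX^{\boldpi\cdot\pt_0} = \bbX^{\pt'}$. The $\perm$-consistency of $(\mfrak_\pt)_\pt$ yields $(\boldpi\pf)_*\mfrak_{\pt_0} = \mfrak_{\pt'}$, so that for any measurable $C \subset \bbX^{\pt'}$ one has $\mfrak_{\pt_0}((\boldpi\pf)^{-1}(C)) = \mfrak_{\pt'}(C)$. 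Combining the above gives
\begin{align*}
\mfrak\bigl((F^\psi)^{-1}(A)\bigr)
  = \sum_{\substack{\pt_0 \in \trees,\;\boldpi\in\perm_{\pt_0}\\ \boldpi\cdot\pt_0 = \pt'}} \mfrak_{\pt'}\bigl(\boldpi\pf(E^{\pt_0}_\boldpi) \cap B\bigr).
\end{align*}

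\emph{The main step}, and what I expect to be the core of the argument, is to show that the sets $\boldpi\pf(E^{\pt_0}_\boldpi)$, indexed by pairs $(\pt_0,\boldpi)$ with $\boldpi\cdot\pt_0 = \pt'$, form a measurable partition of $\bbX^{\pt'}$---after which the displayed sum collapses to $\mfrak_{\pt'}(B) = \mfrak(A)$. This is where Lemma~\ref{lem:unshuffling-lemma} is crucial: it asserts that $F^\psi$ is a measurable bijection of $\trees{[\bbX]}$ with explicit inverse $(\pt',\x') \mapsto ((\overline\boldsigma^\psi_{\pt',\x'})\cdot\pt', (\overline\boldsigma^\psi_{\pt',\x'})\pf\x')$. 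Disjointness then follows because any $\y = \boldpi\pf\x = \boldpi'\pf\x'$ lying simultaneously in two sets of the family would satisfy $F^\psi(\pt_0,\x) = F^\psi(\pt_0',\x') = (\pt',\y)$, forcing $(\pt_0,\x) = (\pt_0',\x')$ and hence $\boldpi = \boldpi'$; covering follows because any $\y \in \bbX^{\pt'}$ equals $(\boldsigma^\psi_{\pt_0,\x})\pf\x$ where $(\pt_0,\x) = (F^\psi)^{-1}(\pt',\y)$. The main conceptual obstacle is precisely reconciling the decoration-dependence of the shuffling rule with a change-of-variables argument; the $\perm$-equivariance of $\psi$ is exactly what makes Lemma~\ref{lem:unshuffling-lemma}, and hence the partition argument, available.
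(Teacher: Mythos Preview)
Your argument is correct and follows essentially the same route as the paper: both decompose according to the value of the shuffling permutation, invoke the equivalence in Lemma~\ref{lem:unshuffling-lemma} (the paper via its rephrasing Corollary~\ref{cor:unshuffling-cor}) to identify the pieces, and then use $\perm$-consistency to transport each piece to a common measure $\mfrak_{\pt'}$. The only cosmetic differences are that the paper integrates test functions rather than measuring sets, and indexes by $\boldpi\in\perm_{\pt'}$ (with $\boldsigma^\psi_{\pt,\x}=\boldpi^{-1}$) rather than by pairs $(\pt_0,\boldpi)$ with $\boldpi\cdot\pt_0=\pt'$; these indexings are in bijection via $\boldpi\leftrightarrow\boldpi^{-1}$, and your partition $\{\boldpi\pf(E^{\pt_0}_\boldpi)\}$ of $\bbX^{\pt'}$ corresponds exactly to the paper's partition by the value of $\overline\boldsigma^\psi_{\pt',\x'}$.
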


\begin{proof}
	Let $\pt'\in\trees$ and let $\phi$ be a positive and measurable function on $\bbX^\pt$, and let $\Phi\colon\trees{[\bbX]}\to\R$ be the positive and measurable function $(\pt,\x)\mapsto\indic{\{\pt=\pt'\}}\phi(\x)$.
	Such functions suffice to characterize measures on $\trees{[\bbX]}$, so that we only need to check that $\int \Phi\circ F^\psi\diff \mfrak=\int \Phi\diff \mfrak$.
	Notice that
	\begin{align}\label{eq:proof-shuffling-general-m-1}
	\int_{\trees{[\bbX]}} \Phi\circ F^\psi\diff \mfrak
	=\int_{\trees{[\bbX]}}\mfrak(\diff\pt,\diff\x)\,
	\indic{\left\{
		\boldsigma^\psi_{\pt,\x}\cdot\pt=\pt'
		\right\}}\cdot
	\phi\bigl(	(\boldsigma^\psi_{\pt,\x})_*\x\bigr)
	=\sum_{\boldpi\in\perm_{\pt'}}I(\boldpi),
	\end{align}
	where for $\boldpi\in\perm_{\pt'}$, we denote
	\begin{align*}
	I(\boldpi)=
	\int_{\trees{[\bbX]}}\mfrak(\diff\pt,\diff\x)\,
	\indic{\left\{
		\boldsigma^\psi_{\pt,\x}\cdot\pt=\pt'
		\right\}}\cdot
	\indic{\left\{
		\boldsigma^\psi_{\pt,\x}=\boldpi^{-1}
		\right\}}\cdot
	\phi\bigl(	\boldpi^{-1}_*\x\bigr).
	\end{align*}
	Let $\boldpi\in\perm_{\pt'}$.
	By Corollary~\ref{cor:unshuffling-cor}, we have
	\begin{align*}
	I(\boldpi)
		&=
			\int_{\trees{[\bbX]}}\mfrak(\diff\pt,\diff\x)\,
			\indic{\left\{
				\pt=\boldpi\cdot\pt'			
				\right\}}\cdot
			\indic{\left\{
				\boldsigma^\psi_{\pt',\smash{\boldpi^{-1}_*\x}}=\overline\boldpi
				\right\}}\cdot
			\phi\bigl(	\boldpi^{-1}_*\x\bigr)\\
		&=	\int_{\bbX^{\boldpi\cdot\pt'}}\mfrak_{\boldpi\cdot\pt'}(\diff\x)\,
			\indic{\left\{
				\boldsigma^\psi_{\pt',\boldpi^{-1}_*\x}=\overline\boldpi
				\right\}}\cdot
			\phi\bigl(	\boldpi^{-1}_*\x\bigr),
	\end{align*}
	where the last equality uses that $\mfrak$ takes the form \eqref{eq:form-m-prop-shuffling-inv}.
	Since by $\perm$-consistency, the measure $\mfrak_{\pt'}$  is the push-forward with respect to $\x\mapsto\boldpi^{-1}_*\x$ of the measure $\mfrak_{\boldpi\cdot\pt'}$, we obtain
	\begin{align*}
	I(\boldpi)
	=
	\int_{\bbX^{\pt'}}\mfrak_{\pt'}(\diff\x')\,
	\indic{\left\{
		\boldsigma^\psi_{\pt',\x'}=\overline\boldpi
		\right\}}\cdot
	\phi\bigl(	\x'\bigr).
	\end{align*}
	By summing over all $\boldpi\in\perm_{\pt'}$ and using \eqref{eq:proof-shuffling-general-m-1}, we get
	\begin{align*}
	\int_{\trees{[\bbX]}} \Phi\circ F^\psi\diff \mfrak
	&= \sum_{\boldpi\in\perm_{\pt'}}\,
	\int_{\bbX^{\pt'}}\mfrak_{\pt'}(\diff\x')\,
	\indic{\{
		\boldsigma^\psi_{\pt',\x'}=\overline\boldpi
		\}}\cdot
	\phi\bigl(	\x'\bigr)\\
	&= \int_{\bbX^{\pt'}}\mfrak_{\pt'}(\diff\x')\,
	\phi\bigl(	\x'\bigr)
	\end{align*}
	Since $\mfrak$ takes the form \eqref{eq:form-m-prop-shuffling-inv}, the right-hand side equals $\int \Phi\diff \mfrak$.
	Hence we have $\int \Phi\circ F^\psi\diff \mfrak=\int \Phi\diff \mfrak$, as needed to conclude.
\end{proof}

\begin{lemma}\label{lem:iid-is-consistent}
	Let $(\bbX,\mathcal X,\nu)$ be a probability space.
	The collection $(\nu^{\otimes\pt})_{\pt\in\trees}$ of probability measures is $\perm$-consistent.
\end{lemma}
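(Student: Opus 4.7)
The plan is to trace through the definitions and exploit the fact that $\nu^{\otimes\pt}$ is an iid product measure, which is preserved under any bijective re-indexing of its coordinates.

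First, I would fix $\pt\in\trees$ and $\boldsigma\in\perm_\pt$, and recall from Lemma~\ref{lem:inverse-g} that $\u\mapsto\boldsigma\cdot\u$ is a bijection from $\pt$ onto $\boldsigma\cdot\pt$, with inverse $\u'\mapsto\boldsigma^{-1}\cdot\u'$. The push-forward mapping is $\boldsigma\pf\colon\bbX^\pt\to\bbX^{\boldsigma\cdot\pt}$, $\x\mapsto\bigl(x_{\boldsigma^{-1}\cdot\u'}\bigr)_{\u'\in\boldsigma\cdot\pt}$, which is just a re-indexing of coordinates by the above bijection.

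Next, to identify the push-forward measure $(\boldsigma\pf)_\#\,\nu^{\otimes\pt}$, it suffices to check its value on cylinder sets $\prod_{\u'\in\boldsigma\cdot\pt}A_{\u'}$ with $A_{\u'}\in\mathcal X$. By the definition of the push-forward, we have
\begin{align*}
(\boldsigma\pf)_\#\,\nu^{\otimes\pt}\Bigl(\prod_{\u'\in\boldsigma\cdot\pt}A_{\u'}\Bigr)
&=\nu^{\otimes\pt}\Bigl(\bigl\{\x\colon\forall\u'\in\boldsigma\cdot\pt,\,x_{\boldsigma^{-1}\cdot\u'}\in A_{\u'}\bigr\}\Bigr)\\
&=\nu^{\otimes\pt}\Bigl(\prod_{\u\in\pt}A_{\boldsigma\cdot\u}\Bigr)
=\prod_{\u\in\pt}\nu(A_{\boldsigma\cdot\u})
=\prod_{\u'\in\boldsigma\cdot\pt}\nu(A_{\u'}),
\end{align*}
where the second line uses that $\u\mapsto\boldsigma\cdot\u$ is a bijection between $\pt$ and $\boldsigma\cdot\pt$. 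The right-hand side is precisely $\nu^{\otimes(\boldsigma\cdot\pt)}\bigl(\prod_{\u'\in\boldsigma\cdot\pt}A_{\u'}\bigr)$, so the two measures agree on cylinder sets and hence coincide.

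There is no real obstacle here: the statement is morally a ``permutation of iid variables is iid'' argument, and the only thing to be careful about is matching the re-indexing convention in the definition of $\boldsigma\pf$ (which uses $\boldsigma^{-1}$) with the bijection $\u\mapsto\boldsigma\cdot\u$ between the two index sets. Once the cylinder-set computation above is written out, $\perm$-consistency of $(\nu^{\otimes\pt})_{\pt\in\trees}$ follows immediately.
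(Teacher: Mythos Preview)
Your proof is correct and essentially the same as the paper's: both verify the identity on a generating class (you use cylinder sets, the paper uses products of bounded measurable test functions) and reduce to the fact that a bijective re-indexing of an iid product is still iid with the same marginal.
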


\begin{proof}
	Let $\pt\in\trees$ and $\boldsigma\in\perm_\pt$.
	For every collection $(\phi_{\u'})_{\u'\in\boldsigma\cdot\pt}$ of bounded measurable functions on $\bbX$, we have
	\begin{align*}
	\int\nu^{\otimes\pt}(\diff\x)
	\prod_{\u'\in\boldsigma\cdot\pt}
	\phi_{\u'}(x_{\boldsigma^{-1}\cdot\u'})
	&=	\prod_{\u'\in\boldsigma\cdot\pt}
	\int g_{\u'}\diff\nu \\
	&=	\int\nu^{\otimes(\boldsigma\cdot\pt)}(\diff\x')
	\prod_{\u'\in\boldsigma\cdot\pt}\phi_{\u'}(x'_{\u'}),
	\end{align*}
	which gives that the image measure of $\nu^{\otimes\pt}$ with respect to the mapping $\x\mapsto\boldsigma\pf\x$ is the measure $\nu^{\otimes(\boldsigma\cdot\pt)}$.
	Hence $(\nu^{\otimes\pt})_{\pt\in\trees}$ is $\perm$-consistent.
\end{proof}

\begin{cor}\label{cor:shuffling-sg-with-iid}
	Let $(\bbX,\mathcal X,\nu)$ be a probability space, and let $\psi$ be a measurable $\bbX$-shuffling rule.
	If $\psi$ is $\perm$-equivariant, then for every non-negative sequence $\w=(w_0,w_1,\dots)$ with $w_0w_1>0$, and for every $n\geq1$, the probability measure
	\begin{align*}
	\SimpGen{\w}{n}(\mathrm{d}\pt)\cdot\nu^{\otimes\pt}(\mathrm{d}\x)
	\end{align*}
	is invariant under the mapping $F^\psi\colon(\pt,\x)\mapsto \bigl((\boldsigma^\psi_{\pt,\x})\cdot\pt,(\boldsigma^\psi_{\pt,\x})\pf\x\bigr)$.
\end{cor}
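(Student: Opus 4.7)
The plan is to apply Proposition~\ref{prop:shuffling-general-m} to the collection of finite measures $\mfrak_\pt = \SimpGen{\w}{n}(\pt)\cdot\nu^{\otimes\pt}$ on $\bbX^\pt$, $\pt\in\trees$, which is well-defined since $\SimpGen{\w}{n}(\pt)$ is a non-negative number and $\nu^{\otimes\pt}$ is a probability measure. First I would verify that the global measure $\mfrak$ given by $\SimpGen{\w}{n}(\mathrm{d}\pt)\cdot\nu^{\otimes\pt}(\mathrm{d}\x)$ takes the disintegrated form of \eqref{eq:form-m-prop-shuffling-inv}, which is immediate:
\begin{align*}
\SimpGen{\w}{n}(\mathrm{d}\pt)\cdot\nu^{\otimes\pt}(\mathrm{d}\x)
=\sum_{\pt_0\in\trees}\delta_{\pt_0}(\diff\pt)\,\mfrak_{\pt_0}(\diff\x).
\end{align*}

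Next I would verify the $\perm$-consistency of $(\mfrak_\pt)_{\pt\in\trees}$. The key observation is that the scalar factor $\SimpGen{\w}{n}(\pt)$ is invariant under the action $\pt\mapsto\boldsigma\cdot\pt$ for $\boldsigma\in\perm_\pt$. Indeed, by Lemma~\ref{lem:children-after-shuffling} we have $k_{\boldsigma\cdot\u}(\boldsigma\cdot\pt)=k_\u(\pt)$ for every $\u\in\pt$, and since $\u\mapsto\boldsigma\cdot\u$ is a bijection from $\pt$ to $\boldsigma\cdot\pt$ we deduce that
\begin{align*}
\omega(\boldsigma\cdot\pt)
=\prod_{\u'\in\boldsigma\cdot\pt}w_{k_{\u'}(\boldsigma\cdot\pt)}
=\prod_{\u\in\pt}w_{k_\u(\pt)}
=\omega(\pt),
\end{align*}
whence $\SimpGen{\w}{n}(\boldsigma\cdot\pt)=\SimpGen{\w}{n}(\pt)$. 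Combined with Lemma~\ref{lem:iid-is-consistent}, which asserts that $(\nu^{\otimes\pt})_{\pt\in\trees}$ is $\perm$-consistent, this yields that the pushforward of $\mfrak_\pt$ under $\x\mapsto\boldsigma\pf\x$ equals $\SimpGen{\w}{n}(\pt)\cdot\nu^{\otimes(\boldsigma\cdot\pt)}=\mfrak_{\boldsigma\cdot\pt}$, as required.

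Having verified both hypotheses of Proposition~\ref{prop:shuffling-general-m}—namely, $\perm$-equivariance of $\psi$ (assumed) and $\perm$-consistency of $(\mfrak_\pt)_{\pt\in\trees}$—I would conclude directly that $\mfrak$ is invariant under $F^\psi$. There is no real obstacle here: the statement follows essentially by unpacking definitions, with all the substantive content already absorbed into Proposition~\ref{prop:shuffling-general-m}. The only mild subtlety is the observation that the simply generated weight depends on $\pt$ only through the multiset of children-counts, which is manifestly preserved by shuffling via Lemma~\ref{lem:children-after-shuffling}.
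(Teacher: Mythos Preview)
Your proof is correct and follows essentially the same approach as the paper: define $\mfrak_\pt=\SimpGen{\w}{n}(\pt)\cdot\nu^{\otimes\pt}$, observe via Lemma~\ref{lem:children-after-shuffling} that $\SimpGen{\w}{n}(\boldsigma\cdot\pt)=\SimpGen{\w}{n}(\pt)$, combine with Lemma~\ref{lem:iid-is-consistent} to obtain $\perm$-consistency, and invoke Proposition~\ref{prop:shuffling-general-m}. Your version is slightly more explicit in writing out the product $\omega(\boldsigma\cdot\pt)=\omega(\pt)$, but the argument is otherwise identical.
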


\begin{proof}
	Fix $n\geq1$.
	For $\pt\in\trees$, we let $\mfrak_\pt=\SimpGen{\w}{n}(\pt)\cdot\nu^{\otimes\pt}(\mathrm{d}\x)$.
	By Proposition~\ref{prop:shuffling-general-m}, it suffices to verify that the collection $(\mfrak_\pt)_{\pt\in\trees}$ is $\perm$-consistent.
	Let $\pt\in\trees$ and $\boldsigma\in\perm_\pt$.
	We deduce from Lemma~\ref{lem:children-after-shuffling} and from the definition~\eqref{eq:def-SG} of the distribution $\SimpGen{\w}{n}(\pt)$ that we have $\SimpGen{\w}{n}(\pt)=\SimpGen{\w}{n}(\boldsigma\cdot\pt)$.
	Combining this with Lemma~\ref{lem:iid-is-consistent}, we get that the image under $\x\mapsto\boldsigma\pf\x$ of $\mfrak_\pt$ is $\mfrak_{\boldsigma\cdot\pt}$, and therefore the collection $(\mfrak_\pt)_{\pt\in\trees}$ is $\perm$-consistent, as needed.
\end{proof}

%\section{Perspectives and open problems}
%\label{sec:comments}
%\input{parts/comments}

%\section*{Index of notation}
%\addcontentsline{toc}{section}{Index of Notation}
%
%{\color{red}
%\Huge \centering À faire}
%\vfill\pagebreak

\printbibliography

\end{document}